\begin{document}

	\newtheorem{theorem}{Theorem}
	\newtheorem{definition}[theorem]{Definition}
	\newtheorem{question}[theorem]{Question}
	
	\newtheorem{lemma}[theorem]{Lemma}
	\newtheorem{claim}[theorem]{Claim}
	\newtheorem*{claim*}{Claim}
	\newtheorem{corollary}[theorem]{Corollary}
	\newtheorem{problem}[theorem]{Problem}
	\newtheorem{conj}[theorem]{Conjecture}
	\newtheorem{fact}[theorem]{Fact}
	\newtheorem{observation}[theorem]{Observation}
	\newtheorem{prop}[theorem]{Proposition}

	\newcommand\marginal[1]{\marginpar{\raggedright\parindent=0pt\tiny #1}}
	
	\theoremstyle{remark}
	\newtheorem{example}[theorem]{Example}
	\newtheorem{remark}[theorem]{Remark}
	\newcommand{\RED}{\color{red}}
	
	\newcommand{\supp}{\mathrm{supp}}
	\def\eps{\varepsilon}
	\def\HH{\mathcal{H}}
	\def\E{\mathbb{E}}
	\def\C{\mathbb{C}}
	\def\R{\mathbb{R}}
	\def\Z{\mathbb{Z}}
	\def\N{\mathbb{N}}
	\def\PP{\mathbb{P}}
	\def\T{\mathbb{T}}
	\def\l{\lambda}
	\def\s{\sigma}
		\def\vp{\varphi}
	\def\t{\theta}
	\def\a{\alpha}
	\def\la{\langle}
	\def\ra{\rangle}	
	\def\endproof{{\hfill $\square$} }
	\def\Xt{\widetilde{X}}
	\def\Pt{\widetilde{P}}
	\def\Var{\mathrm{Var}}
	\def\PV{\mathrm{PV}}
	\def\NV{\mathrm{NV}}
	\def\NN{\mathcal{N}}
	\def\CC{\mathcal{C}}
	
	\def\cA{\mathcal{A}}
	\def\cQ{\mathcal{Q}}	
	\def\cC{\mathcal{C}}
	\def\cG{\mathcal{G}}
	\def\F{\mathcal{F}}
	\def\tm{\tilde{\mu}}
	\def\ts{\tilde{\sigma}}
	\def\L{\Lambda}
	\def\z{\zeta}
		\def\p{\partial}
	\def\g{\gamma}
	\def\Q{\mathcal{Q}}
	\newcommand{\Cov}{\mathrm{Cov}}
	\renewcommand{\P}{\mathbb{P}}
	\newcommand{\Corr}{\mathrm{Corr}}
	\newcommand{\SCS}{\mathrm{SCS}}
	\newcommand{\SCC}{\mathrm{SCC}}
	\newcommand{\one}{\mathbf{1}}
	\newcommand{\bx}{\mathbf{x}}
	\newcommand{\by}{\mathbf{y}}
		\newcommand{\bu}{\mathbf{u}}
		\newcommand{\bv}{\mathbf{v}}
	\newcommand{\bw}{\mathbf{w}}
	\newcommand{\bz}{\mathbf{z}}
	\newcommand{\bt}{\mathbf{t}}
	\newcommand{\mes}{\mathrm{mes}}
	\newcommand{\tr}{\mathrm{tr}}
	\newcommand{\Exp}{\mathrm{Exp}}
		\def\im{\mathrm{Im}}
		\def\re{\mathrm{Re}}
\pagestyle{plain}

\title{Random polynomials: the closest roots to the unit circle}
\author{Marcus Michelen \and Julian Sahasrabudhe}
\begin{abstract}
Let $f = \sum_{k=0}^n \eps_k z^k$ be a random polynomial, where $\eps_0,\ldots ,\eps_n$ are iid standard Gaussian random variables,
and let $\z_1,\ldots,\z_n$ denote the roots of $f$. 
We  show that the point process determined by the magnitude of the roots $\{ 1-|\z_1|,\ldots, 1-|\z_n| \}$  tends to 
a Poisson point process at the scale $n^{-2}$ as $n\rightarrow \infty$. One consequence of this result is that it determines the magnitude of the closest root to the unit circle. In particular, we show that 
\[ \min_{k} ||\z_k| - 1|n^2 \rightarrow \Exp(1/6),\]
in distribution, where $\Exp(\l)$ denotes an exponential random variable of mean $\l^{-1}$. This resolves a conjecture of Shepp and Vanderbei from 1995 that was later studied by Konyagin and Schlag.
\end{abstract}

\maketitle

\section{Introduction}

We consider the typical distribution of the zeros of the polynomial
\begin{equation} \label{eq:def-f} f(z) = \sum_{k=0}^n \eps_k z^k, \end{equation}
where $\eps_0,\ldots,\eps_n$ are iid standard Gaussian random variables. 
This problem originates in the 1930s with seminal works of Bloch and P\'olya \cite{bloch-polya}, Littlewood and Offord, \cite{littlewood-offord1,littlewood-offord2,littlewood-offord4,littlewood-offord3} and Erd\H{o}s and Offord \cite{erdosOfford} and, in the years since, many aspects of this problem have come to be well understood
(see, for example, \cite{ibragimov-maslova4,ibragimov-maslova1,ibragimov-maslova3,ibragimov-maslova2,peres-virag,rice1,rice2,tao-vu}). 

One immediately striking aspect of the zeros of a random polynomial is that they cluster tightly and uniformly around the unit circle. This phenomenon, now widely known, was first discovered by \v{S}paro and \v{S}ur \cite{sparoSur} in the 1960s and is now known to persist for a wide variety of coefficient distributions, thanks to the work of Arnold \cite{arnold} and Ibragimov and Zaporozhets \cite{IZ}. Finer aspects of this convergence are also known: the proportion of roots that are within $\rho/n$ of the unit circle was determined by Shepp and Vanderbei \cite{shepp-vanderbei} and the limiting distribution of the roots that have \emph{constant} distance away from the unit circle, was determined\footnote{Peres and Vir\'ag actually work in a slightly different setup: for them the $\eps_j$ are iid standard \emph{complex} Gaussian random variables, a case which it is easy to adapt our results to.} in the celebrated work of Peres and Vir\'ag \cite{peres-virag}.

In this paper we determine the \emph{microscopic} nature of the clustering around the unit circle by studying the point process determined by the roots at distance $O(n^{-2})$ from the unit circle, the scale at which the first zeros appear. To lay this out a little more carefully, let us order the roots of $f$ according to their distance from the unit circle 
\[ |1-|\z_1|| \leq |1-|\z_2|| \leq \cdots \leq |1-|\z_n|| \]
and write $1-|\z_i| = x_i/n^{2}$. In their 1995 paper, Shepp and Vanderbei \cite{shepp-vanderbei} conjectured that the closest root to the unit circle is at distance $\Theta(n^{-2})$ and that the point process determined by the roots at this distance  $\{x_1,x_2,x_3,\ldots \}$ is asymptotically a Poisson point process. In this paper we prove these conjectures. For this, let $f \sim \cG_n$ denote the probability space defined by \eqref{eq:def-f}.

\begin{theorem}\label{thm:main1} If $f \sim \cG_n$ then the set 
\[ \{ (|\zeta| - 1)n^2 : f(\zeta) = 0 \} \] converges to a homogeneous Poisson process with intensity $1/12$, as $n\rightarrow \infty$, in the vague topology.
\end{theorem}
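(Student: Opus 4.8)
The plan is to establish Poisson convergence via the method of moments: show that for any finite collection of disjoint bounded intervals $I_1,\ldots,I_m \subset \R$, the joint factorial moments of the counts $N_j := \#\{\zeta : f(\zeta)=0,\ (|\zeta|-1)n^2 \in I_j\}$ converge to those of independent Poisson random variables with means $\frac{1}{12}|I_j|$. By standard criteria, it suffices to prove that $\E[N_j] \to \frac{1}{12}|I_j|$ and that all higher factorial moments $\E[\,\prod_j (N_j)_{(\ell_j)}\,]$ factorize in the limit into products of the corresponding first moments. Equivalently, writing things in terms of the $k$-point intensity functions $\rho_k$ of the zero set (rescaled by $n^2$ near the circle), I need to show $\rho_1(x) \to \frac{1}{12}$ locally uniformly and $\rho_k(x_1,\ldots,x_k) \to \prod_i \rho_1(x_i)$ for $k \ge 2$ on compact sets avoiding diagonals.

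First I would set up the Kac--Rice machinery. For a Gaussian analytic function, the $k$-point correlation functions of the zero set have exact determinantal-type formulas in terms of the covariance kernel $K(z,w) = \E[f(z)\overline{f(w)}] = \sum_{k=0}^n (z\bar w)^k = \frac{1-(z\bar w)^{n+1}}{1 - z\bar w}$ and its derivatives; in the real-Gaussian case one gets a Kac--Rice integrand built from the joint law of $(f, f')$ at the relevant points. The key analytic input is the behavior of $K$ and its normalized derivatives when $z = (1 + x/n^2)e^{i\theta}$: after rescaling, $|z\bar w|^{n+1} = (1+x/n^2)^{n+1} \to 1$ but the combination $n^2(1-|z|^2)$ stays of order one, so the kernel degenerates in a controlled way and, after the right change of variables (zooming in radially at scale $n^{-2}$ and angularly at scale $n^{-1}$), the local picture converges to that of a limiting Gaussian analytic function — essentially the one governing the Peres--Vir\'ag ``edge'' behavior, or a close relative. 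Computing the first intensity of this limiting field and integrating over the angle $\theta \in [0,2\pi)$ (which contributes the factor absorbing the $n^{-1}$ angular scale against the $n$ available roots) should yield the constant $\frac{1}{12}$; this is consistent with the stated $\Exp(1/6)$ for the minimum, since a Poisson process of intensity $\frac{1}{12}$ on $\R$ restricted to $[0,\infty)$ via $||\zeta|-1|$ doubles the rate to $\frac{1}{6}$, making the gap to the first point $\Exp(1/6)$.

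The second and more delicate step is the decorrelation estimate: for $x_1,\ldots,x_k$ at fixed distinct rescaled radii (and integrating out angles), the corresponding points $z_1,\ldots,z_k$ on the circle are at macroscopic angular separation with high probability, and at such separations the Gaussian vector $(f(z_1),f'(z_1),\ldots,f(z_k),f'(z_k))$ is asymptotically independent across the $k$ blocks because the off-diagonal kernel entries $K(z_i,z_j)$ are lower-order. I would make this quantitative by bounding the inverse covariance matrix and showing the Kac--Rice integrand splits as a product up to $(1+o(1))$ errors, uniformly on compacta; the contribution of configurations where two of the $z_i$ are angularly close must be shown to be negligible, which is where one needs a decent upper bound on $\rho_2$ at short angular scales (a "repulsion does not blow up the second moment" estimate).

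The main obstacle I anticipate is precisely this uniform control of the Kac--Rice integrand in the regime where the kernel is near-degenerate: near $|z|=1$ one has $K(z,z) \approx n$ while $\det \mathrm{Cov}(f(z),f'(z))$ is small, so the integrand is a ratio of quantities that individually blow up or vanish, and extracting the clean limit $\frac{1}{12}$ plus error bounds good enough to integrate over $\theta$ and to justify the moment-method passage to the limit requires careful asymptotic expansion of $\frac{1-t^{n+1}}{1-t}$ and its first two derivatives at $t = (1+x/n^2)^2 e^{i\phi}$ for $\phi$ of size $n^{-1}$. A secondary technical point is handling the real (as opposed to complex) Gaussian case, where $f(\bar z) = \overline{f(z)}$ forces one to treat roots near $\pm 1$ and complex-conjugate pairs of roots with a little extra care, though this only affects a vanishing fraction of the angular range and should not change the constant.
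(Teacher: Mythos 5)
Your high-level skeleton---method of moments, Kac--Rice, decorrelation at macroscopic angular separation, then show the near-diagonal contribution is negligible---matches the paper's, and your bookkeeping of the constant ($1/12$ on $\R$ giving $\Exp(1/6)$ for the minimum) is correct. But you propose to run Kac--Rice \emph{directly} on the complex zeros of $f$ in the rescaled annulus, treating the $k$-point intensity of the planar zero set, whereas the paper takes a very different route: it first proves (Lemmas~\ref{lem:pairing1}, \ref{lem:pairing2}, \ref{lem:colliding-roots}) that each zero $\zeta$ with $|\zeta| - 1 = O(n^{-2})$ can, with high probability, be bijectively matched to a ``colliding pair'' $(x_0,y_0)$ of zeros of $\re f$ and $\im f$ \emph{on the unit circle} with $|x_0-y_0|=O(n^{-2}\log^4 n)$, replaces $\nu_f$ by the circle measure $\mu_f$, and only then applies a one-dimensional Kac--Rice formula in the $2k$ real variables $(\bx,\by)$. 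This reduction is not cosmetic; it is what makes the hard step tractable.

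And that hard step is where your proposal has a real gap. You write that for the near-diagonal configurations one ``needs a decent upper bound on $\rho_2$ at short angular scales.'' This understates the problem in two ways. First, the method of moments needs control of $\rho_k$ near \emph{all} diagonals for \emph{every} $k$, not just $k=2$: the diagonal blow-up must be shown integrable uniformly in $k$. Second, and more importantly, at the relevant scale the Gaussian vector you condition on is nearly degenerate (angular separation can be as small as $n^{-2}\log^4 n$, far below the natural $n^{-1}$ scale), so both numerator and denominator of the Kac--Rice integrand vanish and a naive estimate gives nothing. The paper's main technical contribution, Lemma~\ref{th:density-bound}, is precisely the sharp bound $p_k(\bx,\by)\le C_k n^{2k}$, valid down to zero separation, and proving it consumes Sections 5--8 via matched upper bounds on the numerator (Lemma~\ref{cor:num-UB}, using a mean-value argument to extract the factor $\prod_{i<j}\min\{|x_i-x_j|,n^{-1}\}^2$) and lower bounds on $\det\Sigma$ (Lemma~\ref{lem:det-LB-general}, via divided differences, the clustering device of Lemma~\ref{lem:scale}, and the spectral bound of Lemma~\ref{lem:correlation-LB} for the limiting sinc process). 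Your proposal names this obstacle but offers no mechanism to overcome it, and in the planar formulation you have chosen there is no analogue of the divided-difference trick that powers the paper's cancellation, so it is not clear your route can be completed as described. You would at minimum need to supply a substitute for Lemma~\ref{th:density-bound} adapted to the two-dimensional Kac--Rice density, and that is the entire difficulty of the theorem.
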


From this, we can immediately resolve the conjecture of Shepp and Vanderbei regarding the scale at which the first zeros appear. One direction of this conjecture was already proved by Konyagin and Schlag \cite{konyagin-schlag} who showed that the closest zero to the unit circle is at distance \emph{at least} $cn^{-2}$ with positive probability. Here we provide a matching upper bound and, in fact, determine the asymptotic law of this distribution. To state this result we write $Z(f)$ for the set of complex zeros of $f$, let 
$\mathbb{S}$ denote the unit circle in the complex plane, let $d(A,B) = \inf\{ |x-y| : x \in A, y\in B\}$ for sets $A,B \subseteq \C $
and, for $\l \geq 0$, let $\Exp(\l)$ denote an exponential random variable with mean $\l^{-1}$.

\begin{corollary}\label{cor:main}
Let $f \sim \cG_n$ then
\[ n^2d(Z(f),\mathbb{S}) \rightarrow \Exp(1/6) ,\] in distribution, as $n \rightarrow \infty$.
\end{corollary}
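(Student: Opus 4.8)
\emph{Proof proposal.} The plan is to derive Corollary~\ref{cor:main} directly from Theorem~\ref{thm:main1}. First, I would observe that for any $\zeta\in\C$ the Euclidean distance from $\zeta$ to the unit circle equals $||\zeta|-1|$, so $d(Z(f),\mathbb{S}) = \min_{f(\zeta)=0}||\zeta|-1|$, and therefore $n^2 d(Z(f),\mathbb{S})$ is exactly the distance from the origin to the nearest point of the point process $\Xi_n := \{(|\zeta|-1)n^2 : f(\zeta)=0\}$ on $\R$. Equivalently, for every fixed $t>0$,
\[ \P\big(n^2 d(Z(f),\mathbb{S}) > t\big) = \P\big(\Xi_n\cap[-t,t]=\emptyset\big), \]
so it suffices to show that the right-hand side tends to $e^{-t/6}$, which is the survival function of an $\Exp(1/6)$ random variable; convergence of this survival function at every $t>0$ (the case $t\le 0$ being trivial, as $n^2 d(Z(f),\mathbb{S})\ge 0$) is equivalent to the asserted convergence in distribution to $\Exp(1/6)$.

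Next I would record the corresponding statement for the limit. By Theorem~\ref{thm:main1}, $\Xi_n$ converges in the vague topology to a homogeneous Poisson process $\Xi$ on $\R$ of intensity $1/12$. For such a process, $\Xi([-t,t])$ is Poisson with mean $\tfrac{1}{12}\cdot 2t = t/6$, so $\P(\Xi\cap[-t,t]=\emptyset)=e^{-t/6}$; equivalently, the distance from $0$ to the nearest point of $\Xi$ is exponential with rate $1/6$, i.e.\ distributed as $\Exp(1/6)$. (This is also the natural value: the scaling symmetry $\zeta\mapsto 1/\zeta$ of the root set makes the limit process symmetric about the circle.)

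Finally I would transfer this from $\Xi$ to $\Xi_n$ using only the compactness of the window $[-t,t]$. Since $[-t,t]$ is compact and its boundary $\{-t,t\}$ is almost surely avoided by $\Xi$ (a Poisson process has no atoms at prescribed points), vague convergence gives $\Xi_n([-t,t])\to\Xi([-t,t])$ in distribution. As these are $\Z_{\ge 0}$-valued and $\tfrac12$ is a continuity point of the limit law, this yields $\P(\Xi_n([-t,t])=0)\to\P(\Xi([-t,t])=0)=e^{-t/6}$, which is exactly what is needed.

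The whole mathematical content of the statement lies in Theorem~\ref{thm:main1}; the corollary is a soft consequence, and I anticipate no genuine obstacle. The one point worth stating carefully is that the functional ``distance from $0$ to the nearest point'' is \emph{not} continuous on the space of locally finite counting measures in the vague topology, since mass can escape to infinity, so one cannot simply invoke the continuous mapping theorem globally; working with the fixed compact window $[-t,t]$, on which vague convergence already controls everything, avoids this entirely.
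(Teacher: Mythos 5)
Your proposal is correct and follows essentially the same route as the paper's (very terse) proof: rewrite $\P\big(n^2 d(Z(f),\mathbb{S}) > t\big)$ as the avoidance probability of the compact window $[-t,t]$, pass to the limiting Poisson process of intensity $1/12$ via Theorem~\ref{thm:main1}, and evaluate $e^{-2t/12}=e^{-t/6}$. Your version is merely more careful about the vague-convergence technicality (boundary atom avoidance for the window) that the paper leaves implicit.
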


Note that Corollary~\ref{cor:main} is indeed a corollary of our Theorem~\ref{thm:main1}, and requires no more information about polynomials: given a homogeneous Poisson point process $x_1,x_2,\ldots $ on $\R$, we have that $\min\{|x_1|,|x_2|,\ldots \}$ is exponentially distributed.  

It also appears that our techniques are strong enough to prove a slightly stronger and intuitive generalization of Theorem \ref{thm:main1}: the two-dimensional point process \[ \{ \left( (|\zeta|-1)n^2, \arg(\zeta)\right)   : f_n(\zeta) = 0 ,\arg(\zeta)\in[0,\pi]\}\] converges to a homogeneous Poisson process on\footnote{Note here that we restrict to roots $\z$ with $\arg(\zeta) \in [0,\pi]$, since the roots with 
$\arg(\z) \in [\pi,2\pi]$ are simply a reflection of this set, due to the fact $f$ has real coefficients.} the strip $\R\times [0,\pi]$ of intensity $1/(12\pi)$.  In this paper, however, we limit ourselves to providing a proof of Theorem~\ref{thm:main1}.

An interesting point of contrast comes from another conjecture made by Shepp and Vanderbei \cite{shepp-vanderbei} who also considered the closest 
\emph{real} root to the unit circle. In this direction, they conjectured that the closest real root is at distance $\Theta(n^{-1})$ and the point process determined by the real roots at distance $\Theta(n^{-1})$ converges to a Poisson process as $n \to \infty$. In contrast with Theorem \ref{thm:main1}, the first named author will show in a forthcoming paper \cite{MMrealroots} that the limiting point process converges to a point process which is \emph{not} Poisson and will also affirm the conjecture of Shepp and Vanderbei, that the closest real roots to the unit circle appear at this scale. Indeed, in \cite{MMrealroots} it is shown that with probability $1-\eps$,
\[ d(Z(f) \cap \R, \mathbb{S}) = \Theta_{\eps}(n^{-1}),\]
for all $\eps  >0$. 

\subsection{A heuristic and proof sketch}

The main new idea behind the proof of Theorem~\ref{thm:main1} is conceptually simple and we hope it will inspire uses beyond the results of this paper. To get a feel for this idea, let us start by considering the zero set
\[\{ z :  \re\, f(z) = 0 \}  \] in an annular neighbourhood of the unit circle. 
As we will see, this set appears as $\approx n$ curves in this annular region (which we will call \emph{strings}, to borrow a phrase from combinatorial geometry) which begin somewhere inside the unit circle and then cross over to the outside of the unit circle. For the purposes of this discussion, it is also convenient to imagine these strings as coloured \emph{red}. Likewise we imagine the zero set
\[ \{ z : \im\, f = 0 \}, \]
as $\approx n$ \emph{blue} strings which behave in much the same fashion. Now while we will have essentially no crossings between strings of the same colour, crossings between strings of \emph{different} colours correspond exactly to the zeros of $f$ in our annular neighbourhood. To find a root of $f$ near the unit circle, our strategy will be to find a pair of strings, one blue and one red, that are extremely close to each other \emph{on} the unit circle. We will then see (with high probability) that these strings must cross \emph{near} the unit circle, thereby giving us our zero of $f$.

This idea, assuming it can be made rigorous, reduces the problem to showing that there exist two strings, of different colours, that get quite ``close'' on the unit circle, meaning (as we'll see), with distance $O(n^{-2})$. For this, consider the trigonometric polynomials
\[ X(x) := \sum_{k=0}^n \eps_k \cos kx = \re\, f(e^{ix}), \qquad Y(x) := \sum_{k=0}^n \eps_k \sin kx = \im\,f(e^{ix}), \]
and observe that we would like to show that there is a pair of roots $(x_0,y_0)$ of $X,Y$, respectively, with $|x_0- y_0| = O(n^{-2})$.

To see that this is a reasonable goal, let $R$ be the set of (red) zeros of $X$ in $[0,\pi]$ and $B$ be the set of (blue) zeros of $Y$ in\footnote{We ignore $[\pi,2\pi]$ as the zeros here are simply a refection of the zeros in $[0,\pi]$.} $[0,\pi]$. A classical result, due to Dunnage \cite{dunnage}, tells us that $|R|,|B| \sim n/\sqrt{3}$. So if $R,B$ were each sets of $n/\sqrt{3}$ \emph{iid} points in $[0,\pi]$, then we would have $d(R,B) \approx n^{-2}$, as desired.

While this heuristic appears promising, one aspect of the (true) distribution of the zeros of the real and imaginary parts of $f$ seem to point in a different direction: the roots of the real and imaginary parts of $f$ actually
\emph{repulse} each other (see Figure~\ref{figure}). Thus one may be lead to believe that the phenomena described above actually \emph{fails} when applied to roots, as opposed to random sets of points. However, as we shall show, the behavior of the distribution of the real roots $x,y$ with $d(x,y) = \Theta(n^{-2})$ remains Poisson in the real case, albeit with a different parameter.

\begin{figure}[h] \label{figure}
	\centering
	\subfigure[The zeros of $\re f$ and $\im f$ on the unit circle \newline  for $n = 40$ in red and blue respectively.] {
		\includegraphics[width=2.7in]{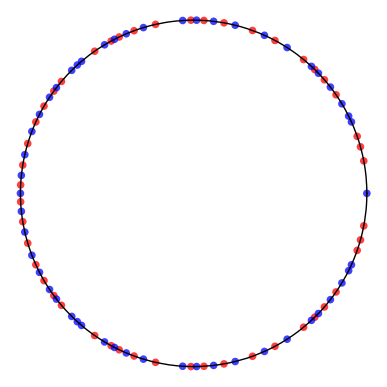} 
	\hspace{.5in}}
	\subfigure[The same number of red and blue  points placed independently at random.] {
		%\hspace{.25in}
		\includegraphics[width=2.7in]{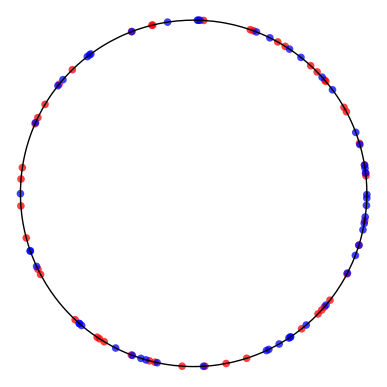}}
	\caption{}
\end{figure}

Turning this heuristic into a proof is rather involved and consists of two main steps: first, we show (in Section \ref{sec:unit-circle-redux}) that to understand the point process corresponding to roots $\{\zeta\}$ of $f$ with $|\zeta| = 1 + \Theta(n^{-2})$, it is sufficient to study a different point process $\mu_f$ defined entirely in terms of the behavior of $f$ on the unit circle.  Roughly speaking, $\mu_f(U)$ is the number of pairs of zeros of $\re(f)$, $\im(f) $ that have the correct position and velocity
so that their associated strings will collide at some radius $r \in U$. The second step consists of showing that our new process $\mu_f$ converges to a Poisson point process.  To do this, we use the method of moments along with a Kac-Rice formula which allows us to express the factorial moments of $\mu_f(U)$ as an integral of a certain kernel. 
We then work with this kernel: for tuples of zeros that are far apart we shall show that this kernel approximately factors, which roughly says that the behavior of far away roots is independent.

The main challenge lies in dealing with the case when the roots are clustered together (in various possible configurations) and thus are significantly dependent. Our main tool here is Lemma~\ref{th:density-bound}, which is our main technical contribution of this paper and consumes most of its length.
 We rephrase this lemma here in a slightly different way to give the reader a feel for the strength of the result.
 
\begin{lemma}\label{lem:density-bound-restatement} Let $I_1,\ldots,I_{2k}$ be disjoint arcs of the unit circle above the real line which satisfy 
$d(I_i, \{\pm 1\}) >n^{-1/2}$. For $f \sim \cG_n$, let $A_i$ be the event that 
$\re(f)$ has a root in $I_i$ and $B_i$ be the event that $\im(f)$ has a root in $I_{i+ k}$. Then 
\begin{equation}\label{eq:PAB} \PP\Big( \bigcap_{i=1}^k (A_i \cap B_i) \Big) = O_k\Big( n^{2k} \prod_{j=1}^{2k} |I_{j}|   \Big). \end{equation}
\end{lemma}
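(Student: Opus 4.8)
The plan is to use a Kac–Rice type argument to write the probability in question as an integral over the arcs $I_1,\dots,I_{2k}$ of a suitable density, and then bound the density pointwise. Concretely, $\PP(\bigcap_i (A_i\cap B_i))$ is at most the expected number of tuples $(x_1,\dots,x_k,y_1,\dots,y_k)$ with $x_i$ a root of $X = \re f(e^{i\,\cdot})$ in $I_i$ and $y_i$ a root of $Y = \im f(e^{i\,\cdot})$ in $I_{i+k}$; by the Kac–Rice formula this expectation equals
\[
\int_{I_1\times\cdots\times I_{2k}} \E\Big[ \prod_{i=1}^k |X'(x_i)|\,|Y'(y_i)| \;\Big|\; X(x_i)=0,\ Y(y_i)=0\ \forall i \Big]\, p(0,\dots,0)\; dx\,dy,
\]
where $p$ is the joint density of the Gaussian vector $(X(x_1),\dots,X(x_k),Y(y_1),\dots,Y(y_k))$ at the origin. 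So it suffices to show the integrand is $O_k(n^{2k})$ uniformly over the arcs, where the $n^{2k}$ must come out as roughly $n$ per factor: $n$ from each density value $p$ at a well-conditioned coordinate, since $\Var(X(x))\asymp n$, and $\sqrt n$ from each conditional expectation of $|X'(x_i)|$, since $\Var(X'(x))\asymp n^3$, giving $n^{1/2}\cdot n^{1/2}$ per pair — wait, that is $2k$ factors of $n^{1/2}$ from the derivatives and $2k$ factors of $n^{-1/2}$ would be needed... let me recount: there are $2k$ density factors contributing $n^{-1/2}$ each if we think of $p(0)\approx \prod (2\pi \Var)^{-1/2}$, but the conditional structure shifts this; the honest accounting is that the full expectation equals a Gaussian integral whose total scaling is $n^{2k}$, and the point is to verify the covariance matrix of $(X(x_i),Y(y_i),X'(x_i),Y'(y_i))_i$ is sufficiently non-degenerate.

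The key steps, in order, are: (1) Set up the Kac–Rice formula rigorously on each arc, noting that $X,Y$ and their derivatives form a $6k$-dimensional Gaussian vector with an explicit covariance matrix built from the kernels $\E[X(x)X(x')] = \tfrac12\sum_k \cos k(x-x') + \tfrac12\sum_k\cos k(x+x')$ and its derivatives. (2) Bound $p(0,\dots,0)$: show the $2k\times 2k$ covariance matrix $\Sigma$ of $(X(x_i),Y(y_i))$ has $\det\Sigma = \Omega_k(n^{2k})$ — this is where the hypothesis $d(I_i,\{\pm1\})>n^{-1/2}$ enters, killing the degenerate contribution of the ``$x+x'$'' kernel near $\pm 1$ — so that $p(0)=O_k(n^{-k})$... and here I realize the scaling only works if we are careful, so the real content of (2) is a lower bound on the least singular value of $\Sigma$ after rescaling. (3) Bound the conditional expectation $\E[\prod |X'(x_i)||Y'(y_i)| \mid \cdots]$ by $O_k(n^{3k})$ (i.e. $n^{3/2}$ per derivative, since $\Var X'\asymp n^3$) using the crude bound $\E[\prod|Z_i| \mid \cdots] \le \prod (\E[Z_i^2\mid\cdots])^{1/2}$ and the fact that conditioning on other Gaussians only decreases variance. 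Combining, $p(0)\cdot\text{(cond.\ exp.)} = O_k(n^{-k}\cdot n^{3k})$... which overshoots. The resolution — and this is the crux — is that one must \emph{not} bound $p$ and the conditional expectation separately when roots are clustered: instead one rescales each arc to unit size (substituting $x_i \mapsto$ local coordinate of width $|I_i|$), absorbs a factor $|I_i|$ per variable from $dx$, and shows the resulting rescaled integrand is $O_k(1)$ by a compactness/continuity argument away from the diagonal together with a separate, more delicate analysis on the diagonal.

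The main obstacle, as the authors themselves flag, is precisely the clustered case: when several of the $x_i$ (or $y_i$, or both) are within $O(n^{-2})$ of each other, the Gaussian vector $(X(x_i),\dots)$ becomes highly degenerate and the naive product bounds on $\det\Sigma$ and on the conditional moments both blow up, while their product must nonetheless stay $O_k(n^{2k})$. Handling this requires a careful Taylor expansion of $X$ (and $Y$) around a cluster center: one writes $X(x_i) = X(x_*) + (x_i-x_*)X'(x_*) + \tfrac12(x_i-x_*)^2 X''(x_*)+\cdots$ and observes that the joint law of $(X(x_i))_{i \text{ in cluster}}$ is governed, to leading order, by the first few Taylor coefficients $X(x_*), X'(x_*), X''(x_*),\dots$, whose joint density is non-degenerate at the relevant scale; the vanishing of all the $X(x_i)$ then forces the low-order Taylor coefficients to be small, and one gains back exactly the powers of $n$ lost in the determinant. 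Making this rigorous — uniformly over all partition types of the index set into clusters, and uniformly over the positions of the arcs subject to $d(I_i,\{\pm 1\})>n^{-1/2}$ — is the technical heart of the argument and, consistent with the remark in the introduction, is expected to consume most of the length of the proof of Lemma~\ref{th:density-bound}.
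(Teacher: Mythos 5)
Your high-level reduction matches the paper exactly: bound $\PP(\bigcap_i(A_i\cap B_i))$ by the Kac--Rice integral $\int_{I_1\times\cdots\times I_{2k}}p_k(\bx,\by)\,d\bx\,d\by$ and then invoke the uniform density bound $p_k(\bx,\by)\leq C_k n^{2k}$, which is precisely Lemma~\ref{th:density-bound}; that is the paper's two-line proof of the statement in question, and you have it.

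Two points in your discussion of \emph{why} the density bound should hold need correction. First, your arithmetic: you compute $p(0)\cdot(\text{cond.\ exp.})=O_k(n^{-k}\cdot n^{3k})$ and declare this ``overshoots,'' but $n^{-k}\cdot n^{3k}=n^{2k}$ is exactly the target. In the well-separated regime the naive accounting is already correct. What fails is not the multiplication but the inputs: the lower bound $\det\Sigma=\Omega(n^{2k})$ breaks down as soon as some of the $x_i$ (or $y_i$) cluster, and correspondingly the conditional moments of the derivatives shrink. You do eventually identify the clustering degeneracy as the real obstacle, so the diagnosis is sound even though the route to it is confused. Second, you assert that ``one must \emph{not} bound $p$ and the conditional expectation separately when roots are clustered.'' The paper in fact \emph{does} bound them separately: Lemma~\ref{cor:num-UB} gives $\alpha_k\leq C_k n^{2k^2+k}\prod_{i<j}\min\{|x_i-x_j|,n^{-1}\}^2\min\{|y_i-y_j|,n^{-1}\}^2$ and Lemma~\ref{lem:det-LB-general} gives $\det\Sigma\geq c_k n^{2k^2}$ times the \emph{same} product, so the clustering factors cancel in the ratio defining $p_k$. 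The mechanism you gesture at---vanishing at several nearby points forces higher-order Taylor data to be small---is indeed how both bounds are obtained: the numerator bound uses a Rolle-type estimate $|X'(x_1)|\lesssim\sup|X^{(k)}|\prod_j|x_1-x_j|$ (Lemma~\ref{lem:numerator-MVT}), and the determinant bound passes to divided differences (Lemmas~\ref{lem:MVT-divided-diff}, \ref{lem:det-divided-diff}, \ref{lem:main-compare}) and a limiting-process least-eigenvalue bound to extract precisely the Vandermonde factor. Your ``rescale each arc to unit size and appeal to compactness'' suggestion corresponds, at best, to the paper's compactness argument for the limiting process (Lemma~\ref{lem:min-eigenval}); on its own it does not produce the cluster-dependent cancellation, because after rescaling the covariance is not uniformly non-degenerate---one still needs the divided-difference change of variables to factor out the near-singularity.
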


This result is sharp, up to constants, for all sizes of intervals $I_1,\ldots,I_k$, and thus gives very good control even when $|I_i|$  is much smaller than $n^{-1}$. Observe in the statement of Lemma~\ref{lem:density-bound-restatement}, we specify that the intervals are above the real axis. This is because the roots of $f$ are identical above and below the real axis. We also specify that these intervals are not too close to the real axis. Indeed, some condition of this form is required
as $f$ has a very different behavior (again due to its real coefficients) near $\pm 1$. For an extreme example, we point out that $\im(f)$ is \emph{always} zero on the real line and thus $\im(f)$ always has a zero in any interval $I$ containing $\{\pm 1\}$, which would be detrimental to an estimate like \eqref{eq:PAB}. 

We also point out that Lemma~\ref{th:density-bound} (or, equivalently, Lemma~\ref{lem:density-bound-restatement}) can be seen as extending a key lemma of Granville and Wigman \cite[Proposition A.1]{granville-wigman} to its logical conclusion. Granville and Wigman, prove a variant of Lemma~\ref{th:density-bound} for three zeros in a single interval of length $O(n^{-1})$. While there are some similarities in the approach, our generalization is not at all straightforward.

\subsection{Future research}

It appears that the notion of studying the zero sets $\{ z : \re\, f = 0 \}$ and $\{ z : \im\, f = 0 \}$ for a random polynomial is novel and many 
natural questions suggest themselves about the nature of these ensembles of ``strings''. Most generally, one might ask if there is a natural probabilistic notion that models these ensembles of strings. It is also natural to ask if other phenomena, such as the fascinating results of Peres and Vir\'ag \cite{peres-virag}, have a pleasing explanation in terms of these trajectories. 

It also appears natural to consider the microscopic structure of the zero set of a random polynomial about other circles  $\{ z : |z| = r \}$, where $r \geq 1$. For $r = 1+O(n^{-1})$ we would expect a very similar behavior to what we see around $|z|=1$, but with the ``force'' of the repulsion increasing with $r$. It seems particularly interesting to consider the distribution of roots about the circles with radius $r = 1 +  \omega(n^{-1})$, where (we would imagine) the effects of the repulsion start to seriously warp the distribution.

Another direction would be to consider variants of Theorem~\ref{thm:main1} for different coefficient distributions. While we have not investigated this question here, we would imagine that the behavior seen in Theorem~\ref{thm:main1} is ``universal'' in the sense that a similar result should remain true for a wide class of coefficient distributions. We offer the following as a target for future research.

\begin{conj}
	Let $\{\eps_j\}_j$ be iid real random variables with $\E\, \eps_1 = 0$ and $\E\, \eps_1^2 = 1$.  Then the conclusion of Theorem \ref{thm:main1} still holds. 
\end{conj}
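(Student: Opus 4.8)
The statement to prove is the universality conjecture: for iid real $\eps_j$ with $\E\,\eps_1=0$ and $\E\,\eps_1^2=1$, the conclusion of Theorem~\ref{thm:main1} still holds. Since a conjecture is not a theorem with an author's proof to anticipate, I will sketch the route I believe has the best chance of working: transfer the entire Gaussian argument of this paper to the general case by a combination of (i) a near-circle comparison that reduces everything to the behavior of $f$ on the unit circle, exactly as in Section~\ref{sec:unit-circle-redux}, and (ii) a quantitative CLT / Lindeberg-style replacement showing that the relevant finite-dimensional functionals of $(X(x),Y(x))$ and their derivatives are asymptotically Gaussian with the same covariance structure, uniformly on the arcs of interest.

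\textbf{Step 1: Reduce to the unit circle.} First I would re-examine the reduction in Section~\ref{sec:unit-circle-redux} to check which parts use Gaussianity. The passage from the point process of roots $\{(|\zeta|-1)n^2\}$ to the surrogate process $\mu_f$ (counting pairs of zeros of $\re f$, $\im f$ with the right position and velocity so that their strings collide at a given radius) is essentially a deterministic/analytic statement about $f$ near $\mathbb S$ together with crude tail bounds; I expect it survives verbatim once one has, say, sub-exponential or even just polynomial moment control on $\max_{|z|=1+O(n^{-2})}|f^{(j)}(z)|$ for $j=0,1,2$, which follows from $\E\,\eps_1^2=1$ and a union bound over a net. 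So the real content is to show $\mu_f \Rightarrow \mathrm{Poisson}(1/12)$ for general coefficients.

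\textbf{Step 2: Method of moments plus Lindeberg replacement.} As in the Gaussian case, I would compute the factorial moments of $\mu_f(U)$ via a Kac--Rice formula. The Kac--Rice density for the joint event ``$X$ has a root near $x_i$ with velocity $\approx v_i$, $Y$ has a root near $y_i$ with velocity $\approx w_i$'' is a conditional expectation of a local functional of the Gaussian vector $(X(x_i),X'(x_i),Y(y_i),Y'(y_i))_i$; in the non-Gaussian case the analogous density is an expectation of the same local functional against the true law of this vector. The key point: after rescaling, the vector $\big(n^{-1/2}X(x_i), n^{-3/2}X'(x_i),\dots\big)_i$ is a sum of $n$ independent small contributions (the $k$-th contribution being $\eps_k$ times a deterministic vector of size $O(n^{-1/2})$), so by a Lindeberg/Stein argument its distribution converges to the Gaussian with the covariance computed in this paper, with an explicit error. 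One must run this replacement not for the indicator of a root but inside the Kac--Rice integral, i.e. show that $\E[\,\delta\text{-like test function}\,]$ is continuous enough in the underlying distribution; this is where smoothing (Kac--Rice with a mollified delta, or the classical regularization $\frac{1}{2\eps}\mathbf 1_{|X|<\eps}|X'|$) is needed, together with the anti-concentration input to control the smoothing error — and here one can feed in Lemma~\ref{lem:density-bound-restatement}, whose proof I would want to check is itself robust to replacing Gaussian coefficients (its anti-concentration core likely goes through via Esseen-type bounds rather than explicit Gaussian densities). Granville--Wigman-type estimates and the clustered-configuration analysis of Lemma~\ref{th:density-bound} would have to be re-derived with small-ball inequalities in place of Gaussian density computations.

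\textbf{Main obstacle.} The hard part will be the regime where the zeros are clustered at scale $\ll n^{-1}$ (the case that already consumes most of this paper): there the relevant conditional Gaussian densities degenerate, the analysis is delicate, and one cannot afford a lossy CLT error. One needs a version of Lemma~\ref{th:density-bound} with an \emph{error term} quantifying the distance to the Gaussian prediction, uniformly over all cluster geometries, and this requires quantitative multidimensional Berry--Esseen bounds for the low-dimensional projections $\big(X(x_i),X'(x_i),Y(y_i),Y'(y_i)\big)_i$ that remain effective even as the projection matrix becomes nearly singular (which happens precisely when the $x_i,y_i$ collide). Controlling the Berry--Esseen rate against the smallest singular value — equivalently, showing the smoothing error from Step~2 is genuinely smaller than the main term on \emph{every} scale down to the relevant one — is the crux, and is where a genuinely new idea beyond routine Lindeberg swapping would be required. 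A secondary but real difficulty is the behavior near $\pm1$: the argument must handle the $n^{-1/2}$-neighborhoods of $\pm1$ excluded in Lemma~\ref{lem:density-bound-restatement} separately and show they contribute negligibly, which in the non-Gaussian case needs its own (softer) tail estimates rather than exact Gaussian formulas.
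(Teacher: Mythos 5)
The statement you were asked about is not a theorem of the paper at all: it is stated as a \emph{conjecture}, explicitly offered as ``a target for future research,'' and the paper contains no proof of it (the authors even single out Littlewood coefficients as a natural first test case). Your submission, as you yourself acknowledge, is a research programme rather than a proof, so there is nothing here that establishes the statement. The plan is a sensible one --- reduce to the unit circle, run the method of moments through a Kac--Rice-type expression, and replace the Gaussian law of the vectors $\bigl(X(x_i),X'(x_i),Y(y_i),Y'(y_i)\bigr)_i$ by a Lindeberg/Berry--Esseen argument --- but every step where Gaussianity is actually used in the paper is left open in your sketch. Concretely: the reduction of Section~\ref{sec:unit-circle-redux} does not ``survive verbatim'' under moment control alone, since Lemmas~\ref{lem:good-complex}--\ref{lem:close-roots-circle} and Lemma~\ref{lem:endpoints} are proved via Gaussian Kac--Rice formulas, Gaussian conditioning, and the density/small-ball facts of Fact~\ref{fact:gaussian}; for coefficient laws with atoms (e.g.\ Rademacher) the relevant densities do not exist and one needs genuine anti-concentration inputs of Littlewood--Offord type. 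Likewise the Kac--Rice formula of Theorem~\ref{th:general-KR} is a statement about Gaussian fields, so even writing down the factorial moments of $\mu_f(U)$ in the general case requires a mollified counting argument whose error you do not control.

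The decisive gap is the one you name yourself: a non-Gaussian analogue of Lemma~\ref{th:density-bound} uniform over clustered configurations. In the regime $|x_i-x_j|\ll n^{-1}$ the covariance matrix $\Sigma_k(\bx,\by)$ has smallest eigenvalue degenerating like $n\min\{|x_i-x_j|,n^{-1}\}^2$, so any off-the-shelf multivariate Berry--Esseen bound produces an error that is inversely polynomial in this eigenvalue times $n^{-1/2}$, which overwhelms the main term precisely at the scale $n^{-2}$ where the Poisson statistics live; ``controlling the Berry--Esseen rate against the smallest singular value'' is not a technical refinement but the heart of the problem, and you offer no mechanism for it beyond saying a new idea is needed. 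Until that step (and the atom/anti-concentration issues above) is supplied, the proposal should be read as a plausible roadmap toward the conjecture, not as a proof of it.
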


Perhaps the most natural first step in this direction would be to extend Theorem~\ref{thm:main1} to the case of random \emph{Littlewood} polynomials: polynomials where the $\eps_k$ are chosen in $\{\pm 1\}$ independently and uniformly.

\section{Reduction to the unit circle}\label{sec:unit-circle-redux}
The purpose of this section is to make rigorous a central piece of the heuristic outlined in the introduction.  In particular, we show that to understand the zeros of $f_n$ near the unit circle, it is sufficient to look at zeros of the real and imaginary part on the unit circle along with their derivatives at those points.

But before getting to this, we get an irritating matter out of the way: since $f$ has real coefficients, we have $|\overline{\z}| = |\z|$ for all the roots $\z$ of $f$ and thus each distance in the 
sequence $(|\z_1|-1)n^2,\ldots,(|\z_n|-1)n^2$ occurs twice for roots $\z$ with $\z \in \C \setminus \R$. To sweep away this redundancy, we consider only the roots in the upper half plane; that is, with $\im\, \z \geq 0 $. 
Now, for $S \subseteq \R$ and $n \in \N$, define the annulus in the upper-half plane
\[ \cA_n(S) = \{ z \in \C : (|z|-1)n^2 \in S \textit{ and } \im\, \z \geq 0 \}. \] For a polynomial $f$ with $\deg(f) = n$, we define
\[ \cA_f(S) = \{ z \in \cA_n(S) : f(\z) = 0  \}, \]
 and define the measure $\nu_f$ on $\R$ by
$$\nu_{f}(S) = |\cA_f(S)|,$$
for each Borel set $S \subseteq \R$.  The measure $\nu_f$ is our main object of interest and in fact Theorem \ref{thm:main1} is exactly the statement that the random counting measure $\nu_f$ converges to a Poisson point process as $n \to \infty$. 

To study $\nu_f$, we show in this section that it is sufficient to work with another measure $\mu_{f}$, which is easier to work with and is defined solely in terms of the behavior of $f$ on the unit circle.  Throughout, we write $\T := \R/(2\pi\Z )$ but often just work in $[0,\pi]$. Since $f$ behaves differently near the real axis than elsewhere, it will be convenient for us to work only with points away from the real axis; with this in mind, define 
\[ \T_{0} := \{x \in [0,\pi]: d(x,\{0,\pi\}) > n^{-1/2} \}. \]
Now, to define the measure $\mu_f$, break $f$ into real and imaginary parts
\[ f((1+\rho)e^{ix}) = X(x,\rho) + iY(x,\rho), \] where
\[ X(x,\rho) = \sum_{j = 1}^n \eps_j (1+\rho)^j \cos(jx),\,\, \text{  and   }\, \, Y(x,\rho) = \sum_{j = 1}^n \eps_j (1+\rho)^j \sin(jx)\,.\]
We also define  $X(x) := X(x,0)$ and similarly for $Y$.  For a Borel set $S \subseteq \R$, we define $\cC_f(S)$ to be 
\begin{align} 
\left\{ (x,y) \in \T_0^2:\, X(x) = Y(y) = 0 ,\frac{(x-y)X'(x)Y'(y)n^2}{(X'(x))^2 + (Y'(y))^2 } \in S, |x - y| \leq n^{-2}(\log n)^4 \right\} \, \label{eq:c-def}
\end{align}
and then set
\[ \mu_f(S) := |\cC_f(S)|. \]
Roughly speaking we have designed the measure of $S$ to be the number of pairs of zeros, of $X$ and $Y$ respectively, that are at the right distance from each other and moving at the right ``speed'' (as $\rho$ changes)
so that they will result in a zero $\z$ of $f$ with $(|\z|-1)n^2 \in S$. 

The following lemma, to which the remainder of this section is dedicated, makes the connection between $\nu_f$ and $\mu_f$ rigorous.

\begin{lemma}\label{lem:colliding-roots}
	Let $f \sim \cG_n$ and let $I \subseteq \R$ be a bounded open interval. Then 
	$$\P\left(\nu_f(I) = \mu_f(I) \right) \to 1\, ,$$ as $n$ tends to infinity.
\end{lemma}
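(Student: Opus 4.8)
The plan is to produce a high-probability event $\cE = \cE_n$ on which we construct an explicit bijection between the zeros $\z$ of $f$ counted by $\nu_f(I)$ and the pairs $(x_0,y_0)$ counted by $\mu_f(I)$; the lemma is then immediate. The bijection makes the heuristic of the introduction precise: a zero $\z = (1+\rho)e^{i\theta}$ of $f$ in $\cA_f(I)$ (so $\rho = O(n^{-2})$) is matched to $(x_0,y_0)$, where $x_0$ is the zero of $X(\cdot)$ on the ``red string'' through $\z$ and $y_0$ the zero of $Y(\cdot)$ on the ``blue string'' through $\z$; here a string through $\z$ is the connected component of $\{(x,t): X(x,t)=0\}$ (resp. $\{(x,t):Y(x,t)=0\}$) through $(\theta,\rho)$, which we will show is a graph $x = \xi(t)$ over $t\in[-Ln^{-2},Ln^{-2}]$ with $L := \sup_{s\in I}|s|+2$. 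Conversely each $\cC_f(I)$-pair produces a string collision, hence a zero of $f$ at the right radius.

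The event $\cE$ records the non-degeneracy needed. (i) Uniform bounds on $\T_0' \times [-Ln^{-2},Ln^{-2}]$, where $\T_0' := \{x\in[0,\pi]: d(x,\{0,\pi\})>\tfrac12 n^{-1/2}\}$, for $X,\p_x X,\p_x^2 X,\p_\rho X,\p_x\p_\rho X,\p_\rho^2 X$ and the $Y$-analogues, of the natural orders $n^{1/2},n^{3/2},n^{5/2},n^{3/2},n^{5/2},n^{5/2}$ times $(\log n)^{O(1)}$; these follow from Bernstein's inequality for trigonometric polynomials together with standard Gaussian maximal (Salem--Zygmund type) bounds and a net in $\rho$. (ii) Using the Kac--Rice formula for $f$ and the identities $X'(\theta) = -\im(e^{i\theta}f'(e^{i\theta}))$, $Y'(\theta)=\re(e^{i\theta}f'(e^{i\theta}))$: every zero $\z=(1+\rho)e^{i\theta}\in\cA_f(I)$ has $\min(|X'(\theta)|,|Y'(\theta)|) \ge n^{3/2}/\log\log n$ (the expected number of exceptional zeros is $O(1/\log\log n)$, since conditionally on $f(z)=0$ the derivative $f'(z)$ is a circular complex Gaussian of variance $\asymp n^3$). (iii) Using a two-point Kac--Rice computation for the pair $(X,Y)$, together with the fact that $X(x)$ and $Y(y)$ are nearly independent when $x,y\in\T_0$ and $|x-y|\le n^{-2}(\log n)^4$ (there $\Cov(X(x),Y(y))=O(n^{1/2})$ while the variances are $\asymp n$): every pair $(x_0,y_0)\in\T_0^2$ with $X(x_0)=Y(y_0)=0$ and $|x_0-y_0|\le n^{-2}(\log n)^4$ has $\min(|X'(x_0)|,|Y'(y_0)|)\ge n^{3/2}/\log\log n$, and moreover $X$ has no other zero within $n^{-2}(\log n)^5$ of $x_0$, and $Y$ none within that distance of $y_0$. (iv) First-moment Kac--Rice bounds for $f$ (which also give that $\nu_f$ has bounded intensity): no zero of $f$ in $\cA_n(I)$ with $\arg$ within $2n^{-1/2}$ of $\{0,\pi\}$ (this uses that $f$ behaves differently near $\pm1$, where its nearest zero sits at distance $\Theta(n^{-1})$); and, for a fixed sequence $\eps_n\downarrow 0$ chosen to dominate all $o(1)$ errors below, no zero $\z$ of $f$ with $(|\z|-1)n^2$ within $\eps_n$ of $\p I$, and no pair $(x_0,y_0)$ as above whose value $S(x_0,y_0):=(x_0-y_0)X'(x_0)Y'(y_0)n^2/((X'(x_0))^2+(Y'(y_0))^2)$ lies within $\eps_n$ of $\p I$ (the latter via a Kac--Rice bound for the intensity of $\mu_f$).

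On $\cE$ the bijection is built by implicit-function-theorem string tracking combined with the Cauchy--Riemann equations for $f$, which read $\p_\rho X = (1+\rho)^{-1}\p_x Y$ and $\p_\rho Y = -(1+\rho)^{-1}\p_x X$; combined with (i) these give $\p_\rho X = Y' + O(n^{1/2}(\log n)^{O(1)})$ and $\p_\rho Y = -X' + O(n^{1/2}(\log n)^{O(1)})$ on the range $|\rho|\le Ln^{-2}$. Given $\z=(1+\rho)e^{i\theta}\in\cA_f(I)$, the lower bounds (ii) and second-derivative bounds (i) let us solve $X(\xi(t),t)=0$, $Y(\eta(t),t)=0$ on $[-Ln^{-2},Ln^{-2}]$ with $\xi(\rho)=\eta(\rho)=\theta$, keeping $|\p_x X|,|\p_x Y|\ge\tfrac12 n^{3/2}/\log\log n$ and $|\xi'|,|\eta'| = O((\log n)^{O(1)})$ along the graphs, which therefore stay in $\T_0'$. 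Put $x_0:=\xi(0)$, $y_0:=\eta(0)$: then $|x_0-\theta|,|y_0-\theta| \le n^{-2}(\log n)^{1+o(1)}$, so by (iv) $x_0,y_0\in\T_0$ and $|x_0-y_0|\le n^{-2}(\log n)^4$. For $g:=\xi-\eta$ one finds $g(0)=x_0-y_0$, $g'(0) = -(Y'(x_0)Y'(y_0)+X'(x_0)X'(y_0))/(X'(x_0)Y'(y_0))$, which has a definite sign and magnitude $\asymp(\log n)^{O(1)}$, and $|g''| = O(n(\log n)^{O(1)})$ on the interval; hence $g$ has a unique zero $t=\rho^*$ there, and a second-order Taylor expansion yields $\rho^* n^2 = S(x_0,y_0)(1+O(n^{-1}(\log n)^{O(1)})) + O(n^{-1}(\log n)^{O(1)}) = S(x_0,y_0)+o(1)$. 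Since $\z$ corresponds to a zero of $g$ in the range, $\rho=\rho^*$, so $S(x_0,y_0) = \rho n^2 + o(1)\in I$ by (iv); thus $(x_0,y_0)\in\cC_f(I)$, and we set $\Phi(\z):=(x_0,y_0)$. Conversely, given $(x_0,y_0)\in\cC_f(I)$, the lower bounds (iii) give the red and blue graphs $\xi,\eta$ through $(x_0,0),(y_0,0)$ on $[-Ln^{-2},Ln^{-2}]$; the identity $|g(0)/g'(0)|n^2 = |S(x_0,y_0)|(1+o(1)) \le M+o(1) < L-1$ (with $M=\sup_{s\in I}|s|$) forces the unique zero $\rho^*$ of $g=\xi-\eta$ into $(-Ln^{-2},Ln^{-2})$ with $\rho^* n^2 = S(x_0,y_0)+o(1)\in I$, and then $\z^*:=(1+\rho^*)e^{i\xi(\rho^*)}$ satisfies $X(\xi(\rho^*),\rho^*) = Y(\eta(\rho^*),\rho^*) = 0$ with $\xi(\rho^*)=\eta(\rho^*)$, i.e. $f(\z^*)=0$, so $\z^*\in\cA_f(I)$. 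Uniqueness of the string through a point where $|\p_x X|$ (resp. $|\p_x Y|$) is large, and uniqueness of the zero of $g$, show that $\Phi$ and $(x_0,y_0)\mapsto\z^*$ are mutually inverse; hence $\nu_f(I) = |\cA_f(I)| = |\cC_f(I)| = \mu_f(I)$ on $\cE$.

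The main obstacle is establishing $\cE$: the uniform derivative bounds are routine, but items (ii) and (iii)—that the $O(1)$ zeros of $f$ within $O(n^{-2})$ of the circle, and the clusters of zeros of $X$ and $Y$ lying within $n^{-2}(\log n)^4$ of one another, never have anomalously small derivatives or anomalously small spacing—require careful Kac--Rice computations tracking the joint Gaussian law of $(X,X',X'')$ (resp. of $f,f',f''$) with the correct variances, and in particular exploiting both the near-independence of $X$ and $Y$ away from the real axis and the qualitatively different behaviour of $f$ near $\pm1$ (entering item (iv)). A secondary technical point is the boundary effect at $\p I$: the correspondence preserves the scaled radius only up to an $o(1)$ error, so one must separately rule out, via the boundedness of the intensities of $\nu_f$ and $\mu_f$, any zero or pair landing within $\eps_n$ of $\p I$.
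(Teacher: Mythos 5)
Your proposal is correct and follows essentially the same approach as the paper: you match each zero of $f$ near the circle to a nearby zero-pair of $(\re f,\im f)$ on the circle by tracking the strings $\{X=0\}$ and $\{Y=0\}$ as the radius varies (this is precisely what the paper's Lemmas~\ref{lem:pairing1}--\ref{lem:pairing2} and \ref{lem:root-movement} do, via Taylor expansion and the Cauchy--Riemann relations $\p_\rho X \approx Y'$, $\p_\rho Y \approx -X'$), and you establish the needed non-degeneracy---derivative lower bounds at zeros, no zeros near $\pm 1$, no anomalously clustered zeros---by Kac--Rice first-moment bounds together with Salem--Zygmund/Bernstein, exactly as in the paper's Lemmas~\ref{lem:good-complex}--\ref{lem:close-roots-circle}. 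The only packaging differences are that you build a single bijection $\Phi$ on one good event where the paper constructs two separate injections $\alpha:\cA_f(I)\to\cC_f(I)$ and $\beta:\cC_f(I)\to\cA_f(I)$ and compares cardinalities, and that you explicitly insert an $\eps_n$-buffer at $\partial I$; the latter is a genuine refinement, since the paper's appeal to ``$I$ open and $n$ large'' glosses over the fact that the $o(1)$ error is uniform but $\gamma$ is random and could lie arbitrarily close to $\partial I$, and your intensity bound is the cleaner way to dispatch this.
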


The key idea behind the proof of Lemma~\ref{lem:colliding-roots}, is that every zero $\z$ of $f$ that is \emph{close} to the unit circle can be traced back to two nearby zeros \emph{on} the unit circle of $\re(f)$ and $\im(f)$, respectively. This is established in the following two sister lemmas (Lemmas~\ref{lem:pairing1}, \ref{lem:pairing2}), the proofs of which are quite similar yet different enough in a few important ways and so we have treated them separately.  We now turn to state a few standard facts that we will make heavy use of.

\begin{fact} \label{facts} 	\mbox{}
	\begin{enumerate}
		\item \textbf{Salem-Zygmund Inequality}: For $f \sim \cG_n$ there is a constant $C > 0$ so that we have $$\max_{|z| = 1}|f(z)| \leq C (n \log n)^{1/2},$$
		with high probability.
		
		\item \textbf{Bernstein's Inequality}: If $g$ is a polynomial of degree $n$ then $$\max_{|z| = 1} |g'(z)| \leq n \max_{|z| = 1} |g(z)|\,.$$
		
		\item \label{eq:fact-max3} If $g$ is a polynomial of degree $n$ then for each $\rho \geq 1$ we have $$\max_{|z| = \rho} |g(z)| \leq \rho^n \max_{|z| = 1} |g(z)|\,.$$
	\end{enumerate}
\end{fact}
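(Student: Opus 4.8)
The plan is to prove the three parts in logical order, dispatching the two deterministic statements — Bernstein's inequality (part~(2)) and the growth bound (part~(\ref{eq:fact-max3})) — first, and then deducing Salem--Zygmund (part~(1)) from Bernstein's inequality together with Gaussian concentration and a union bound. For the growth bound, I would pass to the reciprocal polynomial $h(z) := z^n g(1/z)$, which is again a polynomial of degree at most $n$. On $|z| = 1$ we have $1/z = \bar z$, so $|h(z)| = |g(\bar z)|$, and as $z$ ranges over the unit circle so does $\bar z$; hence $\max_{|z|=1}|h(z)| = \max_{|w|=1}|g(w)| =: M$. Since $|h|$ is subharmonic, the maximum modulus principle gives $|h(z)| \le M$ on the closed unit disc; writing $g(u) = u^n h(1/u)$ and taking $|u| = \rho \ge 1$, so that $|1/u| \le 1$, then yields $|g(u)| = \rho^n |h(1/u)| \le \rho^n M$, which is part~(\ref{eq:fact-max3}).

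For Bernstein's inequality it suffices to bound $|g'(e^{i\theta_0})|$ for an arbitrary point $z_0 = e^{i\theta_0}$ of the unit circle. I would choose a unimodular constant $c$ so that $w(\theta) := \re\big(c\, g(e^{i\theta})\big)$ satisfies $w'(\theta_0) = |g'(e^{i\theta_0})|$; this is possible since $w'(\theta_0) = \re\big(c\, i e^{i\theta_0} g'(e^{i\theta_0})\big)$ and $|ie^{i\theta_0}g'(e^{i\theta_0})| = |g'(e^{i\theta_0})|$. Now $w$ is a real trigonometric polynomial of degree at most $n$ with $\max_\theta |w(\theta)| \le \max_{|z|=1}|g(z)|$, so the bound $|g'(e^{i\theta_0})| = w'(\theta_0) \le n \max_\theta|w(\theta)| \le n\max_{|z|=1}|g(z)|$ follows once we know Bernstein's inequality for real trigonometric polynomials. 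The latter I would derive from Riesz's interpolation formula: for a trigonometric polynomial $t$ of degree $n$ one has $t'(\theta) = \sum_{\nu=1}^{2n} \lambda_\nu\, t(\theta + \alpha_\nu)$ with $\alpha_\nu = (2\nu-1)\pi/(2n)$ and $\lambda_\nu = (-1)^{\nu+1}/\big(4n\sin^2(\alpha_\nu/2)\big)$, where the weights satisfy $\sum_{\nu=1}^{2n}|\lambda_\nu| = n$, so taking absolute values gives $|t'(\theta)| \le n\max_\theta|t(\theta)|$.

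For Salem--Zygmund I would set $N := n^2$, $x_j := 2\pi j/N$ for $0 \le j < N$, and argue in two steps. First, by Bernstein's inequality applied to $f$, for any $x$ there is $x_j$ with $|x - x_j| \le \pi/N$ and $|f(e^{ix}) - f(e^{ix_j})| \le (\pi/N)\max_{|z|=1}|f'(z)| \le (\pi n/N)\max_{|z|=1}|f(z)| = (\pi/n)\max_{|z|=1}|f(z)|$; hence $\max_{|z|=1}|f(z)| \le 2\max_{0 \le j < N}|f(e^{ix_j})|$ for $n$ sufficiently large. Second, recalling $f(e^{ix_j}) = \sum_{k=0}^n \eps_k e^{ikx_j}$, for each fixed $j$ the real part $\sum_k \eps_k\cos(kx_j)$ and imaginary part $\sum_k\eps_k\sin(kx_j)$ are centered Gaussians of variance at most $n+1$, so $\P\big(|f(e^{ix_j})| > t\big) \le 4\exp\big(-t^2/(4(n+1))\big)$; choosing $t = C\sqrt{n\log n}$ and union bounding over the $N = n^2$ points gives $\P\big(\max_{j}|f(e^{ix_j})| > C\sqrt{n\log n}\big) = O\big(n^{2 - C^2/8}\big) \to 0$ once $C$ is a large absolute constant. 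Combining the two steps yields $\max_{|z|=1}|f(z)| \le 2C\sqrt{n\log n}$ with high probability.

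All three statements are classical and none presents a serious obstacle; the only points requiring a little care are recalling the precise form of Riesz's interpolation formula (or substituting an equivalent derivation) in part~(2), and, in part~(1), calibrating the net size — $N = n^2$ works — so that the discretization error inherited from part~(2) is negligible while the Gaussian tail still beats the union bound over the $N$ sample points. If forced to name a ``main obstacle'' it would be this last bit of bookkeeping in the Salem--Zygmund step.
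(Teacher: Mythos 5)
Your proposal is correct. Note that the paper itself offers no proof of this Fact---it is stated as a collection of classical results and used as a black box---so there is nothing to compare against; your arguments (maximum modulus applied to the reciprocal polynomial $z^n g(1/z)$ for part~(3), reduction to a real trigonometric polynomial plus the Riesz interpolation formula for Bernstein, and a discretization at $n^2$ points combined with Gaussian tails and a union bound for Salem--Zygmund) are the standard proofs, and the details you give, including the absorption of the $(\pi/n)\max|f|$ error term and the variance bound $n+1$ at each sample point, all check out.
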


\vspace{3mm}

\noindent We shall also make use of the following properties of Gaussian random variables.

\begin{fact} \label{fact:gaussian} For $\s > 0$, let $X \sim N(0,\sigma^2)$ be a centered Gaussian random variable. 
\begin{enumerate}
\item \label{item:gaussMoment} For all $k \in \N$, there exists a constant $C_k$, independent of $\s$, for which $\E |X|^k = C_k \sigma^k$.
\item \label{item:gaussCondition} If $(X,Y)$ is a bivariate Gaussian random variable and $y\in \R$  then $\Var(X \vert \, Y = y) \leq \sigma^2$.
\item \label{item:gaussSmallBall} If $U \subseteq \R$ is an open set then $\PP(X \in U ) = O(|U|/\sigma)$. 
\end{enumerate}
\end{fact}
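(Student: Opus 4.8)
The plan is to reduce each of the three claims to the corresponding statement for the standard Gaussian $Z \sim N(0,1)$ and then read it off from the explicit density $p_Z(z) = \frac{1}{\sqrt{2\pi}}e^{-z^2/2}$. Throughout we use that, since $\s > 0$, any $X \sim N(0,\s^2)$ can be written as $X = \s Z$ with $Z \sim N(0,1)$, and that $X$ has density $p_X(x) = \frac{1}{\sqrt{2\pi}\,\s}\exp\!\bigl(-x^2/(2\s^2)\bigr)$, so in particular $\|p_X\|_{\infty} = \frac{1}{\sqrt{2\pi}\,\s}$.

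For item~\eqref{item:gaussMoment}, the representation $X = \s Z$ immediately gives $\E|X|^k = \s^k\,\E|Z|^k$, so the claim holds with
\[ C_k := \E|Z|^k = \frac{1}{\sqrt{2\pi}}\int_{\R} |z|^k e^{-z^2/2}\,dz, \]
which is a finite quantity manifestly independent of $\s$ (one may evaluate $C_k = 2^{k/2}\Gamma(\tfrac{k+1}{2})/\sqrt{\pi}$ by substituting $u = z^2/2$, but only finiteness is needed). For item~\eqref{item:gaussSmallBall}, for any open — indeed any Borel — set $U \subseteq \R$ we bound
\[ \PP(X \in U) = \int_U p_X(x)\,dx \leq \|p_X\|_{\infty}\,|U| = \frac{|U|}{\sqrt{2\pi}\,\s} = O\!\left(\frac{|U|}{\s}\right), \]
which is exactly the assertion.

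For item~\eqref{item:gaussCondition}, let $\Var(X) = \s^2$. If $\Var(Y) = 0$ then $Y$ is almost surely constant, conditioning on the event $\{Y = y\}$ does not change the law of $X$, and $\Var(X \mid Y = y) = \Var(X) = \s^2$. Otherwise, the standard formula for the conditional law of a jointly Gaussian pair gives that $X \mid (Y = y)$ is again Gaussian, with variance
\[ \Var(X \mid Y = y) = \Var(X) - \frac{\Cov(X,Y)^2}{\Var(Y)} = \s^2\bigl(1 - \Corr(X,Y)^2\bigr) \leq \s^2, \]
using $|\Corr(X,Y)| \leq 1$ (Cauchy--Schwarz); note in particular that this conditional variance does not depend on the conditioning value $y$, so the bound is uniform. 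The only point demanding any care is this degenerate case $\Var(Y) = 0$ (equivalently, $(X,Y)$ supported on an affine line), handled above; beyond that the three items are routine and present no genuine obstacle.
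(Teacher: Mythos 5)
Your verification is correct: all three items follow exactly as you argue, and your handling of the degenerate case $\Var(Y)=0$ in item~\eqref{item:gaussCondition} (together with the scaling $X=\s Z$ and the density bound $\|p_X\|_\infty = 1/(\sqrt{2\pi}\,\s)$) is the standard argument. The paper states this as a Fact without proof, treating it as standard, so there is nothing further to compare.
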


Let us say that $\z \in \C$ and $(x,y) \in \T^2$ are $\delta$-close if $|\z - e^{ix}|, |\z - e^{iy}| < \delta$.  The following lemma allows us to associate a zero of $f$ to a $\delta$-close pair of zeros of $X$ and $Y$, assuming a regularity hypothesis is met.

\begin{lemma}\label{lem:pairing1}
Put $\delta := n^{-2}(\log n)^3$, let $I \subseteq \R$ be an open, bounded interval. Then there exists $n_0 = n_0(I)$ for which the following holds for all $n > n_0(I)$. Let $f$ be a polynomial with 
\begin{equation}\label{eq:f-not-too-big} \max_{x \in [0,2\pi]} |f(e^{ix})| \leq  n^{1/2}\log n.\end{equation}
If $\z \in \cA_f(I)$ with $\arg(\zeta) = \theta \in \T_0$ and 
\begin{equation} \label{eq:non-degen}|X'(\t)|,|Y'(\t)| > n^{3/2}/\log n \end{equation}
then $\z$ is $\delta$-close to a pair $(x_0,y_0) \in \cC_f(I)$
\end{lemma}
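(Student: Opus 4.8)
The plan is to set up a two-dimensional Newton/implicit-function argument in the variables $(x,\rho)$ locating a genuine zero of $f$ near $\theta$, and then to read off from it the associated zero of $X$ on the unit circle and the associated zero of $Y$. Write $\z = (1+\rho_0)e^{i\theta_0}$ with $|\rho_0| \le |I|/n^2 =: R$, so $\z \in \cA_f(I)$ means $f((1+\rho_0)e^{i\theta_0}) = 0$. First I would record the estimates that hold under hypothesis \eqref{eq:f-not-too-big}: by Fact~\ref{facts}(3) the bound $|f(e^{ix})| \le n^{1/2}\log n$ propagates to $|z|=1+\rho$ with only a factor $(1+\rho)^n = 1 + O(R n) = 1 + o(1)$ of loss on the annulus $|\rho| \le (\log n)^4/n^2$, and combining this with Bernstein's inequality (applied on each circle $|z|=1+\rho$) gives $|f'|, |f''| = O(n^{3/2}\log n)$ and $O(n^{5/2}\log n)$ respectively throughout the relevant annulus. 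In particular all the derivatives of $X(x,\rho), Y(x,\rho)$ in $x$ and $\rho$ up to second order are controlled on a box of radius $\asymp \delta$ about $(\theta, 0)$.

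Next I would use the non-degeneracy hypothesis \eqref{eq:non-degen} together with these second-derivative bounds to show that $X$ has exactly one zero $x_0$ in the interval $(\theta_0 - \delta, \theta_0 + \delta)$ and $Y$ has exactly one zero $y_0$ there, and that both lie within $O(R + \text{(something} \ll \delta))$ of $\theta_0$: indeed $|X(\theta_0)| = |X(\theta_0) - X(\theta_0,\rho_0)| = |\partial_\rho X|\cdot|\rho_0| + O(\ldots) = O(n^{3/2}\log n \cdot R)$, which is tiny compared to the slope $|X'(\theta_0)| > n^{3/2}/\log n$ times $\delta$, so a first-order Taylor expansion with the quadratic error absorbed (here is where $|X''| = O(n^{5/2}\log n)$ and $\delta = n^{-2}(\log n)^3$ are exactly balanced: $|X''|\delta^2 \ll |X'|\delta$ iff $n^{5/2}\log n \cdot n^{-4}(\log n)^6 \ll n^{3/2}(\log n)^{-1} \cdot n^{-2}(\log n)^3$, i.e. $(\log n)^7 \ll (\log n)^2$ — so in fact one needs $\delta$ a bit smaller, which is why the lemma takes $\delta = n^{-2}(\log n)^3$ while $|x-y|$ is allowed up to $n^{-2}(\log n)^4$; I would choose the constants/powers of $\log n$ so that the Newton step closes) forces a unique sign change, giving $x_0$ with $|x_0 - \theta_0| \le \delta$, and similarly $y_0$. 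Then $|\z - e^{ix_0}| \le |\z - e^{i\theta_0}| + |e^{i\theta_0} - e^{ix_0}| \le R + \delta < \delta'$ for the appropriate $\delta$, and likewise for $y_0$; since $\theta_0 \in \T_0$ and $x_0, y_0$ are within $\delta = o(n^{-1/2})$ of it, we get $x_0, y_0 \in \T_0$ as well.

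It remains to check $(x_0, y_0) \in \cC_f(I)$, i.e. the velocity condition $\tfrac{(x_0-y_0)X'(x_0)Y'(y_0)n^2}{X'(x_0)^2 + Y'(y_0)^2} \in I$ and $|x_0 - y_0| \le n^{-2}(\log n)^4$. The latter is immediate from $|x_0 - y_0| \le |x_0-\theta_0| + |\theta_0 - y_0| \le 2\delta \le n^{-2}(\log n)^4$. For the velocity condition, the point is that the quantity $\tfrac{(x_0-y_0)X'(x_0)Y'(y_0)n^2}{X'(x_0)^2+Y'(y_0)^2}$ is, to leading order, exactly $(\rho_0) n^2$ = the value in $I$: expand $0 = X(\theta_0,\rho_0) = X(x_0,0) + X'(x_0)(\theta_0 - x_0) + \partial_\rho X(x_0,0)\rho_0 + (\text{quadratic})$ and the analogous identity for $Y$ at $y_0$; using $X(x_0,0)=Y(y_0,0)=0$ and the Cauchy–Riemann relations $\partial_\rho X = \partial_x(-Y)/(1+\rho) \cdot$(chain-rule factor), i.e. $\partial_\rho X(\theta,0) = -Y'(\theta)$ and $\partial_\rho Y(\theta,0) = X'(\theta)$ up to the $(1+\rho)$ Jacobian which is $1+o(1)$, one solves the $2\times 2$ linear system for $(\theta_0 - x_0, \theta_0 - y_0, \rho_0)$ and finds $\rho_0 n^2 = \tfrac{(x_0-y_0)X'(x_0)Y'(y_0)n^2}{X'(x_0)^2 + Y'(y_0)^2}(1 + o(1)) + o(1)$, where the $o(1)$'s come from the second-order Taylor errors divided by $|X'||Y'| \gtrsim n^3/(\log n)^2$ and are genuinely negligible by the same $\delta$-versus-derivative-bound arithmetic as above. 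Since $I$ is open and the error is $o(1)$ uniformly, for $n$ large (depending on $I$, as allowed) the value lies in $I$, so $(x_0,y_0) \in \cC_f(I)$ as required.

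The main obstacle is the bookkeeping in the last step: making the Cauchy–Riemann substitution precise (the derivatives $\partial_\rho X$, $\partial_\rho Y$ are with respect to the radial, not angular, variable, so the harmonic-conjugate identity carries a $(1+\rho)$ factor and one must verify it contributes only $1+o(1)$) and tracking that every quadratic Taylor remainder is beaten by the product $|X'(x_0)Y'(y_0)| \gtrsim n^3(\log n)^{-2}$ after dividing through — in other words, confirming that the exponents of $\log n$ in $\delta$, in \eqref{eq:non-degen}, in \eqref{eq:f-not-too-big}, and in the allowed window $n^{-2}(\log n)^4$ are mutually consistent. Everything else (the one-dimensional root-counting for $X$ and $Y$, the $\delta$-closeness, membership in $\T_0$) is routine once the derivative bounds from Fact~\ref{facts} are in hand.
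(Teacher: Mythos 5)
Your approach is essentially the paper's: Taylor-expand $X(x,\rho)$, $Y(y,\rho)$ in $(x,\rho)$ about $(\theta,0)$ to second order, use the non-degeneracy hypothesis \eqref{eq:non-degen} to locate nearby zeros $x_0$ of $X$ and $y_0$ of $Y$ on the unit circle, solve the resulting $2\times 2$ linear system for $\rho_0$ in terms of $x_0 - y_0$, and then check the velocity condition by replacing $X'(\theta), Y'(\theta)$ with $X'(x_0), Y'(y_0)$ via the mean value theorem. Two small slips, neither fatal:

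First, you state the Cauchy--Riemann relations with the wrong signs. Since $\partial_x f((1+\rho)e^{ix}) = i(1+\rho)\,\partial_\rho f((1+\rho)e^{ix})$, writing $f = X + iY$ gives $X_x = -Y_\rho$ and $Y_x = X_\rho$; so $\partial_\rho X(\theta,0) = +Y'(\theta)$ and $\partial_\rho Y(\theta,0) = -X'(\theta)$, the opposite of what you wrote. (You nonetheless report the correct final formula $\rho_0 n^2 = \frac{(x_0-y_0)X'Y'n^2}{(X')^2+(Y')^2}+o(1)$, so there is an unacknowledged double negation somewhere in your scratch work; with your stated signs the algebra produces $-\rho_0$, which would land outside $I$ in general.)

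Second, the ``exactly balanced'' worry about the $\log$ powers is the result of an arithmetic slip: you dropped the $n$-powers. The correct comparison for $|X''|\delta^2 \ll |X'|\delta$ is
\[
n^{5/2}\log n \cdot n^{-4}(\log n)^6 = n^{-3/2}(\log n)^7
\quad\text{vs.}\quad
n^{3/2}(\log n)^{-1}\cdot n^{-2}(\log n)^3 = n^{-1/2}(\log n)^2,
\]
and the ratio is $n^{-1}(\log n)^5 = o(1)$, not $(\log n)^5$. So the stated $\delta = n^{-2}(\log n)^3$ already works with room to spare, and no retuning of the $\log$ exponents is needed. In fact this is exactly the bookkeeping the paper performs; the Newton step closes as written, and the choice of a slightly larger window $n^{-2}(\log n)^4$ in the definition of $\cC_f$ is a convenience rather than a necessity.

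Otherwise the plan (derivative bounds from Salem--Zygmund plus Bernstein, IVT/Newton to produce $x_0$, $y_0$, and the $2\times 2$ linear solve) matches the paper's proof line by line, and your remark that the radial $(1+\rho)$-Jacobian contributes only a $1+o(1)$ factor to the harmonic-conjugate identity is a point the paper uses implicitly; making it explicit is a small improvement in clarity.
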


\begin{proof}
We have that $\z$ is a root of $f$ where $\zeta = \rho_0e^{i\t} = (1 + \gamma/n^2)e^{i\theta}$ for some $\gamma \in I$ and $\theta \in \T_0$. We first select $x_0,y_0$ and then show that they satisfy the conclusions of Lemma~\ref{lem:pairing1}.

We express $X,Y$ using the Taylor expansion in variables $\t,x$, at $x = \t, \rho = 0 $,
\begin{equation}\label{eq:exp-X}  X(x,\rho) = X(\t) + X'(\t)(x-\t) + Y'(\t)\rho + O(n^{-3/2} (\log n)^3 ), \end{equation}
\begin{equation}\label{eq:exp-Y}   Y(y,\rho) = Y(\t) + Y'(\t)(y-\t) - X'(\t)\rho + O(n^{-3/2} (\log n)^3 ),  \end{equation}
where the error bound holds when $|x-\t|, |y-\t|, \rho = O(n^{-2}\log n)$. One can see this last point by bounding the second derivatives of $X$,$Y$:
\[ |X_{xx}(x,\rho)|,|Y_{xx}(x,\rho)| \leq \sup_{x \in [0,2\pi]} |f^{(2)}((1+\rho)e^{ix})| \leq 2n^2\sup_{x \in [0,2\pi]} |f(e^{ix})|  = O(n^{5/2}\log n).\]
Here we have used Bernstein's inequality along with \eqref{eq:fact-max3} in Fact~\ref{facts} and our assumption \eqref{eq:f-not-too-big}.
   
Now, since $\z \in Z(f)$, we have $X(\t,\rho_0) = Y(\t,\rho_0) = 0$ which we write out as 
\[  0 = X(\t,\rho_0) = X(\t) + Y'(\t)\rho_0 + O(n^{-3/2} (\log n)^3 );\]
\[  0 = Y(\t,\rho_0) = Y(\t) - X'(\t)\rho_0 + O(n^{-3/2} (\log n)^3 ). \]
We then choose $x_0,y_0$, so that $X'(\t)(x_0-\t) = Y'(\t)\rho_0$ and $Y'(\t)(y_0 -\t) = -X'(\t)\rho_0$,
up to lower-order terms. That is, there exist $x_0,y_0$ so that $X(x_0) = Y(y_0) = 0$ and 
\begin{equation} \label{eq:x-t} x_0 = \theta + \frac{\gamma}{n^2} \frac{Y'(\t)}{ X'(\t)} + O(n^{-3}(\log n)^4) ;  \end{equation} 
\begin{equation} \label{eq:y-t} y_0 = \theta - \frac{\gamma}{n^2}\frac{X'(\t)}{Y'(\t)} + O(n^{-3}(\log n)^4).\end{equation}
We now check that this choice of $x_0,y_0$ satisfies the conclusion of Lemma~\ref{lem:pairing1}.
Using \eqref{eq:non-degen}, note that this choice of $(x_0,y_0)$ is $\delta$-close to $\z$, as desired. 
Rearranging \eqref{eq:x-t} and \eqref{eq:y-t}, we may write 
\begin{equation}\label{eq:gam} \gamma = \frac{n^2(x_0 - y_0)X'(\t)Y'(\t) }{(X'(\t))^2 + (Y'(\t))^2} + o(1).\end{equation}
We now need to replace $X'(\t),Y'(\t)$, in \eqref{eq:gam}, with $X'(x_0),Y'(y_0)$ to fit the definition of $\cC_f(S)$. We achieve 
this with a easy application of the mean value theorem. Indeed, since $x_{0}$ is within $O(n^{-2}(\log n)^2)$ of $\t$ there exists a 
$\xi$ with $|\t-\xi| = O(n^{-2}(\log n)^2)$ so that 
\[ X'(x_0) = X'(\t) + X^{(2)}(\t)(\t-\xi). \]
Since $|X'(\t)|>n^{3/2-o(1)}$, by \eqref{eq:non-degen} and $|X^{(2)}(\t)| < n^{5/2+o(1)}$ (again using \eqref{eq:f-not-too-big} and Bernstein's inequality) we see $X'(x_0) = X'(\t)(1+n^{-1 +o(1)})$. Applying the same to $Y$ yields
\[  \frac{n^2(x_0 - y_0)X'(\t)Y'(\t) }{(X'(\t))^2 + (Y'(\t))^2} = \gamma + o(1), \]
and therefore the left-hand-side is in $I$ for sufficiently large $n$, since $I$ is an open interval and $\gamma \in I$.
From \eqref{eq:x-t}, \eqref{eq:y-t} and condition \eqref{eq:non-degen} we see that $|x_0 - y_0| = O(n^{-2}(\log n)^2)$.  Therefore $(x_0,y_0) \in \cC_f(I)$, for large enough $n$. \end{proof}

\vspace{4mm}

\noindent In a way very similar to the proof of Lemma~\ref{lem:pairing1}, we may track how the roots move as $\rho$ changes.
\begin{lemma}\label{lem:root-movement}
Let $f$ be a polynomial satisfying 
\[ \max_{x \in [0,2\pi]} |f(e^{ix})| \leq n^{1/2}\log n,\] and let
$x_0,y_0 \in \T$ be such that $X(x_0) = 0 $ and $Y(y_0) = 0$, and 
\[ |X'(x_0)|,|Y'(x_0)|, |X'(y_0)|,|Y'(y_0)| > n^{3/2}/\log n. \]
Then, for each $|\rho| \leq n^{-2}\log n$ there exists $x_\rho, y_{\rho} \in \T$ so that 
\[ X(x_{\rho},\rho) = 0, \qquad Y(y_{\rho},\rho) = 0, \]
where 
\[ x_{\rho} = x_0 - \rho \frac{Y'(x_0)}{X'(x_0)} + O\left(n^{-3}(\log n)^4\right), \]  
\[ y_{\rho} = y_0 + \rho \frac{X'(y_0)}{Y'(y_0)} + O\left( n^{-3}(\log n)^4 \right). \]
\end{lemma}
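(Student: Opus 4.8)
The plan is to mimic the proof of Lemma~\ref{lem:pairing1} almost verbatim, the only difference being that we now fix the angle $\rho$ as a parameter and perturb only in the angular variable, rather than solving for both $\rho$ and the angle simultaneously. First I would write down the first-order Taylor expansions of $X(\cdot,\rho)$ and $Y(\cdot,\rho)$ around the points $x_0$ and $y_0$ respectively, at radius $\rho=0$, exactly as in \eqref{eq:exp-X} and \eqref{eq:exp-Y}:
\[ X(x,\rho) = X(x_0) + X'(x_0)(x-x_0) + Y'(x_0)\rho + O(n^{-3/2}(\log n)^3), \]
\[ Y(y,\rho) = Y(y_0) + Y'(y_0)(y-y_0) - X'(y_0)\rho + O(n^{-3/2}(\log n)^3), \]
valid whenever $|x-x_0|,|y-y_0|,|\rho| = O(n^{-2}\log n)$; the error bound follows, just as before, from bounding the second derivatives $|X_{xx}|,|Y_{xx}| = O(n^{5/2}\log n)$ via Bernstein's inequality together with \eqref{eq:fact-max3} in Fact~\ref{facts} and the hypothesis on $\max|f(e^{ix})|$. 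Note that here the ``$Y'\rho$'' term appears in the expansion of $X$ and the ``$-X'\rho$'' in the expansion of $Y$ because $\partial_\rho X = Y'$ and $\partial_\rho Y = -X'$ at $\rho=0$ on the unit circle, which is precisely the Cauchy–Riemann relation already exploited in Lemma~\ref{lem:pairing1}.

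Next, since $X(x_0) = Y(y_0) = 0$, I would solve the two scalar equations $X(x_\rho,\rho) = 0$ and $Y(y_\rho,\rho) = 0$ for $x_\rho,y_\rho$ by the intermediate value theorem (or a one-variable implicit function / contraction argument), using that $|X'(x_0)|, |Y'(y_0)| > n^{3/2}/\log n$ so that the linear coefficient does not vanish and dominates the quadratic correction on the relevant scale. This gives
\[ x_\rho = x_0 - \rho\frac{Y'(x_0)}{X'(x_0)} + O(n^{-3}(\log n)^4), \qquad y_\rho = y_0 + \rho\frac{X'(y_0)}{Y'(y_0)} + O(n^{-3}(\log n)^4), \]
where the error term is obtained exactly as in the derivation of \eqref{eq:x-t} and \eqref{eq:y-t}: the $O(n^{-3/2}(\log n)^3)$ additive error in the Taylor expansion, divided by the derivative lower bound $n^{3/2}/\log n$, produces the stated $O(n^{-3}(\log n)^4)$. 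One should also check that $|x_\rho - x_0|, |y_\rho - y_0| = O(n^{-2}(\log n)^2)$, so that the perturbed points remain in the region where the Taylor expansion is valid, which is consistent since $|\rho| \le n^{-2}\log n$ and the ratios of derivatives are $1 + n^{-1+o(1)}$ by the same mean-value-theorem argument used at the end of Lemma~\ref{lem:pairing1}.

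I do not expect any serious obstacle here: this lemma is strictly easier than Lemma~\ref{lem:pairing1}, since fixing $\rho$ decouples the system into two independent one-variable root-tracking problems rather than a coupled two-variable one. The only mild point of care is verifying that the extra derivative hypotheses — we now assume bounds on all four of $|X'(x_0)|, |Y'(x_0)|, |X'(y_0)|, |Y'(y_0)|$ rather than just two — are actually used: $X'(x_0)$ and $X'(y_0)$ control the $x$-equation and $Y'(y_0)$ the $y$-equation, while $Y'(x_0)$ appears in the numerator of the first-order displacement and thus needs no lower bound, only the trivial upper bound $|Y'(x_0)| = O(n^{3/2+o(1)})$ from Bernstein to keep the displacement $O(n^{-2}(\log n)^2)$. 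With those bookkeeping checks in place the proof is a routine transcription of the argument already given for Lemma~\ref{lem:pairing1}, so I would keep it short and refer back to that proof for the estimates that are identical.
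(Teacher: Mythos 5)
Your proof is correct and is exactly what the paper intends: the paper gives no separate proof of Lemma~\ref{lem:root-movement}, stating only that it follows ``in a way very similar to the proof of Lemma~\ref{lem:pairing1},'' and your Taylor-expansion-plus-root-tracking argument is precisely that routine adaptation. One cosmetic slip in your last paragraph: it is $X'(x_0)$ (denominator) together with $Y'(x_0)$ (numerator) that govern the $x$-equation, and $Y'(y_0)$ (denominator) together with $X'(y_0)$ (numerator) that govern the $y$-equation, rather than the pairing you wrote --- though you identify the correct roles in the very next clause, so this does not affect the argument.
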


Note crucially,  that if the $x_0,y_0$ are very close to each other then $Y'(x_0)/X'(x_0) \approx Y'(y_0)/X'(y_0)$
and thus Lemma~\ref{lem:root-movement} tells us that as we increase $\rho$ the roots $x_\rho,y_{\rho}$ are moving in opposite directions on the circle.  This is, in fact, a direct consequence of the Cauchy-Riemann equations.  The next lemma associates a nearby complex root of $f$ to a pair of roots of $X,Y$ on the unit circle.

\begin{lemma}\label{lem:pairing2}
Put $\delta = n^{-2}(\log n)^3$, let $I \subseteq \R$ be an open interval. Then there exists $n_0 = n_0(I)$ for which the following holds for all $n > n_0(I)$. Let $f$ be a polynomial with 
\[ \max_{x \in [0,2\pi]} |f(e^{ix})| \leq n^{1/2}\log n.\]
If $(x_0,y_0) \in \cC_f(I)$ with
\[ |X'(x_0)|,|Y'(x_0)|, |X'(y_0)|,|Y'(y_0)| > n^{3/2}/\log n, \]
then there exists $\z \in \cA_f(I)$ which is $\delta$-close to $(x_0,y_0)$. 
\end{lemma}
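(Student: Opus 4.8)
The plan is to start from the pair $(x_0,y_0)$ and let the radius grow: by Lemma~\ref{lem:root-movement} the zero of $X(\cdot,\rho)$ near $x_0$ and the zero of $Y(\cdot,\rho)$ near $y_0$ drift in \emph{opposite} directions along the circle as $\rho$ varies, so they must collide at some radius $\rho^*$, and at that radius the common location $\theta^*$ yields a zero $\z=(1+\rho^*)e^{i\theta^*}$ of $f$. The hypotheses of Lemma~\ref{lem:pairing2} are exactly those needed to apply Lemma~\ref{lem:root-movement} to $(x_0,y_0)$, so for $|\rho|\le n^{-2}\log n$ we obtain $x_\rho,y_\rho$, which we may take to depend continuously on $\rho$ (being the unique zeros of $X(\cdot,\rho),Y(\cdot,\rho)$ in small neighbourhoods of $x_0,y_0$), with $X(x_\rho,\rho)=Y(y_\rho,\rho)=0$ and
\[ x_\rho=x_0-\rho\,\frac{Y'(x_0)}{X'(x_0)}+O(n^{-3}(\log n)^4),\qquad y_\rho=y_0+\rho\,\frac{X'(y_0)}{Y'(y_0)}+O(n^{-3}(\log n)^4).\]
Thus $h(\rho):=x_\rho-y_\rho$ satisfies $h(\rho)=(x_0-y_0)-c\rho+O(n^{-3}(\log n)^4)$ with $c:=\frac{Y'(x_0)}{X'(x_0)}+\frac{X'(y_0)}{Y'(y_0)}$. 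Since $(x_0,y_0)\in\cC_f(I)$ forces $|x_0-y_0|\le n^{-2}(\log n)^4$, Bernstein's inequality (Fact~\ref{facts}) together with the hypothesis on $\max|f|$ lets us run the mean-value-theorem argument from the proof of Lemma~\ref{lem:pairing1}, replacing $X'(y_0),Y'(x_0)$ by $X'(x_0),Y'(y_0)$ at the cost of $(1+n^{-1+o(1)})$ factors. This produces two facts we will use: writing $t:=Y'(x_0)/X'(x_0)$ one has $c=t+t^{-1}+o(1)$, so $|c|\ge 2-o(1)$; and
\[ \frac{x_0-y_0}{c}=\frac{(x_0-y_0)X'(x_0)Y'(y_0)}{(X'(x_0))^2+(Y'(y_0))^2}\,(1+o(1))=\frac{\gamma^*}{n^2}+o(n^{-2}),\]
where $\gamma^*\in I$ is the quantity certifying $(x_0,y_0)\in\cC_f(I)$; in particular $(x_0-y_0)/c=O(n^{-2})$ because $I$ is bounded.

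To produce the collision radius, let $E$ be the implied constant times $n^{-3}(\log n)^4$ in the error term for $h$, and set $\rho_1:=(x_0-y_0+3E)/c$ and $\rho_2:=(x_0-y_0-3E)/c$. By the last displayed estimate both $\rho_i$ are $O(n^{-2})$, hence lie in the range $|\rho|\le n^{-2}\log n$ where Lemma~\ref{lem:root-movement} is valid, and $h(\rho_1)\le -2E<0$, $h(\rho_2)\ge 2E>0$; by continuity there is a $\rho^*$ between them with $h(\rho^*)=0$, so $\rho^*=(x_0-y_0)/c+O(E)=O(n^{-2})$, and $x_{\rho^*}=y_{\rho^*}$ holds as real numbers in a small neighbourhood of $x_0$ (not merely modulo $2\pi$, as both lie within $O(n^{-2}(\log n)^2)$ of $x_0\approx y_0$). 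Writing $\theta^*:=x_{\rho^*}=y_{\rho^*}$ gives
\[ f\big((1+\rho^*)e^{i\theta^*}\big)=X(\theta^*,\rho^*)+i\,Y(\theta^*,\rho^*)=0,\]
so $\z:=(1+\rho^*)e^{i\theta^*}$ is a zero of $f$ with $\im\,\z\ge 0$ (since $\theta^*\in(0,\pi)$ for $n$ large). Moreover $(|\z|-1)n^2=\rho^*n^2=\gamma^*+o(1)\in I$ for all large $n$ because $I$ is open, so $\z\in\cA_f(I)$; and plugging $\rho^*=O(n^{-2})$ into the formulas for $x_\rho,y_\rho$ and using the nondegeneracy hypotheses gives $|x_{\rho^*}-x_0|,|y_{\rho^*}-y_0|=O(n^{-2}(\log n)^2)$, whence, together with $\big||\z|-1\big|=\rho^*=O(n^{-2})$ and $\theta^*=x_{\rho^*}=y_{\rho^*}$, we obtain $|\z-e^{ix_0}|,|\z-e^{iy_0}|=O(n^{-2}(\log n)^2)<\delta=n^{-2}(\log n)^3$ for $n$ large; that is, $\z$ is $\delta$-close to $(x_0,y_0)$.

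The heart of the argument — and the only place where real care is needed — is the passage from ``opposite directions'' to an actual collision: one must simultaneously control the slope $c$ of $h$ (it must be bounded away from $0$, which follows from $c\approx t+t^{-1}$) and check that all the relevant values of $\rho$ (the evaluation points $\rho_1,\rho_2$ \emph{and} the collision radius $\rho^*$ itself) stay inside the window $|\rho|\le n^{-2}\log n$ on which Lemma~\ref{lem:root-movement} operates. The latter hinges on the cancellation exhibited in the second displayed identity: although $|x_0-y_0|$ and $|c|$ may each carry a factor of $(\log n)^2$, their ratio is $\Theta(\gamma^*/n^2)=O(n^{-2})$ since $\gamma^*\in I$ is bounded. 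Both points are handled by the same derivative-replacement computation (mean value theorem plus Bernstein) already used in the proof of Lemma~\ref{lem:pairing1}, and once $\rho^*$ has been produced the verification that $\z\in\cA_f(I)$ and that $\z$ is $\delta$-close to $(x_0,y_0)$ is routine bookkeeping, exactly parallel to that lemma.
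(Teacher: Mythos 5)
Your proposal is correct and follows the same route as the paper: apply Lemma~\ref{lem:root-movement} to produce $x_\rho,y_\rho$, compare the slopes $Y'(x_0)/X'(x_0)$ and $X'(y_0)/Y'(y_0)$ via mean value theorem plus Bernstein, observe the two roots travel in opposite directions, and deduce a collision that yields $\z\in\cA_f(I)$. You go further than the paper in making the collision step rigorous: you introduce $h(\rho)=x_\rho-y_\rho$, prove the slope $c=t+t^{-1}+o(1)$ is bounded away from zero, verify that $\rho_1,\rho_2,\rho^*$ all lie inside the window $|\rho|\le n^{-2}\log n$ via the cancellation $(x_0-y_0)/c=\gamma^*/n^2+o(n^{-2})$, and invoke the intermediate value theorem explicitly — all of which the paper treats only informally (``traveling in opposite directions... therefore we must have $x_{\rho_0}=y_{\rho_0}$''). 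The one small gap you share with the paper is continuity of $\rho\mapsto x_\rho$ and $\rho\mapsto y_\rho$, which you assert parenthetically; a short implicit-function-theorem argument using the nondegeneracy hypotheses would close it.
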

\begin{proof}
We apply Lemma~\ref{lem:root-movement}, to see that $X(x_{\rho},\rho) = Y(x_{\rho},\rho) = 0 $ where
\begin{equation}\label{eq:xrho} x_{\rho} = x_0 - \rho \frac{Y'(x_0)}{X'(x_0)} + O(n^{-3}(\log n)^4), \end{equation} 
\begin{equation}\label{eq:yrho} y_{\rho} = y_0 + \rho \frac{X'(y_0)}{Y'(y_0)} + O(n^{-3}(\log n)^4 ). \end{equation}
To compare these two ratios, we apply the mean value theorem and use that $|X'(y_0)| > n^{3/2}/\log n$ and Bernstein's inequality to see that
for some $\xi$ with $|\xi -y_0| \leq |y_0 -x_0|$, we have
\[ X'(x_0) = X'(y_0) + X^{(2)}(y_0)(\xi-y_0) = (1+o(n^{-1/4}))X'(y_0),\] 
and likewise for $Y'(x_0)$. As a result, we have that 
\[ X'(y_0)/Y'(y_0) = (1+o(1))X'(x_0)/Y'(x_0).\]
Using this along with \eqref{eq:xrho}, \eqref{eq:yrho} and 
\[ (\log n)^2 > |X'(x_0)|/|Y'(x_0)| > 1/(\log n)^2 \]
we see that $x_{\rho},y_{\rho}$ are (up to lower order terms) traveling in opposite directions on the circle, and therefore we must have $x_{\rho_0} = y_{\rho_0}$ for some $\rho_0 = \gamma/n^2$ where
\[ \gamma = \frac{n^2(x - y)X'(x_0)Y'(y_0) }{(X'(x_0))^2 + (Y'(y_0))^2} + o(1). \]
Thus $\z := (1+\gamma/n^2)e^{i\t} \in \cA_f(I)$, for sufficiently large $n$, since $I$ is an open interval. Finally, we note that $(x_0,y_0)$ is $\delta = n^{-2}(\log n)^3$ close to $\z$, for large enough $n$. 
%This is because $|x_0-y_0| = O(n^{-2}\log n)$, $|X'(x_0) Y'(y_0)| = O(n^{3} (\log n )^2)$ and $X'(x_0)^2 + Y'(y_0)^2 = \Omega(n^{3} (\log n)^{-4})$ .
\end{proof}

\subsection{Dealing with pathological points}
Lemmas~\ref{lem:pairing1} and \ref{lem:pairing2} showed that we could pair each zero $\z$ of $f$ with a nearby root-pair of $X,Y$ (and vice versa), \emph{provided} our function was not doing something atypical around our zero. Here we record a few lemmas that say these atypical behaviors will
not be a problem for us. We postpone the fairly straightforward proofs of these lemmas to Appendix~\ref{sec:details}, so we don't distract from the main trajectory of our proof.  

\begin{lemma}\label{lem:good-complex} Let $f \sim \cG_n$.
	All $\zeta \in Z(f)$ with $||\zeta| - 1| \leq n^{-2}(\log n )^{1/4} $ satisfy 
	\[ |X'(\arg(\z))|,|Y'(\arg(\z))| > n^{3/2}/\log n,\] with high probability. 
\end{lemma}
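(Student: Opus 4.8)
The plan is to fix a root $\zeta \in Z(f)$ with $||\zeta|-1| \leq n^{-2}(\log n)^{1/4}$ and argue that the event $|X'(\arg\zeta)| \leq n^{3/2}/\log n$ is very unlikely, then union-bound over a net of candidate angles. Write $\theta = \arg(\zeta)$. The starting observation is that since $f(\zeta) = 0$ and $\zeta$ is within $n^{-2}(\log n)^{1/4}$ of the unit circle, a first-order Taylor expansion of $f$ along the radial direction (exactly as in the proof of Lemma~\ref{lem:pairing1}, using Bernstein's inequality, Fact~\ref{facts}\eqref{eq:fact-max3}, and the Salem--Zygmund bound $\max_{|z|=1}|f(z)| = O((n\log n)^{1/2})$ which holds with high probability) gives $X(\theta) = O(n^{-3/2+o(1)})$ and $Y(\theta) = O(n^{-3/2+o(1)})$; that is, $\theta$ is very nearly a common zero of $X$ and $Y$ on the circle. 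So it suffices to show that with high probability there is \emph{no} point $\theta \in \T_0$ at which $X(\theta), Y(\theta)$ are both $O(n^{-3/2+o(1)})$ and simultaneously $|X'(\theta)|$ (or $|Y'(\theta)|$) is at most $n^{3/2}/\log n$.

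The core of the argument is a small-ball estimate on a fine net. Let $N \subseteq \T_0$ be a $\delta_0$-net with $\delta_0 = n^{-3}$, so $|N| = O(n^3)$; by Bernstein's inequality and the Salem--Zygmund bound, controlling $X, X', Y$ on $N$ controls them on all of $\T_0$ up to negligible error (the second derivatives are $O(n^{5/2+o(1)})$, so moving by $\delta_0$ changes $X, X', Y$ by $O(n^{-1/2+o(1)})$, which is absorbed into the $n^{-3/2+o(1)}$ and $n^{3/2}/\log n$ thresholds with room to spare). For a fixed $\theta \in N$, the vector $(X(\theta), Y(\theta), X'(\theta))$ is a centered Gaussian vector in $\R^3$; I would compute its covariance matrix and show it is nondegenerate with the relevant eigenvalues (equivalently conditional variances) of the right order — namely $\Var(X(\theta)) \asymp n$, $\Var(Y(\theta)) \asymp n$, $\Var(X'(\theta)) \asymp n^3$, with the cross-correlations bounded away from $\pm 1$ for $\theta \in \T_0$ (this is exactly where $d(\theta, \{0,\pi\}) > n^{-1/2}$ is used, to keep $X,Y,X'$ from degenerating near $\pm 1$). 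Then, using Fact~\ref{fact:gaussian}\eqref{item:gaussSmallBall} together with Fact~\ref{fact:gaussian}\eqref{item:gaussCondition} to condition successively, the probability that $|X(\theta)| \leq n^{-3/2+o(1)}$, $|Y(\theta)| \leq n^{-3/2+o(1)}$, and $|X'(\theta)| \leq n^{3/2}/\log n$ all hold is at most
\[
O\!\left( \frac{n^{-3/2+o(1)}}{n^{1/2}} \right) \cdot O\!\left( \frac{n^{-3/2+o(1)}}{n^{1/2}} \right) \cdot O\!\left( \frac{n^{3/2}/\log n}{n^{3/2}} \right) = O\!\left( n^{-4+o(1)} / \log n \right).
\]
Multiplying by $|N| = O(n^3)$ gives a bound of $O(n^{-1+o(1)}/\log n) = o(1)$ for the probability that any net point is bad, and the same for the $Y'$ alternative by symmetry. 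Transferring back from the net to all of $\T_0$ and combining with the high-probability Salem--Zygmund event finishes the proof.

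The main obstacle I anticipate is the covariance computation and, in particular, verifying the nondegeneracy claims uniformly for $\theta \in \T_0$: one needs that the conditional variance of $X'(\theta)$ given $(X(\theta), Y(\theta))$ is still $\Theta(n^3)$ (so that the third factor above is genuinely $O(n^{3/2}/\log n \cdot n^{-3/2})$ rather than something larger), and that the conditional variance of $Y(\theta)$ given $X(\theta)$ is $\Theta(n)$. These are explicit trigonometric-sum computations — $\sum_{j\le n}\cos^2(j\theta)$, $\sum_{j\le n} j^2\cos^2(j\theta)$, $\sum_{j\le n} j\cos(j\theta)\sin(j\theta)$, etc. — which concentrate around their averages $n/2$, $n^3/6$, $0$ respectively precisely when $\theta$ is bounded away from $0$ and $\pi$ by $n^{-1/2}$; near $\pm 1$ these sums can behave very differently, which is the reason for restricting to $\T_0$. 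Since the lemma is relegated to the appendix, I would carry out these estimates there in full, but the structure above is the whole idea.
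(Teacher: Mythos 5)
Your overall plan---fix a root near the circle, observe that $X$ and $Y$ must be small at $\arg(\zeta)$, and then rule out the coincidence of $X$, $Y$ and $X'$ all being small simultaneously---is a reasonable instinct, but there is a quantitative error at the first step that the rest of the argument cannot absorb. From \eqref{eq:exp-X} in the proof of Lemma~\ref{lem:pairing1}, setting $x = \theta$ and using $X(\theta,\rho_0)=0$, one gets
\[ X(\theta) = -Y'(\theta)\rho_0 + O\bigl(n^{-3/2}(\log n)^3\bigr), \]
and likewise $Y(\theta) = X'(\theta)\rho_0 + O(n^{-3/2}(\log n)^3)$. You appear to have kept only the error term: the first-order term $Y'(\theta)\rho_0$ is of size up to $n^{3/2}(\log n)^{1/2}\cdot n^{-2}(\log n)^{1/4} = n^{-1/2+o(1)}$, which dominates. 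So the correct statement is $|X(\theta)|,|Y(\theta)| = O(n^{-1/2+o(1)})$, not $O(n^{-3/2+o(1)})$.

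This loses a factor of $n$ in each of the first two small-ball probabilities, turning your per-point bound from $O(n^{-4+o(1)}/\log n)$ into $O(n^{-2+o(1)}/\log n)$, and then multiplying by $|N|\asymp n^3$ gives $O(n^{1+o(1)}/\log n)$, which diverges. Even the best tuning does not rescue it: the finest useful mesh is governed by how fast $X,Y$ can move, namely $\delta_0 \lesssim n^{-2}\,\mathrm{polylog}(n)$, giving $|N| \approx n^2\,\mathrm{polylog}(n)$; using the improved bound $|Y(\theta)| \lesssim n^{-1/2}(\log n)^{-3/4}$ that comes from assuming $|X'(\theta)|\le n^{3/2}/\log n$, the per-point probability is $O(n^{-2}(\log n)^{-1})$, and the product still diverges like $(\log n)^{1/4}$. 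The margin in the statement---only a $(\log n)^{1/4}$ slack in the annulus width---is simply too thin to pay for the inefficiency of a union bound over a discrete net. The paper sidesteps this by computing the \emph{expected number} of ``nearly bad'' zeros directly with the complex Kac--Rice formula (Lemma~\ref{lem:complex-KR}) applied to $g(z)=zf(z)$: the density integral over the thin annulus, combined with a Cauchy--Schwarz split of the conditional expectation, yields $O((\log n)^{-1/4})=o(1)$, and then a mean-value-theorem step transfers from $\re(\zeta f'(\zeta)), \im(\zeta f'(\zeta))$ small to $X'(\arg\zeta),Y'(\arg\zeta)$ small. That Kac--Rice approach exploits the continuum structure and saves precisely the $\mathrm{polylog}$ factors that kill the net argument, so I would recommend you switch to that framework here; the covariance and nondegeneracy estimates you outline (Lemma~\ref{lem:early-approx}, the use of $\T_0$ to stay away from the degenerate behavior near $\pm 1$) remain exactly the right ingredients.
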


\begin{lemma}\label{lem:circle-good}
	Let $f \sim \cG_n$ and let $I$ be a finite interval. Then all $(x_0,y_0) \in \mathcal{C}_f(I)$ satisfy
	$$
	|X'(x_0)|, |Y'(x_0)|, |X'(y_0)|, |Y'(y_0)| > n^{3/2} / \log n, 
	$$ with high probability.
\end{lemma}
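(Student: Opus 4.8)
The plan is a first-moment estimate. Fix the interval $I$ and put $T := n^{3/2}/\log n$. Let $N$ be the number of pairs $(x_0,y_0) \in \mathcal{C}_f(I)$ with $\min\{|X'(x_0)|,|Y'(x_0)|,|X'(y_0)|,|Y'(y_0)|\}\le T$; since $\P(N\ge 1)\le \E N$, it suffices to prove $\E N = o(1)$. I would split $N$ into four counts, one for each derivative that could be small; by the symmetry of the estimates below under swapping $X\leftrightarrow Y$ and $x_0\leftrightarrow y_0$, it is enough to bound $\E N_1$, where $N_1$ counts pairs with $|X'(x_0)|\le T$. Throughout, recall that every pair in $\mathcal{C}_f(I)$ lies in $\T_0^2$, has $|x_0 - y_0|\le \delta' := n^{-2}(\log n)^4$, and satisfies $V(x_0,y_0)\in I$, where $V(x,y):= (x-y)X'(x)Y'(y)n^2/(X'(x)^2+Y'(y)^2)$.

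The main tool is the Kac--Rice formula for the two Gaussian processes $X$ and $Y$, which gives
\[
\E N_1 = \int_{\T_0}\int_{|x-y|\le\delta'} \E\!\left[\,|X'(x)||Y'(y)|\,\one\{|X'(x)|\le T\}\,\one\{V(x,y)\in I\}\ \middle|\ X(x)=Y(y)=0\right] p_{X(x),Y(y)}(0,0)\, dy\, dx,
\]
with $x,y$ understood to range over $\T_0$. I would estimate this with three inputs. First, $p_{X(x),Y(y)}(0,0)=O(1/n)$ on $\T_0$: here $\Var X(x),\Var Y(y)=\tfrac n2+O(n^{1/2})$ and $\Cov(X(x),Y(y))=O(n^{1/2}+n^2|x-y|)=o(n)$, so the covariance determinant is $\asymp n^2$. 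Second, conditional on $X(x)=Y(y)=0$, each of $X'(x)$, $Y'(x)$, $X'(y)$, $Y'(y)$ is a centred Gaussian of variance $\asymp n^3$; with Fact~\ref{fact:gaussian}\eqref{item:gaussSmallBall} this gives $\P(|X'(x)|\le T\mid X(x)=Y(y)=0)=O(T/n^{3/2})$, while Fact~\ref{fact:gaussian}\eqref{item:gaussCondition} and \eqref{item:gaussMoment} give $\E[|Y'(y)|^k\mid X(x)=Y(y)=0]=O(n^{3k/2})$ for fixed $k$. Third, for fixed $x$ the constraint $V(x,y)\in I$ confines $y$ to an interval of length $O\big(|I|(X'(x)^2+Y'(x)^2)/(|X'(x)||Y'(x)|n^2)\big)$, on which $|Y'(y)|$ and $p_{X(x),Y(y)}(0,0)$ are essentially constant. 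Performing the $y$-integral with this bound, then inserting the second input (in particular $X'(x)^2\le T^2$ on the relevant event and the Gaussian computation $\E[Y'(y)^2\one\{|X'(x)|\le T\}\mid\cdot]=O(Tn^{3/2})$), a routine computation yields $\E N_1 = O(1/\log n)$, which is $o(1)$. The $\delta'$-cutoff is used only to absorb the negligible contribution of the rare pairs in which $|Y'(y)|$ is atypically small (where the $V$-interval is long), and the Salem--Zygmund and Bernstein bounds of Fact~\ref{facts} are not needed, as the above estimates hold unconditionally.

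The main obstacle is the lower bound $\asymp n^3$ on the four conditional variances. Conditioning on $X(x)=Y(y)=0$ removes from $\Var X'(x)$ the squared projection of the coefficient vector $(-j\sin jx)_j$ onto $\mathrm{span}\{(\cos jx)_j,(\sin jy)_j\}$; these two vectors are nearly orthogonal (inner product $o(n)$, norms $\asymp n^{1/2}$), so the amount removed is $\approx \Cov(X'(x),X(x))^2/\Var X(x)+\Cov(X'(x),Y(y))^2/\Var Y(y)$. By Abel summation together with $d(x,\{0,\pi\})>n^{-1/2}$ one has $\Cov(X'(x),X(x))=-\tfrac12\sum_j j\sin 2jx=O(n/|\sin x|)=O(n^{3/2})$, so the first term is $O(n^2)$; and $\Cov(X'(x),Y(y))=-\sum_j j\sin jx\sin jy=-\tfrac{n^2}{4}+o(n^2)$ (using $|x-y|\le\delta'$) with $\Var Y(y)=\tfrac n2+o(n)$, so the second term is $\tfrac{n^3}{8}(1+o(1))$. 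Since $\Var X'(x)=\sum_j j^2\sin^2 jx=\tfrac{n^3}{6}+o(n^3)$ --- the lower bound following, e.g., from $\sum_{n/2\le j\le n} j^2\sin^2 jx\ge (n/2)^2\sum_{n/2\le j\le n}\sin^2 jx\gtrsim n^3$, again using $\sum_j\cos 2jx=O(n^{1/2})$ on $\T_0$ --- and $\tfrac16>\tfrac18$, the conditional variance is $\gtrsim n^3$. The analogous trigonometric-sum estimates (with $\cos$ and $\sin$ interchanged as appropriate) handle $\Var Y'(x)$, $\Var X'(y)$, $\Var Y'(y)$, each retaining a positive fraction $\asymp n^3$. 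This is precisely where the hypothesis that the points lie at distance $>n^{-1/2}$ from $\{0,\pi\}$ enters: all four of these variances degenerate near $0$ and $\pi$ (for instance $\Var Y(0)=0$).
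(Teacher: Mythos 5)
Your approach is the right one -- a first-moment bound via Kac--Rice, relying on the key fact that the relevant conditional variances are $\asymp n^3$ so that the small-ball probability for each derivative is $O(1/\log n)$. For the derivatives $X'(x_0)$ and $Y'(y_0)$, which appear in the Kac--Rice density, your estimate is essentially the paper's (and in fact the paper's ``bad pair'' computation already uses the conditional covariance $\eqref{eq:early-approx2}$ that you recompute by hand at the end). Where you genuinely diverge is in how $|Y'(x_0)|$ and $|X'(y_0)|$ are handled: the paper first establishes $|X'(x_0)|,|Y'(y_0)|>2n^{3/2}/\log n$ and then \emph{transfers} these bounds to $y_0$ and $x_0$ via the mean value theorem, using the Salem--Zygmund/Bernstein bound $\sup|X''|,\sup|Y''| = O(n^{5/2}\log n)$ with high probability and the fact that $|x_0-y_0|\le n^{-2}(\log n)^4$ so the transfer error is $O(n^{1/2}(\log n)^5) = o(n^{3/2}/\log n)$. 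You instead treat $Y'(x_0)$ and $X'(y_0)$ directly in the Kac--Rice integral as auxiliary Gaussians (which the general form, Theorem~\ref{th:general-KR}, allows, since Lemma~\ref{lem:kac-rice} as stated only lets the indicator depend on $X'(x_i),Y'(y_j)$), and verify that $\Var(Y'(x_0)\mid X(x_0)=Y(y_0)=0)$ and $\Var(X'(y_0)\mid\cdot)$ are also $\gtrsim n^3$. Both routes work. Yours avoids the Salem--Zygmund/Bernstein hypotheses but pays for it with four conditional-variance computations rather than two; the paper's transfer argument is shorter because the supremum bounds are already needed elsewhere.

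Two small corrections to your write-up. First, the symmetry $X\leftrightarrow Y$, $x_0\leftrightarrow y_0$ exchanges $|X'(x_0)|\leftrightarrow|Y'(y_0)|$ and $|Y'(x_0)|\leftrightarrow|X'(y_0)|$, so it reduces the four counts to \emph{two} ($N_1$ and the $|Y'(x_0)|$-count), not to one; the second case needs the trivariate conditional Gaussian $(X'(x_0),Y'(y_0),Y'(x_0))$ and is a small variant of the $N_1$ argument, not identical to it. Second, the remark that the $\delta'$-cutoff is needed to control the $y$-integral is not really so: after Fubini, the $r$-integral over all of $\R$ yields a factor $\frac{|I|}{n^2}\frac{Z_1^2+Z_2^2}{|Z_1||Z_2|}$, which is then multiplied by $|Z_1||Z_2|$ from the Kac--Rice density, leaving $\frac{|I|}{n^2}(Z_1^2+Z_2^2)$, which is integrable; this is exactly how the paper proceeds, extending the $r$-range to $\R$ and never using the cutoff.
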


For generic $z \in \C$, $f$ is a non-degenerate two-dimensional Gaussian.  However, near the real axis, the imaginary part of $f$ is small, thus leading to a different, more ``one-dimensional'' behavior that we will have to treat in a slightly different manner. In particular, we shall show that $f$ has no roots near the real axis that are within $O(n^{-2})$ of the unit circle.

\begin{lemma}\label{lem:endpoints} Let $f \sim \cG_n$, $M > 0$ and $\eps > 0$. Then $f$ has no zeros in $$\{z \in \cA_n([-M,M]) : |\arg(z)| \leq n^{-\eps} \text{ or }|\arg(-z)| \leq n^{-\eps}  \},$$
	with high probability.
\end{lemma}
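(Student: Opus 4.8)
The plan is to argue via a first-moment bound on these atypically located zeros, after peeling off the genuinely one-dimensional behaviour near the real axis by hand. Replacing $f(z)$ with $f(-z)=\sum_k(-1)^k\eps_k z^k$ (which has the same law) and using that $\bar\z$ is a zero whenever $\z$ is, it suffices to show that, with high probability, $f$ has no zero $\z=(1+\rho)e^{ix}$ with $|\rho|\le Mn^{-2}$ and $0\le x\le n^{-\eps}$. Set $\delta_0:=n^{-1}(\log n)^{-2}$ and treat separately $x=0$, $0<x\le\delta_0$ and $\delta_0\le x\le n^{-\eps}$. Throughout we work on the high-probability event that $\max_{|z|=1}|f(z)|\le C(n\log n)^{1/2}$ (Salem--Zygmund), that $|f(1)|>C_M n^{-1/2}(\log n)^{1/2}$ (since $f(1)\sim N(0,n+1)$, this fails with probability $O_M(n^{-1}(\log n)^{1/2})$ by Fact~\ref{fact:gaussian}), and that $|Y_x(0,\rho)|>n^{3/2}/(2\log n)$ for all $|\rho|\le Mn^{-2}$ (since $Y_x(0,0)=\sum_j j\eps_j$ is Gaussian of variance $\Theta(n^3)$ this fails for $\rho=0$ with probability $O(1/\log n)$, and $|Y_x(0,\rho)-Y_x(0,0)|\le Mn^{-2}\sum_j j^2|\eps_j|=O_M(n\log n)$ with high probability, giving the uniformity).

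\textbf{The cases $x=0$ and $0<x\le\delta_0$.} A real zero $\z=1+\rho$ of $f$ with $|\rho|\le Mn^{-2}$ would give $|f(1)|=\left|\int_1^{1+\rho}f'(t)\,dt\right|\le Mn^{-2}\max_{|t-1|\le Mn^{-2}}|f'(t)|$, which by Bernstein's inequality and part~\eqref{eq:fact-max3} of Fact~\ref{facts} is $O_M\!\left(n^{-2}\cdot n\cdot(n\log n)^{1/2}\right)=O_M(n^{-1/2}(\log n)^{1/2})$, contradicting our conditioning; so there is no zero with $x=0$ (and, via $f(-z)$, none with $x=\pi$). For $0<x\le\delta_0$ we instead show $Y(x,\rho)=\im f(\z)\neq 0$. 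Taylor expanding in $x$ about $0$ and using $Y(0,\rho)=0$ gives $Y(x,\rho)=Y_x(0,\rho)\,x+\tfrac12 Y_{xx}(\xi,\rho)\,x^2$ for some $\xi$; applying Bernstein twice together with Salem--Zygmund gives $|Y_{xx}(\xi,\rho)|=O(n^{5/2}(\log n)^{1/2})$, so $|Y(x,\rho)|\ge x\bigl(n^{3/2}/(2\log n)-O(n^{5/2}(\log n)^{1/2})\,\delta_0\bigr)>0$ for $n$ large, as $\delta_0=n^{-1}(\log n)^{-2}$.

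\textbf{The range $\delta_0\le x\le n^{-\eps}$: a first-moment bound.} Let $N$ be the number of zeros $\z=(1+\rho)e^{ix}$ of $f$ with $(x,\rho)$ in a slightly enlarged open box containing $[\delta_0,n^{-\eps}]\times[-Mn^{-2},Mn^{-2}]$; it suffices to show $\E\,N=o(1)$ and invoke Markov's inequality. By the Kac--Rice formula,
\[ \E\,N=\int\!\!\int \E\bigl[\,|J(x,\rho)| \ \big| \ X(x,\rho)=Y(x,\rho)=0\,\bigr]\; p_{(X,Y)(x,\rho)}(0,0)\;d\rho\,dx, \]
where $J$ is the Jacobian of $(x,\rho)\mapsto(X(x,\rho),Y(x,\rho))$. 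The Cauchy--Riemann equations for $f((1+\rho)e^{ix})$ give $|J(x,\rho)|=(1+\rho)\,|f'(\z)|^2$, and since conditioning on the Gaussian event $\{f(\z)=0\}$ only decreases conditional variances, $\E[\,|J|\mid f(\z)=0\,]\le (1+\rho)\,\E|f'(\z)|^2=(1+\rho)\sum_{j}j^2|\z|^{2(j-1)}=O_M(n^3)$. For the density, let $\Sigma(x,\rho)$ be the covariance matrix of $(X(x,\rho),Y(x,\rho))$; expanding the Gram determinant of the two coefficient vectors,
\[ \det\Sigma(x,\rho)=\sum_{1\le i<j\le n}(1+\rho)^{2(i+j)}\sin^2\!\bigl((j-i)x\bigr)=\bigl(1+O(n^{-1})\bigr)\sum_{k=1}^{n-1}(n-k)\sin^2(kx), \]
and a routine estimate (splitting according to whether $nx\lesssim 1$ or $nx\gtrsim 1$, using $\sin t\ge \tfrac2\pi t$ on $[0,\tfrac\pi2]$ in the former range and a Dirichlet-kernel bound in the latter) gives $\sum_{k=1}^{n-1}(n-k)\sin^2(kx)\gtrsim n^2\min\bigl((nx)^2,1\bigr)$ for $0<x\le\tfrac\pi2$. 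Hence $p_{(X,Y)(x,\rho)}(0,0)=(2\pi\sqrt{\det\Sigma})^{-1}=O\!\bigl(n^{-2}\max((nx)^{-1},1)\bigr)$ uniformly in our range, the $\rho$-integral contributes a factor $O(Mn^{-2})$, and
\[ \int_{\delta_0}^{n^{-\eps}}n^{-2}\max\bigl((nx)^{-1},1\bigr)\,dx=O\!\left(\frac{\log\log n}{n^2}+\frac1{n^{1+\eps}}\right), \]
so $\E\,N=O_M(n^3)\cdot O(n^{-2})\cdot O(n^{-2}\log\log n+n^{-1-\eps})=O_M(n^{-1}\log\log n+n^{-\eps})=o(1)$.

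\textbf{Main obstacle.} The heart of the matter is the uniform lower bound $\det\Sigma(x,\rho)\gtrsim n^2\min((nx)^2,1)$; it quantifies the degeneration of $(\re f,\im f)$ as $\z\to 1$, which is precisely why $\pm1$ must be excluded from statements like Lemma~\ref{lem:density-bound-restatement}, and it forces $\det\Sigma$ to vanish at $x=0$ so that the Kac--Rice integral alone cannot cover a full neighbourhood of the real axis. This also dictates the choice of $\delta_0$: large enough for the Taylor estimate to beat $Y_{xx}$, yet small enough that $\int_{\delta_0}^{n^{-\eps}}n^{-2}(nx)^{-1}\,dx=O(n^{-2}\log\log n)$. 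The remaining points (justifying Kac--Rice on the nondegenerate interior, enlarging the box to absorb its closed endpoints, and transferring everything to a neighbourhood of $-1$) are routine.
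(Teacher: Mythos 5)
Your proposal is correct and reaches the same conclusion, but it takes a genuinely different route from the paper's proof, so a comparison is in order. The paper discretizes the arc with grid points $z_k = e^{ix_k}$, $x_k = k/n^2$, shows via Gaussian anti-concentration plus the determinant bound of Lemma~\ref{lem:det-close-to-axis} that $|f(z_k)| \geq n^{-1/2}\log n$ simultaneously at all grid points with high probability, and then uses the Salem--Zygmund and Bernstein bounds on $f'$ together with a mean-value argument to conclude that $f$ is zero-free in each ball $B_{2M/n^2}(z_k)$, whose union covers the region. You instead peel off the degenerate layer $0 \le x \le \delta_0 := n^{-1}(\log n)^{-2}$ by hand (a Taylor expansion of $Y(x,\rho)$ in $x$ using a lower bound on $Y_x(0,\rho)$, together with a separate treatment of $x = 0$ via $|f(1)|$), and then apply a complex Kac--Rice first-moment bound of exactly the type the paper already proves as Lemma~\ref{lem:complex-KR}, using $|J| = (1+\rho)|f'(\zeta)|^2$ and $\E[|f'(\zeta)|^2] = O(n^3)$ together with the covariance determinant bound. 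The determinant estimate $\det\Sigma \gtrsim n^2\min((nx)^2,1)$ that you invoke is precisely the content of the paper's Lemma~\ref{lem:det-close-to-axis} (stated there as $\Omega(\min\{n^4 x^2, n^2\})$), so both arguments rest on the same technical core; the paper proves it more carefully than your brief sketch (your ``Dirichlet-kernel bound'' route needs a separate compactness step for $nx$ bounded between two constants, which is what the paper's trapezoidal-rule calculation handles). The trade-off is that the paper's discretization covers the full arc down to $x=0$ in one sweep (handling $k=0$ separately), whereas the Kac--Rice route requires the process to be nondegenerate and thus forces you to handle the layer near the real axis explicitly; in return, your route is more in the spirit of the rest of the paper's appendix and avoids the ball-covering step.

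Two minor bookkeeping points. First, you write $p_{(X,Y)(x,\rho)}(0,0) = O(n^{-2}\max((nx)^{-1},1))$, but $\det\Sigma \gtrsim n^2\min((nx)^2,1)$ gives $(\det\Sigma)^{-1/2} \lesssim n^{-1}\max((nx)^{-1},1)$, not $n^{-2}$; your subsequent integral $\int_{\delta_0}^{n^{-\eps}}\cdot\,dx = O(n^{-2}\log\log n + n^{-1-\eps})$ is in fact computed for the (correct) integrand $n^{-1}\max((nx)^{-1},1)$, so the final tally $\E\,N = O_M(n^3)\cdot O(n^{-2})\cdot O(n^{-2}\log\log n + n^{-1-\eps}) = o(1)$ still stands; the exponent in the displayed integrand is a typo, but worth fixing. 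Second, in the Gram-determinant identity the sum should run over $0 \le i < j \le n$, not $1 \le i < j \le n$, though this only strengthens the lower bound and is harmless.
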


The following lemma tells us that no two roots of $f$ have distance $\approx n^{-2+o(1)}$ in $\C$. 

\begin{lemma}\label{lem:close-roots} Let $f \sim \cG_n$ and 
	let $I$ be a bounded interval. Then, with high probability, there are no pairs of distinct $\z_1,\z_2 \in \cA_f(I)$ so that
	\[ |\zeta_1 - \zeta_2| \leq n^{-2}(\log n)^{10}. \]
\end{lemma}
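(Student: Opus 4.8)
The plan is to prove Lemma~\ref{lem:close-roots} via a first-moment computation using the Kac--Rice formula for the complex zeros of $f$ in the annular region $\cA_n(I)$, combined with a dyadic decomposition of the possible distance $|\z_1 - \z_2|$. Specifically, I would bound the expected number of \emph{pairs} of zeros $\z_1, \z_2$ of $f$ with both $\z_i \in \cA_f(I)$ and $|\z_1 - \z_2| \le n^{-2}(\log n)^{10}$, and show this expectation is $o(1)$; Markov's inequality then gives the claim. The two-point Kac--Rice formula expresses this expected count as an integral over pairs $(z_1, z_2)$ in the annulus of a kernel of the form $\E\big[ |f'(z_1)|^2 |f'(z_2)|^2 \,\big|\, f(z_1) = f(z_2) = 0\big]$ times the joint density of $(f(z_1), f(z_2))$ evaluated at $(0,0)$ (here $f$ is viewed as a function $\C \to \C$, so each evaluation is a bivariate real Gaussian). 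The joint density at the origin is controlled by the covariance structure $\E\, \re f(z_1)\overline{f(z_2)}$ and its conjugate, which for $z_1, z_2$ near the unit circle and $\deg f = n$ behave like $\sum_{k\le n} r_1^k r_2^k e^{ik(x_1 - x_2)}$; the key point is that when $|z_1 - z_2|$ is tiny the Gaussian vector $(f(z_1), f(z_2))$ is highly degenerate and the conditioning is essentially one complex constraint, while the conditional moment of $|f'|^4$ is of order $n^4$ times the typical variance.

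Concretely, I would split into two regimes. When $|z_1 - z_2|$ is between $n^{-1}$ and a constant, the two evaluations are roughly independent and the pair kernel factors as (roughly) the square of the one-point Kac--Rice density, which is known to give $O(1)$ zeros per unit of $x$ in a band of width $O(n^{-2})$ around the circle; integrating over $z_2$ in a disc of radius $n^{-2}(\log n)^{10}$ around $z_1$ then produces a factor $(n^{-2}(\log n)^{10})^2 \cdot n^2$ which is $o(1)$ after integrating over $z_1$ over the $O(n^{-2})$-wide annulus (the annulus contributes $O(1)$ expected zeros, and forcing a second nearby zero costs $n^{-2}(\log n)^{O(1)}$). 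When $|z_1 - z_2| \le n^{-1}$, independence fails badly, and I would instead use a \emph{local} Taylor expansion: writing $z_2 = z_1 + w$ with $|w|$ small, $f(z_2) \approx f(z_1) + f'(z_1) w + \tfrac12 f''(z_1)w^2$, so conditioning on $f(z_1) = f(z_2) = 0$ forces $f'(z_1) w = -\tfrac12 f''(z_1) w^2 + \cdots$, i.e. $|f'(z_1)| \lesssim |w| \cdot |f''(z_1)| = |w| \cdot n^{O(1)}$. Since $|f'(z_1)|$ on the circle is typically of order $n^{1/2} \cdot n = n^{3/2}$ (and the event $|f'| \ll n^{3/2}$ is a genuine small-ball event with probability proportional to the allowed range of $f'$), this extra small-ball cost of order $|w| n^{O(1)}/n^{3/2}$ beats the $n^{O(1)}$ volume/derivative factors once $|w| \le n^{-2}(\log n)^{10}$. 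In fact this regime is morally the same calculation that drives Lemma~\ref{lem:density-bound-restatement} (two zeros of $X$, or of $Y$, forced into a tiny interval), so I expect I can cite or mimic the argument behind Lemma~\ref{th:density-bound} rather than redo the small-ball analysis from scratch.

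The main obstacle, as usual in these Kac--Rice arguments, is the degeneracy of the Gaussian near the real axis: when $z_1$ (and hence $z_2$) has argument within $O(n^{-1/2})$ of $0$ or $\pi$, the imaginary part of $f$ is small and the covariance matrix of $(f(z_1), f(z_2))$ becomes ill-conditioned in a second, independent way, so the naive bound on the joint density at the origin blows up. However, Lemma~\ref{lem:endpoints} already tells us that, with high probability, $f$ has \emph{no} zeros of the relevant kind with argument within $n^{-\eps}$ of $\{0, \pi\}$; so I would first invoke Lemma~\ref{lem:endpoints} to restrict attention to $\arg(z_1) \in \T_0$ (or a slightly larger good set), and on that event run the Kac--Rice bound only over the good range of arguments, where the covariance is uniformly non-degenerate in the relevant sense. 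A secondary technical point is justifying the use of Kac--Rice for complex zeros in a region, and controlling the conditional fourth moment $\E[|f'(z_1)|^2|f'(z_2)|^2 \mid \cdots]$ uniformly; both follow from the fact that $f$ restricted to an $O(n^{-2})$-neighbourhood of the circle is, after rescaling, a well-behaved Gaussian field with derivative bounds coming from Bernstein's inequality and the Salem--Zygmund bound (Fact~\ref{facts}), exactly as used throughout Section~\ref{sec:unit-circle-redux}. Assembling the two dyadic regimes and summing the geometric series in $|w|$ then yields the $o(1)$ bound on the expected number of close pairs, completing the proof.
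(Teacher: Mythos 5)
Your proposal goes via a two-point Kac--Rice bound on the expected number of close pairs, which is a genuinely different route from the paper and considerably harder in the only regime that actually matters. First, a cleanup: since the hypothesis already forces $|\z_1 - \z_2| \leq n^{-2}(\log n)^{10} \ll n^{-1}$, your ``far'' regime $n^{-1} \lesssim |z_1 - z_2| \lesssim 1$ is vacuous; everything lives in the degenerate close regime. There, the two-point Kac--Rice density is a ratio of two quantities that both vanish as $z_1 \to z_2$: the numerator $\E\big[|f'(z_1)|^2|f'(z_2)|^2 \mid f(z_1)=f(z_2)=0\big]$ and the denominator $\det\Cov(f(z_1),f(z_2))^{1/2}$ degenerate at rates that must be extracted with care, and you cannot simply cite Lemma~\ref{th:density-bound}, since that lemma concerns the one-dimensional real processes $X,Y$ on the circle, not the complex Kac--Rice kernel for $f$ in an annulus; you would in effect have to redo a version of Sections 5--7 in a different setting. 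The paper's proof is much lighter and avoids the near-diagonal two-point degeneracy entirely: it observes, via the mean-value theorem applied to $\re f'$ and $\im f'$ along the segment joining $\z_1, \z_2$ together with the Bernstein/Salem--Zygmund bound on $f''$, that a bad pair deterministically yields a ``rotten'' zero $\z$ with $|f'(\z)| \leq n^{1/2}(\log n)^{13}$; it then uses the \emph{one-point} complex Kac--Rice formula (Lemma~\ref{lem:complex-KR}) and $\det\Cov(f(z)) = \Omega(n^2)$ on $\T_0$ to show the expected number of rotten zeros is $o(1)$, needing only the crude bound $|f'|^2 \leq n(\log n)^{26}$ under the indicator and the annulus area $|V| = O(n^{-1})$. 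Your core intuition --- close zeros of $f$ force a small derivative, and a small derivative is a rare event --- is exactly the driving idea of the paper's proof, but packaging it inside a two-point Kac--Rice computation rather than as a deterministic reduction to a one-point small-derivative count is precisely what leaves a real gap in the close regime.
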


Similarly, the following says that on the unit circle, neither $X$ nor $Y$ has two roots that are very close together, another example of the repulsion of roots of random polynomials.

\begin{lemma}\label{lem:close-roots-circle}
	Let $g$ be either $X$ or $Y$.  Then, with high probability, there do not exist distinct points $x_1,x_2 \in \T_0$ with $g(x_1) = g(x_2) = 0$, 
	\[|x_1 - x_2| \leq n^{-2} (\log n)^{10}\,.\]
\end{lemma}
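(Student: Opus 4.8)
The plan is to prove Lemma~\ref{lem:close-roots-circle} via a first-moment (union bound) argument: we will cover the arc $\T_0$ by $O(n^2(\log n)^{-10})$ intervals of length $\ell := n^{-2}(\log n)^{10}$ (say with a double cover by two offset families, so that any two points at distance $\leq \ell/2$ lie in a common interval), and show that for each such interval $I$ the probability that $g$ has two zeros in $I$ is $o(n^{-2})$. Summing over all $O(n^2)$ intervals then gives that, with high probability, no interval of this family contains two zeros of $g$, which is exactly the conclusion after adjusting the constant in the exponent of the logarithm.

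To bound $\PP(g \text{ has two zeros in } I)$ for a fixed interval $I$ of length $\ell$, the natural tool is the Kac--Rice / Rolle approach. By Rolle's theorem, if $g$ has two zeros in $I$ then $g'$ has a zero in $I$ as well, so $g$ has a zero $x \in I$ with $|g'(x)|$ forced to be small relative to $|g''|$ on $I$; more precisely, if $x_1, x_2 \in I$ are consecutive zeros of $g$ then there is a point $\xi \in I$ with $g'(\xi)=0$, and hence $|g'(x_1)| = |g'(x_1)-g'(\xi)| \leq \ell \cdot \sup_I |g''|$. Combining Bernstein's inequality with the Salem--Zygmund bound (Fact~\ref{facts}), we have $\sup_{[0,2\pi]}|g''| = O(n^{5/2}\log n)$ with high probability, so on this event the existence of two zeros of $g$ in $I$ forces a zero $x_1 \in I$ of $g$ with $|g'(x_1)| = O(\ell n^{5/2}\log n) = O(n^{1/2}(\log n)^{11})$. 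We then estimate the probability of \emph{this} event by a Kac--Rice-type first moment: the expected number of zeros $x$ of $g$ in $I$ with $|g'(x)| \leq T$ (here $T = n^{1/2}(\log n)^{11}$) is
\[ \int_I \E\big[\, |g'(x)| \, \mathbf{1}\{|g'(x)| \leq T\} \, \big| \, g(x) = 0 \,\big] \, p_{g(x)}(0) \, dx, \]
where $p_{g(x)}$ is the density of the Gaussian $g(x)$. On $\T_0$ we have $\Var(g(x)) \asymp n$ (away from the real axis, uniformly), so $p_{g(x)}(0) = O(n^{-1/2})$; and by Fact~\ref{fact:gaussian}\eqref{item:gaussSmallBall} applied to the conditional Gaussian $g'(x) \mid g(x)=0$, whose conditional variance is $\asymp n^3$ on $\T_0$, we get $\E[|g'(x)|\mathbf 1\{|g'(x)|\le T\} \mid g(x)=0] = O(T^2/n^{3/2})$. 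Hence the integrand is $O(n^{-1/2} \cdot T^2 n^{-3/2}) = O(T^2 n^{-2})$, and integrating over $I$ of length $\ell$ gives $O(\ell T^2 n^{-2}) = O(n^{-2}(\log n)^{10} \cdot n (\log n)^{22} \cdot n^{-2}) = O(n^{-3}(\log n)^{32})$. Multiplying by the $O(n^2(\log n)^{-10})$ intervals in the cover yields a total bound of $O(n^{-1}(\log n)^{22}) = o(1)$, completing the proof; one handles the low-probability events where Salem--Zygmund or the variance lower bounds fail by simply adding their $o(1)$ failure probabilities.

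The main obstacle, and the place requiring the most care, is the uniform lower bound $\Var(g(x)) \asymp n$ and, more importantly, $\Var(g'(x) \mid g(x) = 0) \asymp n^3$ for all $x \in \T_0$ — that is, the nondegeneracy of the Gaussian pair $(g(x), g'(x))$ uniformly away from $\{0,\pi\}$ at the required scale $n^{-1/2}$. This is precisely why the definition of $\T_0$ excludes an $n^{-1/2}$-neighbourhood of the real axis: near $x = 0$ or $x = \pi$, $Y(x) = \sum \eps_j \sin(jx)$ and its derivative both become small and correlated, so the conditional variance degenerates. Verifying these variance estimates amounts to a standard but slightly delicate computation with the covariance kernels $\sum_j \cos^2(jx)$, $\sum_j j^2\sin^2(jx)$, etc., and their cross terms; I expect this is either already available from the Kac--Rice setup used elsewhere in the paper (the same kernels appear in the factorial-moment computation for $\mu_f$) or can be cited from Dunnage~\cite{dunnage}. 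Given those variance bounds, the rest is the routine union bound sketched above.
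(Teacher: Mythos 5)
Your proof is correct and follows essentially the same route as the paper's: Rolle's theorem combined with the Salem--Zygmund/Bernstein bound on $\sup|g''|$ shows that two zeros within $n^{-2}(\log n)^{10}$ force a zero $x_1$ of $g$ with $|g'(x_1)|=O(n^{1/2}(\log n)^{11})$, and a Kac--Rice first-moment estimate (using the nondegeneracy $\Var(g'(x)\mid g(x)=0)\asymp n^3$ on $\T_0$, which is exactly what the restriction to $\T_0$ and Lemmas~\ref{lem:early-approx}, \ref{lem:covar-approx} provide) shows such zeros are rare. Your covering of $\T_0$ by short intervals is superfluous, though harmless: the paper simply bounds the expected number of ``bad'' zeros over all of $\T_0$ in a single Kac--Rice integral and applies Markov, arriving at the same $O((\log n)^{22}/n)=o(1)$ bound.
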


\subsection{Proof of Lemma~\ref{lem:colliding-roots}}

\begin{proof}[Proof of Lemma~\ref{lem:colliding-roots}] As in Lemmas~\ref{lem:pairing1} and \ref{lem:pairing2}, we set $\delta = n^{-2}(\log n)^3$.
We define a function 
\[ \alpha : \cA_f(I) \rightarrow \cC_f(I),\] which is an injection with high probability.
Say that $\z = (1+\gamma/n^2)e^{i\t} \in \cA_{f}(I)$ is \emph{good} if 
\[|X'(\t)|,|Y'(\t)| > n^{3/2}/\log n, \]
and \emph{bad} otherwise. Given a good $\z \in \cA_f(I)$, we can apply Lemma~\ref{lem:pairing1} to see that there is a pair $(x_0,y_0) \in \cC_f(I)$, for large enough $n$, that is $\delta$-close to $\z$.
Define $\alpha( \z ) = (x_0,y_0)$. This defines $\alpha$ for all good $\z$. For bad $\z$, we define $\alpha$ to be an arbitrary point in $\cC_f(I)$. 

We need to show that there are no bad $\z$ with high probability and that there is no pair $(x_0,y_0)$ with $\alpha(\z) =\alpha(\z') =(x_0,y_0)$, for distinct good $\z,\z'$. Starting with this latter point, assume that both of $\z,\z'$ are mapped to the $\delta$-close pair $(x_0,y_0)$. This implies that $|\z-\z'| < 2n^{-2} (\log n)^3$, which occurs with probability $o(1)$, by Lemma~\ref{lem:close-roots}. 
We now turn to the former point and apply Lemma~\ref{lem:good-complex} to show that there are no bad $\z$ with high probability. Hence $\alpha$ is an injection with high probability and thus 
\[ \PP( |\cA_f(I)| \leq |\cC_f(I)|) = 1-o(1). \]

We now define a function $\beta : \cC_f(I) \rightarrow \cA_f(I)$, which will be an injection with high probability. For each $(x_0,y_0) \in \cC_f(I)$ we
say that $(x_0,y_0)$ is \emph{good} if 
\[ |X'(x_0)|,|Y'(x_0)|, |X'(y_0)|,|Y'(y_0)| > n^{3/2}/\log n,\]
and \emph{bad} otherwise. Now if $(x_0,y_0)\in \cC_f(I)$ is good then we may apply Lemma~\ref{lem:pairing2} to find a 
$\z \in \cA_f(I)$ that is $\delta$-close to $(x_0,y_0)$ and define $\beta((x,y)) = \z$. We define $\beta(x_0,y_0)$ to be an arbitrary element of $\cA$
if $(x,y)$ is bad. Lemma~\ref{lem:circle-good} tells us that there are no bad pairs with high probability. To see that this is an injection with high probability, assume that $\beta ((x_0,y_0)) = \beta((x_1,y_1)) = \z$. This means that both $(x_0,y_0),(x_1,y_1)$ are $\delta$ close to $\z$ and therefore
$|x_0 - x_1| < n^{-2}(\log n)^3$. Lemma~\ref{lem:close-roots-circle} tells us this happens with probability $o(1)$. Therefore $\beta$ is an injection with high probability and so
\[ \PP( |\cC_f(I)| \leq |\cA_f(I)|) = 1-o(1)\,, \]  
thus completing the proof of Lemma~\ref{lem:colliding-roots}. \end{proof}

\section{Moments and a Kac-Rice type formula}

With the work of Section~\ref{sec:unit-circle-redux} in hand, it is enough to show that the related measure $\mu_f$ converges to a Poisson point process. 
In this section we set up the remainder of the paper by expressing the moments of the random variables $\{\mu_f(U)\}_U$
in a convenient integral form, known as the Kac-Rice formula. 

But first we note the following lemma which tells us that to prove Theorem~\ref{thm:main1}, it is enough to study the factorial moments of the random variables $\mu_f(U)$, for all appropriate $U$. For a real number $x$, we use the notation
$(x)_k = x(x-1)\cdots (x-k+1)$.

\begin{lemma} \label{lem:method-of-moments}
Let $\{ m_n \}$ be a sequence of point processes on $\R$. Then $m_n$ converges in the vague topology to a Poisson point process of intensity $\l$ if  \begin{equation}\label{eq:moments-converge}
\E[(m_n(U))_k ] \to (\l |U|)^k,
\end{equation}
for all $U \subset \R$ and all $k \in \N$, where $U$ is a finite union of open intervals. Here $|U|$ denotes is the Lebesgue measure of $U$.
\end{lemma}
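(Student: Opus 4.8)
The plan is to invoke the standard method of moments for point process convergence, reducing the statement to a classical fact about Poisson random variables plus the fact that vague convergence of point processes is determined by convergence of the distributions of the counts $m_n(U)$ over a sufficiently rich class of test sets $U$. First I would recall that a point process $m$ on $\R$ is a Poisson process of intensity $\l$ if and only if, for every bounded Borel set $U$, $m(U)$ is a Poisson random variable with mean $\l|U|$, and for every finite collection of pairwise disjoint bounded Borel sets $U_1,\ldots,U_r$ the random variables $m(U_1),\ldots,m(U_r)$ are independent. Since finite unions of open intervals form a convergence-determining class for the vague topology on point processes on $\R$ (they generate the Borel $\sigma$-algebra, and any open set is such a union up to a boundary of measure zero), it suffices to show that for every finite union of open intervals $U$, $m_n(U)$ converges in distribution to a Poisson random variable with mean $\l|U|$, and moreover that counts over disjoint such sets become asymptotically independent.

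The second step is to handle the single-set marginal. Fix $U$ a finite union of open intervals and set $\l|U| =: t$. A Poisson random variable $\mathrm{Pois}(t)$ has factorial moments $\E[(\mathrm{Pois}(t))_k] = t^k$ for all $k$, and it is determined by its moments (its moment generating function is finite everywhere, so the moment problem is determinate); since the factorial moments $\{x^k\mapsto (x)_k\}_{k\ge 0}$ and the ordinary monomial moments span the same space of polynomials, convergence of all factorial moments $\E[(m_n(U))_k]\to t^k$ implies convergence of all ordinary moments, hence convergence in distribution $m_n(U)\Rightarrow \mathrm{Pois}(t)$ by the classical moment convergence theorem (a determinate limit law plus convergence of all moments gives convergence in distribution). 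This is exactly the hypothesis \eqref{eq:moments-converge}.

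The third step is to upgrade this to joint convergence over disjoint test sets, which is what actually delivers the Poisson \emph{process} rather than just matching one-dimensional marginals. Given disjoint finite unions of open intervals $U_1,\ldots,U_r$, their union $U := U_1\cup\cdots\cup U_r$ is again a finite union of open intervals, so by Step 2 we have $m_n(U)\Rightarrow\mathrm{Pois}(\l|U|)$; applying the same to each $U_i$ gives $m_n(U_i)\Rightarrow\mathrm{Pois}(\l|U_i|)$. Because the limiting total count is Poisson with mean equal to the sum of the means of the limiting parts, and the parts are nonnegative-integer-valued, a standard argument (e.g.\ via joint factorial moments: $\E\big[\prod_i (m_n(U_i))_{k_i}\big]$ can be expanded and bounded using the single-set convergence together with the subadditivity $\sum_i m_n(U_i)\le m_n(U)$, forcing the mixed factorial moments to factor as $\prod_i (\l|U_i|)^{k_i}$ in the limit) shows the limiting joint law is that of independent Poissons. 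I would carry this out by noting that the joint factorial moment generating function $\E\big[\prod_i (1+s_i)^{m_n(U_i)}\big]$ converges — via the moment hypothesis applied to the refined partition generated by the $U_i$ — to $\prod_i e^{\l|U_i| s_i}$, which is the joint factorial mgf of independent Poissons; uniqueness of this transform then identifies the limit, and hence $m_n\Rightarrow$ a Poisson process of intensity $\l$ in the vague topology.

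The main obstacle, such as it is, is Step 3: one must be careful that merely matching marginals is insufficient, and that the correct way to extract independence is through the \emph{joint} factorial moments, which is legitimate here precisely because $m_n(U)$ for $U$ a finite union of open intervals is covered by the hypothesis — so the joint moments over disjoint $U_i$ are themselves expressible via the hypothesis applied to sums $\sum_i a_i \one_{U_i}$ of indicators, equivalently to the finitely many atoms of the partition they generate. Apart from this bookkeeping, the argument is entirely classical; no property of random polynomials enters. (In the write-up I would likely just cite a standard reference such as Kallenberg's \emph{Random Measures} for the moment method for simple point processes and give the one-line verification that $\mathrm{Pois}(t)$ is moment-determinate with factorial moments $t^k$.)
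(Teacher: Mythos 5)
Your Steps 1 and 2 are correct and match the paper's use of the method of moments: since $\mathrm{Pois}(t)$ has factorial moments $t^k$ and is moment-determinate, \eqref{eq:moments-converge} yields $m_n(U)\Rightarrow\mathrm{Pois}(\lambda|U|)$ for each finite union of open intervals $U$.

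The genuine gap is in Step 3, and it is substantive. You want the joint factorial moments $\E\big[\prod_i (m_n(U_i))_{k_i}\big]$ over disjoint $U_1,\dots,U_r$ to converge to $\prod_i(\lambda|U_i|)^{k_i}$, but the hypothesis only gives you the \emph{diagonal} quantities $\E[(m_n(U))_k]$. Your claim that the mixed moments are ``expressible via the hypothesis applied to sums $\sum_i a_i\one_{U_i}$'' does not parse: such weighted indicators are not finite unions of intervals, so \eqref{eq:moments-converge} says nothing about them. If one instead tries to polarize via the Vandermonde identity $\E[(m_n(U_A))_k]=\sum_{\sum_{i\in A}k_i=k}\binom{k}{k_1,\dots}\E\big[\prod_{i\in A}(m_n(U_i))_{k_i}\big]$ over subsets $A\subseteq[r]$, the resulting linear system is underdetermined as soon as $k\ge 3$ and $r\ge 2$ (for instance, for $k=3$, $r=2$ you get one equation for the two unknowns $\E[(m_n(U_1))_2 m_n(U_2)]$ and $\E[m_n(U_1)(m_n(U_2))_2]$). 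And the fallback --- that each $M_i$ is marginally Poisson and each partial sum $\sum_{i\in A}M_i$ is Poisson --- does \emph{not} by itself force the joint law to be a product of independent Poissons. So as written, Step 3 does not go through.

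The paper avoids all of this by invoking Kallenberg's convergence criterion (Theorem 4.7 of \emph{Random Measures}): to show $m_n$ converges to a Poisson process of intensity $\lambda$, it suffices to check $\E\,m_n(I)\to\lambda|I|$ and $\P(m_n(I)=0)\to e^{-\lambda|I|}$ for intervals $I$. Both are \emph{single-set} statements: the first is $k=1$ of \eqref{eq:moments-converge}, and the second follows from your Steps 1--2, since $m_n(I)\Rightarrow\mathrm{Pois}(\lambda|I|)$ implies convergence of the atom at $0$. Independence over disjoint sets comes for free from Kallenberg's theorem (ultimately because void probabilities determine a simple point process). That is the route to take here; it is both shorter and closes the gap you would otherwise have to wrestle with.
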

\begin{proof}
	By a theorem of Kallenberg \cite[Theorem 4.7]{kallenberg}, in order to show that $m_n$ converges to the Poisson process of intensity $\l$, it is sufficient to show that for each $a < b$ and $I$ of the form $(a,b]$ or $[a,b)$ we have that $\E\, m_n(I) \to \l|I|$ and $\P(m_n(I) = 0 )\to e^{-\l|I|}$.  By the method of moments \cite[Theorems 30.1 and 30.2]{billingsley}, both follow from showing convergence of the factorial moments; further, since $m_n$ asymptotically  assigns no mass to the endpoints, we may work with the interior of $I$.  Applying \eqref{eq:moments-converge} then shows that $m_n$ converges to the desired limit.  
\end{proof}

\vspace{4mm}

\noindent We now turn to our integral form for these moments. For this we define 
$$\vp_U(x,y):= \one\left\{ (x,y) \in \cC_f(U)  \right\}\, ,$$
where $f$ is a degree $n$ polynomial, $\cC_f(U)$ is as defined at \eqref{eq:c-def}, $x,y \in \T$ and $S \subseteq \R$. We now make an important, admittedly somewhat jarring, definition, the utility of which will be apparent soon.
For $k \in \N$, let $\bx \in \T^k$, $\by \in \T^k$ and define
\[ p_{k}(\bx,\by,U) :=  \frac{\E\left[\prod_{j = 1}^{k} |X'(x_j)|\cdot |Y'(y_j)| \cdot \vp_U(x_j,y_j) \,\big|\,\left\lbrace X(x_i) = Y(y_i) = 0  \right\rbrace_{i=1}^k \right]  }{(2\pi)^{k}\det(\Sigma)^{1/2}}, \] 
where $\Sigma = \Sigma_{k}(\bx,\by) = \Cov(X(x_i),Y(x_i))_{i \in [k]}$ is the covariance matrix of the joint distribution $(X(x_i),Y(x_i))_{i=1}^k$. We now arrive at our Kac-Rice-type integral for the factorial moments of $\mu_f(U)$.

\begin{lemma}\label{lem:our-KR-form}
	Let $U$ be a bounded open set and $k \in \N$.  Then 
\begin{equation} \label{eq:our-KR}
	\E[(\mu_f(U))_{k}] = 
\int_{\T_0^{2k} } p_{k}(\bx,\by,U)   \, d\bx d\by\,. \end{equation}
%	where $\Sigma = \Cov(X(x_i),Y(y_j))_{i,j \in [k]}$. 
\end{lemma}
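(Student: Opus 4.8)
The plan is to obtain \eqref{eq:our-KR} as an instance of the Kac--Rice formula for the expected number of zeros of a vector-valued random field, counted with a weight. Concretely, fix $k$ and consider the $\R^{2k}$-valued Gaussian field $F \colon \T_0^k \times \T_0^k \to \R^{2k}$ given by $F(\bx,\by) = (X(x_1),Y(y_1),\ldots,X(x_k),Y(y_k))$. Its zero set, intersected with $\T_0^{2k}$, is exactly the set of $2k$-tuples $(\bx,\by)$ such that each $x_j$ is a root of $X$ and each $y_j$ is a root of $Y$. The falling factorial $(\mu_f(U))_k$ counts ordered $k$-tuples of \emph{distinct} pairs from $\cC_f(U)$; expanding the product $\prod_{j=1}^k \vp_U(x_j,y_j)$ over such tuples and using that, with probability one, distinct roots of $X$ (resp. of $Y$) are simple and that no two of the finitely many roots coincide, one sees that $(\mu_f(U))_k$ equals the number of zeros $(\bx,\by)$ of $F$ in $\T_0^{2k}$ weighted by $\prod_{j=1}^k \vp_U(x_j,y_j)$ — the diagonal terms (where $x_i = x_j$ or $y_i = y_j$ for $i\neq j$) contribute nothing to the falling factorial and also have measure zero in the Kac--Rice integral.

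Next I would apply the Kac--Rice / Euler-characteristic-type formula for the expectation of a weighted zero count of a smooth Gaussian field whose coordinates are almost surely nondegenerate. The general statement (see e.g. Adler--Taylor or Azaïs--Wschebor) gives
\[
\E\Big[ \sum_{(\bx,\by): F(\bx,\by)=0} w(\bx,\by)\, |\!\det J_F(\bx,\by)|\Big]
= \int_{\T_0^{2k}} \E\big[ w(\bx,\by)\,|\!\det J_F| \,\big|\, F(\bx,\by)=0\big]\, \frac{d\bx\,d\by}{(2\pi)^{k}\det(\Sigma_k(\bx,\by))^{1/2}},
\]
where $\Sigma_k(\bx,\by)$ is the $2k\times 2k$ covariance of $F(\bx,\by)$ and $(2\pi)^{k}\det(\Sigma_k)^{1/2}$ is the value of the Gaussian density of $F(\bx,\by)$ at $0$ (this is where the normalization in the definition of $p_k$ comes from). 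The crucial simplification is that the Jacobian $J_F$ is block-diagonal: differentiating $X(x_j)$ only in the $x_j$-direction and $Y(y_j)$ only in the $y_j$-direction, we get $\det J_F(\bx,\by) = \prod_{j=1}^k X'(x_j) Y'(y_j)$, so $|\!\det J_F| = \prod_{j=1}^k |X'(x_j)|\,|Y'(y_j)|$. Taking $w = \vp_U$ and comparing with the definition of $p_k(\bx,\by,U)$ yields exactly \eqref{eq:our-KR}.

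To make this rigorous I would need to dispatch a few regularity points: (i) $F$ is a smooth (indeed trigonometric-polynomial) Gaussian field, hence a.s. $C^\infty$; (ii) for $(\bx,\by)$ in the \emph{open} set $\T_0^{2k}$ with all coordinates distinct, $\Sigma_k(\bx,\by)$ is nonsingular, so the conditioning is well-defined and the field $F$ has a density at $0$ — this needs a short argument that $\{X(x_i),Y(y_i)\}$ are linearly independent as linear functionals of $(\eps_0,\ldots,\eps_n)$ when the points are distinct and away from $\{0,\pi\}$, which holds because $n$ is large relative to $k$; (iii) almost surely $F$ has no zeros on the part of the boundary of $\T_0^{2k}$ that would cause trouble, and $0$ is a regular value of $F$, both standard consequences of Bulinskaya's lemma; (iv) the weight $\vp_U$ is bounded and, outside a null set, the set $\{(x,y): |x-y| = n^{-2}(\log n)^4\}$ or the preimage of $\partial U$ contributes nothing, so the indicator causes no measurability or boundary issue. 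The main obstacle is item (ii)–(iii): verifying the nondegeneracy of $\Sigma_k$ and that $0$ is a.s. a regular value across all of $\T_0^{2k}$, including near the diagonal where pairs of points coalesce — there the covariance degenerates, but since the integrand $p_k$ is defined via conditioning that remains valid off the diagonal and the diagonal is Lebesgue-null, the Kac--Rice identity still holds as stated; one just has to be careful that the formula is being applied to the field restricted to the open, full-measure set where everything is nondegenerate, and that the contribution of the excluded set to both sides vanishes.
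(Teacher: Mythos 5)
Your proposal is correct and follows essentially the same route as the paper: both derive the identity from the general multivariate Kac--Rice theorem (Aza\"is--Wschebor, Theorem~6.4), applied to the Gaussian field $(X(x_1),\ldots,X(x_k),Y(y_1),\ldots,Y(y_k))$ whose Jacobian is diagonal with determinant $\prod_j X'(x_j)Y'(y_j)$, and both identify the weighted zero count with $(\mu_f(U))_k$ by counting ordered tuples of distinct root-pairs. The paper makes the regularization you flag in items (ii)--(iv) explicit by approximating the indicator weight with continuous functions $g_\alpha \uparrow \one_V$ and restricting to a thinned domain $T_\delta$ that excludes the diagonal coalescences where $\Sigma_k$ degenerates, then passing to the limit in $\alpha$ and $\delta$ by monotone convergence.
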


We postpone the proof of Lemma~\ref{lem:our-KR-form} to Appendix~\ref{sec:KR-details}, where we derive it from a very general form of the Kac-Rice integral.  

\vspace{4mm}
On a technical note, we point out that if $x_i = x_j$ for some $i \neq j$, or similarly for $\by$, then $\Sigma$ is singular and so $p_k$ is undefined.
In these cases, we just set $p_k = 0$, for completeness, although this is technically unnecessary, as we only care about the behavior of $p_k$ up to sets of measure $0$. 
As it turns out, this is ``the correct'' way of extending $p_k$, since $p_k$ tends to zero as $|x_i  - x_j| \to 0$, for some $i \not= j$, a fact which will be a consequence of our results.
 
\begin{remark}
	When we write $\E[|X'(x)| \,|\,X(x) = 0] $ we use the standard definition of a conditioned multivariate Gaussian.  This may be thought of as $\lim_{\eps\to0^+}\E[|X'(x)| \,|\,|X(x)| \leq \eps]$, which is different from $\lim_{\eps \to 0^+}\E[|X'(x)| \,|\, \exists~y\in[x-\eps,x+\eps] \text{ s.t. }X(y) = 0]\,$,
	which is another possible interpretation.
\end{remark}

\section{Using the integral form of the moments}\label{sec:using-KR}

In this section we set up the rest of the paper by introducing Lemma~\ref{th:density-bound}, which is our main technical lemma of the paper. After presenting this lemma, we will conclude the section by proving Theorem~\ref{thm:main1} assuming Lemma~\ref{th:density-bound}, thus providing motivation to our later sections.

In this direction, we introduce the density function
\begin{equation} \label{eq:def-pk} p_k(\bx,\by) := \frac{\E\left[\prod_{j = 1}^{k} |X'(x_j)|\cdot |Y'(y_j)|  \,\big|\, \left\lbrace X(x_i) = Y(y_i) = 0  \right\rbrace_{i=1}^k \right]  }{(2\pi)^{k}\det(\Sigma)^{1/2}}, \end{equation} which is a natural upper bound on $p_k(\bx,\by,U)$, for every $U$, and will be central to our discussion going forward. Our main technical lemma of this paper says that $p_k$ never gets too large. 

\begin{lemma}\label{th:density-bound}
For $k \in \N$, let $\bx,\by \in \T_0^k$ and $p_{k}$ be as above. Then
$$p_{k} (\bx,\by) \leq C_{k} n^{2k}\,,$$
where $C_{k}>0$ is a constant depending only on $k$.
\end{lemma}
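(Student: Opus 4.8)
\textbf{Proof proposal for Lemma~\ref{th:density-bound}.}

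The plan is to bound $p_k(\bx,\by)$ by systematically reducing to two extreme regimes — ``well-separated'' tuples and ``tightly clustered'' tuples — and then patching them together. First I would observe that the numerator of \eqref{eq:def-pk} is easy to control: by Fact~\ref{fact:gaussian}\eqref{item:gaussMoment}, the conditional expectation $\E[\prod_j |X'(x_j)||Y'(y_j)| \mid \{X(x_i)=Y(y_i)=0\}]$ is bounded by a constant (depending on $k$) times $\prod_j \sigma(X'(x_j)\mid \cdot)\,\sigma(Y'(y_j)\mid\cdot)$, and each conditional standard deviation is at most the unconditional one by Fact~\ref{fact:gaussian}\eqref{item:gaussCondition}; since $\Var X'(x) = \sum_j j^2 \sin^2(jx) = O(n^3)$ and likewise for $Y'$, the numerator is $O_k(n^{3k})$. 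Thus it suffices to prove the anti-concentration bound
\[ \det \Sigma_k(\bx,\by) \;\geq\; c_k\, n^{-2k}\cdot \big(\text{numerator}\big)^2 / n^{4k}, \]
or more usefully, to show directly that $\det\Sigma \geq c_k n^k \prod_j (\text{local scale})$ in a way that beats the $n^{3k}$ from the numerator by a factor $n^{-2k}$ overall. The cleanest route is to prove the combined statement: the conditional density of $(X'(x_j),Y'(y_j))_{j=1}^k$ given $(X(x_i),Y(y_i))_{i=1}^k = 0$, evaluated against the product of $|X'(x_j)||Y'(y_j)|$, is $O_k(n^{2k})$. Equivalently, after conditioning on the $2k$ linear constraints, the residual Gaussian vector $(X'(x_j),Y'(y_j))_j$ has a density whose relevant moment is controlled — and this is exactly where Lemma~\ref{lem:density-bound-restatement} / the interval form comes in, since $p_k$ is (up to the $|X'||Y'|$ weights) the Kac-Rice density for the events $A_i\cap B_i$.

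The key structural step is a dichotomy on scales. Fix a large constant $L$; partition the index set according to the equivalence relation generated by ``$|x_i - x_j| \le L|x_i - x_j'|$-type closeness'', so that the $2k$ points $x_1,\dots,x_k,y_1,\dots,y_k$ break into clusters, where within a cluster all points are within a factor of roughly $n^{-1}$ (or smaller) of each other, and distinct clusters are separated by $\gg$ the internal diameters. For \emph{well-separated} configurations (all $2k$ points pairwise $\gg n^{-1}$ apart), the covariance matrix $\Sigma$ is approximately block-diagonal: the off-diagonal correlations decay, so $\det\Sigma \approx \prod_i \det\Sigma_1(x_i,y_i)$, each factor of size $\Theta(n^2)$ after accounting for the $T_0$ condition keeping us away from $\pm1$ (where $\Var X(x) = \tfrac{n}{2} + O(1)$, $\Var Y(y) = \tfrac{n}{2}+O(1)$ and the cross-correlation is $O(1/n)$, giving $\det\Sigma_1 = \Theta(n^2)$ — note the relevant normalization here actually makes each single-pair factor contribute the right power). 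The conditional expectation in the numerator similarly factorizes into $k$ independent single-pair contributions, each of which is the standard Kac-Rice density for one root of $X$ near one root of $Y$, known to be $O(n^2)$ (this is essentially the $k=1$ case, which one checks directly). Multiplying, the well-separated case gives $O_k(n^{2k})$.

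The main obstacle — and the one I expect to consume the bulk of the argument — is the clustered regime, where several of the $2k$ points sit within $n^{-1}$ (or much less) of each other and the Gaussian vector $(X(x_i),Y(y_i))$ becomes highly degenerate, so naive determinant lower bounds fail. Here the strategy is to change variables inside each cluster: instead of the values $X(x_i)$ at nearby points, work with the divided differences / Taylor coefficients $X(x), X'(x), X''(x)/2!, \dots$ at a single representative point of the cluster, which are (conditionally) non-degenerate Gaussians with variances of order $n, n^3, n^5, \dots$ respectively. A cluster containing $m$ of the sample points then ``uses up'' the first few Taylor modes, and the relevant Jacobian of the change of variables contributes a Vandermonde-type factor $\prod_{i<j \text{ in cluster}} |x_i - x_j|$ in the denominator, which is exactly cancelled by the vanishing of the corresponding minor of $\Sigma$ — producing a finite, scale-independent limit (this is the sharpness ``for all sizes of intervals'' claimed after Lemma~\ref{lem:density-bound-restatement}, and is the step generalizing Granville--Wigman's three-roots-in-one-interval computation). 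One must do this simultaneously for $X$-points and $Y$-points in overlapping clusters, track the cross-correlations between $X$- and $Y$-Taylor data at the shared base point (controlled by the Cauchy--Riemann structure already noted in Section~\ref{sec:unit-circle-redux}), and verify that the $|X'(x_j)||Y'(y_j)|$ weights in the numerator, re-expressed in the new coordinates, do not spoil the bound. After handling a single generalized cluster, one assembles the general case by the block/separation argument above: distinct clusters contribute approximately independently, and the total is $O_k(n^{2k})$. The per-cluster estimate — an induction on cluster size with the Taylor-coefficient change of variables as the engine — is where essentially all the technical work lives, and it is presumably the content of the long sections the authors allude to.
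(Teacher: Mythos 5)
Your plan correctly identifies one of the two main engines of the paper's argument: to lower-bound $\det\Sigma_k(\bx,\by)$ you replace the degenerate raw values $X(x_i)$ within a cluster by divided differences (equivalently, rescaled Taylor coefficients at a representative point), extract a Vandermonde factor from the Jacobian $\Delta$ via $\det\Sigma = (\det\Delta)^{-2}\det(\Delta\Sigma\Delta^T)$, and lower-bound the residual covariance by a minimum-eigenvalue argument for well-separated points. This is precisely what the paper does in Section~\ref{sec:det-lower-bound} (Lemmas~\ref{lem:det-divided-diff}, \ref{lem:main-compare}, \ref{lem:det-close}, \ref{lem:det-LB-general}), with the well-separated min-eigenvalue bound (Lemma~\ref{lem:correlation-LB}) proved by Fourier-analytic means rather than by any Cauchy--Riemann structure, and with the clustering handled by a single scale-selection lemma (Lemma~\ref{lem:scale}) and one application of Lemma~\ref{lem:det-close}, rather than an induction on cluster size.

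However, your proposal has a genuine gap on the numerator side, and it is at least half the real work of the lemma. You open by bounding the conditional expectation $\alpha_k(\bx,\by)$ by $O_k(n^{3k})$ using Fact~\ref{fact:gaussian}. That bound is correct but far too crude in the clustered regime: if two $x_i$'s are at distance $\ll n^{-1}$, then $\det\Sigma^{1/2}$ is much smaller than $n^k$, and the ratio $n^{3k}/\det\Sigma^{1/2}$ blows up. The crucial fact, which cannot be extracted from your opening observation, is that the numerator $\alpha_k$ \emph{itself} vanishes as points coalesce, and does so with a Vandermonde factor $\prod_{i<j}\min\{|x_i-x_j|,n^{-1}\}^2\cdot\min\{|y_i-y_j|,n^{-1}\}^2$ that exactly matches the one hiding in $\det\Sigma^{1/2}$. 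The paper establishes this as a standalone upper bound (Lemma~\ref{cor:num-UB}), and the mechanism is \emph{not} the divided-difference change of variables at all: it is Rolle's theorem. Conditioning on $X(x_1) = \cdots = X(x_m) = 0$ at $m$ nearby points forces $X'(x_1) = \frac{X^{(m)}(\xi)}{m!}\prod_{j\geq 2}(x_1-x_j)$ for some $\xi$ (Lemma~\ref{lem:numerator-MVT}), and taking conditional expectations together with the sup-bound $\E\sup_{I}|X^{(m)}| = O(n^{m+1/2})$ (Lemma~\ref{lem:sup-bound}) yields $\E[|X'(x_1)|\mid\cdots] \lesssim n^{m+1/2}\prod_j|x_1-x_j|$ (Lemma~\ref{lem:num-close}). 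Your proposal compresses all of this into ``verify that the $|X'||Y'|$ weights, re-expressed in the new coordinates, do not spoil the bound,'' which names the problem but supplies no mechanism, and implicitly suggests doing it through the divided-difference coordinate change, which is not how the cancellation actually arises. Once both Lemma~\ref{cor:num-UB} and Lemma~\ref{lem:det-LB-general} are in hand, the final proof of Lemma~\ref{th:density-bound} is a one-line division; the ``well-separated vs.\ clustered'' dichotomy you describe at the top level is really internal to the proofs of those two lemmas, not to the proof of the density bound itself.
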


In addition to being a natural upper bound for $p_k(\bx,\by,U)$, $p_k(\bx,\by)$ may be thought of as a density for $k$-tuples of zeros, a perspective we illustrate by deriving Lemma~\ref{lem:density-bound-restatement} from Lemma~\ref{th:density-bound}.

\begin{proof}[Proof of Lemma~\ref{lem:density-bound-restatement}]
	Set $S = \prod_i^{2k} I_i$ and bound $$\P\left(\bigcap_{i = 1}^k (A_i \cap B_i)\right) \leq \int_S p_{k}(\bx,\by)\,d\bx\,d\by \leq C_k n^{2k}|S| = C_k n^{2k} \prod_{j = 1}^{2k}|I_j|\,. $$
%Let $\vp = \one\left( x_i \in I_i,\, y_i \in J_{i+k} \text{ for all } i \in [k] \right)$. We have that 
%\[ \PP\bigg( \bigcap_{i=1}^k (A_i \cap B_i) \bigg) = \int_{\T_0^k} p_k(\bx,\by) \vp(\bx,\bx) d\bx\, d\by
% =  O_k(n^{2k})\prod_{i=1}^{2k} |I_i|,  \]
%where the first equation is a Kac-Rice integral which we discuss in Appendix~\ref{sec:KR-details}.
\end{proof}

\vspace{3mm}

We now prove Theorem~\ref{thm:main1}, assuming Lemma~\ref{th:density-bound}, by calculating the moments
of $\mu_f(U)$ using Lemma~\ref{th:density-bound}, for all open and bounded sets $U$. Recall, $\mu_f$ is the (random) measure defined in Section~\ref{sec:unit-circle-redux}.
As a first step in this direction, we lay out a few approximations that will be proved later. 

\begin{lemma}\label{lem:early-approx}
	For $d(\{x,y\},\pi\Z) \geq n^{-1/2}$ and $|x - y| \geq n^{-1/2}$ we have 
\begin{equation}\label{eq:early-approx1}	\Cov\left(\frac{X'(x)}{n^{3/2}},\frac{Y(y)}{n^{1/2}},\frac{Y'(y)}{n^{3/2}},\frac{X(x)}{n^{1/2}}\right) = (1+o(1)) \begin{bmatrix}
	\frac{1}{6} & -\frac{1}{4} & 0 & 0 \\
	-\frac{1}{4} & \frac{1}{2} & 0 & 0 \\
	0 & 0 & \frac{1}{6} & \frac{1}{4} \\
	0 & 0 & \frac{1}{4} & \frac{1}{2}
	\end{bmatrix}\,. \end{equation} We also have
	\begin{equation}\label{eq:early-approx2}
		\Cov\left(\frac{X'(x)}{n^{3/2}},\frac{Y'(y)}{n^{3/2}}\,\bigg|\,X(x) = Y(y)=0 \right) = (1+o(1)) \begin{bmatrix}
		\frac{1}{24} & 0 \\ 0 & \frac{1}{24}
		\end{bmatrix}\,.\end{equation}
	Further, if $F, G \in \{ X,X',Y,Y'\}$ then 
	\begin{equation}\label{eq:early-approx3} \Corr(F(x),G(y)) = O(n^{-1/2})\,.
	\end{equation} \end{lemma}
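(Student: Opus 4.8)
The plan is to treat Lemma~\ref{lem:early-approx} as a deterministic computation. Since the $\eps_j$ are iid standard Gaussian, every covariance appearing in \eqref{eq:early-approx1}, \eqref{eq:early-approx2} and \eqref{eq:early-approx3} is a trigonometric sum, and all of them reduce via product-to-sum identities to the four basic sums
\[ c_a(\theta) := \sum_{j=1}^n j^a\cos(j\theta), \qquad s_a(\theta) := \sum_{j=1}^n j^a\sin(j\theta), \qquad a\in\{0,1,2\}, \]
evaluated at $\theta\in\{\,x-y,\ x+y,\ 2x,\ 2y,\ 0\,\}$. For instance $\Var(X'(x)) = \tfrac12\big(c_2(0) - c_2(2x)\big)$, $\Cov(X'(x),Y(y)) = -\tfrac12\big(c_1(x-y) - c_1(x+y)\big)$, $\Cov(Y'(y),X(x)) = \tfrac12\big(c_1(x-y) + c_1(x+y)\big)$, $\Cov(X'(x),Y'(y)) = -\tfrac12\big(s_2(x+y) + s_2(x-y)\big)$, and so on. So the first step is simply to write out all ten entries of the $4\times4$ matrix in \eqref{eq:early-approx1}, and the analogous pieces feeding \eqref{eq:early-approx2} and \eqref{eq:early-approx3}, in this normal form.

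The second step is to record two estimates for $c_a,s_a$. \emph{Main-term regime:} if $n|\theta| = o(1)$ then $\cos(j\theta) = 1 + O(j^2\theta^2)$ and $\sin(j\theta) = j\theta + O(j^3|\theta|^3)$ for all $j\le n$, so with $\sum_{j\le n}j^a = \tfrac{n^{a+1}}{a+1}(1+o(1))$ one gets $c_a(\theta) = \tfrac{n^{a+1}}{a+1}(1+o(1))$ and $s_a(\theta) = \theta\tfrac{n^{a+2}}{a+2}(1+o(1)) = o(n^{a+1})$. \emph{Oscillatory regime:} if $\theta$ has distance at least $\eta$ from $2\pi\Z$, then the geometric-series bound $\big|\sum_{j=1}^m e^{ij\theta}\big| \le 2|1-e^{i\theta}|^{-1} \le \pi/\eta$ together with two applications of Abel summation gives $|c_a(\theta)|,|s_a(\theta)| = O(n^a/\eta)$. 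The hypothesis $d(\{x,y\},\pi\Z)\ge n^{-1/2}$ is exactly what forces $2x$, $2y$ and $x+y$ to lie at distance at least $2n^{-1/2}$ from $2\pi\Z$, so the oscillatory regime with $\eta = n^{-1/2}$ applies to those three arguments, making $c_a,s_a$ there $O(n^{a+1/2})$, negligible next to an $n^{a+1}$ main term. The only delicate argument is $\theta = x-y$: for \eqref{eq:early-approx1} and \eqref{eq:early-approx2} we use the main-term regime, which is valid whenever $n|x-y| = o(1)$, in particular throughout the support of $\cC_f$ (where $|x-y|\le n^{-2}(\log n)^4$; recall \eqref{eq:c-def}), while for \eqref{eq:early-approx3} the separation $|x-y|\ge n^{-1/2}$ places $x-y$ in the oscillatory regime as well.

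Step three is assembly. For \eqref{eq:early-approx1}: the four variances are $\Var(X'(x)) = \Var(Y'(y)) = \tfrac{n^3}{6}(1+o(1))$ and $\Var(X(x)) = \Var(Y(y)) = \tfrac n2(1+o(1))$ (only $c_a(0)$ survives, the $c_a(2x),c_a(2y)$ pieces being $O(n^{a+1/2})$); the two surviving cross terms are $\Cov(X'(x),Y(y)) = -\tfrac12 c_1(x-y) + O(n^{3/2}) = -\tfrac{n^2}{4}(1+o(1))$ and $\Cov(Y'(y),X(x)) = \tfrac12 c_1(x-y) + O(n^{3/2}) = \tfrac{n^2}{4}(1+o(1))$; and every remaining entry --- $\Cov(X'(x),Y'(y))$, $\Cov(X'(x),X(x))$, $\Cov(Y(y),Y'(y))$, $\Cov(X(x),Y(y))$ --- is built only from $s_a$ at $2x$, $2y$ or $x\pm y$, hence $o(n^{a+1})$ and so $o(1)$ after the stated normalization. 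This is precisely the block-diagonal matrix claimed. For \eqref{eq:early-approx3}, dividing a covariance that is $O(n^a/\eta)$ with $\eta = n^{-1/2}$ by variances of order $n$ or $n^3$ gives $O(n^{-1/2})$ directly. Finally \eqref{eq:early-approx2} follows from \eqref{eq:early-approx1} and the Gaussian conditioning (Schur complement) identity: with $V = (X'(x),Y'(y))$ and $W = (X(x),Y(y))$ one has $\Cov(V\mid W) = \Cov(V) - \Cov(V,W)\Cov(W)^{-1}\Cov(W,V)$, where $\Cov(V) = \tfrac{n^3}{6}I(1+o(1))$, $\Cov(W) = \tfrac n2 I(1+o(1))$ so $\Cov(W)^{-1} = \tfrac 2n I(1+o(1))$, and $\Cov(V,W)$ has $(1,2)$- and $(2,1)$-entries $-\tfrac{n^2}{4}(1+o(1))$ and $+\tfrac{n^2}{4}(1+o(1))$ and diagonal entries $O(n^{3/2})$; hence $\Cov(V,W)\Cov(W)^{-1}\Cov(W,V) = \tfrac 2n\cdot\tfrac{n^4}{16}I(1+o(1)) = \tfrac{n^3}{8}I(1+o(1))$ and therefore $\Cov(V\mid W) = \big(\tfrac16 - \tfrac18\big)n^3 I(1+o(1)) = \tfrac{n^3}{24}I(1+o(1))$, which is \eqref{eq:early-approx2} after dividing by $n^3$.

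The whole argument is routine once the two sum-estimates are in hand; the main point requiring care --- and the only genuine obstacle --- is the bookkeeping of which of $x-y$, $x+y$, $2x$, $2y$ lies in the main-term regime and which in the oscillatory one, and in particular using that $x,y$ are bounded away from $0$ and $\pi$ to keep $2x$, $2y$ and $x+y$ uniformly away from $2\pi\Z$. One should also check that the $o(n^3)$ error terms genuinely survive the matrix inversion and subtraction in the Schur-complement step; they do, since $\Cov(W)$ is to leading order a scalar multiple of the identity, so the correction term is $\tfrac{n^3}{8}I$ up to an $o(n^3)$ matrix and the nonzero difference $\tfrac{n^3}{24}I(1+o(1))$ is left intact.
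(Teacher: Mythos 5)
Your proof is correct, and the underlying mathematics is the same as the paper's --- product-to-sum identities reducing every covariance to the sums $c_a,s_a$ (the paper's $D_{n,d},S_{n,d}$), a Riemann-sum/Abel-summation estimate in two regimes (the paper's Fact~\ref{fact:approx-int}), and a Schur complement for the conditional variance in \eqref{eq:early-approx2}. The one genuine difference is that you compute directly, whereas the paper routes through the limiting process $(W,Z)$ via Lemma~\ref{lem:covar-approx} and the integral formulas \eqref{eq:covWaWb}--\eqref{eq:covWaZb}; your route is more self-contained and avoids the extra abstraction, at the cost of a little more bookkeeping in the assembly step.

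The most valuable thing you do, though, is to fix a real inconsistency in the statement of Lemma~\ref{lem:early-approx}. As written, the hypotheses include $|x-y|\ge n^{-1/2}$, but under that hypothesis the entries $\Cov(X'(x)/n^{3/2},Y(y)/n^{1/2})$ and $\Cov(Y'(y)/n^{3/2},X(x)/n^{1/2})$ are $O(n^{-1/2})$, not $\mp\tfrac14$: one has
\[
\Cov(X'(x),Y(y)) = -\tfrac12\big(c_1(x-y)-c_1(x+y)\big),
\]
and $c_1(x-y)=O(n/|x-y|)=O(n^{3/2})$ when $|x-y|\ge n^{-1/2}$, so the normalized entry is $o(1)$ rather than $-\tfrac14$. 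Consequently \eqref{eq:early-approx2} also fails under that hypothesis (the conditional variance would stay at $\tfrac16$ rather than drop to $\tfrac1{24}$). The $\mp\tfrac14$ entries --- and hence \eqref{eq:early-approx1} and \eqref{eq:early-approx2} as stated --- require $n|x-y|=o(1)$, which is exactly the regime in which those identities are invoked in Lemma~\ref{lem:eval-mean} and Lemma~\ref{lem:factor-density}, where $|x-y|\le n^{-2}(\log n)^4$. The hypothesis $|x-y|\ge n^{-1/2}$ is needed only for \eqref{eq:early-approx3}, as you correctly observe. You should state this split of hypotheses explicitly rather than leaving it as a remark inside the argument, since it amends the lemma's statement and not merely its proof.
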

	
	\begin{proof} Lines \eqref{eq:early-approx1} and \eqref{eq:early-approx3} follow from Lemmas~\ref{lem:covar-approx} and Fact~\ref{fact:approx-int} while
	\eqref{eq:early-approx2} follows from \eqref{eq:early-approx1} and the formula for the variance of a conditioned Gaussian.\end{proof}

\vspace{4mm} 

\noindent We now consider our first moment $\E\, \mu_f(U)$, where $U$ is a finite union of intervals.

\begin{lemma}\label{lem:eval-mean} Let $U$ be an open set. We have that 
	\[ \E\, \mu_f(U)  = \frac{|U|}{12} + o(1) \,. \]
\end{lemma}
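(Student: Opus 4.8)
The plan is to apply the Kac--Rice formula of Lemma~\ref{lem:our-KR-form} with $k=1$, so that
\[ \E\,\mu_f(U) = \int_{\T_0^2} p_1(x,y,U)\, dx\, dy, \]
and then to evaluate the integrand asymptotically using the covariance approximations of Lemma~\ref{lem:early-approx}. The key point is that the constraint $|x-y|\le n^{-2}(\log n)^4$ built into $\vp_U$ forces the integral to concentrate on a thin diagonal strip, so after a change of variables $u = (x-y)n^2$ the integral becomes, up to $o(1)$ errors, an integral over $x\in\T_0$ and $u\in\R$ of the probability density (in $u$) that the rescaled quantity $uX'(x)Y'(y)/((X'(x))^2+(Y'(y))^2)$ lands in $U$, weighted by the Kac--Rice density for $X$ to vanish at $x$ and $Y$ at $y$.

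First I would fix $x$ and use the independence-type statement \eqref{eq:early-approx3}: since $x,y$ are $O(n^{-2+o(1)})$ apart one cannot directly apply Lemma~\ref{lem:early-approx} (which needs $|x-y|\ge n^{-1/2}$), so instead I would compare $(X(x),X'(x))$ and $(Y(y),Y'(y))$ at nearby points to their values at a single point, say replacing $Y(y),Y'(y)$ by $Y(x),Y'(x)$ up to lower-order terms via Taylor expansion and Bernstein's inequality (just as in the proofs of Lemmas~\ref{lem:pairing1} and \ref{lem:pairing2}); the cost of this replacement is $1+o(1)$ on the relevant scales. Then at the single point $x$ the vector $(X(x),X'(x),Y(x),Y'(x))$ is, after rescaling by $n^{1/2}$ and $n^{3/2}$, approximately the Gaussian with the covariance matrix displayed in \eqref{eq:early-approx1} (this uses $d(x,\pi\Z)\ge n^{-1/2}$, which holds on $\T_0$), and crucially $X$ and $Y$ decouple at a single point. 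Conditioning on $X(x)=Y(x)=0$, the pair $(X'(x)/n^{3/2}, Y'(x)/n^{3/2})$ becomes a centered Gaussian with covariance $(1+o(1))\mathrm{diag}(1/24,1/24)$ by \eqref{eq:early-approx2}, while $\det\Sigma_1(x,y)^{1/2} = (1+o(1)) n\cdot\frac12$ from the $(X(x),Y(x))$ block of \eqref{eq:early-approx1} (each marginal variance $\sim n/2$, and they are asymptotically uncorrelated).

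Next I would carry out the change of variables. Writing $a = X'(x)/n^{3/2}$, $b=Y'(y)/n^{3/2}\approx Y'(x)/n^{3/2}$ and $u=(x-y)n^2$, the event $(x,y)\in\cC_f(U)$ becomes (to leading order) $X(x)=Y(y)=0$, $u ab/(a^2+b^2)\in U$ and $|u|\le(\log n)^4$; the extra factor $|X'(x)||Y'(y)| = n^3|ab|$ in the numerator of $p_1$ pairs with $dx\,dy = n^{-2}\,dx\,du$ and the $(2\pi)\det\Sigma^{1/2}\sim 2\pi\cdot n/2$ in the denominator to give a clean constant. Concretely,
\[ \E\,\mu_f(U) = (1+o(1))\,\frac{1}{\pi n}\int_{\T_0}\!\!\int_{|u|\le(\log n)^4}\!\! \E\!\left[ n^3|ab|\,\one\!\left\{\tfrac{uab}{a^2+b^2}\in U\right\}\right] n^{-2}\,du\,dx , \]
where $(a,b)\sim N(0,\tfrac1{24}I_2)$; the $n^3\cdot n^{-2}/n = 1$ powers cancel, $|\T_0| = \pi + o(1)$, and I would extend the $u$-integral to all of $\R$ at a cost of $o(1)$ (the integrand decays once $|u|$ is large since $|ab|/(a^2+b^2)\le\tfrac12$ forces $u$ comparable to the endpoints of $U$). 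It then remains to compute the scalar integral
\[ \frac{1}{\pi}\int_{\T_0}dx \cdot \E_{(a,b)\sim N(0,\frac1{24}I_2)}\!\left[\,|ab|\cdot\mathrm{Leb}\!\left\{u\in\R : \tfrac{uab}{a^2+b^2}\in U\right\}\right] = \E\!\left[\,|ab|\cdot\frac{a^2+b^2}{|ab|}\right]|U| = \E\big[a^2+b^2\big]\,|U|, \]
using that the map $u\mapsto uab/(a^2+b^2)$ is linear with slope $ab/(a^2+b^2)$, so the Lebesgue measure of the preimage of $U$ is $|U|(a^2+b^2)/|ab|$. Since $\E[a^2+b^2] = 2\cdot\tfrac1{24} = \tfrac1{12}$, this yields $\E\,\mu_f(U) = |U|/12 + o(1)$.

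The main obstacle is the first step: justifying the Taylor-expansion replacement of the four-dimensional Gaussian $(X(x),X'(x),Y(y),Y'(y))$ by its value at a single point $x$ uniformly over the diagonal strip, and controlling the resulting error in the \emph{conditioned} expectation $p_1(x,y,U)$ rather than just in the unconditioned covariance. This requires care because one is conditioning on a measure-zero event and because the factor $|X'(x)||Y'(y)|$ is unbounded; I would handle it by noting that conditionally on $X(x)=Y(y)=0$ the variables $X'(x),Y'(y)$ are Gaussian with variance $O(n^3)$ (Fact~\ref{fact:gaussian}(\ref{item:gaussCondition})), so all moments are controlled, and the $o(1)$ perturbations to the covariance entries translate to $o(1)$ perturbations of $p_1$ by continuity of the relevant Gaussian integrals — a routine but slightly delicate estimate that parallels the arguments already used in Section~\ref{sec:unit-circle-redux}. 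A secondary technical point is checking that the contribution of $x$ within $n^{-1/2}$ of $\pi\Z$ (excluded from $\T_0$) and the tails $|u|>(\log n)^4$ are genuinely negligible, which follows from Lemma~\ref{th:density-bound} and the boundedness of $|ab|/(a^2+b^2)$ respectively.
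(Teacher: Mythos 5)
Your proposal is correct and follows the same overall strategy as the paper: apply the Kac--Rice formula (Lemma~\ref{lem:our-KR-form}) with $k=1$, approximate $\det\Sigma$ and the conditional law of $(X'(x),Y'(y))$ via Lemma~\ref{lem:early-approx}, and then change variables and apply Fubini so that the $|X'(x)Y'(y)|$ weight cancels against the length $|U|(a^2+b^2)/|ab|$ of the $u$-preimage, yielding $|U|\,\E[a^2+b^2]=|U|/12$. The one place you diverge is well taken: you correctly point out that Lemma~\ref{lem:early-approx} as stated assumes $|x-y|\geq n^{-1/2}$, which is violated everywhere on the diagonal strip $|x-y|\leq n^{-2}(\log n)^4$, and you repair this by Taylor-expanding $Y(y),Y'(y)$ back to the single point $x$ and then conditioning at that point. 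That fix works, but it is slightly heavier than necessary: the specific entries used in this proof — $\Var X(x)$, $\Var Y(y)$, $\Cov(X(x),Y(y))$, the conditional variances of $X'(x),Y'(y)$, and $\Corr(X'(x),Y'(y))$ — all pair one $X$-type with one $Y$-type quantity (or are single-point variances), and these estimates hold uniformly over \emph{all} $|x-y|$ once $x,y\in\T_0$, since they reduce to bounds like $\sum k^j\sin(2kx)=O(n^{j+1/2})$ rather than to the $x-y$ oscillatory sums. The hypothesis $|x-y|\geq n^{-1/2}$ is genuinely needed only for same-function correlations such as $\Corr(X(x),X(y))$, which arise in the factorization Lemma~\ref{lem:factor-density} but not here; so the paper's direct invocation of \eqref{eq:early-approx1}--\eqref{eq:early-approx3} is, modulo the imprecise statement of Lemma~\ref{lem:early-approx}, also sound.
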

\begin{proof}
	We first apply Lemma~\ref{lem:our-KR-form} to express 
	\begin{equation} \label{eq:eval-mean1} \E\, \mu_f(U) = \int_{\T_0^2} p_1(x,y,U) \,d x d y\, 
	= \int_{\T_0}\int_{y: |x- y| \leq n^{-2}(\log n)^4 } p_1(x,y,U)\,dy\, dx , \end{equation}
	where the second equality holds since pairs $(x,y)$ with $|x - y| > n^{-2}(\log n)^4 $ have $p_1(x,y,U) = 0$. We now consider 
\begin{equation}\label{eq:p1} p_1(x,y,U) 
	= \frac{\E\left[| X'(x) Y'(y)| \cdot \vp_U(x,y) \,\big|\,  X(x) = Y(y) = 0   \right]  }{(2\pi)\det(\Sigma)^{1/2}}, \end{equation}
where $\Sigma$ is the 2 by 2 covariance matrix $\Cov( X(x),Y(y) )$.
We first directly compute the denominator of \eqref{eq:p1}, by using \eqref{eq:early-approx1} in Lemma~\ref{lem:early-approx}:
\begin{equation} \label{eq:detSigma} \det(\Sigma) = (1+o(1))\det \begin{bmatrix}
		\frac{1}{2} & 0 \\ 0 & \frac{1}{2}
		\end{bmatrix}\,=  (1+o(1))n^2/4.\end{equation}

Set $\sigma^2 = n^{3}/24$ and define the multivariate Gaussian 
\[ (W_1,W_2) = (X'(x)/\sigma,Y'(y)/\sigma \,|\,X(x) = Y(y) = 0) \] and note that $\E W_j^2 = 1 + o(1)$ by \eqref{eq:early-approx2} and $\Cov(W_1,W_2) = O(n^{-1/2})$ by \eqref{eq:early-approx3}.  We may then write $$ \E\left[| X'(x) Y'(y)| \cdot \vp_U(x,y) \,\big|\,  X(x) = Y(y) = 0   \right]  = \sigma^2 \E\left[|W_1 W_2| \one\left(\frac{n^2r W_1 W_2 }{W_1^2 + W_2^2} \in U\right) \right] $$
and note $$\E\left[|W_1 W_2| \one\left(\frac{n^2r W_1 W_2 }{W_1^2 + W_2^2} \in U\right) \right] = (1 + o(1))\E\left[|Z_1 Z_2| \one\left(\frac{n^2r Z_1 Z_2 }{Z_1^2 + Z_2^2} \in U\right) \right]$$
where $(Z_1,Z_2)$ is a standard $2$-dimensional Gaussian.  Putting the previous two lines together gives
%
\begin{comment}
Now, turning to the numerator of \eqref{eq:p1}, by \eqref{eq:early-approx2} in Lemma~\ref{lem:early-approx}, we see that $(X'(x)n^{-3/2}, Y'(y) n^{-3/2})$, conditioned on $X(x) = 0,Y(y) = 0$, converges in distribution to $((24)^{1/2}Z_1,(24)^{1/2}Z_2)$, as $n\rightarrow \infty$ where $Z_1,Z_2 \sim N(0,1)$ are independent. Therefore
\end{comment}
\begin{equation} \label{eq:EX'Y'} \E\left[| X'(x) Y'(y)| \cdot \vp_U(x,y) \,\big|\,  X(x) = Y(y) = 0   \right] 
= (1+o(1))\frac{n^3}{24}\E\left[  |Z_1 Z_2|  \one\left(\frac{n^2 r Z_1 Z_2}{Z_1^2 + Z_2^2} \in U\right) \right]. \end{equation}

Now, with \eqref{eq:detSigma} and \eqref{eq:EX'Y'} in hand, we return to \eqref{eq:eval-mean1}. We set $\eta = n^{-2}(\log n)^4$ and, for each fixed $x$, we let $r = |x-y|$ and compute
\[  \int_{y: |x- y| \leq n^{-2}(\log n)^4 } p_1(x,y,U)\,dy = (1+o(1))\frac{n^2}{24\pi } \int_{-\eta}^{\eta}\E\left[|Z_1|\cdot |Z_2| \one\left(\frac{n^2 r Z_1 Z_2}{Z_1^2 + Z_2^2} \in U\right) \right]\,dr. \]
Anticipating an application of Fubini's theorem, note that $$\int_{-\eta}^\eta \one\left(\frac{n^2 r Z_1 Z_2}{Z_1^2 + Z_2^2} \in U \right)\,dr =\int_{-\eta}^\eta \one\left( r \in \left(\frac{Z_1^2 + Z_2^2}{n^2 Z_1 Z_2}\right)U\right)\,dr = \left|[-\eta,\eta ] \cap\left(\frac{Z_1^2 + Z_2^2}{n^2 Z_1 Z_2}\right)U\right|\,.$$

We then have \begin{align*}\int_{-\eta}^{\eta}\E\left[|Z_1|\cdot |Z_2| \one\left(\frac{n^2 r Z_1 Z_2}{Z_1^2 + Z_2^2} \in U\right) \right]\,dr  &= (1 + o(1)) \E\left[|U| n^{-2} \cdot |Z_1| \cdot |Z_2| \cdot \frac{Z_1^2 + Z_2^2}{|Z_1|\cdot |Z_2|} \right] \\
&= (1 + o(1)) 2|U|/n^2\,.\end{align*}
Combining then gives 
\[  \int_{y: |x- y| \leq n^{-2}(\log n)^4 } p_1(x,y,U)\,dy = (1 + o(1)) \frac{n^2}{24\pi}(2 \cdot |U| /n^2) =\frac{|U|}{12 \pi} +o(1). \]
Thus, from \eqref{eq:eval-mean1}, we have that 
\[ \E\, \mu_f(U) =\int_{\T_0} \int_{y: |x- y| \leq n^{-2}(\log n)^4 } p_1(x,y,U)\,dy\, dx = \frac{|U|}{12} +o(1), \]
as desired.
\end{proof}

\vspace{4mm}

\noindent We now show that $p_k(\bx,\by)$ approximately factors when the $x_j$ have pairwise distance at least $n^{-1/2}$. For this, we need an elementary fact
about inverting matrices in a neighborhood of the identity. Here and throughout, we use the notation $\|B\|_{op}$ to denote the  $\ell^2 \mapsto \ell^2$ operator norm.

\begin{fact}\label{fact:matrix-inverse} Let $M$ be a $d\times d$ matrix of the form $M =  I + E$, where $\|E\|_{op} = \delta$. If $\delta <1 $ then 
\[ M^{-1} =  I + B, \]
where $\|B\|_{op} \leq \delta/(1 - \delta)$.
\end{fact}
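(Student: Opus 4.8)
The plan is the standard Neumann series argument. First I would define
\[ B := \sum_{j=1}^{\infty} (-E)^j \]
and check that this series converges in operator norm: by submultiplicativity $\|(-E)^j\|_{op} \le \|E\|_{op}^j = \delta^j$, and since $\delta < 1$ the partial sums are Cauchy in the (complete) space of $d \times d$ matrices under $\|\cdot\|_{op}$, so $B$ is a well-defined matrix.

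Next I would verify that $I + B = M^{-1}$. Setting $S_N := \sum_{j=0}^{N} (-E)^j$, a telescoping computation gives $(I+E)S_N = I - (-E)^{N+1}$. Letting $N \to \infty$ and using $\|(-E)^{N+1}\|_{op} \le \delta^{N+1} \to 0$ together with continuity of matrix multiplication, we get $(I+E)(I+B) = I$; the identical argument applied on the other side yields $(I+B)(I+E) = I$. Hence $M$ is invertible with $M^{-1} = I + B$.

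Finally, the norm bound is immediate from the triangle inequality and submultiplicativity:
\[ \|B\|_{op} \le \sum_{j=1}^{\infty} \|E\|_{op}^{j} = \sum_{j=1}^{\infty} \delta^{j} = \frac{\delta}{1-\delta}. \]

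There is no real obstacle here: the statement is a routine fact, and the only point needing a word of justification is the convergence of the series and the interchange of the limit $N \to \infty$ with matrix multiplication, both of which follow instantly from completeness of the finite-dimensional space of matrices and continuity of multiplication.
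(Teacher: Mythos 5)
Your proof is correct and is the standard Neumann series argument. The paper states this as an elementary \emph{Fact} and supplies no proof of its own, so there is no alternative paper argument to compare against; the route you take is the canonical one and matches what is implicitly invoked.
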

\noindent For what follows, let $k \in \N$ and define 
\[ D_{1} = D_{1,k,n} := \left\lbrace (\bx,\by) \in \T_0^{2k} : |x_i - x_j| \geq n^{-1/2} \textit{ for all } i\neq j \right\rbrace . \]

\begin{lemma}\label{lem:factor-density}
Let $U$ be an open set. For all $\bx \in D_{1,k}$ and all $\by$, we have
$$p_{k}(\bx,\by,U) = \left(1 + o(1)\right) \prod_{j = 1}^k p_1(x_j,y_j,U) + O(e^{-c(\log n)^2})\,.$$
\end{lemma}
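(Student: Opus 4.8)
\textbf{Proof proposal for Lemma~\ref{lem:factor-density}.}

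The plan is to show that when the $x_j$ are pairwise well-separated, the Gaussian vector $\big(X(x_1),\dots,X(x_k),Y(y_1),\dots,Y(y_k),X'(x_1),\dots,X'(x_k),Y'(y_1),\dots,Y'(y_k)\big)$ decouples across the index $j$ up to small errors, and then to unwind the definition of $p_k(\bx,\by,U)$ to see that both the determinant factor and the conditional expectation factor accordingly. The key input is that the correlations between any two of $X(x_i),X'(x_i),Y(y_i),Y'(y_i)$ and any of $X(x_j),X'(x_j),Y(y_j),Y'(y_j)$ for $i\neq j$ are $O(n^{-1/2})$; this is exactly \eqref{eq:early-approx3} in Lemma~\ref{lem:early-approx}, which applies precisely because $(\bx,\by)\in D_{1,k}$ forces $|x_i-x_j|\geq n^{-1/2}$ (note we do not need the $y_j$ separated, since the $Y(y_j)$ are not conditioned against each other in a way that matters after rescaling — the pairs $(x_j,y_j)$ are $\delta$-close by the support condition $\vp_U$, so separation of the $x$'s gives separation of everything across blocks).

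First I would rescale: let $\widehat X(x_j) = X(x_j)/n^{1/2}$, $\widehat Y(y_j)=Y(y_j)/n^{1/2}$, $\widehat X'(x_j)=X'(x_j)/n^{3/2}$, $\widehat Y'(y_j)=Y'(y_j)/n^{3/2}$. In these variables the full $4k\times 4k$ covariance matrix $\Gamma$ is, by Lemma~\ref{lem:early-approx}, block-diagonal up to an error of operator norm $O(k\cdot n^{-1/2})$: the $j$-th diagonal block is the $4\times 4$ matrix appearing in \eqref{eq:early-approx1} (with rows/columns in the order $\widehat X'(x_j),\widehat Y(y_j),\widehat Y'(y_j),\widehat X(x_j)$), call it $\Gamma_0$, and every off-block entry is $O(n^{-1/2})$. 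Write $\Gamma = \Gamma_{\mathrm{diag}} + E$ with $\Gamma_{\mathrm{diag}} = \bigoplus_{j} \Gamma_0$ and $\|E\|_{op} = O(n^{-1/2})$. Since $\Gamma_0$ is a fixed invertible matrix, $\Gamma_{\mathrm{diag}}^{-1/2}\Gamma\,\Gamma_{\mathrm{diag}}^{-1/2} = I + \widetilde E$ with $\|\widetilde E\|_{op} = O(n^{-1/2})$, and Fact~\ref{fact:matrix-inverse} lets me invert it. This gives $\det\Gamma = (1+o(1))\prod_j \det\Gamma_0$, hence, extracting the relevant $2k\times 2k$ sub-block $\Sigma$ of $X(x_i),Y(y_i)$ values, $\det(\Sigma) = (1+o(1))\prod_j \det(\Sigma^{(j)})$ where $\Sigma^{(j)} = \Cov(X(x_j),Y(y_j))$; after the $n^{1/2}$ scaling this is the denominator of $p_k$ factoring as $(1+o(1))\prod_j$ (denominator of $p_1(x_j,y_j,U)$).

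For the numerator I would argue that the conditional law of $\big(\widehat X'(x_j),\widehat Y'(y_j)\big)_{j=1}^k$ given $\{\widehat X(x_i)=\widehat Y(y_i)=0\}_{i=1}^k$ is a centered $2k$-dimensional Gaussian whose covariance is within $O(n^{-1/2})$ in operator norm of the block-diagonal matrix $\bigoplus_j \big(\tfrac{1}{24}I_2\big)$ — this follows from the Schur-complement formula for conditioning together with the near-block-diagonality of $\Gamma$ and Fact~\ref{fact:matrix-inverse}, and it refines \eqref{eq:early-approx2}. Consequently one can couple this Gaussian to a vector whose blocks are genuinely independent copies of the $j=1$ conditional law, with total variation / Wasserstein error $O(n^{-1/2})$; since the integrand $\prod_j |X'(x_j)|\,|Y'(y_j)|\,\vp_U(x_j,y_j)$ is, in rescaled variables, a product over $j$ of the bounded-support functions $n^3\big|\widehat X'(x_j)\widehat Y'(y_j)\big|\one\{\cdots\in U\}$ whose $L^1$-type norms are controlled (exactly as in the proof of Lemma~\ref{lem:eval-mean}), the expectation of the product is $(1+o(1))\prod_j$ (the single-pair expectation) $+\,O(e^{-c(\log n)^2})$ — the exponential error absorbing the contribution of the rare event that some $|\widehat X'(x_j)|$ is atypically large, using Gaussian concentration (Fact~\ref{fact:gaussian}) and the fact that $|x_j-y_j|\leq n^{-2}(\log n)^4$ keeps the support of $\vp_U$ thin. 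Assembling the factored numerator and denominator, and comparing with $\prod_j p_1(x_j,y_j,U)$ term by term, yields the claim.

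The main obstacle I expect is the last step: controlling the expectation of a product of unbounded (but rapidly decaying) functions under a Gaussian law that is only \emph{approximately} product, uniformly over all $\bx \in D_{1,k}$ and all $\by$. The subtlety is that $|X'(x_j)|$ has size $\sim n^{3/2}$ and the indicator $\vp_U$ cuts out a set of probability $\sim n^{-2}$, so the integrand is a product of large-times-small factors and one must be careful that the $O(n^{-1/2})$ perturbation of the covariance does not interact badly with the heavy-ish tails or with the thinness of the support region; the resolution is to truncate each $|\widehat X'(x_j)|,|\widehat Y'(y_j)|$ at level $(\log n)$, pay $O(e^{-c(\log n)^2})$ for the truncation via Fact~\ref{fact:gaussian}, and then on the truncated (hence bounded) region use that a Gaussian density perturbed by $O(n^{-1/2})$ in its covariance changes expectations of bounded functions by $O(n^{-1/2})$, times the $n^{-2}$-scale normalization that is already accounted for in $p_1$.
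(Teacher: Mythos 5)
Your decomposition mirrors the paper's: split $p_k$ into numerator $\alpha_{n,k}$ and denominator $\det\Sigma$, show the $4k\times4k$ covariance is block-diagonal up to an $O(n^{-1/2})$ operator-norm perturbation via Lemma~\ref{lem:early-approx}, get the determinant factorization, truncate the derivatives at $n^{3/2}\log n$ paying $O(e^{-c(\log n)^2})$, and then show the truncated numerator factors. The denominator step and the truncation step are exactly as in the paper.

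The one place your argument, as written, has a real gap is precisely the step you flagged as the main obstacle: replacing the true conditional covariance $M$ of $(X'(x_j),Y'(y_j))_j$ by the block-diagonal $M'$. You propose to do this by a coupling with total-variation/Wasserstein error $O(n^{-1/2})$. But a TV bound only gives an \emph{additive} error $\lesssim \|g\|_\infty\cdot n^{-1/2}$ for bounded $g$, and here the truncated integrand has $\|g\|_\infty\sim (n^{3/2}\log n)^{2k}$ while $\prod_j \alpha_1(x_j,y_j,U)$ can be far smaller than $n^{3k-1/2}$ (e.g.\ when some $|x_j-y_j|\ll n^{-2}$, so the indicator $\vp_U$ has tiny conditional probability). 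So the TV-based additive error is not $o(1)$ \emph{relative} to the main term, and the conclusion $(1+o(1))\prod_j p_1$ does not follow uniformly in $\by$. What's actually needed, and what the paper does, is a \emph{pointwise} density-ratio comparison on the truncated region: write $\alpha_{n,k}$ as an explicit Gaussian integral with density $\propto \det(M)^{-1/2}\exp(-\tfrac12(\mathbf{s},\mathbf{t})^TM^{-1}(\mathbf{s},\mathbf{t}))$, use Fact~\ref{fact:matrix-inverse} to bound $|(\mathbf{s},\mathbf{t})^T(M^{-1}-(M')^{-1})(\mathbf{s},\mathbf{t})| = O(n^{-1/2}(\log n)^2)$ uniformly over $|(\mathbf{s},\mathbf{t})|_\infty\le n^{3/2}\log n$, and likewise $\det(M)=(1+O(n^{-1/2}))\det(M')$, so the two densities differ pointwise by a multiplicative $(1+O(n^{-1/2}(\log n)^2))$. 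This multiplicative control is inherited by the integral against any nonnegative integrand supported in the truncated box, and the block-diagonal $M'$ then lets the integral factor exactly. Your penultimate sentence gestures at exactly this ("on the truncated region \dots changes expectations of bounded functions by $O(n^{-1/2})$"), but stated as a TV statement it is insufficient; stated as a pointwise density-ratio bound it is the right step and closes the proof.
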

\begin{proof} 
We consider the numerator and denominator of the density $p_{k}(\bx,\by,U)$ separately. That is,
\begin{equation}\label{eq:pkk} p_{k}(\bx,\by,U) = \frac{\alpha_{n,k}(\bx,\by)}{(2\pi)^{k} \det(\Sigma)^{1/2}}, \end{equation}
where 
\[ \Sigma = \Cov\left( X(x_j),Y(y_j) \right)_{i,j \in [k]}. \]

Note that we may restrict ourselves to considering $\by$ which satisfy $|y_j - x_j| \leq n^{-2}(\log n)^4$ for all $i \in [k]$, since $p_k = 0 $ otherwise. Now, for $j \in [k]$, define $\Sigma_j$ to be the $2\times 2$ covariance matrix $\Cov((X(x_j),Y(x_j)))$. We may use Lemma \ref{lem:early-approx} to see that 
$$
\Sigma = \begin{pmatrix}
\Sigma_1 & 0 & \cdots & 0 \\
0 & \Sigma_2 & \cdots & 0 \\
\vdots  & \vdots  & \ddots & \vdots  \\
0 & 0 & \cdots & \Sigma_k  
\end{pmatrix} + E 
$$
where $E$ is a matrix with all entries $O(n^{1/2})$.  Since the diagonal entries of each $\Sigma_j$ are $\Theta(n)$, it follows that
\begin{equation} \label{eq:det-factor}
\det \Sigma = (1 + O(n^{-1/2})) \prod_{j = 1}^k \det \Sigma_j \,.
\end{equation}

We now turn to the numerator of $p_{k}(\bx,\by,U)$,
$$\alpha_{n,k}(\bx,\by) = \E\left[\left(\prod_{j = 1}^k |X'(x_j)| \cdot |Y'(y_j)| \right)\cdot \varphi \,\bigg|\, X(x_j) = Y(y_j) = 0 \text{ for all  }j\right]\,.$$
Set 
\[ \psi := \one\left( |X'(x_i)| \leq n^{3/2}\log n, |Y'(x_j)| \leq n^{3/2}\log n  \text{ for all } i,j \in [k]\right)\]
and notice that we may introduce $\psi$ into the expectation in $\alpha_{n,k}$ and incur an additive error of at most $O(e^{-c (\log n)^2 })$. That is,
\begin{align}
\alpha_{n,k}(\bx,\by) &= \E\left[\left(\prod_{j = 1}^k |X'(x_j)| \cdot |Y'(y_j)| \right)\cdot \varphi\cdot \psi \,\bigg|\, X(x_j) = Y(y_j) = 0 \text{ for all  }j\right] + O(e^{-c(\log n)^2}) \nonumber\\
&= \int_{\T_0^{2k}} \frac{  s_1\ldots s_k t_1\ldots t_k\cdot \varphi\cdot \psi}{(2\pi)^k \det(M)^{1/2}}\exp\left(-(\mathbf{s},\mathbf{t})^T M^{-1}(\mathbf{s},\mathbf{t})/2\right)\, d\mathbf{s}\,d\mathbf{t} + O(e^{-c(\log n)^2}),\label{eq:alpha-as-int}
\end{align}
where we defined the covariance matrix $$M:= \Cov\left((X'(x_j),Y'(y_j))_{j = 1}^k \,|\, X(x_j) =  Y(y_j) = 0 \text{ for all  }j\right)$$ 
and re-written $(\mathbf{s},\mathbf{t})$ in the permutation $(s_1,t_1,s_2,t_2,\ldots,s_k,t_k)$.  

To finish the lemma, we need to show that we can ``replace'' the occurrences of $M$ in \eqref{eq:alpha-as-int} with $M'$
where
$$M' := \mathrm{diag}\left(\Cov((X'(x_j),Y'(y_j))\,|\,X(x_j) = Y(y_j) = 0)_{j=1}^k \right)\,.$$
This is easily done as $M,M'$ are both approximately multiples of the identity.
Indeed, we see that 
\[ M,M' = \frac{n^3}{24}I + O(n^{5/2}), \]
by using \eqref{eq:early-approx2} for the diagonal entries and \eqref{eq:early-approx3} for the off-diagonal entries.
It follows that $\det(M) = (1+O(n^{-1/2}))\det(M')$. To replace the occurrence of $M^{-1}$ in \eqref{eq:alpha-as-int}, we apply Fact~\ref{fact:matrix-inverse} to see that for all $(\mathbf{s},\mathbf{t})$ with
 $|(\mathbf{s},\mathbf{t})|_{\infty} < n^{3/2}\log n$, we have
$$|(\mathbf{s},\mathbf{t})^TM^{-1}(\mathbf{s},\mathbf{t}) - (\mathbf{s},\mathbf{t})^T(M')^{-1}(\mathbf{s},\mathbf{t})| 
= O( n^{-1/2}(\log n)^2)\,. $$
This shows that on the support of the integrand of \eqref{eq:alpha-as-int}, we may replace both instances of $M$ with $M'$ at the cost of a multiplicative error of at most $\left(1+O(n^{-1/2}(\log n)^2\right)$ and therefore  
$$\alpha_{n,k}(\bx,\by) = (1 + O(n^{-1/2}(\log n )^2)\prod_{j = 1}^k \E \left[ |X'(x_j)| \cdot |Y'(y_j)| \cdot \varphi \,|\, X(x_j) = Y(y_j) = 0 \right] + O(e^{-c(\log n)^2})\,.$$
Combining this with \eqref{eq:det-factor} completes the proof.\end{proof}

\vspace{3mm}
We now arrive at the main application of Lemma~\ref{th:density-bound} in the proof of Theorem~\ref{thm:main1}. This lemma deals the case when there are two root-pairs,  $(x_0,y_0)$ and $(x_1,y_1)$ of $X,Y$ that are close to each other.  In particular, define
\[ D_{0} = D_{0,k,n} := \{(\bx,\by) \in \T_0^{2k} : |x_i - x_j| \leq n^{-1/2} \text{ for some }i \neq j \}.\]

\begin{lemma}\label{lem:density-diagonal}For $k \in \N$, let $U \subseteq \R$ be open and bounded. We have
\[ \int_{D_0} p_{k}(\bx,\by,U) \,d\bx\, d\by  = o(1), \]
as $n$ tends to infinity.
\end{lemma}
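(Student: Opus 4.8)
\textbf{Proof proposal for Lemma~\ref{lem:density-diagonal}.} The plan is to treat this as a pure volume estimate, with all the analytic difficulty outsourced to Lemma~\ref{th:density-bound}. The two ingredients I would use are: (i) the pointwise bound $p_k(\bx,\by,U) \le p_k(\bx,\by) \le C_k n^{2k}$ from Lemma~\ref{th:density-bound}, valid for every open $U$; and (ii) the support restriction that $p_k(\bx,\by,U) = 0$ unless $|x_j - y_j| \le n^{-2}(\log n)^4$ for all $j \in [k]$, which holds because $\vp_U(x_j,y_j) = \one\{(x_j,y_j)\in\cC_f(U)\}$ forces this deterministic inequality on the $\T_0$-coordinates (the same observation used in the proof of Lemma~\ref{lem:eval-mean}).

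With these in hand I would argue as follows. Decompose $D_0$ according to which pair is close: $D_0 \subseteq \bigcup_{i < j} D_0^{(i,j)}$, where $D_0^{(i,j)} = \{(\bx,\by): |x_i - x_j| \le n^{-1/2}\}$, a union of $\binom{k}{2}$ sets, so by the triangle inequality it suffices to bound $\int_{D_0^{(i,j)}} p_k\, d\bx\, d\by$ for a single fixed pair, say $(i,j) = (1,2)$. On $D_0^{(1,2)}$, intersected with the support of $p_k(\,\cdot\,,U)$, the admissible region has small Lebesgue measure: the coordinate $x_1$ ranges over $\T_0$ (measure $O(1)$); then $x_2$ is confined to an interval of length $2n^{-1/2}$ around $x_1$ (measure $O(n^{-1/2})$); each of $x_3,\dots,x_k$ ranges over $\T_0$ (measure $O(1)$ each); and each $y_j$, $j\in[k]$, is confined to $|y_j - x_j|\le n^{-2}(\log n)^4$ (measure $O(n^{-2}(\log n)^4)$ each). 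Multiplying these gives a region of measure $O_k\!\big(n^{-1/2}\cdot (n^{-2}(\log n)^4)^k\big) = O_k\!\big(n^{-1/2} n^{-2k}(\log n)^{4k}\big)$. Combining with the bound $p_k \le C_k n^{2k}$ yields
\[
\int_{D_0^{(1,2)}} p_k(\bx,\by,U)\, d\bx\, d\by \le C_k n^{2k}\cdot O_k\!\big(n^{-1/2}n^{-2k}(\log n)^{4k}\big) = O_k\!\big(n^{-1/2}(\log n)^{4k}\big),
\]
and summing over the $\binom{k}{2}$ choices of close pair keeps the bound $O_k(n^{-1/2}(\log n)^{4k}) = o(1)$, as desired.

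There is essentially no obstacle in this argument itself: once Lemma~\ref{th:density-bound} is granted, the lemma follows from counting degrees of freedom — the single ``lost'' factor $n^{-1/2}$ coming from collapsing one $x$-coordinate onto another beats nothing on the other side, and the $n^{2k}$ in the density is exactly cancelled by the $k$ short $y$-integrals of length $\Theta(n^{-2+o(1)})$. The only points requiring a modicum of care are making sure one invokes the $U$-free bound $p_k(\bx,\by)$ (so that Lemma~\ref{th:density-bound} applies verbatim) rather than $p_k(\bx,\by,U)$, and recording that the support restriction $|x_j-y_j|\le n^{-2}(\log n)^4$ is genuinely part of the definition of $\cC_f$ and hence kills the integrand off this thin neighbourhood of the diagonal $\{x_j = y_j\}$. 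All the real work has been pushed into the proof of Lemma~\ref{th:density-bound}.
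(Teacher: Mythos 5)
Your proof is correct and is essentially the paper's argument: both bound $p_k(\bx,\by,U) \le C_k n^{2k}$ via Lemma~\ref{th:density-bound}, use the support restriction $|x_j-y_j|\le n^{-2}(\log n)^4$ to reduce to a thin set, and conclude with a Lebesgue-measure estimate giving the extra factor $n^{-1/2}$. The only cosmetic difference is that you make the union-over-pairs decomposition of $D_0$ explicit, whereas the paper just asserts the measure bound $|D'| = O\bigl((\log n)^{4k} n^{-2k-1/2}\bigr)$ directly.
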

\begin{proof}
	From the definition of $p_{k}$, we see that $p_{k} = 0$ if $|x_i-y_i| > n^{-2}(\log n)^4$ for some $i \in [k]$. So it makes sense to consider the integral over the set 
\[ D' := D_0 \cap \{(\bx,\by) : |x_j - y_j| \leq n^{-2}(\log n)^4 \text{ for all }j \}\,. \]
By Lemma~\ref{th:density-bound}, we have $p_{k}(\bx,\by,U) = O(n^{2k})$ and so
\[ I := \int_{D_0} p_{k}(\bx,\by,U)\,d\bx\, d\by =  \int_{D'} p_{k}(\bx,\by,U)\,d\bx \, d \by = O\left( |D'| \cdot n^{2k}\right). \]
Since 
\[ |D'| = O\left(\frac{(\log n)^{4k}}{n^{2k}} \frac{1}{n^{1/2}}\right), \] it follows that $I = o(1)$, as desired.\end{proof}

\vspace{3mm}

\noindent We now prove Theorem~\ref{thm:main1}, assuming Lemma~\ref{th:density-bound}.
\begin{proof}[Proof of Theorem~\ref{thm:main1}]
Let $U \subseteq \R$ be open and bounded and let $D_{1},D_0$ be as above. We have 
\[ \E[(\mu_f(U))_{k}] = 
\int_{\T_0^{2k}} p_{k}(\bx,\by,U)\,d\bx\,d\by    = \int_{D_0}p_{k}(\bx,\by,U)\, d\bx\,d\by   + \int_{D_1}p_{k}(\bx,\by,U)\, d\bx\,d\by\,. \]
By Lemma~\ref{lem:density-diagonal}, the first integral on the right-hand-side is $o(1)$, while we may apply Lemma~\ref{lem:factor-density} to see
\begin{align*} \int_{D_1}p_{k}(\bx,\by,U)\,d\bx\,d\by &= \int_{D_1} (1 + o(1)) \prod_{j= 1}^k p_1(x_j,y_j,U)\,d\bx\,d\by + O(e^{-(\log n)^2}). \\
 &=  (1+o(1))\int_{\T_0^{2k}}  \prod_{j= 1}^k p_1(x_j,y_j,U)\,d\bx\,d\by - (1+o(1))\int_{D_0}  \prod_{j= 1}^k p_1(x_j,y_j,U)\,d\bx\,d\by  \\
 &= (1+o(1))\prod_{j= 1}^k \int_{\T_0^2} p_1(x_j,y_j,U)\,dx_j\,dy_j + o(1), \end{align*}
where we have applied Lemma~\ref{th:density-bound} again to see that $p_1(x_j,y_j,U) = O(n^2)$ and used that 
\[ |D_0| = O((\log n)^{4k}n^{-(2k+1/2)}).\]
We may now apply Lemma~\ref{lem:eval-mean} to see that
\[ \prod_{j= 1}^k \int p_1(x_j,y_j,U)\,d\bx\,d\by = (1+o(1))\left(\frac{|U|}{12}\right)^k. \]
Thus for all bounded open sets $U$ and all $k$, we have
\[ \E[(\mu_f(U))_{k}]\ = (1+o(1))\left(\frac{|U|}{12}\right)^k \]
and so we apply Lemma~\ref{lem:method-of-moments} to see that $\mu_f$ tends to a Poisson point processes in the vague topology. 

In order to show that $\nu_n$ converges, it is enough to show that the finite dimensional distributions converge \cite[Theorem 4.2(iii)]{kallenberg}.  For any finite set of intervals, $\mu_n$ and $\nu_n$ match on all of them with high probability.  Since $\mu_n$ converges to the Poisson process of rate $1/12$, its marginal distributions converge to the corresponding marginals of the Poisson process and thus those of $\nu_n$ do as well.

\end{proof}

\begin{proof}[Proof of Corollary \ref{cor:main}]
Let $X_n = \min_{\zeta_k} ||\zeta_k| - 1| n^2$.  Then for each $t \geq 0$, Theorem \ref{thm:main1} implies 
\[\P(X_n \geq  t) \to e^{ - 2\cdot t /12} = e^{-t/6}\,,\]
which is sufficient to conclude that $X_n$ tends in distribution to an exponential random variable with mean $6$.
\end{proof}

\section{The numerator}
\noindent 
We now turn to the first of several sections where we take the reader through a proof of Lemma~\ref{th:density-bound}. In this section, we tackle the numerator of $p_{k}(\bx,\by) = p_{n,k}(\bx,\by)$, as defined at \eqref{eq:def-pk}. That is, write
\[ p_{k}(\bx,\by) = \frac{\alpha_{k}(\bx,\by)}{(2\pi)^{k} |\Sigma|^{1/2}},\] where we have defined $\alpha_{k}(\bx,\by)$ to be 
\begin{equation}\label{eq:def-alpha}  \E\bigg[ \left|X'(x_1)\cdots X'(x_k) Y'(y_1)\cdots Y'(y_k) \right|\, X(x_i)=Y(y_i) =0 \text{ for all }i \in [k]\bigg]. \end{equation} The purpose of this section is to prove the following upper bound on $\alpha_{k}$.

\begin{lemma}\label{cor:num-UB} For $k \in \N$, there is a constant $C_{k} >0$ so that for all 
$\bx = (x_1,\ldots,x_k) \in \T_0^k$ and $\by = (y_1,\ldots,y_k) \in \T_0^{k}$ we have 
\[ \alpha_{k}(\bx,\by) \leq  C_{k}  n^{2k^2 + k}\left(\prod_{i < j} \min\{|x_i - x_j|,n^{-1}  \}^2\cdot \min\{|y_i - y_j|,n^{-1}  \}^2 \right)\,. \]
\end{lemma}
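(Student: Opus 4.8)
The plan is to bound $\alpha_k(\bx,\by)$ by controlling the expectation of a product of $2k$ absolute values of derivatives, conditioned on the $2k$ linear constraints $X(x_i)=Y(y_i)=0$. The natural first step is to pass from the conditioned expectation to an expectation over the conditioned Gaussian vector $V=(X'(x_1),\dots,X'(x_k),Y'(y_1),\dots,Y'(y_k))$ and then to use H\"older's inequality to reduce to estimating $\E\big[|X'(x_i)|^{2k}\,\big|\,\text{constraints}\big]$ and $\E\big[|Y'(y_j)|^{2k}\,\big|\,\text{constraints}\big]$ individually. By Fact~\ref{fact:gaussian}\eqref{item:gaussMoment}, each such moment is $C_k\,\sigma_i^{2k}$ where $\sigma_i^2$ is the conditional variance of the relevant derivative. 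Since conditioning only decreases variance (Fact~\ref{fact:gaussian}\eqref{item:gaussCondition}), the crude bound $\sigma_i^2 = O(n^3)$ holds, giving $\alpha_k = O(n^{3k})$; this is the right power of $n$ in the ``generic'' regime but does not capture the vanishing as points collide, which is exactly the content of the $\min\{|x_i-x_j|,n^{-1}\}^2$ factors. So the real work is to show the conditional variance of $X'(x_i)$ is small when $x_i$ is close to some other $x_j$.

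To see where the gain comes from, I would argue that when $x_i$ and $x_j$ are within $n^{-1}$ of each other, the constraint $X(x_j)=0$ together with $X(x_i)=0$ forces $X'(x_i)$ to be of size roughly $|x_i-x_j|\cdot\sup|X''| = O(|x_i-x_j|\,n^{5/2})$ heuristically (this is the discrete version of Rolle/Taylor). More precisely: conditioned on $X(x_i)=0$, write $X'(x_i)$ in terms of the Gaussian process; the additional conditioning $X(x_j)=0$ with $|x_i-x_j|$ small means $X'(x_i)$ is highly correlated with $(X(x_j)-X(x_i))/(x_j-x_i)$, which has conditional variance $O(|x_i-x_j|^2 n^5)$. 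Carrying this through for each pair and combining via a Gram/determinant bookkeeping of the covariance structure of $V$ conditioned on the constraints, one gets that the conditional variance of $X'(x_i)$ is $O\big(n^3\prod_{j\neq i}\min\{|x_i-x_j|,n^{-1}\}^2\big)$ — wait, that over-counts; more carefully one should reorder the $x$'s so $x_1<\cdots<x_k$ and bound the conditional variance of $X'(x_i)$ by $O\big(n^3\min\{|x_i - x_{i-1}|, n^{-1}\}^2\big)$ (or a symmetric nearest-neighbor version), and then check that multiplying the $k$ such bounds together, for both $X$ and $Y$, reproduces $n^{2k^2+k}\prod_{i<j}\min\{|x_i-x_j|,n^{-1}\}^2\min\{|y_i-y_j|,n^{-1}\}^2$ after accounting for how the per-point powers $n^{3/2}$ interact with the pairwise factors. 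The exponent $2k^2+k$ should drop out of this arithmetic: $2k$ factors each contributing up to $n^{3/2}$ gives $n^{3k}$, and we are told the answer is $n^{2k^2+k}$ times pairwise factors each at most $n^{-2}$, i.e. the bookkeeping has to reconcile $n^{3k}$ with $n^{2k^2+k}\cdot n^{-2\binom{k}{2}\cdot 2}$; so in fact the pairwise factors must be allowed to be as large as $\Theta(1)$ near the diagonal-complement and the statement is genuinely only sharp once combined with the denominator estimate in the next section.

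The main obstacle, I expect, is making the covariance computation rigorous and uniform over \emph{all} configurations $\bx,\by\in\T_0^k$, including clustered ones where several points pile up. The clean way to organize this is: (1) establish, via Taylor expansion of the trigonometric sums and the Salem--Zygmund/Bernstein bounds of Fact~\ref{facts}, quantitative estimates on $\Cov(X^{(a)}(x),X^{(b)}(x'))$ as functions of $x-x'$ (these should be the content of the ``Lemma~\ref{lem:covar-approx}'' and ``Fact~\ref{fact:approx-int}'' referenced earlier, extended to small separations); (2) use a change of basis on the conditioning constraints $\{X(x_i)=0\}$ — replacing them by divided differences $\Delta^m X$ at consecutive nodes — so the constraint matrix becomes well-conditioned even when nodes collide, at which point the Schur-complement formula for the conditional covariance of $V$ becomes tractable; (3) feed the resulting conditional variances into the H\"older bound. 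Steps (1)--(2) are where essentially all the length goes, and getting the error terms to be genuinely uniform (no hidden dependence on the gaps $|x_i-x_j|$ blowing up) is the delicate point; the subsequent product estimate in step (3) is then routine.
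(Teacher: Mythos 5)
Your high-level structure --- reduce $\alpha_k$ to a product of conditional second moments of the $X'(x_i)$, $Y'(y_i)$, then argue that each conditional variance shrinks when $x_i$ has nearby siblings --- matches the paper's (which uses a direct Gaussian-product lemma rather than H\"older, but that is a cosmetic difference). However, your proposed bound on the conditional variance is too weak, and the route you reach for to repair it is not what the paper does here.

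The decisive step is the bound, for a point $x_i$ with $m = |S_i|$ near neighbours $x_j$ (within $n^{-1}$),
\begin{equation*}
\left(\E\big[\,|X'(x_i)|^2 \;\big|\; X(x_j)=0,\ j\in S_i\,\big]\right)^{1/2} \ \leq\ C_m\, n^{m+1/2} \prod_{\substack{j\in S_i\\ j\ne i}} |x_i-x_j|\,.
\end{equation*}
Note two things you are missing: first, the gain is a product over \emph{all} near neighbours, not merely the nearest one, so your ``reorder and use $\min\{|x_i-x_{i-1}|,n^{-1}\}^2$'' version is strictly weaker --- when $m$ points cluster inside an interval of length $\eps\ll n^{-1}$, the true bound picks up $\eps^{2(m-1)}$ per point while yours only picks up $\eps^2$. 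Second, and crucially, the power of $n$ is $n^{m+1/2}$, i.e.\ it grows with the number of local constraints (one extra $n$ per derivative taken), not a fixed $n^{3/2}$; your first-guess bound with a flat $n^3 = (n^{3/2})^2$ out front has the wrong exponent and will not produce the $n^{2k^2+k}$ when products are taken. It is exactly this interplay --- one factor of $n$ per extra constraint, one factor of $|x_i-x_j|$ per pair --- that makes the arithmetic close, and when all gaps are $\geq n^{-1}$ the product collapses back to $n^{3k}$ as it should.

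The paper obtains the display above by a much more elementary route than a Schur-complement/covariance computation: a generalized Rolle/mean-value identity (Lemma~\ref{lem:numerator-MVT}), stating that if a smooth $g$ vanishes at $x_1,\ldots,x_m$ then $g'(x_1) = \frac{g^{(m)}(\xi)}{m!}\prod_{j\geq 2}(x_1-x_j)$ for some $\xi$ between the extreme nodes. Applied pointwise to a realization of $X$ under the conditioning, this turns the variance estimate into a sup bound on $X^{(m)}$ over a short interval (Lemma~\ref{lem:sup-bound}, a routine Taylor/Gaussian computation), side-stepping any explicit inversion of the constraint covariance. The divided-difference change-of-basis and well-conditioning argument you reach for are genuine and do appear in the paper, but in Section~\ref{sec:det-lower-bound} for the \emph{denominator} $\det\Sigma_k$; importing them into the numerator bound is unnecessary and, as sketched, would leave you with the wrong exponents. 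Finally, your closing remark that the estimate is ``genuinely only sharp once combined with the denominator'' is not quite right: the paper's numerator upper bound and determinant lower bound have exactly matching shapes, and both are tight individually; the $\prod\min\{\cdot\}^2$ factors cancel when you take the ratio, which is precisely what gives the clean $O(n^{2k})$ in Lemma~\ref{th:density-bound}.
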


\vspace{2mm}

\noindent Our first move in the direction of Lemma~\ref{cor:num-UB} is a basic property of Gaussian random variables. 

\begin{lemma} \label{lem:expectation-variance}
	Let $(Z_1,\ldots,Z_d)$ be a mean-zero multivariate Gaussian random variable  Then \begin{equation*}
	\E[|Z_1|\cdots |Z_d|] = \Theta_d\left(\sqrt{\Var(Z_1)\cdots \Var(Z_d)} \right) \,.
	\end{equation*}
\end{lemma}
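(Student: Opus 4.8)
The plan is to first reduce to the unit-variance case and then prove the two matching bounds separately, with the upper bound a one-line application of H\"older's inequality and the lower bound the real content. Note that if any $\Var(Z_i) = 0$ then $Z_i = 0$ almost surely and both sides vanish, so we may assume $\Var(Z_i) > 0$ for all $i$. Replacing $Z_i$ by $Z_i/\sqrt{\Var(Z_i)}$ multiplies the left-hand side by $\prod_i \Var(Z_i)^{-1/2}$ and the right-hand side by the same factor, so it suffices to prove $\E[|Z_1|\cdots|Z_d|] = \Theta_d(1)$ whenever $(Z_1,\dots,Z_d)$ is a centered Gaussian vector with $\Var(Z_i) = 1$ for every $i$, i.e. whose covariance matrix $\Sigma$ is a correlation matrix.

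For the upper bound, apply the generalized H\"older inequality with all exponents equal to $d$:
\[ \E[|Z_1|\cdots |Z_d|] \leq \prod_{i=1}^d \big(\E |Z_i|^d\big)^{1/d} = \prod_{i=1}^d \big(C_d \Var(Z_i)^{d/2}\big)^{1/d} = C_d \prod_{i=1}^d \sqrt{\Var(Z_i)}\,, \]
using Fact~\ref{fact:gaussian}\eqref{item:gaussMoment} for the middle equality. After the reduction this gives $\E[|Z_1|\cdots|Z_d|] \leq C_d$.

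For the lower bound I would argue by compactness. The set $\mathcal{K}_d$ of $d\times d$ correlation matrices (positive semidefinite, unit diagonal) is compact. Write $Z = \Sigma^{1/2}W$ with $W$ a standard $d$-dimensional Gaussian; since $\Sigma \mapsto \Sigma^{1/2}$ is continuous and $\|\Sigma^{1/2}\|_{op}$ is bounded on $\mathcal{K}_d$, the integrand $|Z_1|\cdots|Z_d|$ is dominated by $C|W|^d \in L^1$ uniformly in $\Sigma \in \mathcal{K}_d$, so dominated convergence shows $\Sigma \mapsto \Phi(\Sigma) := \E_\Sigma[|Z_1|\cdots|Z_d|]$ is continuous on $\mathcal{K}_d$. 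It remains to check $\Phi(\Sigma) > 0$ for every $\Sigma \in \mathcal{K}_d$: for each fixed $i$ the marginal $Z_i$ is $N(0,1)$, hence $\P(Z_i = 0) = 0$, so by a union bound $\P(Z_1\cdots Z_d \neq 0) = 1$ and the nonnegative integrand $|Z_1|\cdots|Z_d|$ is strictly positive almost surely, forcing $\Phi(\Sigma) > 0$. A continuous positive function on the compact set $\mathcal{K}_d$ attains a positive minimum $c_d > 0$, which is exactly the desired bound $\E[|Z_1|\cdots|Z_d|] \geq c_d$.

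The only delicate point — and the one I would expect to be the main obstacle — is handling \emph{degenerate} covariance matrices $\Sigma$ in the lower bound, where the Gaussian is supported on a proper subspace and one cannot simply lower-bound the expectation by integrating a positive density over a box. The compactness argument above sidesteps this cleanly: positivity of $\Phi$ needs only the (trivial) fact that each one-dimensional marginal is nondegenerate, and continuity of $\Phi$ across the singular stratum follows from the dominated-convergence argument via the continuous square-root map. One could alternatively give a direct estimate $\E[|Z_1|\cdots|Z_d|] \geq \P(|Z_i| \geq 1\ \forall i)$ and bound the latter away from $0$ uniformly, but verifying that probability bound over all correlation matrices is itself a compactness-type argument, so the route above is the most economical.
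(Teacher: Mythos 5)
Your proposal is correct and takes essentially the same route as the paper: normalize to unit variance so that the covariance becomes a correlation matrix, then invoke compactness of the set of correlation matrices together with continuity and positivity of $\Sigma \mapsto \E[|Z_1|\cdots|Z_d|]$ to bound the normalized expectation above and below. The only differences are cosmetic — you supply explicit justifications for continuity (via $\Sigma^{1/2}$ and dominated convergence) and positivity (via the one-dimensional marginals), and you add a separate H\"older argument for the upper bound, which the paper's compactness argument already yields for free.
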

\begin{proof}
	Define the vector $(W_j)_{j \in [d]}$ by setting $W_j = Z_j/\sqrt{\Var(Z_j)}$ and write $$\E[|Z_1|\cdots|Z_d|] = \sqrt{\Var(Z_1)\cdots \Var(Z_d)}\cdot \E[|W_1|\cdots|W_d|]\,.$$
	We now show $\E[|W_1|\cdots|W_d|] = \Theta(1)$.  Let $M = \Cov ((W_j)_{j \in [j]})$ and note that $M_{jj} = 1$ for all $j$.  The map $M \mapsto \E[|W_1|\cdots|W_d|]$ is continuous and takes only positive values. Since the set of covariance matrices with $1$'s on the diagonal is compact, it follows that $\E[|W_1|\cdots|W_d|] $ is bounded above and away from $0$, as desired.\end{proof}

\vspace{4mm}

\noindent With Lemma \ref{lem:expectation-variance} in tow, we only need to understand the conditional variances $$\Var(X'(x_j) \,|\, X(x_i) = Y(y_i) = 0 \text{ for all } i) $$
and similarly for $Y'(y_j)$.  To get a handle on the typical size of $X'(x_j)$ conditioned on $X(x_j)  = Y(y_j) = 0$, we require a consequence of the mean value theorem.

\begin{lemma}\label{lem:numerator-MVT}
	Let $x_1,\ldots,x_k$ be distinct points and let $f$ be a smooth function.  If $f(x_j) = 0$ for all $j \in [d]$ then there exists a $\xi \in [\min_j\{x_j\},\max_j\{x_j\}]$ so that $$f'(x_1) = \frac{f^{(k)}(\xi)}{k!}\prod_{j = 2}^k(x_1 - x_j)\,.$$
\end{lemma}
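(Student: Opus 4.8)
The plan is to prove Lemma~\ref{lem:numerator-MVT} by a standard divided-difference / Rolle argument. Consider the polynomial $q(t) := f(t) - P(t)$, where $P$ is the unique polynomial of degree at most $k-1$ that interpolates $f$ at the $k$ distinct points $x_1,\ldots,x_k$; concretely, by Lagrange interpolation we may write $P$ down explicitly, but for this argument all we need is that $P$ has degree $\leq k-1$ and agrees with $f$ at the $x_j$. Actually, since $f$ already vanishes at every $x_j$, the interpolating polynomial of degree $\leq k-1$ through the $k$ points $(x_j,f(x_j)) = (x_j,0)$ is simply $P \equiv 0$, so there is no need to introduce $P$ at all; this is the feature that makes the statement clean. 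Thus $f$ itself vanishes at the $k$ distinct points $x_1,\ldots,x_k$.

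The core step is then iterated Rolle's theorem. Since $f$ is smooth and vanishes at the $k$ distinct points $x_1,\ldots,x_k$, between consecutive zeros $f'$ has a zero, giving $k-1$ zeros of $f'$; one of these, together with the original zero $x_1$ of $f$ (and suitable bookkeeping), feeds into the next round, and after $k-1$ applications one produces a point $\xi$ in the open interval $(\min_j x_j,\max_j x_j)$ with $f^{(k-1)}(\xi) = 0$. Wait --- that gives a zero of $f^{(k-1)}$, not the claimed identity. The correct route is the divided-difference mean value theorem: the $k$-th divided difference satisfies $f[x_1,\ldots,x_k] = f^{(k-1)}(\xi)/(k-1)!$ for some $\xi$ in the spanning interval. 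Hmm, but the lemma asserts a formula for $f'(x_1)$ with $f^{(k)}(\xi)/k!$, so I instead apply the divided-difference identity to the $k+1$ nodes $x_1,x_1,x_2,\ldots,x_k$ (a confluent divided difference, with $x_1$ repeated). The confluent divided difference on these $k+1$ nodes equals $f^{(k)}(\xi)/k!$ for some $\xi$ in the spanning interval $[\min_j x_j,\max_j x_j]$, by the Hermite--Genocchi formula or by a limiting form of Rolle's theorem. On the other hand, expanding this divided difference directly using the recursive definition and the fact that $f(x_j) = 0$ for all $j$, every term involving an undifferentiated value of $f$ drops out, and one is left with exactly $f'(x_1)\big/\prod_{j=2}^k(x_1 - x_j)$. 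Equating the two expressions yields
\[ f'(x_1) = \frac{f^{(k)}(\xi)}{k!}\prod_{j=2}^k (x_1 - x_j), \]
which is the claim.

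Concretely, the cleanest self-contained way to carry this out, avoiding any appeal to divided-difference machinery, is as follows. Fix the point $x_1$ and define the auxiliary smooth function
\[ g(t) := f(t) - c\cdot \frac{(t-x_2)(t-x_3)\cdots(t-x_k)}{(x_1-x_2)(x_1-x_3)\cdots(x_1-x_k)}\cdot(t - x_1) \cdot \frac{1}{1} \]
--- more precisely, choose the constant $c$ so that a certain normalization holds; here the right comparison function is the degree-$k$ polynomial $h(t) = (t-x_1)(t-x_2)\cdots(t-x_k)$. Set
\[ g(t) := f(t) - \lambda\, h(t), \qquad h(t) = \prod_{j=1}^k (t - x_j), \]
and choose $\lambda$ so that $g'(x_1) = 0$, i.e. $\lambda = f'(x_1)/h'(x_1)$, which is legitimate since $h'(x_1) = \prod_{j=2}^k(x_1-x_j) \neq 0$ as the $x_j$ are distinct. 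Now $g$ vanishes at $x_1,\ldots,x_k$ and additionally $g'(x_1) = 0$, so $x_1$ is (at least) a double zero of $g$; counting with multiplicity, $g$ has $k+1$ zeros in $[\min_j x_j,\max_j x_j]$. Iterated Rolle's theorem (in the form valid for zeros counted with multiplicity) then gives a point $\xi$ in the open spanning interval with $g^{(k)}(\xi) = 0$. Since $h$ is monic of degree $k$, $h^{(k)} \equiv k!$, so $g^{(k)}(\xi) = f^{(k)}(\xi) - \lambda k! = 0$, giving $\lambda = f^{(k)}(\xi)/k!$ and hence $f'(x_1) = \lambda h'(x_1) = \frac{f^{(k)}(\xi)}{k!}\prod_{j=2}^k(x_1-x_j)$, as desired. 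The main --- indeed only --- obstacle is the bookkeeping in the iterated Rolle argument when zeros coincide; this is handled by the standard statement that if a $C^m$ function has $m+1$ zeros in an interval counted with multiplicity, then its $m$-th derivative has a zero in the interior of that interval, applied with $m = k$ and the double zero at $x_1$.
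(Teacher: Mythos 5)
Your final ``concretely, the cleanest self-contained way'' argument is correct and is essentially the same proof as in the paper: both introduce the comparison polynomial $\lambda\prod_{j=1}^{k}(t-x_j)$ (the paper's $p$, your $\lambda h$), choose the scalar $\lambda = f'(x_1)/\prod_{j\geq 2}(x_1-x_j)$ so that the difference $g = f - \lambda h$ has $g'(x_1)=0$, apply iterated Rolle to conclude $g^{(k)}(\xi)=0$ somewhere in the spanning interval, and then solve for $f'(x_1)$ using $h^{(k)}\equiv k!$. The only cosmetic difference is in how Rolle is invoked: the paper explicitly produces $k$ \emph{distinct} zeros of $g'$ (the $k-1$ Rolle points between consecutive $x_j$'s, plus $x_1$ itself) and then iterates, whereas you appeal to the version of iterated Rolle for zeros counted with multiplicity, using the double zero at $x_1$. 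Both are fine; the paper's bookkeeping avoids having to define multiplicity for a general smooth function. The meandering preamble via (confluent) divided differences is also a valid route to the same identity, but it is superseded by your self-contained argument and could be cut.
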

\begin{proof}
	Consider the polynomial $p(x) = c(x - x_1)(x-x_2)\cdots (x-x_k)$
	with $c = \frac{f'(x_1)}{(x_1 - x_2)\cdots(x_1 - x_k)}\,.$ 
	Note that $p(x_j) = 0$ for all $j \in [d]$ and $p'(x_1) = f'(x_1)$. Now, define $g(x) = f(x) - p(x)$ and note that $g(x_j) = 0$ for all $j \in [d]$ and $g'(x_1) = 0$.  Let $y_1 < \cdots < y_k$ be the points $x_1,\ldots,x_k$ in order; by Rolle's theorem, there are points $\xi_i \in (y_i,y_{i+1})$ for $i \in [d-1]$ with $g'(\xi_i) = 0$.  Since $g'$ is zero on the $k$ distinct points $\{\xi_i\}_{i \in [d-1]}$ and $x_1$, there must exist a $\xi \in [\min_j\{x_j\},\max_j\{x_j\}]$ so that $g^{(k)}(\xi) = 0$. Thus $$0 = g^{(k)}(\xi) = f^{(k)}(\xi) - p^{(k)}(\xi) = f^{(k)}(\xi) - \frac{k! f'(x_1)}{(x_1 - x_2)\cdots (x_1 - x_k)}\,.$$
	Solving for $f'(x_1)$ completes the proof.
\end{proof}

\vspace{4mm}

\noindent We shall also need the following standard fact about Gaussian trigonometric polynomials. We state the next two lemmas for the function $X$, but the same is true of $Y$.

\begin{lemma}\label{lem:sup-bound} For $k \in \Z_{\geq 0}$, let $I \subseteq \T$ be an interval with $|I| = An^{-1}$. Then 
\begin{equation} \label{eq:sup-bound-1}\E \sup_{y \in I} |X^{(k)}(y)| = O_{A,j}(n^{k+1/2}) \end{equation}
and 
\begin{equation} \label{eq:sup-bound-2}\E \left[ \sup_{y \in I} |X^{(k)}(y)|\, \Big\vert\, X(y) = 0 \right]= O_{A,j}(n^{k+1/2}). \end{equation}
\end{lemma}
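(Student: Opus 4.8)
The statement concerns $X^{(k)}$ on an interval $I$ of length $An^{-1}$, both unconditionally and conditioned on $X$ vanishing at a point. I would treat \eqref{eq:sup-bound-1} first and then reduce \eqref{eq:sup-bound-2} to it. For \eqref{eq:sup-bound-1}, the natural approach is to combine a pointwise second-moment bound with a chaining/regularity argument. Pointwise, for any fixed $y$, $X^{(k)}(y)=\sum_{j=0}^n \eps_j j^k \cos^{(k)}(jy)$ (or the sine analogue depending on parity of $k$) is a centered Gaussian with variance $\sum_j j^{2k}\cos^{(k)}(jy)^2 = O(n^{2k+1})$, so $\E|X^{(k)}(y)| = O(n^{k+1/2})$ by Fact~\ref{fact:gaussian}\eqref{item:gaussMoment}. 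To upgrade this to the supremum over $I$, I would use that $X^{(k+1)}$ satisfies the same kind of pointwise bound with an extra factor of $n$ (Bernstein-type control), so the increments of $X^{(k)}$ over $I$ are controlled: writing $\sup_{y\in I}|X^{(k)}(y)| \leq |X^{(k)}(y_0)| + |I|\sup_{y\in I}|X^{(k+1)}(y)|$ is too lossy by itself, so instead I would discretize $I$ into $O(n^{1/2})$ (or $O(\log n)$, whatever is cleanest) points at spacing $n^{-1}\cdot n^{-1/2}$ and combine a union bound over the grid with a Lipschitz estimate between grid points using $\E\sup|X^{(k+1)}|$ controlled recursively. Alternatively — and more cleanly — one invokes the standard fact (a consequence of Salem--Zygmund, Fact~\ref{facts}, applied to the degree-$n$ polynomial $z\mapsto \sum \eps_j j^k z^j$ suitably interpreted, or directly to $f^{(k)}$) that $\E\max_{|z|=1}|f^{(k)}(z)| = O(n^k\cdot (n\log n)^{1/2})$; but since we only want $O(n^{k+1/2})$ \emph{on an interval of length $An^{-1}$}, not on the whole circle, the extra $\sqrt{\log n}$ should be avoidable by a more local entropy argument. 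I would therefore set up a Dudley-type bound: the canonical metric $d(y,y') = (\E|X^{(k)}(y)-X^{(k)}(y')|^2)^{1/2}$ on $I$ satisfies $d(y,y') \leq C n^{k+1}|y-y'|$ for $|y-y'|\leq n^{-1}$ (differentiating once more) and also $d(y,y')\leq Cn^{k+1/2}$, so the diameter of $I$ in this metric is $O(n^{k+1/2})$ and the covering numbers are $N(\eps) = O(n^{k+1/2}/\eps)$ for $\eps \le n^{k+1/2}$; then $\int_0^{n^{k+1/2}} \sqrt{\log N(\eps)}\,d\eps = O(n^{k+1/2})$ — wait, the entropy integral $\int_0^D\sqrt{\log(D/\eps)}\,d\eps$ converges and equals $O(D)$, giving exactly $\E\sup_{I}|X^{(k)}| = O(n^{k+1/2})$ as claimed. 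That is the heart of the argument.

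\textbf{Reducing the conditional bound to the unconditional one.} For \eqref{eq:sup-bound-2}, I would condition on $X(y)=0$ for a fixed $y\in I$ and use the standard regression formula: conditionally on $X(y)=0$, the process $X(\cdot)$ is still Gaussian, with mean zero and with $\Var(X^{(k)}(z)\mid X(y)=0) \le \Var(X^{(k)}(z))$ for every $z$ by Fact~\ref{fact:gaussian}\eqref{item:gaussCondition}. Since the supremum of a centered Gaussian process is monotone under such a pointwise variance reduction in the sense relevant here (more precisely, one can realize the conditioned process as $X^{(k)}(z) - \Cov(X^{(k)}(z),X(y))\Var(X(y))^{-1}X(y)$, and then bound its sup by $\sup|X^{(k)}(z)| + |\text{regression coefficient}|\cdot|X(y)|$ with $|X(y)|$ itself Gaussian of variance $O(n)$ and the regression coefficient $O(n^{k+1/2}/n^{1/2}) = O(n^k)$, hence the correction term has expected sup $O(n^k\cdot n^{1/2}) = O(n^{k+1/2})$), we obtain \eqref{eq:sup-bound-2} from \eqref{eq:sup-bound-1}. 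The cleanest route is just: the conditioned process equals the original process minus a single deterministic-function multiple of the Gaussian $X(y)$, so its sup-norm is at most $\sup_I|X^{(k)}| + (\sup_I|\text{regression fn}|)\cdot|X(y)|$, and each of the two terms has expectation $O(n^{k+1/2})$.

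\textbf{Main obstacle.} The only genuinely delicate point is getting the \emph{local} supremum bound without picking up a stray logarithmic factor — i.e., showing $O(n^{k+1/2})$ rather than $O(n^{k+1/2}\log^{1/2} n)$. This is where using a chaining/entropy-integral estimate over an interval of length only $An^{-1}$ (rather than a crude union bound over a polynomially large net, or a global Salem--Zygmund bound) is essential: on such a short interval the process only "decorrelates" across $O(1)$ many scales' worth of effective independent coordinates, so the entropy integral is genuinely $O(\text{diameter})$. I would verify the two metric estimates $d(y,y')\le Cn^{k+1}|y-y'|$ and $d(y,y')\le Cn^{k+1/2}$ by direct computation with the covariance of $X^{(k)}$ (which is a Dirichlet-type kernel and its derivatives), and then quote Dudley's inequality. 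Everything else — the pointwise variance computations, the regression reduction, handling $Y$ identically — is routine.
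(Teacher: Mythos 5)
Your proof is correct, but it takes a genuinely different route from the paper's. The paper's argument is much more elementary: Taylor--expand $X^{(k)}$ around any fixed $x_0 \in I$,
\[
|X^{(k)}(x)| \leq \sum_{j\ge 0}\frac{|X^{(j+k)}(x_0)|}{j!}|x-x_0|^j \leq \sum_{j\ge0}\frac{|X^{(j+k)}(x_0)|}{j!}A^j n^{-j},
\]
take expectations term by term, and use $\E|X^{(j+k)}(x_0)| = O(n^{j+k+1/2})$ with an implied constant uniform in $j$; the factor $A^j n^{-j}$ exactly cancels the extra power $n^j$, and the $1/j!$ then makes the series converge to $O_A(n^{k+1/2})$. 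Because a trigonometric polynomial is entire, the Taylor series has infinite radius of convergence and the whole supremum on $I$ is absorbed by a geometric--factorial tail; no chaining is needed. For \eqref{eq:sup-bound-2} the paper simply takes conditional expectations of the same Taylor series and uses $\E[\,|Z|\mid W=0\,]\le\E|Z|$ for jointly centered Gaussians (Fact~\ref{fact:gaussian}(1)--(2)). Your Dudley/entropy-integral argument also lands at $O(n^{k+1/2})$ without a log, and your regression-coefficient reduction for the conditional bound is sound, but both cost you more machinery than the paper uses. One small arithmetic slip in your sketch: the canonical-metric Lipschitz bound should be $d(y,y')\le Cn^{k+3/2}|y-y'|$ (coming from $\Var\,X^{(k+1)}=O(n^{2k+3})$), not $Cn^{k+1}|y-y'|$; with the corrected exponent the diameter on $I$ is still $O(n^{k+1/2})$, the covering number is still $O(An^{k+1/2}/\eps)$, and the entropy integral still evaluates to $O_A(n^{k+1/2})$, so the error is harmless. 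In short: your argument is a valid, standard Gaussian-process proof, while the paper exploits analyticity to replace chaining with a one-line Taylor-series trick.
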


\begin{proof}
For $x,x_0 \in I$, we may apply Taylor's theorem about $x_0$ to obtain
\begin{equation} \label{eq:Xk-talyor}
	\left|X^{(k)}(x)\right| = \left|\sum_{j \geq 0} \frac{X^{(j+k)}(x_0)}{j!}(x-x_0)^j \right| \leq \sum_{j \geq 0} \frac{|X^{(j+k)}(x_0)|}{j!} A^jn^{-j}\,.
	\end{equation}
To prove \eqref{eq:sup-bound-1} we simply take expectations of both sides and use that 
\[ \E |X^{(j+k)}(x_0)| \leq (\Var (X^{(j+k)}(x_0)) )^{1/2} = O(n^{k+j+1/2}). \]
The proof of \eqref{eq:sup-bound-2} only requires one further twist: we take expectations of both sides of \eqref{eq:Xk-talyor}, while conditioning on on $X(y) = 0$. We then use the property of Gaussian random variables 
\[ \E\left[ |X^{(j+k)}(x_0)| \,|\, X(x_1) = 0 \right] \leq \E |X^{(j+k)}(x_0)| \]
and then proceed as in the proof of \eqref{eq:sup-bound-1}.
\end{proof}

\begin{lemma}\label{lem:num-close}
	For $k \in \N$, let $x_1,\ldots, x_k \in \T$ be points with $|x_1 - x_j| < n^{-1}$ for all $j$.  Then 
 $$\E[|X'(x_1)|\,|\,X(x_1) = X(x_2) = \cdots = X(x_k) = 0  ] \leq C_k n^{k+1/2} \prod_{j = 2}^k |x_1 - x_j| \,, $$
	where $C_k>0$ is a constant depending only on $k$.
\end{lemma}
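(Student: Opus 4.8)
\textbf{Proof plan for Lemma~\ref{lem:num-close}.}

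The plan is to combine Lemma~\ref{lem:numerator-MVT} with the conditional supremum bound of Lemma~\ref{lem:sup-bound}. Applying Lemma~\ref{lem:numerator-MVT} to the smooth function $X$ at the distinct points $x_1,\ldots,x_k$, we obtain a point $\xi \in [\min_j x_j, \max_j x_j]$ with
\[ |X'(x_1)| = \frac{|X^{(k)}(\xi)|}{k!}\prod_{j=2}^k |x_1 - x_j|\,. \]
Since all the $x_j$ lie within $n^{-1}$ of $x_1$, the point $\xi$ lies in an interval $I$ of length at most $2n^{-1}$ containing $x_1$, so $|X^{(k)}(\xi)| \leq \sup_{y \in I} |X^{(k)}(y)|$. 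Taking expectations conditioned on $X(x_1) = \cdots = X(x_k) = 0$ and pulling the deterministic factor $\prod_{j=2}^k |x_1 - x_j|$ out of the expectation gives
\[ \E\big[|X'(x_1)| \,\big|\, X(x_1) = \cdots = X(x_k) = 0\big] \leq \frac{1}{k!}\Big(\prod_{j=2}^k |x_1 - x_j|\Big)\, \E\Big[\sup_{y \in I}|X^{(k)}(y)| \,\Big|\, X(x_1)=\cdots=X(x_k)=0\Big]\,. \]
It then remains to bound the conditional expectation of the supremum by $O_k(n^{k+1/2})$.

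For that final bound, I would argue exactly as in the proof of Lemma~\ref{lem:sup-bound}, but conditioning on the full event $\{X(x_1)=\cdots=X(x_k)=0\}$ rather than on a single zero. Pick $x_0 = x_1$ as the base point and expand $X^{(k)}(y)$ in its Taylor series around $x_1$ as in \eqref{eq:Xk-talyor}, so that $\sup_{y\in I}|X^{(k)}(y)| \leq \sum_{j\geq 0} \frac{|X^{(j+k)}(x_1)|}{j!}(2n^{-1})^j$. Taking conditional expectations term by term, it suffices to know $\E[\,|X^{(j+k)}(x_1)| \mid X(x_1)=\cdots=X(x_k)=0\,] = O(n^{j+k+1/2})$; this follows since conditioning a centered jointly Gaussian vector on the vanishing of some of its coordinates can only decrease the conditional variance (Fact~\ref{fact:gaussian}~\eqref{item:gaussCondition}, or its multivariate analogue), so $\Var(X^{(j+k)}(x_1)\mid \cdots) \leq \Var(X^{(j+k)}(x_1)) = O(n^{2(j+k)+1})$, and then Fact~\ref{fact:gaussian}~\eqref{item:gaussMoment} converts this to the corresponding first-moment bound. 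Summing the resulting geometric-type series in $j$ (the factor $(2n^{-1})^j$ against a growth of $n^j$ in the variance bound gives a convergent series with a $k$-dependent constant) yields $\E[\sup_{y\in I}|X^{(k)}(y)| \mid \cdots] = O_k(n^{k+1/2})$, completing the proof.

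The only mild subtlety — and the place I would be most careful — is justifying that conditioning on the vanishing of \emph{several} Gaussian coordinates still does not increase the variance of $X^{(j+k)}(x_1)$; this is the standard fact that the Schur complement of a positive semidefinite block matrix is dominated by the corresponding diagonal block, i.e. conditional covariance matrices are $\preceq$ the unconditional one, so Fact~\ref{fact:gaussian}~\eqref{item:gaussCondition} applies coordinatewise. Everything else is a routine repetition of the argument already given for Lemma~\ref{lem:sup-bound}, and no new estimate on the variances of derivatives of $X$ is needed beyond the $O(n^{2m+1})$ bound used there.
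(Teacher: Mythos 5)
Your proof is correct and follows essentially the same route as the paper: apply Lemma~\ref{lem:numerator-MVT} to bound $|X'(x_1)|$ by $\max_{\xi}|X^{(k)}(\xi)|\prod_{j\geq 2}|x_1-x_j|$, then bound the conditional expectation of the supremum via the Taylor-series argument of Lemma~\ref{lem:sup-bound}. You are in fact slightly more careful than the paper, which simply invokes Lemma~\ref{lem:sup-bound} even though its second display \eqref{eq:sup-bound-2} is stated only for conditioning on a \emph{single} zero; your observation that the same argument goes through under conditioning on $\{X(x_1)=\cdots=X(x_k)=0\}$, because the multivariate conditional covariance is $\preceq$ the unconditional one, is exactly the missing (easy) step the authors are implicitly using.
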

\begin{proof} Let $a = \min_i x_i$ and $b = \max_i x_i$. Given $X(x_1) = X(x_2) = \cdots = X(x_k) = 0$, apply Lemma \ref{lem:numerator-MVT} to obtain 
$$|X'(x_1)| \leq \max_{\xi \in [a, b]} |X^{(k)}(\xi)| \prod_{j = 2}^k |x_j - x_1|\,.$$
	Taking expectations, conditional on $X(x_1) = X(x_2) = \cdots = X(x_k) = 0$, and using Lemma \ref{lem:sup-bound} completes the proof.\end{proof}

\vspace{4mm}

\begin{proof}[Proof of Lemma~\ref{cor:num-UB}]
	%By Fact~\ref{fact:gaussian}, 
	The multidimensional Gaussian random variable 
	\[ (X'(x_1), \ldots, X'(x_k), Y'(y_1),\ldots, Y'(y_k))\] is still Gaussian after we condition on 
	$ X(x_i) = Y(y_i) = 0$, for $i \in [k]$. Thus, we may apply Lemma \ref{lem:expectation-variance} to these conditioned random variables to learn 
	\begin{equation}\label{eq:alphaUB}
	 \alpha_k(\bx,\by) = \E\left[ \left|X'(x_1) \cdots X'(x_k), Y'(y_1)\cdots Y'(y_k)\right| \bigg\vert\,  X'(x_j) = Y'(y_j) = 0 \text{ for all } j\right] \end{equation} is at most
	\begin{equation}\label{eq:alphaUB2} c_k \prod_{i=1}^k \left(\E\left[ | X'(x_i)|^2 \vert\,  X'(x_j) = Y'(y_j) = 0 \text{ for all } j \right] \E\,\left[ |Y'(y_i)|^2\vert\, X'(x_j) = Y'(y_j) = 0 \text{ for all } j \right]\right)^{1/2} . \end{equation}
%	For all $i \in [k]$, define $S_i := \{ j \in [k] : |x_i-x_j|<n^{-1} \}$ and set $|S_i| = s_i$.
Define $S_i = \{ j : |x_i - x_j| < n^{-1} \}$. We now use two further properties of Gaussian random variables, \eqref{item:gaussCondition} and \eqref{item:gaussMoment} from Fact~\ref{fact:gaussian}, to bound
\begin{align*}
	\left(\E\left[ | X'(x_i)|^2 \vert\, X'(x_j) = Y'(y_j) = 0 \text{ for all } j \right] \right)^{1/2}
	&\leq \left(\E\left[ | X'(x_i)|^2 \vert\, X'(x_j) = 0 \text{ for all } j \in S_i \right] \right)^{1/2} \\
    &\leq C\E\left[ | X'(x_i)| \vert\, X'(x_j) = 0 \text{ for all } j \in S_i \right]. \end{align*} 
	Thus we can use this together with Lemma~\ref{lem:num-close} to see 
\begin{equation} \label{eq:alpha-termUB} \left(\E\left[ | X'(x_i)|^2 \vert\,  X'(x_j) = Y'(y_j) = 0 \text{ for all } j  \right]\right)^{1/2}  \leq C_k n^{k+1/2} \prod_j\min\{ |x_j-x_i|, n^{-1} \}.\end{equation}
Thus using \eqref{eq:alpha-termUB} in \eqref{eq:alphaUB2}, gives us
\[ \alpha_k(\bx,\by)\leq C_k n^{2k^2+k} \prod_{i < j}\min\{ |x_j-x_i|, n^{-1} \}^2 \cdot \min\{ |y_j-y_i|, n^{-1} \}^2,  \]
as desired.\end{proof}
\begin{comment}
\begin{remark}
	Note that this argument can in fact be used to show a corresponding \emph{lower bound} on $\alpha(\bx,\by)$: we would like to use Lemma \ref{lem:numerator-MVT} to bound below $$|X'(x_1)| \geq \frac{\min_{\xi \in [\min x_j,\max x_j]}|X^{(k)}(\xi)| }{k!} \prod_{j = 2}^k|x_1 - x_j|$$ and subsequently take expectations.  This minimum, however, becomes quite small when some $|x_j - x_1| \gg 1/n$.  However, we may argue as in the proof of Lemma \ref{lem:det-LB-general} and break up the various cases of the set of distances $\{|x_j - x_1|\}$.  When all points $x_1,\ldots,x_k$ are close---e.g. within $\eps/n$ for some small yet fixed $\eps_1$---it can be shown that $\E \min_{\xi \in [\min x_j,\max x_j]}|X^{(k)}(\xi)|= \Omega (n^{j+1/2})$ as expected.  {\RED I'm not sure of the rest of this actually}
	
	Broadly, conditioning on $X(x_j) = 0$ for points $x_j$ farther than $C/n$ from $x_1$ for large but fixed $C$ can be shown to not change the conditional variance of $X'(x_1)$ by too much.  
\end{remark}
\end{comment}

\section{Understanding the covariance structure} \label{sec:understanding}

In this section we prepare for Section~\ref{sec:det-lower-bound}, where we will obtain a lower bound on the determinant of the covariance matrix $\Sigma_{k}(\bx,\by)$. In that section, we will relate such covariance matrices to 
covariance matrices involving $X,Y$ and their derivatives at a ``well separated'' set of points. In this section, we study the structure of such covariance  matrices, involving $X,Y$ and their derivatives.
Our main goal here will be to prove Lemma~\ref{lem:correlation-LB}. For this, we use the notation 
\begin{equation} \label{eq:X0Y0} X^{(j)}_0(x) := \frac{X^{(j)}(x)}{n^{j + 1/2}} \qquad Y_0^{(j)}(x) := \frac{Y^{(j)}(x)}{n^{j + 1/2}}, \end{equation}
to denote the re-normalized versions of the derivatives of the trigonometric sums $X,Y$ and, for a matrix $A$, we let $\l_{\min}(A)$
denote the smallest eigenvalue of $A$. We will also abuse notation slightly and let $[s,t]$ denote the interval of \emph{integers}  $\{s,s+1,\ldots, t\}$,
when it is clear from context.

\begin{lemma}\label{lem:correlation-LB}
	For $\eps > 0$ and integers $r \geq 1, s \geq 0$ there exists $n(\eps,r,s) \in \N$ so that the following holds. 
If $z_1,\ldots,z_r \in \T_0$ satisfy $d(z_i,z_j) \geq \eps/n$, for all $i \not= j$ and
\begin{equation} \label{eq:corr-LB} 
	\Sigma = \Cov \left(  X^{(j)}_0(z_i), Y_0^{(j)}(z_i) \right)_{ i \in [r],  j\in [0,s]} \end{equation}
then 
\[ \l_{\min}(\Sigma) = \Omega_{\eps,r,s}(1), \]for all $n\geq n(\eps, r,s)$.
\end{lemma}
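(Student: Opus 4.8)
The plan is to show that if the covariance matrix $\Sigma$ of \eqref{eq:corr-LB} had a small eigenvalue, then there would be a nontrivial linear combination of the values $X^{(j)}(z_i), Y^{(j)}(z_i)$ (over $i \in [r]$, $j \in [0,s]$), with bounded coefficients, whose variance is $o(n^{2\cdot 0 + 1} \cdot \text{(scale)})$ in the rescaled normalization — i.e. the combination is close to deterministically zero. Writing out such a combination explicitly, it has the form
\[ L = \sum_{i=1}^r \sum_{j=0}^s \left( a_{ij} \frac{X^{(j)}(z_i)}{n^{j+1/2}} + b_{ij} \frac{Y^{(j)}(z_i)}{n^{j+1/2}} \right) = \sum_{k=0}^n \eps_k c_k, \]
where each $c_k$ is an explicit trigonometric expression in $k/n$ and the $z_i$. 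Since the $\eps_k$ are i.i.d. standard Gaussians, $\Var(L) = \sum_k c_k^2$, and this is (up to a $1+o(1)$ factor and the scaling $n$) a Riemann sum approximating an integral $\int_0^1 |P(t)|^2\,dt$, where $P(t) = \sum_{i,j} (a_{ij} + i\, b_{ij})' (2\pi i)^? \cdots$ — more precisely, after collecting real and imaginary parts, $P$ is a (complex) trigonometric polynomial of degree $\le s$ in the variables $e^{2\pi i t z_\ell}$ (scaled appropriately), i.e. a quasi-polynomial / exponential sum with frequencies $z_1,\ldots,z_r$. The Riemann-sum approximation is exactly the content of Fact~\ref{fact:approx-int} / Lemma~\ref{lem:covar-approx}-type estimates, valid because the $z_i$ are bounded away from $0$ and $\pi$ (so no resonance) and because $n$ is large; this is where the hypothesis $z_i \in \T_0$ and $n \ge n(\eps,r,s)$ enter.

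The heart of the matter is then a \emph{linear independence} statement: the functions $\{ t \mapsto t^j e^{2\pi i t z_i} : i \in [r], j \in [0,s] \}$ (together with their complex conjugates, to account for $X$ vs.\ $Y$) are linearly independent over $\C$ as functions on $[0,1]$, because the $z_i$ are distinct and each appears with a full Jordan block of derivatives up to order $s$ — this is the classical fact that exponentials with distinct frequencies and their first $s$ derivatives are linearly independent (it follows, e.g., from the nonvanishing of a generalized Vandermonde/Wronskian determinant, or from the fact that an exponential polynomial with finitely many frequencies has only finitely many zeros). Consequently the map from coefficient vectors $(a_{ij}, b_{ij})$ to the $L^2([0,1])$-norm of the associated $P$ is a positive-definite quadratic form on a finite-dimensional space, hence bounded below by a constant $c(\eps, r, s) > 0$ times $\sum (a_{ij}^2 + b_{ij}^2)$ on the unit sphere by compactness. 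Combining: for any unit vector, $\Var(L) \ge (1+o(1)) c(\eps,r,s)$, which forces $\lambda_{\min}(\Sigma) \ge \tfrac12 c(\eps,r,s)$ for all $n \ge n(\eps,r,s)$, as claimed.

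I would structure the write-up as: (1) reduce $\lambda_{\min}(\Sigma) = \Omega(1)$ to the lower bound $\Var(L) = \Omega(1)$ uniformly over unit coefficient vectors, using $\lambda_{\min}(\Sigma) = \inf_{|v|=1} v^T \Sigma v = \inf \Var(\langle v, (\text{entries})\rangle)$; (2) identify $\Var(L) = \sum_k c_k^2$ and show via Fact~\ref{fact:approx-int}-type Riemann-sum control that $\sum_k c_k^2 = (1+o(1)) n \int_0^1 |P(t)|^2\,dt + o(n)$ with the error uniform over the (compact set of) unit coefficient vectors — one has to be slightly careful that the $o(1)$ is uniform, but this follows since the family of functions $P$ is compact in $C^1$; (3) prove the linear-independence / positive-definiteness statement and invoke compactness of the unit sphere in the finite-dimensional coefficient space to extract the constant.

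\textbf{Main obstacle.} The genuinely non-routine step is step (3), the quantitative linear independence of $\{t^j e^{2\pi i z_i t}\}$ on $[0,1]$ — qualitatively classical, but one must confirm the resulting constant can be taken uniform as the $z_i$ range over configurations with $d(z_i, z_j) \ge \eps/n$ and $z_i \in \T_0$. Here the $1/n$-separation is problematic: as $n \to \infty$ two frequencies $z_i, z_j$ can become arbitrarily close (distance $\sim \eps/n \to 0$), so the naive Gram matrix of the exponentials degenerates. The resolution — and I expect this mirrors what the authors do — is to \emph{not} pass to a fixed-frequency integral naively, but rather to keep the $n$-dependence: either rescale $t \mapsto t/n$ so that frequencies $z_i$ become $n z_i$ with $\Omega(\eps)$ separation on a length-$n$ interval (and then the relevant integral/orthogonality is over $[0,n]$, where distinct-by-$\eps$ frequencies ARE quantitatively independent with an $\eps$-dependent constant), or argue directly at the level of the exponential sum $\sum_k c_k^2$ using the structure that within each cluster of nearby $z_i$'s one still has the derivative directions $j=0,\ldots,s$ giving genuine independence. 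Getting the bookkeeping right so that the final constant depends only on $\eps, r, s$ (and not on the fine positions of the $z_i$) is the crux; everything else is standard Gaussian-process and Riemann-sum estimation.
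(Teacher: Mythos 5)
Your reduction is structurally the same as the paper's: you use the variational characterization $\l_{\min}(\Sigma) = \inf_{|v|=1} v^T\Sigma v$, recognize $v^T\Sigma v$ as the variance of a Gaussian linear combination, and observe that this variance is a Riemann sum converging (via Fact~\ref{fact:approx-int}-type estimates) to $\int_0^1 |P(\theta)|^2\,d\theta$ for an exponential polynomial $P(\theta)=\sum_{i,j}\alpha_{ij}\theta^j e^{i\zeta_i\theta}$ with rescaled frequencies $\zeta_i := n z_i$, which are $\eps$-separated. The paper does this with the small detour of introducing an explicit limit process $(W,Z)$ with covariance $\Cov(W(t),W(s)) = \sin(t-s)/(2(t-s))$ (Lemmas~\ref{lem:vSv-int-form}--\ref{lem:covar-approx0}), but that is just packaging: the quadratic form $v^T\Sigma_\infty v$ the paper reaches is exactly your integral $\frac{1}{2}\int_0^1|P|^2$. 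So far so good, and your self-identification of the crux is accurate.

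The genuine gap is in your step (3). Your compactness argument fails as stated, and you say so yourself: after rescaling, the frequencies $\zeta_i = n z_i$ lie in an interval of length $\Theta(n)$, so the configuration space is not compact and you cannot simply take the positive minimum of $\alpha \mapsto \int_0^1 |P|^2$ over the unit sphere. Your ``Main obstacle'' paragraph correctly diagnoses this and gestures at the rescaling resolution, but then stops at the assertion that $\eps$-separated frequencies ``ARE quantitatively independent with an $\eps$-dependent constant'' --- which is precisely the nontrivial statement that needs proof (and is \emph{not} a direct consequence of linear independence plus compactness over an unbounded range). The paper's Lemma~\ref{lem:triglowbound} proves exactly this, by induction on $r$: if all $\zeta_i$ lie in an interval of length $rM$ one does use compactness, but otherwise one finds a gap of size $>M$ splitting the frequency set into two blocks $A,B$, shows the cross-terms $\int_0^1 \theta^{j+j'}\cos((\zeta_i-\zeta_{i'})\theta)\,d\theta$ are $O(1/M)$ by integration by parts, and recovers the bound from the inductive hypothesis on each block once $M$ is chosen large relative to the inductive constant. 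This decoupling-via-integration-by-parts step is the idea your proposal is missing; everything else you wrote tracks the paper's proof correctly.
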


We approach Lemma~\ref{lem:correlation-LB} by first considering a limit object that captures the covariance structure of $X,Y$, and associated derivatives, at the scale $n^{-1}$. To this end, we define the Gaussian processes $(Z(t),W(t))$, where  $W(t)$ is defined to be the stationary mean-zero Gaussian process with covariance function 
\begin{equation}\label{eq:covWW} \Cov(W(t),W(s)) = \frac{\sin(t-s)}{2(t - s)} = \frac{1}{2}\int_0^1 \cos((t-s)\theta)\,d\theta \end{equation}
and $Z(t)$ is defined to be the stationary mean-zero Gaussian process with the same covariance as $W(t)$ and  
\begin{equation}\label{eq:covWV} \E[Z(t)W(s)] = \frac{1 - \cos(t -s)}{2(t - s)}  = \frac{1}{2}\int_0^1 \sin((t-s)\t) \, d\t.\end{equation}
Our aim in this section will be to prove the following about the process $(Z(t),W(t))$.

\begin{lemma}\label{lem:min-eigenval}
	For $\eps > 0$ and integers $r \geq 1$, $s \geq 0$ there exists $n(\eps,r,s) \in \N$ so that the following holds. If $z_1,\ldots,z_r \in \R$ satisfy $|z_i-z_j| \geq \eps$ and 
	\[ \Sigma_{\infty} :=  \Cov\left(  W^{(j)}(z_i), Z^{(j)}(z_i) \right)_{  i \in [r] j\in [s] }\] then 
	\begin{equation} \label{eq:limiting-det-LB} \l_{\min}(\Sigma_{\infty})   = \Omega_{\eps,r,s}(1) \end{equation}
	for all $n \geq n(\eps,r,s)$.
\end{lemma}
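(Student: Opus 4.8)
The plan is to exploit the complex structure hidden in the pair $(Z,W)$. Set $\Phi(t) := W(t) + iZ(t)$. From \eqref{eq:covWW}, \eqref{eq:covWV}, and the fact that $u\mapsto (1-\cos u)/(2u)$ is odd (so that $\E[W(t)Z(s)] = -\E[Z(t)W(s)]$), a direct computation gives $\E[\Phi(t)\overline{\Phi(s)}] = \int_0^1 e^{i(t-s)\theta}\,d\theta$ and $\E[\Phi(t)\Phi(s)] = 0$; that is, $\Phi$ is a circularly symmetric complex Gaussian process with spectral density $\tfrac12\one_{[0,1]}$. Consequently $\Phi$ admits the white-noise representation $\Phi(t) = \int_0^1 e^{it\theta}\,dB(\theta)$ with $B$ a standard complex Brownian motion on $[0,1]$, and hence $W^{(j)}(z) = \re\int_0^1 (i\theta)^j e^{iz\theta}\,dB(\theta)$ and $Z^{(j)}(z) = \im\int_0^1 (i\theta)^j e^{iz\theta}\,dB(\theta)$ for every $z$ and $j$.

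First I would reduce the eigenvalue bound to a non-vanishing statement about exponential polynomials. For a quadratic form $v^T\Sigma_\infty v$ with $v = (a_{ij},b_{ij})_{i\le r,\, 0\le j\le s}$, consider $L := \sum_{i,j}\big(a_{ij}W^{(j)}(z_i) + b_{ij}Z^{(j)}(z_i)\big)$; using $a\re(w) + b\im(w) = \re[(a-ib)w]$ and the representation above, $L = \re\int_0^1 P_v(\theta)\,dB(\theta)$ where $P_v(\theta) := \sum_{i,j}(a_{ij}-ib_{ij})(i\theta)^j e^{iz_i\theta} = \sum_i q_i(\theta)e^{iz_i\theta}$ with each $q_i$ a polynomial of degree at most $s$. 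Since $B$ is standard complex Brownian motion, $\int_0^1 P_v\,dB$ is a circularly symmetric complex Gaussian of variance $\int_0^1|P_v(\theta)|^2\,d\theta$, so $v^T\Sigma_\infty v = \Var(L) = \tfrac12\int_0^1 |P_v(\theta)|^2\,d\theta$. In particular $\Sigma_\infty$ is positive definite: if this integral vanishes then $P_v\equiv 0$ on $[0,1]$, hence identically by analyticity, and since $z_1,\ldots,z_r$ are distinct the functions $\{\theta^j e^{iz_i\theta} : i\le r,\, 0\le j\le s\}$ are linearly independent (they form a fundamental system of solutions of the constant-coefficient linear ODE with characteristic roots $iz_1,\ldots,iz_r$, each of multiplicity $s+1$), so all $a_{ij}-ib_{ij}$, and thus $v$, vanish.

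It remains to promote positive-definiteness to a uniform lower bound $\l_{\min}(\Sigma_\infty) \ge c(\eps,r,s) > 0$ (note $\Sigma_\infty$ does not depend on $n$, so the ``$n \ge n(\eps,r,s)$'' in the statement is vacuous). The one obstacle is that the configuration space $\{(z_1,\ldots,z_r) : |z_i - z_j| \ge \eps\}$ is non-compact, so I would argue by contradiction and compactness. Suppose $\l_{\min}(\Sigma_\infty)\to 0$ along configurations $z^{(m)}$; translating (the process is stationary) we may take $z_1^{(m)} = 0$, and passing to a subsequence we may assume that for each pair $i,i'$ the difference $z_i^{(m)} - z_{i'}^{(m)}$ either converges to a finite limit (of modulus $\ge\eps$) or satisfies $|z_i^{(m)} - z_{i'}^{(m)}| \to \infty$. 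This partitions the index set into clusters that internally stabilize while drifting apart; since every entry of $\Sigma_\infty$ is a continuous function of the relevant difference $z_i - z_{i'}$ that tends to $0$ as $|z_i - z_{i'}| \to \infty$ (Riemann--Lebesgue, as the relevant spectral densities $\theta^{j+k}\one_{[0,1]}$ are integrable), $\Sigma_\infty$ converges along the subsequence to the block-diagonal matrix whose blocks are the covariance matrices of the finite, $\eps$-separated limiting sub-configurations. Each block is positive definite by the previous paragraph, so the limit is as well, and hence $\l_{\min}$ of the limit is strictly positive --- contradiction. Thus the claimed uniform bound holds; the only genuinely fiddly part is the bookkeeping of the clustering and confirming that the cross-cluster entries vanish in the limit.
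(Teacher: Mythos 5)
Your proof is correct, but it takes a genuinely different path from the paper's in both halves. For the integral representation $v^T\Sigma_\infty v = \frac{1}{2}\int_0^1|P_v(\theta)|^2\,d\theta$, the paper (Lemma~\ref{lem:vSv-int-form}) expands the quadratic form directly via the covariance identities \eqref{eq:covWaWb}--\eqref{eq:covWaZb} and regroups four double sums into a perfect square; you instead observe that $\Phi:=W+iZ$ is circularly symmetric with spectral measure $\frac{1}{2}\one_{[0,1]}$, so a white-noise representation gives the identity in one line, which is slicker and more conceptual (though it leans on the standard-but-unverified spectral representation; if you wanted to avoid that, the formula can be checked directly as in the paper). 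The sign difference between your $P_v$ (coefficients $a_{ij}-ib_{ij}$) and the paper's $F_v$ (coefficients $x_{a,b}+iy_{a,b}$) is immaterial: $(a,b)\mapsto(a,-b)$ is a norm-preserving bijection of the test vectors interchanging the two, so the minimum of the integral over $|v|_2=1$ is unchanged. For the uniform lower bound, the paper proves a standalone estimate on $\int_0^1|F|^2$ for exponential polynomials (Lemma~\ref{lem:triglowbound}) by induction on $r$, with integration by parts killing cross-terms between far-apart clusters and a compactness argument handling the case of all $z_i$ in a bounded window; your version is a contradiction/compactness argument operating directly on $\lambda_{\min}(\Sigma_\infty)$, using translation invariance, a diagonal subsequence to make each pairwise difference converge or diverge, and Riemann--Lebesgue decay of the off-block covariance entries so the limit is block-diagonal with positive-definite blocks. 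Both rely on the same two ingredients --- decay at long range, compactness at short range --- so they are morally equivalent, but yours avoids isolating the exponential-polynomial lemma and is a touch shorter. You are also right that the ``for all $n\geq n(\eps,r,s)$'' clause in the statement is vacuous, since $\Sigma_\infty$ has no $n$-dependence.
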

	
\noindent We shall then deduce Lemma~\ref{lem:correlation-LB} by showing that $\l_{\min}(\Sigma) \approx \l_{\min}(\Sigma_{\infty})$, for sufficiently large $n$.

\subsection{The process $(Z(t),W(t))$}
We should remark that it is not actually clear, at this point, that the process $(W(t),Z(t))$ actually \emph{exists}.  
While this is not strictly necessary for our work here, we do pause to take care of this point. Indeed, the question of existence is settled by the following lemma, along with some general machinery.

\begin{lemma}\label{lem:cov-pos-def}
	Let $z_1,\ldots,z_r $ be distinct elements of $\R$.  Then the covariance matrix  \[\Sigma_{\infty} := \Cov (Z(z_i), W(z_i))_{i=1}^{r}\] is positive definite.
\end{lemma}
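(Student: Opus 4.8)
The plan is to realize $\Sigma_{\infty}$ as the Gram matrix of an explicit linearly independent family of vectors in a Hilbert space, after which positive definiteness is automatic. First I would record the entries: by \eqref{eq:covWW}, \eqref{eq:covWV}, and the stipulation that $Z$ has the same covariance function as $W$, every entry of $\Sigma_{\infty}$ is one of
\[ \Cov(W(z_i),W(z_j)) = \Cov(Z(z_i),Z(z_j)) = \frac12\int_0^1\cos\big((z_i-z_j)\t\big)\,d\t \qquad\text{or}\qquad \Cov(Z(z_i),W(z_j)) = \frac12\int_0^1\sin\big((z_i-z_j)\t\big)\,d\t . \]
Now regard $\mathcal{H} := L^2([0,1];\C)$ as a \emph{real} inner-product space under $\la f,g\ra := \re\int_0^1 f(\t)\overline{g(\t)}\,d\t$, and for $z\in\R$ put $e_z(\t):=e^{iz\t}\in\mathcal{H}$. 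Setting $u_i := \tfrac{1}{\sqrt2}\,e_{z_i}$ and $w_i := \tfrac{i}{\sqrt2}\,e_{z_i}$ for $i\in[r]$, the identity $\int_0^1 e^{i(z_i-z_j)\t}\,d\t = \int_0^1\cos((z_i-z_j)\t)\,d\t + i\int_0^1\sin((z_i-z_j)\t)\,d\t$ gives at once $\la u_i,u_j\ra = \la w_i,w_j\ra = \tfrac12\int_0^1\cos((z_i-z_j)\t)\,d\t$ and $\la u_i,w_j\ra = \tfrac12\int_0^1\sin((z_i-z_j)\t)\,d\t$. Comparing with the entries above, $\Sigma_{\infty}$ is exactly the Gram matrix of the $2r$ vectors $u_1,\ldots,u_r,w_1,\ldots,w_r$. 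Hence $\Sigma_{\infty}$ is positive semidefinite, and it is positive definite if and only if these $2r$ vectors are linearly independent over $\R$.

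It then remains to verify this linear independence. A real combination $\sum_i a_i u_i + \sum_i b_i w_i$ equals $\tfrac1{\sqrt2}\sum_i (a_i+ib_i)\,e_{z_i}$, a \emph{complex} linear combination of $e_{z_1},\ldots,e_{z_r}$, which vanishes iff all $a_i+ib_i=0$, provided $e_{z_1},\ldots,e_{z_r}$ are linearly independent over $\C$ in $\mathcal{H}$. The latter is the classical fact that exponentials with distinct frequencies are linearly independent on any interval of positive length: if $\sum_j c_j e^{iz_j\t}=0$ in $\mathcal{H}$, then, the function being real-analytic, it vanishes identically on $[0,1]$, and applying the constant-coefficient differential operator $\prod_{j\neq\ell}\big(\tfrac{d}{d\t}-iz_j\big)$ leaves only $c_\ell\,\prod_{j\neq\ell} i(z_\ell-z_j)\,e^{iz_\ell\t}\equiv 0$, which forces $c_\ell=0$ for every $\ell$ since the $z_j$ are distinct. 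Thus the $u_i,w_i$ are linearly independent over $\R$, and $\Sigma_{\infty}$ is positive definite.

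There is no genuine obstacle here: the only thing one has to notice is that the prescribed covariances are precisely the real and imaginary parts of the $L^2([0,1])$-inner products of the complex exponentials $e^{iz_j\t}$, after which the statement reduces to the elementary linear independence of exponentials. (The same observation also makes the existence of the process transparent, once positive definiteness is in hand and the general extension machinery is invoked: $(W(t),Z(t))$ may be taken to be the real and imaginary parts of $\tfrac1{\sqrt2}\int_0^1 e^{it\t}\,d\beta(\t)$ for $\beta$ a standard complex Brownian motion on $[0,1]$.)
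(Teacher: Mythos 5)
Your proof is correct and at its core it follows the same strategy the paper has in mind: the paper discharges this lemma by pointing to Lemma~\ref{lem:vSv-int-form}, which when specialized to $s=0$ is exactly the identity $v^T\Sigma_\infty v = \tfrac12\int_0^1|F_v(\theta)|^2\,d\theta$ with $F_v(\theta)=\sum_a (x_a+iy_a)e^{iz_a\theta}$, i.e.\ precisely your Gram-matrix realization in $L^2([0,1];\C)$ regarded as a real inner-product space. Where your write-up adds something is the strictness step. Lemma~\ref{lem:vSv-int-form} by itself only yields positive \emph{semi}definiteness, and the paper leaves the non-degeneracy implicit (the quantitative Lemma~\ref{lem:triglowbound}, proved later by induction plus compactness, would supply it, but that is heavier machinery than the lemma at hand requires). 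Your finish is the cleaner one: collapse the real linear combination $\sum_i a_i u_i+\sum_i b_i w_i$ into the single complex combination $\tfrac1{\sqrt2}\sum_i(a_i+ib_i)e_{z_i}$ and kill all but one term with the constant-coefficient annihilator $\prod_{j\neq\ell}\bigl(\tfrac{d}{d\t}-iz_j\bigr)$; this is the classical, elementary proof of independence of exponentials with distinct frequencies and it is entirely self-contained. I checked the inner-product computations ($\la u_i,u_j\ra=\la w_i,w_j\ra=\tfrac12\int_0^1\cos((z_i-z_j)\t)\,d\t$, $\la u_i,w_j\ra=\tfrac12\int_0^1\sin((z_i-z_j)\t)\,d\t$) and they match the covariance entries \eqref{eq:covWW}, \eqref{eq:covWV} under the pairing $u_i\leftrightarrow Z(z_i)$, $w_i\leftrightarrow W(z_i)$. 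The closing parenthetical — realizing $(W,Z)$ as the real and imaginary parts of a stochastic integral against complex Brownian motion — is correct in spirit and a pleasant observation, though not needed for the lemma.
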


We don't prove lemma~\ref{lem:cov-pos-def} here as it will follow from our more general Lemma~\ref{lem:vSv-int-form}. But with Lemma~\ref{lem:cov-pos-def} in hand, we may now apply Kolmogorov's extension theorem \cite[Section 1.2]{adler2009random} to learn that $(Z(t),W(s))$ exists.  Moreover, since the function $\sin(x)/x$ is smooth, we may assume that $(Z(t),W(s))$ has smooth paths \cite[page 30]{azais2009level}.

\vspace{4mm}
We now turn to explore the covariance structure of the derivatives of this process. By differentiating under the integral we have
\begin{equation}\label{eq:covWaWb} \Cov(W^{(a)}(t),W^{(b)}(s)) = \frac{(-1)^b}{2}\int_{0}^1 \t^{a+b}\cos^{(a+b)}((t-s)\theta)\,d\theta\,. \end{equation}
and 
\begin{equation}\label{eq:covWaZb} \Cov(W^{(a)}(t),Z^{(b)}(s)) = \frac{(-1)^b}{2}\int_0^1 \t^{a+b} \sin^{(a+b)}((t-s)\t)\, d\t,\end{equation}
where $\cos^{(k)}, \sin^{(k)}$ denote the $k$th derivative of cosine and sine, respectively. 
In order to work with covariance matrices of $Z,W$ and their derivatives, we prove the following lemma that allows us to express $v^T\Sigma v$ in a convenient form. We remark that this useful form appears in \cite[Lemma 5]{azais-universility}, but only for the process $W$ and its derivative.

\begin{lemma}\label{lem:vSv-int-form} For $\eps > 0$ and integers $r \geq 1$, $s \geq 0$ and $z_1,\ldots,z_r \in \R$, let 
	\[ \Sigma :=  \Cov\left(  W^{(j)}(z_i), Z^{(j)}(z_i) \right)_{i \in [r], j\in [0,s]}.\]
Then 
\[ v^t\Sigma v = \frac{1}{2}\int_0^1 |F_v(\t)|^2 \, dt, \] where 
\[ F_v(\t) := \sum_{a,b} (x_{a,b} + i y_{a,b})(i\t)^be^{z_a i\t},\]
$v = (x,y)$, $x = (x_{i,j})_{i \in [r],j\in [0,s]}$, and $y = (y_{i,j})_{i\in [r],j\in [0,s]}$.
\end{lemma}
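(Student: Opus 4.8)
The plan is to realise the pair $(W,Z)$ concretely as the real and imaginary parts of a single stochastic integral against complex white noise on $[0,1]$, and then read the identity off from the It\^o isometry. Let $B_1,B_2$ be independent standard real Gaussian white noises on $[0,1]$, set $\zeta := B_1 + iB_2$, and define the complex Gaussian process
\[ \Phi(t) := \frac{1}{\sqrt{2}}\int_0^1 e^{it\t}\, d\zeta(\t). \]
Expanding $e^{it\t} = \cos(t\t) + i\sin(t\t)$ and using that $B_1,B_2$ are independent with unit intensity, a one-line computation gives
\[ \Cov(\re\,\Phi(t),\re\,\Phi(s)) = \Cov(\im\,\Phi(t),\im\,\Phi(s)) = \tfrac12\!\int_0^1\!\cos((t-s)\t)\,d\t,\qquad \E[\im\,\Phi(t)\cdot\re\,\Phi(s)] = \tfrac12\!\int_0^1\!\sin((t-s)\t)\,d\t. \]
By \eqref{eq:covWW} and \eqref{eq:covWV} these are precisely the covariances defining $(W,Z)$, so $(W(\cdot),Z(\cdot))$ and $(\re\,\Phi(\cdot),\im\,\Phi(\cdot))$ have the same law, and we take the latter as our version. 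Note this construction is unconditional --- it does not presuppose Lemma~\ref{lem:cov-pos-def}, so it is the honest route to Lemma~\ref{lem:vSv-int-form} without circularity; positive-definiteness will drop out at the end.

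Since $e^{it\t}$ is smooth in $t$ with all $t$-derivatives bounded uniformly over the compact set $\t\in[0,1]$, we may differentiate under the integral, obtaining $\Phi^{(j)}(t) = \tfrac{1}{\sqrt2}\int_0^1 (i\t)^j e^{it\t}\,d\zeta(\t)$, hence $W^{(j)}(z_a) = \re\,\Phi^{(j)}(z_a)$ and $Z^{(j)}(z_a) = \im\,\Phi^{(j)}(z_a)$. Now fix $v=(x,y)$ and consider the centred Gaussian
\[ L := \sum_{a,b}\bigl(x_{a,b}W^{(b)}(z_a) + y_{a,b}Z^{(b)}(z_a)\bigr), \]
which satisfies $\Var(L) = v^t\Sigma v$ by the definition of $\Sigma$. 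Applying the elementary identity $x\,\re(w) + y\,\im(w) = \re\bigl((x\mp iy)w\bigr)$ termwise (the sign here is pinned down by the convention for $Z$ in \eqref{eq:covWV}) and pulling the finite sum inside the integral,
\[ L = \re\!\left(\frac{1}{\sqrt2}\int_0^1 F_v(\t)\, d\zeta(\t)\right), \]
with $F_v$ as in the statement. Writing $F_v = F_v^{(r)} + iF_v^{(i)}$ with $F_v^{(r)},F_v^{(i)}\in L^2[0,1]$ continuous, and $\zeta = B_1 + iB_2$, the real part is $\re\!\int F_v\,d\zeta = \int F_v^{(r)}\,dB_1 - \int F_v^{(i)}\,dB_2$; by independence of $B_1,B_2$ and the It\^o isometry this has second moment $\int_0^1\bigl((F_v^{(r)})^2 + (F_v^{(i)})^2\bigr)\,d\t = \int_0^1|F_v(\t)|^2\,d\t$. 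Dividing by $2$ gives $v^t\Sigma v = \Var(L) = \tfrac12\int_0^1|F_v(\t)|^2\,d\t$, as claimed.

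The main work --- and the only place any care is needed --- is the bookkeeping in the first two displays: checking that the white-noise construction reproduces exactly the prescribed covariances (including the normalising factor $2^{-1/2}$, which is what makes the isometry land with the constant $\tfrac12$), and unwinding the definition of $Z$ to fix the sign of $i$ multiplying each $y_{a,b}$ inside $F_v$; both are short but must be done consistently. Justifying differentiation under the stochastic integral is routine (the integrand $(i\t)^j e^{it\t}$ is smooth in $t$ uniformly over the compact $\t$-domain). Finally, Lemma~\ref{lem:cov-pos-def} follows as an immediate corollary: $v^t\Sigma v = \tfrac12\int_0^1|F_v|^2\,d\t \ge 0$, with equality only if $F_v\equiv 0$ on $[0,1]$; but the functions $\{\t^b e^{iz_a\t}\}_{a\in[r],\,b\in[0,s]}$ are linearly independent (distinct exponents $z_a$, polynomial prefactors), so $F_v\not\equiv 0$ whenever $v\neq 0$, whence $\Sigma$ is positive definite.
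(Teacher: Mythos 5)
Your realisation of $(W,Z)$ as $(\re\,\Phi,\im\,\Phi)$ with $\Phi(t)=\tfrac1{\sqrt2}\int_0^1 e^{it\t}\,d\zeta(\t)$ and the appeal to the It\^o isometry is a genuinely different (and quite clean) route: the paper instead expands $v^T\Sigma v$ into the four block-sums of covariances, substitutes the kernel formulas \eqref{eq:covWaWb}--\eqref{eq:covWaZb}, and recombines the integrals by hand. Your route has the extra merit of making existence of the process and the positive-definiteness of $\Sigma$ (Lemma~\ref{lem:cov-pos-def}) fall out as byproducts, rather than needing Kolmogorov extension separately, and it is easy to check that your covariance computations for $(\re\,\Phi,\im\,\Phi)$ really do reproduce \eqref{eq:covWW}--\eqref{eq:covWV}.

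There is, however, a genuine slip in the one step you flagged as needing care. The elementary identity is $x\,\re(w)+y\,\im(w)=\re\bigl((x-iy)\,w\bigr)$, not $\re\bigl((x+iy)\,w\bigr)$. Carrying this through faithfully gives
\[
L \;=\; \re\!\left(\frac{1}{\sqrt2}\int_0^1 \widetilde F_v(\t)\,d\zeta(\t)\right),\qquad
\widetilde F_v(\t):=\sum_{a,b}(x_{a,b}-iy_{a,b})(i\t)^b e^{i z_a \t},
\]
and hence $v^T\Sigma v=\tfrac12\int_0^1|\widetilde F_v(\t)|^2\,d\t$, with $x-iy$ rather than the $x+iy$ appearing in the statement. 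This matters: $\int_0^1|\widetilde F_v|^2\,d\t$ and $\int_0^1|F_v|^2\,d\t$ are \emph{not} equal as quadratic forms in $v$ (they differ in sign on the $W$--$Z$ cross-terms; one can check this already for $r=2,s=0$). You asserted that the sign ``is pinned down by the convention for $Z$,'' which is exactly right as a slogan, but then wrote down the opposite sign. As it happens, the statement of the lemma in the paper contains the same sign slip (traceable to a sign in \eqref{eq:covWaZb}, which should carry $(-1)^{b+1}$ rather than $(-1)^b$), so your proof and the printed statement agree with each other while both being off by $y\mapsto -y$; the downstream uses (Lemmas~\ref{lem:min-eigenval}, \ref{lem:triglowbound}, and positive-definiteness) only see $\min_{|v|=1}\int_0^1|F_v|^2$, which is invariant under $y\mapsto -y$, so nothing later is affected. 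Still, the step in your write-up claiming that the bookkeeping lands on $F_v$ ``as in the statement'' is the one place where the proof is not actually correct, and it is precisely the place you identified as the only one needing care.
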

\begin{proof}
To understand the indexing of this vector $v$, we note that we can think of the rows and columns of $\Sigma$ as indexed by 
$Z^{(j)}(z_i)$, for all $i \in [r],j \in [0,s]$ along with $W^{(j)}(z_i)$ for all $i \in [r],j \in [0,s]$. 
So we may write $v = (x,y)$ where $x = (x_{i,j})_{i\in [r],j\in [s]}$ and $y = (y_{i,j})_{i\in [r],j\in [s]}$ and  
expand $v^T\Sigma v$ as 
\begin{align} \label{eq:vSv-exp}
v^T \Sigma v &= \sum_{a,b,c,d} x_{a,b}x_{c,d} \Cov(W^{(b)}(z_a)W^{(d)}(z_{c})) + \sum_{a,b,c,d} x_{a,b}y_{c,d} \Cov(W^{(b)}(z_a)Z^{(d)}(z_{c}))  \\
&\qquad+\sum_{a,b,c,d} y_{a,b}x_{c,d} \Cov(Z^{(b)}(z_a)W^{(d)}(z_{c})) + \sum_{a,b,c,d} y_{a,c}y_{c,d} \Cov(Z^{(b)}(z_a)Z^{(d)}(z_{c})). \nonumber  \end{align}
Working with the first of the terms on the right-hand-side, we have 
\[ \sum_{a,b,c,d} x_{a,b}x_{c,d} \Cov(W^{(b)}(z_a)W^{(d)}(z_{c})) =  \frac{1}{4}\sum_{a,b,c,d} x_{a,b}x_{c,d}\int_0^1\left( (i \theta)^{b}(-i\theta)^{d} e^{i(z_a - z_c)\theta} + (i \theta)^{d}(-i\theta)^{b}e^{i(z_c - z_a)\theta}\right)\,d\theta \]
\[ =\frac{1}{2}\sum_{a,b,c,d} x_{a,b}x_{c,d}\int_0^1 (i \theta)^{b}(-i\theta)^{d} e^{i(z_a - z_c)\theta} \,d\theta = \frac{1}{2} \int_0^1 \left|\sum_{a,b} x_{a,b} (i\t)^b e^{i z_a \theta} \right|^2\,d\theta\]

where the second equality follows from swapping the roles of the pairs $(a,b)$ and $(c,d)$ and combining the two sums.
\begin{comment}
Now write 
\[ F_v(\theta) = \sum_{a,b} (x_{a,b} + i y_{a,b})(i\t)^be^{z_a i\t} . \]
\end{comment}
Similarly, simplifying the other three terms in \eqref{eq:vSv-exp} using  \eqref{eq:covWaWb}, \eqref{eq:covWaZb}, allows us to rewrite \eqref{eq:vSv-exp} to obtain
\[ v^T\Sigma v =  \frac{1}{2}\int_0^1\left| F_v(\theta)  \right|^2 \, d\t,\] as desired. \end{proof}

\vspace{4mm}

We now prove a general lower bound for the exponential-type polynomials that appear in Lemma~\ref{lem:vSv-int-form}. 
\begin{lemma}\label{lem:triglowbound} For $r \geq 1 , s \geq 0, \eps >0 $, let $z_1,\ldots,z_r \in \R$ satisfy $|z_i - z_j| \geq \eps$ and let
\[ F(\t) = \sum_{i =1}^r\sum_{j=0}^s \alpha_{i,j}\t^je^{iz_i\t}.\]
Then
\begin{equation} \label{eq:triglowbound}\int_0^1|F(\t)|^2 \, d\t \geq c_{r,s,\eps}  \sum_{i =1}^r\sum_{j=0}^s |\alpha_{i,j}|^2. \end{equation}
\end{lemma}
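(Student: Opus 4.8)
The goal is a lower bound on $\int_0^1 |F(\t)|^2\,d\t$ for functions $F(\t)=\sum_{i,j}\alpha_{i,j}\t^j e^{iz_i\t}$ in terms of $\sum_{i,j}|\alpha_{i,j}|^2$, with the constant depending only on $r,s,\eps$. I would treat this as a statement about linear independence being \emph{quantitatively stable}: the functions $\{\t^j e^{iz_i\t}\}_{i\in[r],\,j\in[0,s]}$ are linearly independent on $[0,1]$ (a finite system of exponential-polynomial functions), so the Gram matrix $G$ with entries $\langle \t^{j}e^{iz_i\t},\t^{j'}e^{iz_{i'}\t}\rangle_{L^2[0,1]}$ is positive definite, and $\int_0^1|F|^2 = \alpha^* G\alpha \geq \l_{\min}(G)\|\alpha\|^2$. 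So the whole content is a uniform-in-$n$ lower bound on $\l_{\min}(G)$, uniform over all configurations $z_1,\dots,z_r$ with pairwise gaps $\geq \eps$.

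\textbf{Step 1: compactify.} The frequencies $z_i$ are only constrained by $|z_i-z_j|\geq\eps$; a priori they range over an unbounded set. Here I would use that the statement is really local in scale — but in fact it is cleanest to first reduce to a bounded range. I would argue that $\l_{\min}(G)$ as a function of $(z_1,\dots,z_r)$ is continuous, invariant under a common translation $z_i\mapsto z_i+c$ (this only multiplies $F$ by a unimodular factor $e^{ic\t}$, which does not change $|F|$), and — crucially — cannot degenerate as some $z_i\to\infty$: intuitively, widely separated frequencies become ``more orthogonal,'' not less. To make this rigorous I would split $\{z_1,\dots,z_r\}$ into clusters that are mutually far apart (say at scale $T$ for a large constant $T=T(r,s,\eps)$) and show the Gram matrix is block-diagonal up to an $o(1)$ (as $T\to\infty$) perturbation, via the Riemann–Lebesgue-type estimate $\int_0^1 \t^{a}e^{i(z_i-z_{i'})\t}\,d\t = O(|z_i-z_{i'}|^{-1})$ when $z_i,z_{i'}$ lie in different clusters. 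By induction on $r$ this reduces the problem to a single cluster, i.e.\ to the case where all $z_i$ lie in a bounded interval of length $\leq r T$, while still keeping $|z_i-z_j|\geq\eps$. That parameter set is compact.

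\textbf{Step 2: compactness + nonvanishing.} On that compact set, $(z_1,\dots,z_r)\mapsto \l_{\min}(G)$ is continuous and strictly positive at every point (positivity is exactly Lemma~\ref{lem:cov-pos-def}'s analogue — the functions $\t^je^{iz_i\t}$ with distinct $z_i$ are linearly independent on any interval of positive length, a standard Wronskian/Vandermonde argument), hence it attains a positive minimum. That minimum is the desired constant $c_{r,s,\eps}$. Note this also yields Lemma~\ref{lem:vSv-int-form}'s positivity claim and Lemma~\ref{lem:cov-pos-def} as a by-product, since $v^t\Sigma v = \tfrac12\int_0^1|F_v|^2$ and $F_v$ has exactly this exponential-polynomial shape (after separating real and imaginary parts, the $\alpha_{i,j}$ there range over $\C$, which is fine).

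\textbf{Main obstacle.} The genuinely delicate point is Step 1: handling the unbounded range of the frequencies $z_i$ and showing no degeneration at infinity. One must be careful that the ``block-diagonalization at scale $T$'' argument interacts correctly with the polynomial factors $\t^j$ — the off-diagonal Gram entries are $\int_0^1 \t^{j+j'} e^{i(z_i-z_{i'})\t}\,d\t$, which by integration by parts are $O_{s}(|z_i-z_{i'}|^{-1})$, so choosing $T$ large compared to $r,s$ and the already-obtained single-cluster constant makes the perturbation negligible; one then needs an elementary eigenvalue-perturbation bound ($\l_{\min}(A+E)\geq \l_{\min}(A)-\|E\|_{op}$) and an induction on the number of clusters. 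An alternative that avoids the clustering bookkeeping: prove the single-cluster (bounded-range) case by compactness as above, then observe directly that for the \emph{full} system, writing $G = D + E$ where $D$ is the block-diagonal part (blocks indexed by $\eps$-separated-but-not-$T$-separated groups) is automatically well-conditioned from the bounded case and $\|E\|_{op}\to 0$ as the inter-group separation grows; since we may always choose the grouping so that inter-group gaps exceed any prescribed threshold once $r$ is fixed, we are done. I would present the clustering version as it is self-contained given only the earlier lemmas.
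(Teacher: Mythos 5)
Your proposal is correct and follows essentially the same route as the paper: induction on $r$, an integration-by-parts estimate $\int_0^1 \t^{j+j'}e^{i(z_i-z_{i'})\t}\,d\t = O(|z_i-z_{i'}|^{-1})$ to decouple well-separated frequencies, and a compactness-plus-linear-independence argument for the case where all $z_i$ lie in a bounded window. The Gram-matrix packaging and $\l_{\min}$ language is a cosmetic rephrasing of the paper's direct bound on the quadratic form $\int_0^1|F|^2$, and the paper's clean dichotomy (either all points lie in an interval of length $rM$, or some gap exceeds $M$ and the set splits into two far-apart pieces) is exactly the inductive clustering step you describe.
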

\begin{proof}
We apply induction on $r$. The statement is clear for $r = 1$. For the induction step at $r$, we define
\[ c^{\ast}_r = \min_{t < r} c_{t,s,\eps}, \]
and we will choose $M$ to be sufficiently large compared to $1/c^{\ast}_r$ $r$ and $s$. We consider two cases; (case 1) all of the $z_i$ are contained in an interval of length $rM$ \emph{or}  (case 2) there exists a partition into non-empty sets $A \cup B = [r]$
so that $|z_i-z_{i'}| > M$ for all $i \in A$ and $i'\in B$. We start by dealing with this latter case: let us write $F = F_A + F_B$ where 
$F_A := \sum_{(i,j): i\in A }  \alpha_{i,j}\t^je^{iz_j\t}$ and $F_B$ is defined similarly We have
\begin{equation}\label{eq:intstuff} \int_0^1 |F|^2 \, d\t = \int_0^1 |F_A|^2 \, d\t + \int_0^1 |F_B|^2 \, d\t 
+ \sum_{i\in A, i' \in B} 2\alpha_{i,j}\alpha_{i',j'}\int_0^{1} \t^{j+j'} \cos((z_i-z_j)\t)\, d\t, \end{equation}
where this last sum is over all pairs $(i,j),(i',j')$, where $i \in A $ and $i' \in B$.
Now since $|z_i-z_{i'}|>M $ for all such pairs, we can integrate by parts to see 
 \[ \left| \int_0^{1} \t^{j+j'} \cos((z_i-z_j)\t)\right| \leq 4r/M .\]
Using this, along with $2|xy| \leq x^2+ y^2$, we see
\[ \left| \sum_{i\in A, i' \in B} 2\alpha_{i,j}\alpha_{i',j'}\int_0^{1} \t^{j+j'} \cos((z_i-z_j)\t),\right| \leq \frac{8r}{M} \sum_{i\in A,i'\in B} |\alpha_{i,j} \alpha_{i',j'}| \leq \frac{4r^2s}{M} \sum_{i,j} |\alpha_{i,j}|^2. \]
On the other hand, we my apply induction to the first two terms on the right hand side of \eqref{eq:intstuff} to obtain
\[ \int_0^1 |F|^2 \, d\t \geq c^{\ast}_r \sum_{i,j} |\alpha_{i,j}|^2 - (4r^2s/M) \sum_{i,j} |\alpha_{i,j}|^2. \]
Thus we may choose $M$ to be sufficiently large in terms of $c^{\ast}_r$, $r$ and $s$ so that the above sum is at least $c_r^{\ast}/2\sum_{i,j} |\alpha_{i,j}|^2$,
as desired.

In the case that $z_1,\ldots,z_r$ lie in an interval of length $rM$, we apply a compactness argument. First note that we may assume that $z_1,\ldots,z_r \in [0,rM]$, as replacing $\{z_1,\ldots, z_r\}$ with $\{ z_1 + T,\ldots z_r + T\}$ does not change the value of the integral. Also note that we may assume that $\sum_{i,j} |\alpha_{i,j}|^2=1$, by scaling both sides of \eqref{eq:triglowbound}.
Now, define the function 
\[ I(z,\alpha) := \int_0^1 |F(\t)|^2\, d\t,\]
where $z = (z_1,\ldots,z_r)$ and $\alpha = (\alpha_{i,j})_{i,j}$ and note that $I(z,\alpha)$ is a continuous function of its variables and that $I$ is always positive when $(\alpha_{i,j})_{i,j}$ is non-zero and $z_1,\ldots,z_r$ are distinct.
Now define the compact set
\[ S_{r,\eps,M} := \{ z \in [0,rM]^r : \eps \leq  |z_i-z_j|\,\,  \forall i\not= j \} \times \{ \alpha \in \C^{r\times (s+1)} : |\alpha|_2 = 1\}. \]
Since $I$ is always positive on $S_{r,\eps,M}$ it attains a positive minimum value on $S_{r,\eps,M}$, this concludes the proof. \end{proof}

\vspace{3mm}

We now are in a position to prove Lemma~\ref{lem:min-eigenval}, our version of Lemma~\ref{lem:correlation-LB} for the limiting process $(W(t),Z(t))_t$.

\begin{proof}[Proof of Lemma~\ref{lem:min-eigenval}]
We use the variational definition of $\l_{\min}(\Sigma_{\infty})$ along with Lemma~\ref{lem:vSv-int-form} to write
\[ \l_{\min}(\Sigma_{\infty}) = \min_{v: |v|_2 = 1} v^T\Sigma_{\infty} v =(1/2)\min_{v: |v|_2 =1} \int_0^1 |F_v(\t)|^2 \, d\t, \]
where $F_v$ is finite sum of the form $F_v(\t) := \sum_{i,j} \alpha_{i,j} \t^i e^{iz_i\t}$ and the $\alpha_{i,j}$ are complex numbers satisfying 
$\sum_{i,j} |\alpha_{i,j}|^2 = 1$. We may now apply Lemma~\ref{lem:triglowbound} to finish the proof of Lemma~\ref{lem:min-eigenval}.
\end{proof}

\vspace{4mm}

\subsection{Proof of Lemma~\ref{lem:correlation-LB} from Lemma~\ref{lem:min-eigenval}}

We now turn to prove our main lemma of this section by approximating covariance matrices in the statement of Lemma~\ref{lem:correlation-LB} with the covariance matrices of our limiting object $(Z(t),W(t))$.

\begin{lemma}\label{lem:covar-approx0} For $r,s \in \Z_{\geq 0}$, let $x_1,\ldots,x_r \in \T_0$, let 
\[ \Sigma = \Cov \left(  X^{(j)}_0(z_i), Y_0^{(j)}(x_i) \right)_{ i \in [r],  j\in [0,s]}\]
and let
\[ \Sigma_{\infty} :=  \Cov\left(  W^{(j)}(n x_i), Z^{(j)}(nx_i) \right)_{  i \in [r],  j \in [0, s] }.\]
Then 
\[ \|\Sigma - \Sigma_{\infty}\|_{op} = O_{r,s}(n^{-1/2}). \]

\end{lemma}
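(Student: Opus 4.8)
The plan is to compare the covariance matrix $\Sigma$ of the rescaled polynomials $X_0^{(j)},Y_0^{(j)}$ at the points $x_i$ with the covariance matrix $\Sigma_\infty$ of the limiting process $(W,Z)$ at the points $nx_i$, entry by entry, and show that each discrepancy is $O_{r,s}(n^{-1/2})$; since the matrices have a bounded number of entries ($O_{r,s}(1)$), a bound of this order on every entry yields the same bound (up to an $r,s$-dependent constant) on the operator norm via $\|A\|_{op}\le \big(\sum_{ij}|A_{ij}|^2\big)^{1/2}$. So it suffices to prove that for each pair of indices $(i,j),(i',j')$ and each choice of $F\in\{X,Y\}$, $G\in\{X,Y\}$ (and the corresponding $\tilde F,\tilde G\in\{W,Z\}$ under the dictionary $X\leftrightarrow W$, $Y\leftrightarrow Z$),
\[
\Cov\!\Big(\tfrac{F^{(j)}(x_i)}{n^{j+1/2}},\tfrac{G^{(j')}(x_{i'})}{n^{j'+1/2}}\Big)
= \Cov\!\big(\tilde F^{(j)}(nx_i),\tilde G^{(j')}(nx_{i'})\big) + O_{r,s}(n^{-1/2}).
\]

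First I would write each left-hand covariance as an explicit trigonometric sum. Using $\E[\eps_a\eps_b]=\delta_{ab}$ one gets, for instance,
\[
\Cov\big(X^{(j)}(x),X^{(j')}(x')\big) = \sum_{\ell=0}^n \ell^{j+j'} \cos^{(j)}(\ell x)\,\cos^{(j')}(\ell x') ,
\]
and analogous expressions with a sine replacing one or both cosines for the $Y$–$Y$ and $X$–$Y$ cases. After dividing by $n^{j+j'+1}$ and substituting $\ell = n\theta$, this is a Riemann sum with mesh $1/n$ for the integral
\[
\frac{1}{n}\sum_{\ell=0}^n \Big(\tfrac{\ell}{n}\Big)^{j+j'}\!\cos^{(j)}(n\theta_\ell \cdot \tfrac{x}{n}\cdot\ell/\ell)\cdots \;\approx\; \int_0^1 \theta^{j+j'} \cos^{(j)}(\theta\, nx)\cos^{(j')}(\theta\, nx')\,d\theta ,
\]
and comparison with \eqref{eq:covWaWb}, \eqref{eq:covWaZb} (after the product-to-sum identities $\cos\alpha\cos\beta=\tfrac12(\cos(\alpha-\beta)+\cos(\alpha+\beta))$, etc., where the $\cos(\alpha+\beta)$ pieces integrate to a negligible oscillatory term, this being precisely where the hypothesis $x,x'\in\T_0$, i.e. $nx,nx'$ bounded away from $0$ modulo $\pi$, is used) identifies the integral with the corresponding entry of $\Sigma_\infty$. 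The quantitative Riemann-sum error for a smooth integrand on $[0,1]$ with sampling step $1/n$ is $O(1/n)$, which is even better than needed; the genuinely $n^{-1/2}$-sized contribution comes from the oscillatory $\cos(\theta(nx\pm nx'))$ terms and from the $\ell=n$ endpoint handling, and I would package this standard stationary-phase / summation-by-parts estimate as a cited fact (this is essentially \texttt{Fact~\ref{fact:approx-int}} invoked in Lemma~\ref{lem:early-approx}, together with \texttt{Lemma~\ref{lem:covar-approx}}, so in the write-up I would simply reduce to those).

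The main obstacle, and the only point requiring care, is the oscillatory term: when $|x_i - x_{i'}|$ or $|x_i + x_{i'}|$ is small — recall the $x_i$ need only satisfy $d(x_i,x_j)\ge \eps/n$ from the ambient setting, so $nx_i$ and $nx_{i'}$ may be a bounded distance apart — the "sum frequency" $n(x_i+x_{i'})$ is large and bounded away from zero thanks to $\T_0$, so integration by parts controls that piece, but one must check the bookkeeping is uniform in the positions. I would therefore organize the argument so that (i) the diagonal-in-$i$ blocks ($x_i=x_{i'}$) are handled by a direct Riemann-sum estimate with $O(1/n)$ error, and (ii) the off-diagonal blocks are handled by writing the trig product as a sum of $\cos/\sin$ of $\theta\,n(x_i-x_{i'})$ and $\theta\,n(x_i+x_{i'})$ and noting that replacing the discrete sum by the integral $\tfrac12\int_0^1\theta^{j+j'}\cos(\theta\,n(x_i-x_{i'}))\,d\theta$ incurs $O(1/n)$, while the $(x_i+x_{i'})$-term is itself $O(1/(n\cdot n^{-1/2}))=O(n^{-1/2})$ by the $\T_0$ separation and integration by parts — this last estimate being the bottleneck that produces the stated $n^{-1/2}$. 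Assembling these entrywise bounds and passing to the operator norm completes the proof.
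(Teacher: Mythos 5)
Your proposal takes essentially the same route as the paper: reduce the operator-norm statement to entrywise bounds via the Frobenius-norm estimate $\|A\|_{op}\le\|A\|_F$, express each covariance entry through the Dirichlet-type sums $D_{n,d}$ and $S_{n,d}$, and control these by the Riemann-integral approximation (the paper's Fact~\ref{fact:approx-int}) together with the observation that $x,y\in\T_0$ keeps the sum-frequency term $D_n^{(a+b)}(x+y)$ of size $O(n^{a+b+1/2})$ — this is exactly the content of Lemma~\ref{lem:covar-approx}, which you correctly invoke. One inaccuracy worth flagging: you assert that the Riemann-sum error for the difference-frequency term $\int_0^1\theta^{j+j'}\cos\big(n(x_i-x_{i'})\theta\big)\,d\theta$ is $O(1/n)$, "even better than needed." That holds only when $n|x_i-x_{i'}|=O(1)$; the lemma allows $x_i,x_{i'}\in\T_0$ at macroscopic separation, in which case the integrand oscillates at rate $n|x_i-x_{i'}|$ and the naive mesh-$1/n$ Riemann estimate gives only $O(|x_i-x_{i'}|)$, which is useless when $|x_i-x_{i'}|\asymp 1$. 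The paper's Fact~\ref{fact:approx-int} handles this regime differently: for $|x|>n^{-1/2}$ it shows the sum $n^{-(d+1)}D_{n,d}(x)$ and the integral are each $O(n^{-1/2})$ \emph{separately} (Abel summation for the sum, integration by parts for the integral), rather than estimating their difference as a quadrature error. Since you do end up citing that fact, the overall plan is sound, but the $O(1/n)$ heuristic should be replaced by the uniform $O(n^{-1/2})$ bound and this two-regime argument.
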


\noindent We show this by showing that $\Sigma - \Sigma_{\infty}$ tends to the zero matrix entry-wise.

\begin{lemma}\label{lem:covar-approx} For $a,b\in \Z_{\geq 0}$,  we have
\[ \max_{x,y \in \T_0 }\left|\Cov\left(X_0^{(a)}(x), X_0^{(b)}(y) \right) - \Cov\left(W^{(a)}(nx),W^{(b)}(ny)\right)\right| = O(n^{-1/2}); \]
\[ \max_{x,y \in \T_0}\left|\Cov\left(X_0^{(a)}(x), Y_0^{(b)}(y) \right) - \Cov\left(W^{(a)}(nx),Z^{(b)}(ny)\right)\right| = O(n^{-1/2}); \]
\[ \max_{x,y \in \T_0}\left|\Cov\left(Y_0^{(a)}(x), Y_0^{(b)}(y) \right) - \Cov\left(Z^{(a)}(nx),Z^{(b)}(ny)\right)\right| = O(n^{-1/2}). \]
\end{lemma}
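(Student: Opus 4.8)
The plan is to write each covariance as a normalized trigonometric sum, recognize it as a Riemann sum of an oscillatory integral, and split off the part that oscillates too fast to contribute. Write $\cos^{(a)}$ and $\sin^{(a)}$ for the $a$-th derivatives of cosine and sine, each of which is $\pm\cos$ or $\pm\sin$. Since $X^{(a)}(x)=\sum_{j}\eps_j\, j^a\cos^{(a)}(jx)$ and the $\eps_j$ are independent standard Gaussians,
\[ \Cov\big(X_0^{(a)}(x),X_0^{(b)}(y)\big)=\frac1n\sum_{j=1}^n\Big(\frac jn\Big)^{a+b}\cos^{(a)}(jx)\,\cos^{(b)}(jy), \]
and similarly with $\sin^{(b)}(jy)$, resp.\ $\sin^{(a)}(jx)\sin^{(b)}(jy)$, for the mixed and the $Y,Y$ covariances. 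The product--to--sum identities rewrite $\cos^{(a)}(jx)\cos^{(b)}(jy)$ (and its $\sin$ analogues) as a fixed linear combination, with coefficients bounded in terms of $a,b$, of functions $\cos\!\big(j(x-y)+\phi\big)$ and $\cos\!\big(j(x+y)+\phi\big)$ with $\phi\in\{0,\tfrac\pi2,\pi,\tfrac{3\pi}2\}$. Hence $\Cov(X_0^{(a)}(x),X_0^{(b)}(y))$ is a bounded linear combination of terms $\re\!\big[e^{i\phi}\,\tfrac1n\sum_{j=1}^n h_\Phi(j/n)\big]$, where $h_\Phi(\theta):=\theta^{a+b}e^{i\Phi\theta}$ and $\Phi=n(x-y)$ (``difference frequency'') or $\Phi=n(x+y)$ (``sum frequency''); note that $\tfrac1n\sum_{j=1}^n h_\Phi(j/n)$ is precisely the right--endpoint Riemann sum of $h_\Phi$ on $[0,1]$. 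Using $\partial_u^a\partial_w^b\cos(u-w)=(-1)^b\cos^{(a+b)}(u-w)$ and its analogue for $\sin$, a short computation with \eqref{eq:covWaWb} (and \eqref{eq:covWaZb} for the mixed case) shows that collecting only the difference--frequency pieces and replacing each Riemann sum by the corresponding integral $\int_0^1 h_{n(x-y)}$ reproduces exactly $\Cov(W^{(a)}(nx),W^{(b)}(ny))$ (resp.\ $\Cov(W^{(a)}(nx),Z^{(b)}(ny))$, $\Cov(Z^{(a)}(nx),Z^{(b)}(ny))$). So it remains to show, uniformly over $x,y\in\T_0$, that (i) the sum--frequency pieces are $O(n^{-1/2})$, and (ii) for the difference frequency the Riemann sum agrees with its integral up to $O(n^{-1/2})$.

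The two facts about $h_\Phi$ that I would use are elementary. First, since $\|h_\Phi'\|_\infty\le (a+b)+|\Phi|$ on $[0,1]$, the Riemann--sum error obeys $\big|\tfrac1n\sum_{j=1}^n h_\Phi(j/n)-\int_0^1 h_\Phi\big|\le\big((a+b)+|\Phi|\big)/(2n)$. Second, for $|\Phi|\ge1$, integration by parts gives $\big|\int_0^1 h_\Phi\big|\le (1+a+b)/|\Phi|$, while summation by parts against the monotone weights $(j/n)^{a+b}$ together with $\big|\sum_{j\le N}e^{i\Phi j/n}\big|\le 1/|\sin(\Phi/2n)|$ gives $\big|\tfrac1n\sum_{j=1}^n h_\Phi(j/n)\big|\le 2/\big(n|\sin(\Phi/2n)|\big)$. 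For (i), with $\Phi=n(x+y)$: since $x,y\in\T_0$ we have $x,y\in(n^{-1/2},\pi-n^{-1/2})$, hence $d\big((x+y)/2,\{0,\pi\}\big)>n^{-1/2}$, so $|\sin((x+y)/2)|\ge \tfrac2\pi n^{-1/2}$ and $|\Phi|\ge 2n^{1/2}$; the two bounds above then give both $\big|\tfrac1n\sum_j h_\Phi(j/n)\big|=O(n^{-1/2})$ and $\big|\int_0^1 h_\Phi\big|=O(n^{-1/2})$. This is the only place the hypothesis $x,y\in\T_0$ is used. For (ii), with $\Phi=n(x-y)$: here $|\Phi|\le\pi n$ and $|\sin(\Phi/2n)|=|\sin((x-y)/2)|\ge|x-y|/\pi=|\Phi|/(\pi n)$. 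If $|\Phi|\le n^{1/2}$, the Riemann--sum error bound yields $\big|\tfrac1n\sum_j h_\Phi(j/n)-\int_0^1 h_\Phi\big|=O(n^{-1/2})$; if $|\Phi|>n^{1/2}$, the two oscillatory bounds yield $\big|\tfrac1n\sum_j h_\Phi(j/n)\big|\le 2\pi/|\Phi|=O(n^{-1/2})$ and $\big|\int_0^1 h_\Phi\big|\le(1+a+b)/|\Phi|=O(n^{-1/2})$, so again they agree up to $O(n^{-1/2})$. Assembling the finitely many terms (and absorbing the harmless $O(1/n)$ from the $j=0$ index, if the sum defining $X$ includes it) gives $\Cov(X_0^{(a)}(x),X_0^{(b)}(y))=\Cov(W^{(a)}(nx),W^{(b)}(ny))+O(n^{-1/2})$ uniformly on $\T_0^2$, and identically for the two remaining statements.

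The main obstacle, such as it is, is obtaining a bound that is uniform over all of $\T_0^2$ — that is, handling every size of $|x-y|$ simultaneously: when $x,y$ are far apart one relies on cancellation in both the discrete sum and the integral at the large frequency $n(x-y)$, whereas when $x\approx y$ one instead relies on the Riemann--sum error estimate, and the crossover at $|x-y|\asymp n^{-1/2}$ must be arranged so that both regimes deliver $O(n^{-1/2})$. The complementary (and conceptually important) point is recognizing that only the $x+y$ frequency is dangerous near the endpoints $\{0,\pi\}$, which is exactly what the definition of $\T_0$ excludes; the remaining product--to--sum and differentiation bookkeeping needed to match the limiting formulas \eqref{eq:covWaWb}--\eqref{eq:covWaZb} is routine.
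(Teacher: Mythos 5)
Your argument is correct and is essentially the same as the paper's: the paper also splits the covariance into a difference-frequency term $D_n^{(a+b)}(x-y)$ and a sum-frequency term $D_n^{(a+b)}(x+y)$, uses the $\T_0$ hypothesis only to kill the sum-frequency piece, and proves the Riemann-sum/integral comparison for the difference-frequency piece by the same two-regime split at $|x-y|\asymp n^{-1/2}$ (this is Fact~\ref{fact:approx-int}). Your product-to-sum bookkeeping and oscillatory-sum/integral bounds just make explicit what the paper packages into the differentiated identity $\Cov(X(x),X(y))=\tfrac12 D_n(x-y)+\tfrac12 D_n(x+y)$ and Fact~\ref{fact:approx-int}.
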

\noindent The proof Lemma~\ref{lem:covar-approx} relies on the following basic fact. Define
\[ D_{n,d}(x) := \sum_{k=0}^n k^d\cos xk \qquad S_n(x) := \sum_{k=0}^n k^d\sin xk \]
for each $d \geq 0$.

\begin{fact}\label{fact:approx-int}  Let $n,d \in\Z_{\geq 0}$. We have
\[\max_{x \in \T} \left| n^{-(d+1)}D_{n,d}(x) - \int_0^1 t^d \cos(xn t)  \, dt \right|  = O(n^{-1/2}); \]
\[\max_{x \in \T} \left| n^{-(d+1)}S_{n,d}(x) - \int_0^1 t^d \sin(xn t)  \, dt \right|  = O(n^{-1/2}). \]
\end{fact}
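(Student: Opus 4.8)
The plan is to view $n^{-(d+1)}D_{n,d}(x)$ as a Riemann sum for $\int_0^1 t^d\cos(xnt)\,dt$ and to control the error by a standard summation-by-parts argument, being careful that the bound is uniform in $x\in\T$ and that the oscillation of $\cos(xnt)$ does not ruin the estimate. Concretely, I would write
\[ n^{-(d+1)}D_{n,d}(x) = \sum_{k=0}^n \left(\tfrac{k}{n}\right)^d \cos(xk)\cdot \tfrac1n, \]
so that, comparing with the partition $0=t_0<t_1<\cdots<t_n=1$ with $t_k=k/n$, the difference from $\int_0^1 t^d\cos(xnt)\,dt$ is $\sum_{k=1}^n \int_{t_{k-1}}^{t_k}\big( t_k^d\cos(xnt_k) - t^d\cos(xnt)\big)\,dt$. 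On each subinterval $t^d$ varies by $O(t_k^{d-1}/n)=O(1/n)$, which contributes a total error $O(1/n)$; this is more than enough. The delicate part is the cosine factor: $|\cos(xnt_k)-\cos(xnt)|\le |xn|\,|t-t_k|\le |x|$, which is useless when $|x|$ is of order $1$. So the naive term-by-term bound does not suffice when $x$ is not small.

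To handle general $x$, I would split into two ranges. When $|x|\le n^{-1/2}$ (more precisely $x$ or $x-2\pi$ small), the term-by-term bound $|\cos(xnt_k)-\cos(xnt)|\le |x|\le n^{-1/2}$ already gives the claimed $O(n^{-1/2})$. When $n^{-1/2}\le |x|\le 2\pi - n^{-1/2}$, I would instead use summation by parts: writing $\cos(xk)=\re\, e^{ixk}$ and $E_m(x):=\sum_{k=0}^m e^{ixk}$, which satisfies $|E_m(x)| = O(1/|1-e^{ix}|) = O(1/\|x\|_{\T})$ uniformly in $m$, one gets
\[ \sum_{k=0}^n \left(\tfrac kn\right)^d e^{ixk} = \left(\tfrac nn\right)^d E_n(x) - \sum_{k=0}^{n-1} E_k(x)\left( \left(\tfrac{k+1}{n}\right)^d - \left(\tfrac kn\right)^d \right), \]
and since the increments of $(k/n)^d$ are $O(1/n)$ and there are $n$ of them, the whole sum is $O(1/\|x\|_{\T}) = O(n^{1/2})$; multiplying by $n^{-1}$ (recall we already divided by $n$ to form the Riemann sum, but the extra $n^{-d}$ in $n^{-(d+1)}D_{n,d}$ is accounted for in the $(k/n)^d$ normalization) shows $n^{-(d+1)}D_{n,d}(x) = O(n^{-1/2})$. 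The same summation-by-parts bound applied to the integral — or simply integrating by parts in $\int_0^1 t^d\cos(xnt)\,dt$, which yields $O(1/(n\|x\|_{\T})) = O(n^{-1/2})$ in this range — shows that both quantities are individually $O(n^{-1/2})$, hence so is their difference.

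Putting the two ranges together gives the uniform bound $O(n^{-1/2})$ for the cosine statement; the sine statement is identical, working with $\im\,e^{ixk}$ and $\int_0^1 t^d\sin(xnt)\,dt$ in place of the real parts, with the same geometric-sum estimate $|\sum_{k=0}^m e^{ixk}| = O(1/\|x\|_{\T})$. The only real obstacle is the uniformity in $x$ near $0$ and $2\pi$, and that is exactly what the case split resolves: for small $\|x\|_{\T}$ the Riemann-sum error is small directly, and for $\|x\|_{\T}$ bounded away from $0$ both the sum and the integral are individually small by cancellation. I expect this to be a short argument; the bookkeeping of the normalization factor $n^{-(d+1)}$ versus the $(k/n)^d$ Riemann-sum weights is the one place to be careful.
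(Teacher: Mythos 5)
Your proposal is correct and follows essentially the same route as the paper: both split into $\|x\|_\T \lesssim n^{-1/2}$ (where the Riemann-sum error, controlled via a Lipschitz/derivative bound on $t^d\cos(xnt)$, is directly $O(x)$) and the complementary range (where summation by parts / Abel summation shows the sum is $O(1/\|x\|_\T)=O(n^{1/2})$ before normalization, and integration by parts shows the integral is $O(1/(xn))$, so each side is individually $O(n^{-1/2})$). The only cosmetic difference is that you do the summation by parts directly on the normalized sum $\sum(k/n)^d e^{ixk}$ while the paper applies Abel summation to $D_{n,d}$ and reduces to the Dirichlet-kernel bound $|D_{m,0}(x)|\lesssim 1/\|x\|_\T$; these are the same estimate.
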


\begin{proof}
We prove only the first part of the fact and note the other is almost identical. We write
\[ n^{-(d+1)}D_{n,d}(x) = n^{-1}\sum_{k=0}^n (k/n)^d\cos\left( xn (k/n) \right) = n^{-1}\sum_{k=0}^n g(k/n), \]
where we set $g_{n,x,d}(t) := t^{d}\cos(xn t)$. Now note that 
\begin{equation}\label{eq:g'} |g'_{n,x,d}(t)| = |dt^{d-1}\cos(xn t) + xnt^d\sin(xnt)| \leq 2dxn, \end{equation}
for $t \in [0,1]$. Thus, using \eqref{eq:g'} and an effective form of convergence of the Riemann integral, we have 
\[ \left| n^{-(d+1)}D_{n,d}(x) - \int_0^1 g(t) \, dt \right| =  \left| n^{-1}\sum_{k=0}^n g(k/n)  - \int_0^1 g(t) \, dt \right| \leq n^{-1}\int_0^1|g'(t)| \, dt = O(x). \]
So this proves Fact~\ref{fact:approx-int} when $x < n^{-1/2}$. For $x > n^{-1/2}$ we show that both the sum and integral are small. Starting with the integral, integrate by parts to see 
$$\int_0^1 t^d \cos(xnt)\,dt = \frac{\sin(xn)}{xn} - \frac{d}{xn} \int_0^1 t^{d-1}  \sin(xnt)\,dt = O(1/(xn)) = O(n^{-1/2})\,.$$ 
On the other hand, apply Abel's summation formula to express
\[ D_{n,d}(x) = n^dD_{n,1}(x) + d \int_0^n t^{d-1} D_{\lfloor t \rfloor,1 } (x) \, dt, \]
and then use the bound $|D_{m,1}(x)| \leq 1/x$ for all $m \in \N$. \end{proof}

\vspace{4mm}

\begin{proof}[Proof of Lemma~\ref{lem:covar-approx}] 
We treat the first case and note that the others are similar. We have  
\[ \Cov\left(X(x), X(y) \right) = \sum_{k=0}^n \cos(xk)\cos(yk) = (1/2)D_n(x-y) + (1/2)D_n(x+y). \] 
Thus applying the differential operator $\left(\frac{d}{dx}\right)^a \left(\frac{d}{dy}\right)^b$ to both sides and multiplying by $n^{-(a+b+1)}$ gives
\begin{equation}\label{eq:CovasDD}  \Cov\left(X_0^{(a)}(x), X_0^{(b)}(y) \right) = (1/2)(-1)^b n^{-(a+b+1)} D^{(a+b)}_n(x-y) + (1/2)n^{-(a+b+1)}D_n^{(a+b)}(x+y). \end{equation}
To deal with the second term on the right hand side of \eqref{eq:CovasDD}, we see that $d(x+y,2\pi\Z) > n^{-1/2}$, since $x,y \in \T_0$, and therefore we have $(1/2)n^{-(a+b+1)}D_n^{(a+b)}(x+y) = O(n^{-1/2})$, where the bound is uniform over all $x,y \in \T_0$.
We can then apply Fact~\ref{fact:approx-int} to \eqref{eq:CovasDD} to conclude  
\[\Cov\left(X_0^{(a)}(x), X_0^{(b)}(y) \right) = (-1)^b/2\int_{0}^1 \t^{a+b}\cos^{(a+b)}((t-s)\theta)\,d\t + O(n^{-1/2}), \]
which is $\Cov\left(W^{(a)}(nx),W^{(b)}(ny)\right)$, by definition. \end{proof}
\begin{proof}[Proof of Lemma~\ref{lem:correlation-LB}]
We compare the matrices
\[ \Sigma := \Cov\left( X^{(j)}_0(x_i), Y_0^{(j)}(x_i) \right)_{ i \in [k],  j \in [0,s] } \qquad \Sigma_{\infty} := \Cov\left( W^{(j)}(n x_i ), Z^{(j)}(n x_i)  \right)_{i \in [k],  j \in [0,s]} . \]
Using the variational definition of the least eigenvalue and the semidefinite property of $\Sigma_{\infty},\Sigma$, we write
\[ |\l_{\min}(\Sigma) -  \l_{\min}(\Sigma_{\infty}) |
= \left|\min_{v : |v|_2 = 1} v^T\Sigma v  - \min_{v : |v|_2 = 1} v^T \Sigma_{\infty} v \right| 
\leq \left|\min_{v : |v|_2 = 1}\left\lbrace v^T\Sigma v  -  v^T \Sigma_{\infty} v \right\rbrace \right|, \]
which, by Lemma~\ref{lem:covar-approx0}, is at most 
\[ \|\Sigma - \Sigma_{\infty}\|_{op} = o(1). \]
Since $\l_{\min}(\Sigma_{\infty}) = \Omega_{r,s,\eps}(1)$, by Lemma~\ref{lem:min-eigenval}, (for $n$ large compared to $\eps,r,s$) we conclude that 
\[ \l_{\min}(\Sigma) = \Omega_{r,s,\eps}(1), \] 
for $n$ sufficiently large, compared to $r,s,\eps$.
\end{proof}

\section{The Determinant of the covariance matrix} \label{sec:det-lower-bound}

In this section we supply a complementary result to Lemma~\ref{cor:num-UB} by proving a lower-bound for the denominator in the expression \eqref{eq:def-pk}. Here we write 
\[ \Sigma_{k}(\bx,\by) := \Cov\left(X(x_i),Y(y_i)\right)_{i\in [k]},  \]
where $\bx = (x_1,\ldots,x_k)$ and $\by = (y_1,\ldots,y_k)$.

\begin{lemma}\label{lem:det-LB-general}
	For $k \in \N$ there exists $c_{k} >0$  and $n(k) \in \N$, so that for all $\bx = (x_1, \ldots, x_k) \in \T_0^k$ and $\by = (y_1, \ldots , y_k) \in \T_0^k$ 
we have 	
	 \begin{equation*}
	\det \Sigma_{k}(\bx,\by)  \geq c_{k}  n^{2k^2} \left(\prod_{i < j} \min\{ |x_j - x_i| ,1/n\}^2 \cdot \min\{ |y_j - y_i| ,1/n\}^2\right), \end{equation*} for all $n > n(k)$.
\end{lemma}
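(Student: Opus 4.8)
The plan is to reduce the lower bound on $\det\Sigma_k(\bx,\by)$ to the eigenvalue estimate of Lemma~\ref{lem:correlation-LB}, by replacing clustered points with their derivative data at a single representative point. First I would reorganize: writing $\Sigma_k = \Cov(X(x_i),Y(y_i))_{i\in[k]}$, it is convenient to enlarge this to the Gaussian vector consisting of all the $X(x_i)$ together followed by all the $Y(y_i)$. Using the near block-diagonal structure from Lemma~\ref{lem:early-approx} (the $X$-block and $Y$-block are asymptotically uncorrelated, with cross-correlations $O(n^{-1/2})$), and the fact that the minimum eigenvalue of the full matrix will be bounded below (as we are about to show), a standard perturbation argument gives $\det\Sigma_k = (1+o(1))\det\Cov(X(x_i))_i \cdot \det\Cov(Y(y_i))_i$ up to the same kind of multiplicative constant; so it suffices to prove the analogous lower bound for the $X$-part alone, namely
\[ \det\Cov(X(x_i))_{i\in[k]} \geq c_k n^{k^2}\prod_{i<j}\min\{|x_i-x_j|,1/n\}^2, \]
and symmetrically for $Y$.

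For this, I would partition $\{x_1,\dots,x_k\}$ into \emph{clusters}: group indices $i,j$ together whenever $|x_i-x_j|$ is small (say $\le M/n$ for a large constant $M=M(k)$ chosen at the end), forming clusters $C_1,\dots,C_m$ whose representatives $z_1,\dots,z_m$ are pairwise at distance $\ge (M/k)\cdot n^{-1} =: \eps/n$. Within a single cluster $C_\ell$ of size $t$, the points $x_i$ all lie within $O(n^{-1})$ of $z_\ell$, so a Taylor expansion of $X$ about $z_\ell$ expresses $(X(x_i))_{i\in C_\ell}$ as a linear image of the derivative vector $(X^{(j)}(z_\ell))_{j=0}^{t-1}$ plus a small error; the linear map is (a scaled) Vandermonde-type matrix in the differences $(x_i-z_\ell)$, whose determinant is $\Theta\big(\prod_{i<j\in C_\ell}(x_i-x_j)\big)$, and the scaling by the normalizations $n^{-(j+1/2)}$ contributes the power of $n$. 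Concatenating over clusters, $(X(x_i))_{i\in[k]}$ is, up to a small perturbation, the image of the big derivative vector $\big(X_0^{(j)}(z_\ell)\big)_{\ell\in[m],\,0\le j\le t_\ell-1}$ under a block-diagonal matrix $T$ with $\det T = \Theta\big(\prod_\ell \prod_{i<j\in C_\ell}(x_i-x_j)\big)\cdot n^{(\text{correct power})}$. Hence
\[ \det\Cov(X(x_i))_i = (\det T)^2 \cdot \det\Cov\big(X_0^{(j)}(z_\ell)\big) \geq (\det T)^2\cdot \lambda_{\min}(\Sigma_\infty\text{-type matrix})^{\#} , \]
and Lemma~\ref{lem:correlation-LB} (applied with the well-separated $z_\ell$ and $s = k-1$) bounds the latter eigenvalue factor below by a constant depending only on $k$. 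Finally one checks that the errors from the Taylor truncation are negligible compared to this lower bound—this is where one uses that $\lambda_{\min}$ is bounded away from $0$ and controls $\det T$ from below, choosing $M$ large enough—and that $\prod_\ell\prod_{i<j\in C_\ell}(x_i-x_j)^2 \asymp \prod_{i<j}\min\{|x_i-x_j|,1/n\}^2$, since cross-cluster pairs contribute a bounded factor $\asymp n^{-2}\cdot\Theta(1)$ each via the $\min$ being $1/n$ and these being absorbed into $n^{k^2}$.

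The main obstacle is the cluster step: making rigorous the claim that conditioning/Taylor-expanding within a cluster really does reduce to a clean Vandermonde factor times the derivative-covariance matrix, with all error terms uniformly controlled over \emph{all} configurations of the $x_i$ (including the delicate regime where some within-cluster gaps are themselves of wildly different orders, e.g. $n^{-10}$ versus $n^{-2}$). One wants a version of Lemma~\ref{lem:numerator-MVT}/the Vandermonde identity that is stable under small perturbations; iterating the cluster decomposition at multiple scales, or an inductive argument on $k$ splitting off the most-separated point (as in the proof of Lemma~\ref{lem:triglowbound}), is the natural way to handle this, and I expect the write-up to mirror that induction. Everything else—the block-diagonal reduction between $X$ and $Y$, and the invocation of Lemma~\ref{lem:correlation-LB}—is routine given the estimates already established.
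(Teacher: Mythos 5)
Your sketch of the \emph{cluster} step—Taylor expansion about cluster representatives, the Vandermonde/divided-difference factor, and the appeal to Lemma~\ref{lem:correlation-LB} for the well-separated centers $z_\ell$—matches the paper's strategy (the paper formalizes the cluster step via divided differences in Lemmas~\ref{lem:MVT-divided-diff}--\ref{lem:main-compare}, and the multi-scale concern you raise about gaps of wildly different orders is handled via Lemma~\ref{lem:scale}). However, your opening reduction is a genuine gap. You propose to write $\Sigma_k$ in $X$-block/$Y$-block form and conclude $\det\Sigma_k = (1+o(1))\det\Cov(X(x_i))_i\cdot\det\Cov(Y(y_i))_i$ from the $O(n^{-1/2})$ cross-correlations, appealing to ``the minimum eigenvalue of the full matrix being bounded below.'' But the normalized $\Sigma_k$ does \emph{not} have its minimum eigenvalue bounded below: it can be as small as roughly $n^{-2(k-1)}$ when the $x_i$ (or $y_i$) cluster at scale $\ll n^{-1}$, which is precisely the regime the lemma must control. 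A Schur-complement bound of the form $\det(B - C^T A^{-1} C)\approx\det B$ requires $\|C\|^2/\lambda_{\min}(A)$ to be small; here $\|C\|=O(n^{-1/2})$ after normalization while $\lambda_{\min}(A)$ can be $\ll n^{-1}$, so the cross terms are \emph{not} negligible relative to the nearly singular diagonal blocks, and the factorization fails exactly where it's needed. The paper avoids this by never separating $X$ from $Y$: it clusters the $x_i$ and $y_j$ jointly around common centers, applies the divided-difference transformation $\Delta$ to the whole $2k\times 2k$ covariance, and only then invokes Lemma~\ref{lem:correlation-LB}, which is stated (for exactly this reason) for the \emph{joint} covariance of $X_0^{(j)}$ and $Y_0^{(j)}$ at the cluster centers. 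If you want to keep a factorization flavor, it must come \emph{after} the divided-difference renormalization (when the matrix really is $\Omega(1)$-conditioned)—but at that point you get the bound directly from $\lambda_{\min}$ and there is nothing left to factor.
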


To prove this, we will cluster the points $\{x_i,y_j\}_{i,j}$ into groups so that points are ``near'' to points in their own group and sufficiently ``far'' from points in the other groups. In the following subsection, we make a few preparations for working within these clusters of ``near'' points.

%We will approximate the contribution these points make to the determinant of the covariance matrix $\Sigma_{k}(\bx,\by)$ as similar to the contribution of a point in that collection and its derivatives. Points that are far away we treat as approximately independent.

\subsection{Near points and derivatives}
Assume that 
$\{x_1,\ldots,x_k\} \cup \{ y_1,\ldots,y_{\ell}\}$ are clustered tightly around points $z_1,\ldots,z_k$, which, themselves, are reasonably well separated.
In this subsection we show that we can compare the covariance matrix of $\Sigma_{k}(\bx,\by)$ (where $\bx,\by$ represent these ``clustered'' set of points) with a covariance matrix of $X^{(j)},Y^{(j)}$ evaluated at the points
$z_1,\ldots,z_r$. This will then allow us to use Lemma~\ref{lem:correlation-LB}, the main result of Section~\ref{sec:understanding}, to prove Lemma~\ref{lem:det-LB-general}.

A key technical device here will be a mean-value theorem for, so called, \emph{divided differences}.  Given $x_0  < x_1< \cdots < x_k $ set 
$x = (x_0,x_1,\ldots,x_k)$ and define \emph{the divided differences with respect to} $x$, for a sequence $y_0,\ldots,y_k$, inductively by $[y_i]_x := y_i$, for each $i$, and
\begin{equation}\label{eq:divdifdef}
 [y_i,\ldots,y_{i + j}]_x:= \frac{[y_{i+1},\ldots,y_{i+j}]_x-  [y_i,\ldots,y_{i+j-1}]_x}{x_{i +j} - x_i}\,.
\end{equation}
 For a function $f$, we further extend this definition, by defining \[f[x_0,\ldots,x_k]_x := [f(x_0),\ldots,f(x_k)]_x .\]

The reason for this definition becomes apparent with the following version of the mean value theorem, which is often attributed to Schwarz\footnote{For more information on divided differences and a proof of Lemma~\ref{lem:MVT-divided-diff}, see the survey \cite{deBoor}.}.

\begin{lemma}\label{lem:MVT-divided-diff}
	Let $x =(x_0,\ldots,x_k) \in \R^{k+1}$ satisfy $x_0 < x_1 < \cdots < x_k$ and let $f$ be a smooth function. Then there exists $\xi \in [\min_j x_j,\max_j x_j]$ so that $$ f[x_0,\ldots,x_k]_x = \frac{f^{(k)}(\xi) }{k!}\,.$$
\end{lemma}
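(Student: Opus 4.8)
\textbf{Proof plan for Lemma~\ref{lem:MVT-divided-diff}.}

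The plan is to prove the divided-difference mean value theorem by induction on $k$, reducing at each step to an application of Rolle's theorem. The base case $k=0$ is trivial, since $f[x_0]_x = f(x_0) = f^{(0)}(x_0)/0!$, and we may take $\xi = x_0$. For the inductive step, the standard move is to build an auxiliary function whose vanishing points we can control. Concretely, I would let $p$ be the interpolating polynomial of degree at most $k$ that agrees with $f$ at $x_0, \ldots, x_k$; writing $p$ in Newton's form, its leading coefficient is exactly $f[x_0,\ldots,x_k]_x$, so $p^{(k)}(t) \equiv k!\, f[x_0,\ldots,x_k]_x$ identically. Then set $g(t) := f(t) - p(t)$, which is smooth and vanishes at the $k+1$ distinct points $x_0 < \cdots < x_k$.

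By Rolle's theorem applied on each of the $k$ consecutive subintervals $[x_{i-1}, x_i]$, the derivative $g'$ has at least $k$ distinct zeros in the open interval $(\min_j x_j, \max_j x_j)$. Iterating — applying Rolle's theorem to $g', g'', \ldots$ in turn — we find that $g^{(k)}$ has at least one zero $\xi$ in $[\min_j x_j, \max_j x_j]$. Since $g^{(k)}(t) = f^{(k)}(t) - p^{(k)}(t) = f^{(k)}(t) - k!\, f[x_0,\ldots,x_k]_x$, evaluating at $\xi$ and solving gives
\[ f[x_0,\ldots,x_k]_x = \frac{f^{(k)}(\xi)}{k!}, \]
as claimed. (An alternative to invoking interpolation explicitly is to prove the statement purely inductively: assuming the result for $k-1$, one writes $f[x_0,\ldots,x_k]_x$ as a difference quotient of two divided differences of length $k$, each expressible via $(k-1)$st derivatives at intermediate points, and then applies the mean value theorem once more to pass from the difference quotient of $f^{(k-1)}$ to $f^{(k)}$; this is essentially the same argument repackaged.)

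The only mild subtlety — and the step I would be most careful about — is bookkeeping the distinctness and nesting of the zeros produced by repeated applications of Rolle's theorem, so that one genuinely obtains $k$ zeros of $g'$, then $k-1$ of $g''$, and so on, all lying in the closed hull $[\min_j x_j, \max_j x_j]$; this is routine but is the place where an off-by-one error could creep in. Everything else (smoothness of $g$, the identification of the leading Newton coefficient with the divided difference, the fact that $p^{(k)}$ is constant) is standard. Since the paper attributes this to Schwarz and points to the survey \cite{deBoor} for a full treatment, I would likely present only the short interpolation-plus-Rolle argument above rather than grinding through the inductive difference-quotient computation.
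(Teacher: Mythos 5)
Your proof is correct. Note, however, that the paper does not actually prove Lemma~\ref{lem:MVT-divided-diff}: it attributes the result to Schwarz and defers the proof to the survey \cite{deBoor} in a footnote. So there is no ``paper's own proof'' to compare against for this particular statement. That said, the interpolation-plus-Rolle argument you give is the standard one and is exactly the template the authors \emph{do} use for the closely related Lemma~\ref{lem:numerator-MVT}, where they subtract off a polynomial vanishing at the given nodes and iterate Rolle's theorem to locate a zero of the $k$th derivative of the difference. Your version goes through as written: the Newton form identifies the leading coefficient of the degree-$\le k$ interpolant $p$ with $f[x_0,\ldots,x_k]_x$, hence $p^{(k)}\equiv k!\,f[x_0,\ldots,x_k]_x$; $g=f-p$ vanishes at the $k+1$ distinct nodes; and repeated Rolle produces $\xi\in(\min_j x_j,\max_j x_j)$ with $g^{(k)}(\xi)=0$, from which the identity follows. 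The bookkeeping of zeros you flag as the delicate point is indeed fine here because the nodes are assumed strictly ordered, so each application of Rolle yields zeros strictly interlaced between the previous ones. The alternative inductive route you sketch (difference quotient of two length-$k$ divided differences plus one more mean value theorem) is also valid and is, in fact, closer to what appears in \cite{deBoor}; either is acceptable.
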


\noindent For our application of Lemma~\ref{lem:MVT-divided-diff}, we will need to use some basic properties of the linear map
 $\Delta_{x} :\R^{k+1} \to \R^{k+1}$ defined by 
$$\Delta_{x} y  = \left([y_0]_x,[y_0,y_1]_x,\ldots,[y_0,y_1,\ldots,y_{k+1}]_x  \right), $$
for all $y = (y_0,\ldots,y_{k+1}) \in \R^{k+1}$.
\begin{lemma}\label{lem:det-divided-diff}
Let $x = (x_1,\ldots,x_k) \in \R^{k}$ where $x_1 < \cdots < x_k$ and let $\Delta_{x}$ be as above. Then $\Delta_{x}$ is a linear map with
\[ \det(\Delta_{x}) = \prod_{1\leq i < j\leq n}(x_j - x_i)^{-1}.\]
\end{lemma}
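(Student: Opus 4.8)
\textbf{Proof proposal for Lemma~\ref{lem:det-divided-diff}.}

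The plan is to observe that $\Delta_x$ is, by construction, a \emph{lower triangular} linear map once we write the divided differences in the right order, so the determinant is just the product of the diagonal entries, and then to identify each diagonal entry explicitly from the recursive definition~\eqref{eq:divdifdef}. Concretely, label the coordinates of the input $y = (y_1,\ldots,y_k)$ (I will use the $k$-variable version matching the statement, with indices $1,\ldots,k$) and the coordinates of the output by $j = 1,\ldots,k$, where the $j$th output coordinate is $[y_1,\ldots,y_j]_x$. First I would prove by induction on $j$ that $[y_1,\ldots,y_j]_x$ is a linear combination of $y_1,\ldots,y_j$ only (no dependence on $y_{j+1},\ldots,y_k$); this is immediate from \eqref{eq:divdifdef} since $[y_1,\ldots,y_j]_x$ is built from $[y_1,\ldots,y_{j-1}]_x$ and $[y_2,\ldots,y_j]_x$, each of which involves only the claimed variables. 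Hence the matrix of $\Delta_x$ in the standard basis is lower triangular.

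Next I would compute the diagonal entry in row $j$, i.e.\ the coefficient of $y_j$ in $[y_1,\ldots,y_j]_x$. Again by induction on $j$ using \eqref{eq:divdifdef}: the coefficient of $y_j$ in $[y_2,\ldots,y_j]_x$ is (by the inductive hypothesis applied to the shifted tuple $x_2 < \cdots < x_j$) equal to $\prod_{2 \le i < j}(x_j - x_i)^{-1}$, while the coefficient of $y_j$ in $[y_1,\ldots,y_{j-1}]_x$ is $0$; dividing by $x_j - x_1$ gives that the coefficient of $y_j$ in $[y_1,\ldots,y_j]_x$ is $\prod_{1 \le i < j}(x_j - x_i)^{-1}$. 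Therefore
\[
\det(\Delta_x) = \prod_{j=1}^k \prod_{1 \le i < j}(x_j - x_i)^{-1} = \prod_{1 \le i < j \le k}(x_j - x_i)^{-1},
\]
as claimed. (Alternatively, one can avoid the explicit coefficient computation entirely: it is classical that $[y_1,\ldots,y_j]_x$ equals the leading coefficient of the degree-$(j-1)$ interpolating polynomial through $(x_1,y_1),\ldots,(x_j,y_j)$, and comparing with the Lagrange formula gives the coefficient of $y_j$ directly as $\prod_{i<j}(x_j-x_i)^{-1}$; but the bare induction above is self-contained and avoids invoking interpolation theory.)

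There is essentially no serious obstacle here: the only thing to be careful about is the bookkeeping of indices (making sure the "shifted" tuple in the inductive step really has the same separation structure, so the inductive hypothesis applies to it), and making sure the triangularity claim is stated with the coordinates in the correct order. The result is then a one-line consequence of triangularity plus the diagonal computation. In the paper this determinant formula will be combined with Lemma~\ref{lem:MVT-divided-diff} to rewrite $\Sigma_k(\bx,\by)$, after a change of basis by $\Delta_x$, in terms of derivatives $X^{(j)},Y^{(j)}$ evaluated at the cluster centers, where Lemma~\ref{lem:correlation-LB} can be applied.
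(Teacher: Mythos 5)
Your proof is correct and is essentially the same as the paper's: both rest on observing that $\Delta_x$ is lower triangular (because $[y_1,\ldots,y_j]_x$ depends only on $y_1,\ldots,y_j$) and then extracting the diagonal entry $\prod_{i<j}(x_j-x_i)^{-1}$ from the recursion \eqref{eq:divdifdef}. The paper packages this as an induction on $k$ via a cofactor expansion along the last column, whereas you compute all diagonal entries at once and multiply, but the underlying computation is identical.
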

\begin{proof} The linearity of $\Delta_x$ is clear from the definition. To calculate the determinant of $\Delta_x$, we apply induction on the dimension $k$. The basis step holds by definition. Now, let $e_k = (0,\ldots,0, 1) \in \R^k$ denote the standard unit vector and note that 
\[ \Delta_x e_k = [y_1,y_2,\ldots,y_k]_x e_k.\]
To calculate $[y_1,\ldots,y_k]_x$ we use that $[0,\ldots,0]_x = 0$ to see
\[ [y_1,\ldots,y_k]_x = (x_1 - x_k)^{-1} ([y_1,\ldots,y_{k-1}]_x - [y_2,\ldots,y_k]_x) = (x_k-x_1)^{-1}[y_2,\ldots y_k]_x.  \]
Iterating this gives $[y_1,\ldots,y_k]_x = \prod_{i<k} (x_k-x_i)^{-1}$.
Now note that if $y \perp e_k$ then $\Delta_x y = \Delta_{x'}y'$ where $x',y' \in \R^{k-1}$ are the vectors $x,y$ with the $k$th coordinates removed.
Thus \[ \det(\Delta_{x}) = [y_1,y_2,\ldots,y_k]_x \det(\Delta_{x'}) = \prod_{i < j} (x_j - x_i)^{-1},\] by induction. \end{proof}

\vspace{4mm}

\begin{lemma}\label{lem:finite-diff-and-deriv}
For $\eps >0$, let $x_0,\ldots ,x_k \in \R$ satisfy $|x_0 - x_j| \leq \eps/n$ for all $0\leq j\leq k$.  Then 
$$\Var\left( \frac{X[x_0, \ldots,x_k]}{n^{k+1/2}} - X_0^{(k)}(x_0)/k!  \right) = O(\eps^2)\,.$$
\end{lemma}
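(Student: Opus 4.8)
The plan is to exploit that both the divided difference $X[x_0,\ldots,x_k]$ and the derivative $X^{(k)}(x_0)$ are \emph{linear} functionals of the random function $X$, hence linear combinations of the iid standard Gaussians $\eps_j$. Writing $X(x)=\sum_{j}\eps_j\cos(jx)$ (with $j$ ranging over $0\leq j\leq n$), the quantity whose variance we must bound is $\sum_j\eps_j d_j$, where $d_j:=n^{-(k+1/2)}\big((\cos(j\cdot))[x_0,\ldots,x_k]-(\cos(jx))^{(k)}|_{x=x_0}/k!\big)$ is the same pair of functionals applied to the single frequency $x\mapsto\cos(jx)$. Since the $\eps_j$ are iid $N(0,1)$, this is a centered Gaussian of variance $\sum_j d_j^2$, so the lemma reduces to the \emph{deterministic} estimate $\sum_j d_j^2=O_k(\eps^2)$.

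To bound $d_j$, I would, for each fixed $j$, apply Schwarz's mean value theorem for divided differences (Lemma~\ref{lem:MVT-divided-diff}) to the smooth function $x\mapsto\cos(jx)$ — reordering the nodes if needed, which does not affect the divided difference. This produces a point $\xi_j$ in the interval spanned by $x_0,\ldots,x_k$, which has length at most $\eps/n$, with $(\cos(j\cdot))[x_0,\ldots,x_k]=j^k c^{(k)}(j\xi_j)/k!$, where $c^{(k)}$ denotes the $k$-th derivative of cosine. Subtracting off $j^k c^{(k)}(j x_0)/k!$ and applying the mean value theorem once more, using $|\xi_j-x_0|\leq\eps/n$ and $|c^{(k+1)}|\leq 1$, gives $|d_j|\leq \eps\, j^{k+1}/(k!\,n^{k+3/2})$. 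Summing, $\sum_{j\leq n}d_j^2\leq \eps^2(k!)^{-2}n^{-(2k+3)}\sum_{j\leq n}j^{2k+2}=O_k(\eps^2)$, which is the claim.

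The point that requires care — and which I regard as the only real obstacle — is that one must \emph{not} apply the mean value theorem directly to $X$: doing so yields an evaluation point $\xi$ that is a random function of the Gaussian coefficients, and $X^{(k)}(\xi)-X^{(k)}(x_0)$ is then not obviously controlled in $L^2$. Expanding in the Fourier basis first and applying Schwarz's theorem frequency-by-frequency makes each $\xi_j$ deterministic, after which the argument is a routine computation. (The case $k=0$ is trivial, since both terms equal $X(x_0)/n^{1/2}$, so one may assume $k\geq 1$; the frequency $j=0$ contributes $d_0=0$.)
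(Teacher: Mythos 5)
Your proof is correct, but it takes a genuinely different route from the paper's. The paper also notes that $E := n^{-(k+1/2)}X[x_0,\ldots,x_k] - X_0^{(k)}(x_0)/k!$ is Gaussian, but it then reduces to bounding $\E|E|$, applies Schwarz's mean value theorem \emph{directly to the random function} $X$ — getting a random evaluation point $\xi$ — and handles the randomness of $\xi$ by the pathwise bound $|X^{(k+1)}(\xi')| \leq \sup_{y\in I}|X^{(k+1)}(y)|$ over the deterministic interval $I$ of length $O(\eps/n)$ containing all nodes, followed by an expected-supremum estimate (Lemma~\ref{lem:sup-bound}) to show $\E\sup_{y\in I}|X^{(k+1)}(y)| = O(n^{k+3/2})$. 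So the issue you flag as ``the only real obstacle'' — the random $\xi$ — is indeed real, but the paper resolves it differently, via this sup-then-expectation trick, rather than avoiding it. Your approach sidesteps the expected-sup lemma entirely by expanding $E$ in the iid Gaussian basis, computing $\Var(E) = \sum_j d_j^2$ with deterministic coefficients $d_j$, and bounding each $d_j$ by applying the MVT frequency-by-frequency to $\cos(jx)$, where the evaluation point $\xi_j$ is deterministic. This is cleaner and more elementary for the trigonometric ensemble at hand, and makes the constant $1/((k!)^2(2k+3))$ explicit; the paper's route is softer and a bit more portable to ensembles where a single-frequency decomposition is unavailable but an expected-sup estimate is. Both arguments rely, as you correctly note, on the symmetry of divided differences under reordering when invoking Lemma~\ref{lem:MVT-divided-diff}, which is stated for ordered nodes.
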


\begin{proof}
Since 
\[ E := X[x_0,\ldots,x_k]n^{-k + 1/2} - X_0^{(k)}(x_0)/k! \] 
is a linear combination of values of $X,X^{(k)}$, which are jointly Gaussian, $E$ itself is a Gaussian random variable. 
Thus it is sufficient to show that $\E\, |E| = O(\eps)$. For this, we apply Lemma~\ref{lem:MVT-divided-diff} to obtain
\begin{equation}\label{eq:X[]dd} X[x_0,\ldots,x_k] = \frac{X^{(k)}(\xi)}{k!}  \end{equation}
for some $\xi \in [x_1 -  \eps n^{-1},x_1 + \eps n^{-1}] =: I$ and so applying the (standard) mean value theorem, we have 
\begin{equation}\label{eq:mvt2} X^{(k)}(\xi) - X^{(k)}(x_1)  = (x_1-\xi)X^{(k+1)}(\xi'),   \end{equation}
for some $\xi' \in I$. We now want to bound
\begin{equation}\label{eq:E|E|} \E\, |E| = \E\,  \left|X[x_0,x_1,\ldots,x_k] - X^{(k)}(x_0)/k!\right|n^{-(k+1/2)},\end{equation}
from above. Using \eqref{eq:X[]dd} along with \eqref{eq:mvt2}  tells us that \eqref{eq:E|E|} is at most
\[ (\eps n^{-1}) n^{-k + 1/2}\E\, \sup_{y \in I} \left|X^{(k+1)}(\xi)\right| = O(\eps)\,,\]
where the last equality follows form Lemma~\ref{lem:sup-bound}. 
\end{proof}

%\vspace{4mm}

We apply Lemma~\ref{lem:finite-diff-and-deriv} to arrive at the main result of this subsection.

\begin{lemma} \label{lem:main-compare} For $\delta >0$, let $\{x_{i,j} \}_{i \in [r],j \in [k]}, \{y_{i,j} \}_{i \in [r],j \in [k]}, \{z_i\}_{i=1}^r \subset \T$ 
be such that
\[ |z_i-x_{i,j}| < \delta/n\, \textit{   and   }\, |z_i-y_{i,j}| < \delta/n,  \] for all $i,j$. Let $\Sigma$ be the covariance matrix of 
the joint distribution
\[ \left(\frac{X[x_{i,0}]}{n^{1/2}}, \ldots, \frac{X[x_{i,0},\ldots,x_{i,k}]}{n^{k+1/2}},
 \frac{Y[y_{i,0}]}{n^{1/2}},\ldots,\frac{Y[y_{i,0},\ldots,y_{i,k}]}{n^{k+1/2}} \right)_{i=1}^r\]
 and let $\Sigma'$ be the covariance matrix of the joint distribution  
\[ \left( \frac{X_0^{(0)}(z_i)}{0!},\ldots,\frac{X_0^{(k)}(z_i)}{k!}, 
\frac{Y_0^{(0)}(z_i)}{0!},\ldots, \frac{Y^{(k)}(z_i)}{k!} \right)_{i=1}^r.\]
Then 
\[ \|\Sigma - \Sigma'\|_{op} = O_{r,k}(\delta).\]
\end{lemma}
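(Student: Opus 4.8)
The plan is to prove the entry-wise version of the claim and then deduce the operator-norm bound from it, exactly as Lemma~\ref{lem:covar-approx0} was deduced from Lemma~\ref{lem:covar-approx}. Both matrices are of fixed size $2r(k+1) \times 2r(k+1)$ depending only on $r,k$, so once every entry of $\Sigma - \Sigma'$ is shown to be $O_{r,k}(\delta)$, the bound $\|\Sigma - \Sigma'\|_{op} = O_{r,k}(\delta)$ follows immediately (e.g.\ by comparing with the Frobenius norm). Thus the whole task reduces to estimating a single covariance difference.

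A generic entry of $\Sigma$ is of the form $\Cov(A, B)$ where each of $A, B$ is one of the normalized divided differences $X[x_{i,0},\ldots,x_{i,m}]/n^{m+1/2}$ or $Y[y_{i,0},\ldots,y_{i,m}]/n^{m+1/2}$, and the corresponding entry of $\Sigma'$ is $\Cov(A', B')$ where $A' = X_0^{(m)}(z_i)/m!$ (resp.\ with $Y$) is the matching normalized derivative. First I would write
\[ \Cov(A,B) - \Cov(A',B') = \Cov(A - A', B) + \Cov(A', B - B'). \]
By Cauchy--Schwarz, $|\Cov(A - A', B)| \leq \Var(A - A')^{1/2}\Var(B)^{1/2}$ and $|\Cov(A', B - B')| \leq \Var(A')^{1/2}\Var(B - B')^{1/2}$. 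Now Lemma~\ref{lem:finite-diff-and-deriv} applied with $\eps = \delta$ (to each of the two clusters involved, noting that all the points $x_{i,0},\dots,x_{i,k}$ lie within $2\delta/n$ of each other) gives $\Var(A - A') = O(\delta^2)$ and $\Var(B - B') = O(\delta^2)$. It remains to bound $\Var(A')$ and $\Var(B)$: the variance of a normalized derivative $X_0^{(m)}(z)$ is $O(1)$ uniformly (this is immediate from $\Var(X^{(m)}(z)) = \sum_k k^{2m}\cos^2(kz) \le \sum_k k^{2m} = O(n^{2m+1})$, or one can cite the covariance computations underlying Lemma~\ref{lem:covar-approx}), and $\Var(B) = O(1)$ follows similarly using the triangle inequality $\Var(B)^{1/2} \le \Var(B - B')^{1/2} + \Var(B')^{1/2} = O(\delta) + O(1) = O(1)$. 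Combining, each entry of $\Sigma - \Sigma'$ is $O_{r,k}(\delta)$.

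The only mild subtlety --- and the step I would be most careful about --- is the bookkeeping for the off-diagonal blocks mixing $X$-type and $Y$-type coordinates, and mixing different cluster indices $i \neq i'$. The decomposition $\Cov(A,B) - \Cov(A',B') = \Cov(A-A',B) + \Cov(A',B-B')$ and the Cauchy--Schwarz step are insensitive to whether $A$ and $B$ come from the same cluster or the same trigonometric polynomial, so the argument above goes through verbatim; one just has to confirm that Lemma~\ref{lem:finite-diff-and-deriv} (stated for $X$) applies equally to $Y$, which it does since the proof only used generic facts about Gaussian trigonometric sums (Lemma~\ref{lem:sup-bound}, which holds for $Y$ as well, as remarked there). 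So there is no genuine obstacle here; the result is essentially a Cauchy--Schwarz argument built on top of Lemma~\ref{lem:finite-diff-and-deriv}, and the main work is simply organizing the finitely many entry types.
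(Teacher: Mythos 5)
Your proof is correct and follows essentially the same route as the paper: bound each entry of $\Sigma - \Sigma'$ via Cauchy--Schwarz together with Lemma~\ref{lem:finite-diff-and-deriv}, then pass to the operator norm via Frobenius since the matrix size is $O_{r,k}(1)$. The only cosmetic difference is the algebraic split --- you write $\Cov(A,B)-\Cov(A',B')=\Cov(A-A',B)+\Cov(A',B-B')$, yielding two error terms, whereas the paper expands the product $(A'+E_1)(B'+E_2)$ and gets three (one of which, $\E E_1 E_2 = O(\delta^2)$, is lower order); both are equivalent telescopings and lead to the same bound.
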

\begin{proof}
\begin{comment}
We first observe that the entries of $\Sigma'$ are bounded by $1$. To see this, we simply apply Cauchy-Schwartz to deal with the entries of the form 
$\E X_0^{(i)}Y_0^{(j)}$ ( terms of the form $\E X_0^{(i)}X_0^{(j)}$ and $\E Y^{(i)}Y^{(j)}$ are treated similarly)
\[ \E X_0^{(i)}(z_b)Y_0^{(j)}(z_a) \leq \left(\Var X_0^{(i)}(z_b)\right)^{1/2}\left(\Var\, Y_0^{(j)}(z_a)\right)^{1/2} \leq 1.\] 
Thus all entries of $\Sigma'$ are of magnitude at most $1$.
\end{comment}

We prove this by showing that all entries in the matrix $\Sigma - \Sigma_0$ are at most $3\delta$ in absolute value. An entry in the matrix $\Sigma - \Sigma_0$ is of the form 
\begin{equation}\label{eq:terminS0-S'} \E \frac{X[x_{0,a},\ldots,x_{i,a}]}{n^{i+1/2}}\frac{Y[y_{0,b},\ldots,y_{j,b}]}{n^{j+1/2}} 
- \E\left(  i!^{-1} X_0^{(i)}(z_a) \cdot j!^{-1} Y_0^{(j)}(z_b) \right) \end{equation}
or like this with the occurrences of $X$ replaced with occurrences of $Y$ or vice versa. However these cases are similar and so we ignore them.
Momentarily suppressing the first subscript, we may express the first term in \eqref{eq:terminS0-S'} as
\[ \E \frac{X[x_0,\ldots,x_i]}{n^{i+1/2}}\frac{Y[y_0,\ldots,y_j]}{n^{j+1/2}} 
= \E \left( X_0^{(i)}(z_a) i!^{-1}  - E_1\right)\left( Y_0^{(j)}(z_b) j!^{-1} - E_2\right), \] 
where $E_1$, is defined as $ n^{-(i+1/2)}X[x_{0,a},\ldots,x_{i,a}] - (i!)^{-1} X_0^{(i)}(z_b)$ and $E_2$ is defined symmetrically. We now expand this out and obtain four terms, the first of which is 
\[ \Cov\left( i!^{-1} X_0^{(i)}(z_a) , j!^{-1}Y_0^{(j)}(z_b) \right)\]
and is the corresponding term in the matrix $\Sigma'$. So to finish we need to bound the three ``error'' terms
\[ -\E E_1 X_0^{(j)}(x_0)j!^{-1} - \E E_2 X_0^{(i)}(x_0) i!^{-1} + \E E_1E_2. \]
We see that each of these are individually at most $\delta$ by applying Cauchy-Schwarz and then Lemma~\ref{lem:finite-diff-and-deriv}.

As a result, we see that $\Sigma_0 - \Sigma'$ is a $2r(k+1) \times 2r(k+1)$ matrix with all entries at most $3\delta$. To finish, we use 
that the operator norm of a matrix is at most its Frobenius norm. So if we set $A = \Sigma_0 - \Sigma$, we have 
$ \|A\|_{op} \leq \|A\|_{F} = \left| \sum_{i,j} A_{i,j}^2\right|^{1/2} = O_{k,r}(\delta)$, as desired.\end{proof}

\vspace{4mm}

\subsection{Proof of Lemma~\ref{lem:det-LB-general}}

We now turn to prove our main lemma of this section, Lemma~\ref{lem:det-LB-general}. At the heart of the proof is Lemma~\ref{lem:det-close}, which allows us to deal with points that are grouped together in groups which are far apart from each other.
As is common, we use the notation $B_\delta(z) \subseteq \R$ to denote the set of points within distance $\delta$ of $z$ and we recall that 
\[ \T_0 = \{ x \in (0,\pi): d(x, \pi \Z) > n^{-1/2} \}. \]
We also need the following elementary fact.
\begin{fact}\label{fact:det-linear-comb}
	If the random vector $\bx$ has covariance matrix $\Sigma$ then the random vector $A \bx$ has covariance matrix $A \Sigma A^T$.
\end{fact}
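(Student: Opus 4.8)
The plan is to unwind the definition of the covariance matrix and push the deterministic linear map $A$ through the expectation by linearity. First I would set $\mu := \E\, \bx$, so that by definition $\Sigma = \E\big[(\bx - \mu)(\bx - \mu)^T\big]$; here $\bx$ is a column vector and $(\bx-\mu)(\bx-\mu)^T$ is the associated rank-one matrix-valued random variable. Since taking expectation commutes with the linear map $\bw \mapsto A\bw$ (it acts coordinatewise as a fixed linear combination, and $\E$ is linear), the mean of $A\bx$ is $\E[A\bx] = A\mu$.

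Next I would compute the covariance matrix of $A\bx$ straight from the definition, using $A\bx - A\mu = A(\bx-\mu)$ and the transpose identity $(A\bw)^T = \bw^T A^T$:
\[ \Cov(A\bx) = \E\big[(A\bx - A\mu)(A\bx - A\mu)^T\big] = \E\big[\,A\,(\bx-\mu)(\bx-\mu)^T\,A^T\,\big]. \]
Since $A$ and $A^T$ are non-random, each entry of the matrix $A\, W\, A^T$ is a fixed linear combination of the entries of $W := (\bx-\mu)(\bx-\mu)^T$, so by linearity of $\E$ we may pull $A$ and $A^T$ outside the expectation to obtain
\[ \Cov(A\bx) = A\,\E\big[(\bx-\mu)(\bx-\mu)^T\big]\,A^T = A\,\Sigma\,A^T, \]
as claimed.

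There is essentially no obstacle here; the only point deserving a word is the interchange of $\E$ with left- and right-multiplication by the fixed matrices $A$, $A^T$, which is immediate from the coordinatewise description above. This completes the proof of Fact~\ref{fact:det-linear-comb}.
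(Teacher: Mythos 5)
Your proof is correct: it is the standard computation $\Cov(A\bx)=\E[A(\bx-\mu)(\bx-\mu)^TA^T]=A\Sigma A^T$, using only linearity of expectation and the fact that $A$ is deterministic. The paper states this as an elementary fact without proof, and your argument is exactly the routine verification it implicitly relies on.
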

\begin{lemma}\label{lem:det-close}
For $k, r \in \N$ and $\eps >0$ there exists $\delta = \delta(k,\eps) \in (0,\eps)$, $c_{k,\eps} > 0$ and $n(k,\eps)$ so that the following holds. Let $z_1,\ldots,z_r \in \T_0$
and let $x_1,\ldots,x_k ,y_1,\ldots,y_k \in \T_0$ satisfy
\[ \{x_1,\ldots,x_k,y_1,\ldots,y_{k}\} \subseteq \bigcup_{i=1}^r \bar{B}_{\delta/n}(z_i) \]
and $|z_i - z_j| > \eps/n$.
If we set $\bx =(x_1,\ldots,x_k)$ and $\by = (y_1,\ldots,y_{k})$ then 
\[ \det \Sigma_{k}(\bx,\by) 
	\geq  c_{k,\eps} n^{2k^2} \prod_{i < j} \left(\min\{|x_j - x_i| ,1/n\}^2 \cdot \min\{|y_j - y_i|, 1/n\}^2\right) \,, \]
for all $n \geq n(k,\eps)$.
\end{lemma}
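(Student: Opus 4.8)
\textbf{Plan for the proof of Lemma~\ref{lem:det-close}.}

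The strategy is to factor the determinant into ``within-cluster'' contributions, one for each $z_i$, by exploiting that the clusters $\bar B_{\delta/n}(z_i)$ are $\eps/n$-separated, and then to reduce each within-cluster contribution to the setup of Lemma~\ref{lem:correlation-LB} via the divided-difference machinery of Section~\ref{sec:det-lower-bound}. Concretely, write $\bz = (\bx, \by)$ as the concatenation, regroup its coordinates according to which cluster $\bar B_{\delta/n}(z_i)$ they fall in, and for each cluster apply the linear change of variables given by $\Delta_{x^{(i)}}$ (for the $x$'s in that cluster) and $\Delta_{y^{(i)}}$ (for the $y$'s), as in Lemma~\ref{lem:det-divided-diff}. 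By Fact~\ref{fact:det-linear-comb}, if $A$ denotes the block-diagonal linear map whose blocks are these $\Delta$'s, then $\det(A\Sigma_k A^T) = \det(A)^2 \det \Sigma_k$, and by Lemma~\ref{lem:det-divided-diff} we have $\det(A)^2 = \prod_{i} \prod_{\text{within cluster } i}(x_j-x_{j'})^{-2}\cdot(y_j - y_{j'})^{-2}$. Since the cross-cluster distances are $\Theta(1/n)$ (being $\geq \eps/n$ and, after passing to the relevant minima, bounded by $1/n$), the target lower bound $c_{k,\eps} n^{2k^2}\prod_{i<j}\min\{|x_j-x_i|,1/n\}^2\min\{|y_j-y_i|,1/n\}^2$ will follow once we show $\det(A\Sigma_k A^T) \geq c'_{k,\eps} n^{2k^2}$ uniformly; indeed, the contribution of the cross-cluster factors to $\prod_{i<j}\min\{\cdots\}$ is $\Theta_{k,\eps}(n^{-2(k^2 - \sum m_i^2)})$ where $m_i$ is the cluster size, and the $\det(A)^{-2} = \prod_i \prod_{\text{within}}(\ldots)^2$ supplies exactly the within-cluster product of distances, so the bookkeeping matches.

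The heart of the matter is the uniform lower bound $\det(A\Sigma_k A^T) = \Omega_{k,\eps}(n^{2k^2})$. After dividing each divided-difference coordinate $X[x_{i,0},\ldots,x_{i,\ell}]$ by $n^{\ell+1/2}$ (which pulls out an explicit power of $n$ accounting for the $n^{2k^2}$ — more precisely the rescaled matrix is $\Theta(1)$-sized), Lemma~\ref{lem:main-compare} tells us that the rescaled matrix $\widehat\Sigma$ is within $O_{k,r}(\delta)$ in operator norm of $\Sigma'$, the covariance matrix of the tuple $\big(X_0^{(\ell)}(z_i)/\ell!,\ Y_0^{(\ell)}(z_i)/\ell!\big)_{i\leq r,\ \ell\leq k}$. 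Now Lemma~\ref{lem:correlation-LB} (with the parameter $s = k$ there and the separation $\eps$) gives $\lambda_{\min}(\Sigma') = \Omega_{\eps,r,k}(1)$ — noting the harmless extra diagonal rescaling by the constants $1/\ell!$, which only changes the bound by a $k$-dependent constant. Choosing $\delta = \delta(k,\eps)$ small enough that $O_{k,r}(\delta)$ is at most half of this $\lambda_{\min}(\Sigma')$, Weyl's inequality gives $\lambda_{\min}(\widehat\Sigma) = \Omega_{\eps,k}(1)$, hence $\det(\widehat\Sigma) = \Omega_{\eps,k}(1)$, and unrescaling restores the $n^{2k^2}$.

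\textbf{The main obstacle} I expect is the bookkeeping of powers of $n$ and the matching of the within-cluster distance products: one must be careful that the block sizes $m_i$ (which may differ for the $x$-part and $y$-part of a cluster, and need not equal each other) are tracked consistently, that the Vandermonde-type determinant $\det(\Delta_x) = \prod_{i<j}(x_j-x_i)^{-1}$ is applied cluster-by-cluster to the correct coordinates, and that the ``$\min\{\cdot,1/n\}$'' truncation in the statement is precisely what converts cross-cluster distances (which are $\Theta(1/n)$ but could a priori be larger) into the clean $n$-power count. A secondary point requiring care is verifying that Lemma~\ref{lem:main-compare} applies with the clusters genuinely of radius $\delta/n$ — i.e.\ that we may take $\delta < \eps$ and still have the separation hypothesis of Lemma~\ref{lem:correlation-LB} satisfied by the $z_i$'s — but this is immediate from $|z_i - z_j| > \eps/n$. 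Once the reduction is set up cleanly, the analytic content is entirely supplied by Lemmas~\ref{lem:main-compare} and~\ref{lem:correlation-LB}.
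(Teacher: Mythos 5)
Your proposal follows essentially the same route as the paper's proof: cluster the points by their nearest $z_i$, apply the block divided-difference operator $\Delta$, use $\det\Sigma_k = (\det\Delta)^{-2}\det(\Delta\Sigma_k\Delta^T)$ together with the Vandermonde formula from Lemma~\ref{lem:det-divided-diff}, rescale to a $\Theta(1)$-sized matrix, compare to $\Sigma'$ via Lemma~\ref{lem:main-compare}, invoke Lemma~\ref{lem:correlation-LB} for the eigenvalue lower bound and a perturbation (Weyl) argument to absorb the $O_k(\delta)$ error, and then redistribute powers of $n$ across the within-/cross-cluster factors using $\min\{|x-x'|,1/n\}^2 \leq n^{-2}$. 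The bookkeeping you flag (unequal $x$- and $y$-cluster sizes, and that $r\leq 2k$ so $O_{k,r}(\delta)$ and $\Omega_{\eps,r,k}(1)$ are really $O_k(\delta)$ and $\Omega_{\eps,k}(1)$) is exactly the accounting the paper does via the quantity $T = \sum_i(k(i)+1)^2 + \sum_i(k'(i)+1)^2$, and it checks out.
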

\begin{proof}
We let $\delta \in (0,\eps/2)$, which we shall specify later. Set $\Sigma := \Sigma_{k}(\bx,\by)$ and define
 \[ S_x(z_i) := \{ j : x_j \in B_{\delta/n}(z_i)\}, \qquad S_y(z_i) := \{ j  : y_j \in B_{\delta/n}(z_i)\}.\]
Note that, by choice of $\delta$, these sets are disjoint.

We now define the linear operator $\Delta$ on $\R^{2k}$ that ``differences'' points $x_i,x_{i'}$, in the sense \eqref{eq:divdifdef}, with $i,i' \in S_x(z_j)$, for each part of the partition (and likewise for the $y_i$). For this, split up the coordinates of $\R^{2k}$,
\[ \R^{2k} = \R^{S_x(z_1)}\times \cdots \times \R^{S_x(z_k)} \times \R^{S_y(z_1)} \times \cdots \times \R^{S_y(z_k)},\] according to the partition
$\{S_y(z_i),S_x(z_i)\}_i$, and write $( \bx,\by) = (\bx_1,\ldots,\bx_k,\by_1,\ldots,\by_k)$. For a point
\[ (\bu_1,\ldots,\bu_{k}, \bv_1,\ldots,\bv_k) \in\R^{2k}  = \R^{S_x(z_1)}\times \cdots \times \R^{S_x(z_k)} \times \R^{S_y(z_1)} \times \cdots \times \R^{S_y(z_k)}\] we define $\Delta$ by setting 
\[ \Delta (\bu_1,\ldots,\bu_{k}, \bv_1,\ldots,\bv_k) = (\Delta_{\bx_1} \bu_1, \ldots, \Delta_{\bx_k} \bu_k,\Delta_{\by_1} \bv_1, \ldots, \Delta_{\by_k} \bv_k ).\]
Of course, 
\begin{equation}\label{eq:det-split} \det\Sigma  = (\det \Delta )^{-2}\det( \Delta \Sigma \Delta^T) \end{equation}
and, by Fact~\ref{fact:det-linear-comb}, we see that $ \Delta \Sigma \Delta^T$ is the covariance matrix of the random variable $\Delta (\bx,\by)$.
After renaming the points 
\[ S_x(z_i) = \{x_{i,0},\ldots,x_{i,k(i)} \} \qquad S_y(z_i) = \{ y_{i,0},\ldots,y_{i,k'(i)}\}, \] for each $i \in [r]$, we may use the definition
of $\Delta$ to write
\[ \Delta \Sigma \Delta^T =\Cov\left( X[x_{i,0}],\ldots,X[x_{i,0},\ldots,x_{i,k(i)}],Y[y_{i,0}],\ldots,Y[y_{i,0},\ldots,y_{i,k'(i)}] \right)_{i=1}^r. \]
We now rescale rows and columns corresponding to the terms $X[x_{i,0},\ldots,x_{t,k(i)}]$ of ``depth'' $t$ by $n^{-(t+1/2)}$ to obtain
\begin{equation}\label{eq:det-rescale} \det \Delta \Sigma \Delta^T = n^{T} \det \Cov\left(\frac{X[x_{i,0}]}{n^{1/2}}, \ldots, \frac{X[x_{i,0},\ldots,x_{i,k(i)}]}{n^{(k(i)+1/2)}},
 \frac{Y[y_{i,0}]}{n^{1/2}},\ldots,\frac{Y[y_{i,0},\ldots,y_{i,k'(i)}]}{n^{k'(i)+1/2}} \right)_{i=1}^r, \end{equation}
where we have set 
\[ T := \sum_{i=1}^r (k(i)+1)^2 +  \sum_{i=1}^r (k'(i)+1)^2.\]
%here the first factor of $2$ comes from the fact we need to rescale rows and columns, the second factor of two comes from the fact we need to do this for the $X$ terms \emph{and} the $Y$ terms.
Now let $\Sigma_0$ be the rescaled matrix in \eqref{eq:det-rescale}. We compare this renormalized matrix $\Sigma_0$ with the covariance matrix of the (re-scaled) derivatives of $X,Y$ at $\{ z_i \}$. That is, the matrix
\[ \Sigma' := \Cov\left( \frac{X_0^{(0)}(z_i)}{0!},\ldots,\frac{X_0^{(k(i))}(z_i)}{(k(i))!}, 
\frac{Y_0^{(0)}(z_i)}{0!},\ldots, \frac{Y_0^{(k'(i))}(z_i)}{(k'(i))!} \right)_{i=1}^r\,.  \]
We apply Lemma~\ref{lem:correlation-LB} (the main result of Section~\ref{sec:understanding}) to learn
\begin{equation}\label{eq:lmin-lb} \l_{\min}(\Sigma') = \Omega_{k,\eps}(1), \end{equation}
for all $n > n(\eps,k)$.
Now, using the variational definition of the least eigenvalue of a symmetric matrix, we have
\begin{equation} \label{eq:lminlmin} \l_{\min}(\Sigma_0) \geq \l_{\min}(\Sigma') - \max_{v , |v|_2 =1} v' (\Sigma_0-\Sigma') v  = \l_{\min}(\Sigma') + O_{k}(\delta),\end{equation}
where we have used Lemma~\ref{lem:main-compare} to bound 
\[\max_{v , |v|_2 =1} v' (\Sigma_0-\Sigma') v = \|\Sigma_0-\Sigma'\|_{op} = O_{k}(\delta). \] 
Using \eqref{eq:lmin-lb} and \eqref{eq:lminlmin} and choosing $\delta>0$ to be sufficiently small compared to $k$ and $\eps$, gives
\begin{equation}\label{eq:detSigma0} \det(\Sigma_0) \geq (\l_{\min}(\Sigma'))^{2k}  = \Omega_{k,\eps}(1), \end{equation}
for sufficiently small $\delta$ and $n > n(k,\eps)$.

Putting the pieces together, we use \eqref{eq:det-split} along with \eqref{eq:det-rescale} to express
\[ \det\Sigma  = n^T (\det \Sigma_0)(\det \Delta )^{-2}.\]
We then apply Lemma~\ref{lem:det-divided-diff} and \eqref{eq:detSigma0} to write, for $c_{k,\eps}>0$, 
\begin{equation}\label{eq:detSigma'} \det\Sigma \geq  c_{k,\eps} n^T \prod_{i=1}^r \left\lbrace \prod_{0 \leq a < b \leq k(i)}(x_{i,b} - x_{i,a})^2 
\prod_{0 \leq a < b \leq k'(i)}(y_{i,b} - y_{i,a})^2 \right\rbrace . \end{equation}
So, finally, using the clear fact $1 \geq n^2 \min \{|x-x'|^2,n^{-2}\}$, we rewrite \eqref{eq:detSigma'} as
\[  \det\Sigma \geq  c_{k,\eps} n^{2k^2} \prod_{i<j}\min\{ |x_i-x_j|, n^{-1} \}^2 \cdot \min\{ |y_i-y_j|, n^{-1} \}^2,  \]
for all $n > n(\eps,k)$, thus completing the proof of Lemma~\ref{lem:det-close}.
\end{proof}
\vspace{4mm}

\noindent 

%\vspace{4mm}

To prove Lemma~\ref{lem:det-LB-general}, all that remains is to apply Lemma~\ref{lem:det-close} at an appropriate ``scale''. 
That is, we need an appropriate distinction between ``near'' and ``far'' points.  The following will provide us with just this.

\begin{lemma}\label{lem:scale} Let $\{\delta_k \}_k \subseteq \R^+$ be a decreasing sequence and let $S \subseteq \R$ have $|S| = s$. Then there exists $R \subseteq S$ with $|R| = r$ so that every $z \in S$ has $d(z,R) \leq \delta_{r}$ and any distinct $x,y \in R$ have $|x-y| > \delta_{r-1}$.
\end{lemma}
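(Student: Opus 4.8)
\textbf{Proof proposal for Lemma~\ref{lem:scale}.}

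The plan is to select $R$ greedily, using a natural "pruning" process whose stopping time determines the value of $r$. Concretely, I would build a nested sequence of subsets $S = R_0 \supseteq R_1 \supseteq \cdots$ as follows: at stage $i \geq 1$, if there exist two distinct points $x, y \in R_{i-1}$ with $|x-y| \leq \delta_{|R_{i-1}|-1}$, delete one of them (say $y$) to form $R_i := R_{i-1}\setminus\{y\}$; otherwise stop and set $R := R_{i-1}$. Since $|R_i| = |R_{i-1}|-1$ whenever we do not stop, the process terminates after at most $s-1$ steps, and we let $r := |R|$ be the size of the final set. By construction, when the process halts no two distinct points of $R$ are within $\delta_{r-1}$ of each other, which gives the second conclusion immediately.

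The remaining task is the first conclusion: every $z \in S$ satisfies $d(z, R) \leq \delta_r$. I would prove this by showing, by induction on $i$, that every $z \in S$ has $d(z, R_i) \leq \delta_{|R_i|}$ — more precisely, that $d(z, R_i) \leq \delta_{r}$ throughout, using that $\{\delta_k\}$ is decreasing and $|R_i| \geq r$. The base case $i=0$ is vacuous if we allow distance $0$ (each $z \in S = R_0$ has $d(z, R_0) = 0$). For the inductive step, suppose $R_i = R_{i-1}\setminus\{y\}$ where $y$ was deleted because $|x - y| \leq \delta_{|R_{i-1}|-1}$ for some $x \in R_{i-1}$, and note $x \in R_i$ since $x \neq y$. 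Now take any $z \in S$; by induction $d(z, R_{i-1}) \leq \delta_{|R_{i-1}|}$, so there is a point $w \in R_{i-1}$ with $|z - w| \leq \delta_{|R_{i-1}|}$. If $w \neq y$ then $w \in R_i$ and we are done. If $w = y$, then
\[ d(z, R_i) \leq |z - x| \leq |z - y| + |y - x| \leq \delta_{|R_{i-1}|} + \delta_{|R_{i-1}|-1} \leq 2\delta_{|R_{i-1}|-1}. \]
This is not quite the clean bound $\delta_{|R_i|}$, so the greedy process as stated needs a small adjustment.

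\textbf{The main obstacle} is exactly this loss of a factor of $2$ (and the shift in index) when relaying a point through a deleted vertex: a single deletion can push $z$'s nearest surviving neighbour from distance $\delta_m$ to distance $\delta_m + \delta_{m-1}$. The cleanest fix is to change the stopping/deletion rule so that the triangle-inequality chains telescope. One option: instead of deleting based on the threshold $\delta_{|R_{i-1}|-1}$, run the process so that $R$ is a \emph{maximal} subset of $S$ that is $\delta_{|R|}$-separated, chosen by first fixing the target size. That is, for each candidate value $t$ from $s$ down to $1$, ask whether $S$ contains a $\delta_{t-1}$-separated subset of size $\geq t$; let $r$ be the largest such $t$ and let $R$ be a $\delta_{r-1}$-separated subset of size exactly $r$ that is \emph{maximal} with respect to inclusion among $\delta_{r-1}$-separated subsets of $S$ of size $r$ — more simply, take $R$ to be any maximal $\delta_{r-1}$-separated subset of $S$, and observe such a maximal set must have size exactly $r$ (size $\geq r$ by the choice of $r$ as achievable, and size $\leq r$ since a $\delta_{r-1}$-separated set of size $r+1$ would, as $\delta_{r}\le\delta_{r-1}$, also be $\delta_{r}$-separated, contradicting maximality of $r$). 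Maximality then gives: every $z \in S\setminus R$ is within $\delta_{r-1} \le $ ... — again one must be careful that maximality yields distance $\le \delta_{r-1}$ to some point of $R$, whereas we want $\le \delta_r$. The correct bookkeeping is to define $R$ via the maximal $\delta_{r}$-\emph{covering}-radius condition; I would sort this index bookkeeping carefully in the write-up. In short: define $r$ to be the largest integer for which $S$ admits an $r$-element subset $T$ with the property that $T$ is $\delta_{r-1}$-separated \emph{and} $\delta_r$-dense in $S$ (i.e. $d(z,T)\le\delta_r$ for all $z\in S$); verify such an $r$ exists (e.g. $r = s$ works if $S$ is $\delta_{s-1}$-separated, and in general one can check $r=1$ always qualifies by taking $T$ to be a single point... no — one should instead run the greedy deletion and track both invariants simultaneously, accepting that the process must be set up so that at the moment of deletion the deleted point $y$ is within $\delta_{|R_{i-1}|}$, not merely $\delta_{|R_{i-1}|-1}$, of a \emph{surviving} point, which is what makes the chain telescope to $\delta_r$). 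I expect that with the right formulation — delete $y$ only when some \emph{other} point $x$ of the current set satisfies $|x-y|\le \delta_{(\text{current size})}$, and take $r$ to be the terminal size — both invariants ("$\delta_{r-1}$-separated" from the inability to continue, "$\delta_r$-dense" from a telescoping triangle inequality using monotonicity of $\delta$) fall out cleanly, and this is the one genuinely delicate point of the argument.
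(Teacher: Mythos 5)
Your proposal is not a complete proof: you try three formulations, hit a wall in each, and end with a hope that some version will "fall out cleanly." But your second attempt — define $r$ to be the largest integer for which $S$ contains a $\delta_{r-1}$-separated subset of size $r$, and let $R$ be such a subset — is exactly the paper's construction, and you abandoned it one step short of the finish. The missing insight is that the density claim does \emph{not} come from inclusion-maximality of $R$ (which, as you correctly observe, only gives $\delta_{r-1}$-density); it comes directly from the maximality of the integer $r$. Suppose some $z \in S$ had $|z - x_i| \geq \delta_r$ for every $x_i \in R$. Then the set $R \cup \{z\}$ of size $r+1$ has all pairwise distances $\geq \min(\delta_{r-1}, \delta_r) = \delta_r$ — the old pairs by the separation hypothesis on $R$ and monotonicity of $\delta$, the new pairs by assumption. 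But $\delta_r = \delta_{(r+1)-1}$, so $R\cup\{z\}$ witnesses that $r+1$ also satisfies the defining property, contradicting maximality of $r$.

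The point you kept missing is that the separation threshold is not fixed — it relaxes as the candidate size grows. A set of size $r+1$ only needs to be $\delta_r$-separated to beat $r$, and $\delta_r$ is precisely the density radius you want. So there is no index mismatch and no factor of $2$ loss; the argument closes in one step once you stop trying to show $R\cup\{z\}$ is $\delta_{r-1}$-separated. (Your initial greedy-deletion attempt is also salvageable but genuinely requires more care, for the reason you identified; the extremal formulation above is cleaner and is what the paper does.)
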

\begin{proof}
	Let $r$ be the largest integer for which there exists a set $R := \{x_1,\ldots,x_r \} \subseteq S$ so that  $|x_i - x_j| \geq \delta_{r-1}$, for all $i\not= j$. For a contradiction, assume there exists a point $z \in S\setminus R$ for which $|z - x_i| \geq \delta_r$ for all $i \in [r]$. But then if we take $x_{r+1} = z$
	the set $\{x_1,\ldots,x_{r+1}\}$ contradicts the maximality of $r$, as $|x_i-x_j| \geq \delta_r$, for all $i\not= j$. Thus we conclude that all points of $S\setminus R$ are within $\delta_{r}$ of a point in $R$.\end{proof}

\vspace{4mm}

We now prove Lemma~\ref{lem:det-LB-general}, the main result of this section. 

\begin{proof}[Proof of Lemma~\ref{lem:det-LB-general}]
We define a decreasing sequence $\{ \delta_m \}_m \subseteq \R_{>0}$ inductively by first setting $\delta_1 = \delta(k,1)$, where $\delta(\cdot,\cdot)$ is the constant that appears in Lemma~\ref{lem:det-close}. Then, assuming that 
$\delta_{t}$ has been chosen, choose $\delta_{t+1} = \delta(k,\delta_t)/2$.
We now apply Lemma~\ref{lem:scale} to find a value of $t$ and a set of points $z_1,\ldots,z_t$ so that $d(z_i,z_j) > \delta_t/n$ for $i \not= j$ 
and all elements of $\{x_1,\ldots,x_{\ell},y_1,\ldots y_k \}$ have distance $\leq \delta_{t+1}/n$ to one of the values $\{z_i\}$.
We may now apply Lemma~\ref{lem:det-close} with $\eps = \delta_{t}$ and $\delta = \delta_{t+1}$ to obtain
\[ \det \Sigma_{k}(x,y)\geq c_{k}n^{2k^2} \prod_{i < j} \left(\min\{x_j - x_i,1/n\}^2 \cdot \min\{y_j - y_i,1/n\}^2\right), \]
for all large enough $n > n(k)$.
\end{proof}

\section{Proof of Lemma~\ref{th:density-bound}}

We now have everything we need to prove Lemma~\ref{th:density-bound} which, as we have seen in Section~\ref{sec:using-KR}, implies Theorem~\ref{thm:main1}, our main theorem.

\begin{proof}[Proof of Lemma~\ref{th:density-bound}]
Recalling the definition of $p_k(\bx,\by)$ from \eqref{eq:def-pk}, and applying Lemma~\ref{cor:num-UB} and Lemma~\ref{lem:det-LB-general} to the numerator and denominator, respectively, we have 
\begin{align}\label{eq:finalpkineq} p_k(\bx,\by) &= \frac{\alpha_{k}(\bx,\by)}{(2\pi)^{k} |\Sigma|^{1/2}} \leq  C_{k} n^{k^2 + k} \left(\prod_{i < j} \min\{|x_i - x_j|,n^{-1}  \}\cdot \min\{|y_i - y_j|,n^{-1}  \} \right)  \\
&\leq C_k n^{k^2 + k} \cdot n^{-k(k-1)} = C_k n^{2k} \nonumber \end{align} 
 as claimed. \end{proof}

\vspace{4mm}

Notice that in the proof of Lemma~\ref{th:density-bound}, we actually obtain a sharper bound on $p_k$, at \eqref{eq:finalpkineq}.
It is our suspicion that this bound is actually the correct one. 

\begin{conj}
	For all $\bx, \by \in \T_0$, \[  p_k(\bx,\by)  = \Theta_k\left( n^{k^2 + k} \left(\prod_{i < j} \min\{|x_i - x_j|,n^{-1}  \}\cdot \min\{|y_i - y_j|,n^{-1}  \} \right) \right)\,.\]
\end{conj}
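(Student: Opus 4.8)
The upper bound $p_k = O_k(n^{k^2+k}P_xP_y)$, where I write $P_x := \prod_{i<j}\min\{|x_i-x_j|,n^{-1}\}$ and $P_y := \prod_{i<j}\min\{|y_i-y_j|,n^{-1}\}$, is exactly \eqref{eq:finalpkineq}; the content of the conjecture is the matching lower bound, and the plan is to run the arguments of Sections~\ref{sec:understanding}--\ref{sec:det-lower-bound} in \emph{both} directions. Recall $p_k = \alpha_k/((2\pi)^k\det(\Sigma_k)^{1/2})$ from \eqref{eq:def-pk}. Applying Lemma~\ref{lem:expectation-variance} to the Gaussian vector $(X'(x_j),Y'(y_j))_{j\in[k]}$ conditioned on $\{X(x_i)=Y(y_i)=0\}_i$ — the very step behind the upper bound of Lemma~\ref{cor:num-UB} — yields the \emph{two-sided} identity $\alpha_k = \Theta_k\big(\prod_j (V_j^X V_j^Y)^{1/2}\big)$, with $V_j^X := \Var(X'(x_j)\,|\,\{X(x_i)=Y(y_i)=0\}_i)$ and $V_j^Y$ defined symmetrically. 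Each conditional variance is a ratio of Gaussian determinants, $V_j^X = \det(\tilde\Sigma_j^X)/\det(\Sigma_k)$, where $\tilde\Sigma_j^X$ is the $(2k+1)\times(2k+1)$ covariance matrix of $\big(X'(x_j),(X(x_i))_i,(Y(y_i))_i\big)$. So the conjecture follows from two two-sided determinant estimates, writing $Q_j := \prod_{\ell\neq j}\min\{|x_j-x_\ell|,n^{-1}\}$ and $Q_j^y := \prod_{\ell\neq j}\min\{|y_j-y_\ell|,n^{-1}\}$: (I) $\det\Sigma_k = \Theta_k(n^{2k^2}P_x^2P_y^2)$, i.e. Lemma~\ref{lem:det-LB-general} with $\geq c_k$ improved to $\Theta_k$; and (II) $\det\tilde\Sigma_j^X = \Theta_k(n^{2k^2+2k+1}\,Q_j^2\,P_x^2P_y^2)$ for each $j$, together with its mirror image for $\tilde\Sigma_j^Y$. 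Granting these, $V_j^X = \Theta_k(n^{2k+1}Q_j^2)$ and $V_j^Y = \Theta_k(n^{2k+1}(Q_j^y)^2)$; since $\prod_j Q_j = P_x^2$ and $\prod_j Q_j^y = P_y^2$ we recover $\alpha_k = \Theta_k(n^{2k^2+k}P_x^2P_y^2)$ — Lemma~\ref{cor:num-UB} as a genuine two-sided bound — and hence $p_k = \Theta_k(n^{k^2+k}P_xP_y)$.

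Statement (I) requires no new idea: the proof of Lemma~\ref{lem:det-LB-general} already gives the matching upper bound once one observes that the rescaled-derivative covariance matrix $\Sigma'$ of Lemma~\ref{lem:det-close} has not only $\l_{\min}(\Sigma') = \Omega_{k,\eps}(1)$ (Lemma~\ref{lem:correlation-LB}) but also $\|\Sigma'\|_{op} = O_k(1)$, the latter being immediate since its entries are bounded; so $\det\Sigma' = \Theta_k(1)$, and the Vandermonde-Jacobian-and-rescaling computation in Lemma~\ref{lem:det-close} becomes an equality up to constants. The easy upper analogues of Lemmas~\ref{lem:sup-bound} and \ref{lem:num-close}, together with Lemma~\ref{lem:main-compare}, dispose of the error terms.

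The real work is (II), whose obstacle is the extra variable $X'(x_j)$. If $x_j$ lies in a cluster together with other $x$-points $x_{\ell_1},\dots,x_{\ell_{a-1}}$, then at scale $n^{-1}$ the variable $X'(x_j)/n^{3/2}$ is \emph{asymptotically parallel} to the divided difference $X[x_j,x_{\ell_1}]/n^{3/2}$ (both converge to $W^{(1)}(nz)$ at the cluster centre $z$), so the naive limiting covariance matrix is singular and the bound must capture the exact rate at which the degeneracy is approached. The clean remedy is to treat $x_j$ as a node of \emph{multiplicity two}: the data attached to that cluster is $X(x_j),X'(x_j),X(x_{\ell_1}),\dots,X(x_{\ell_{a-1}})$, and in place of $\Delta_x$ (Lemma~\ref{lem:det-divided-diff}) one uses the \emph{confluent} divided-difference map, still linear, whose determinant — by the confluent Vandermonde formula — is $c\prod_m(x_j-x_{\ell_m})^{-2}\prod_{m<m'}(x_{\ell_m}-x_{\ell_{m'}})^{-1}$; squaring (Fact~\ref{fact:det-linear-comb}) produces precisely the power of $\prod_\ell|x_j-x_\ell|$ carried by $Q_j^2$. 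The confluent divided differences $X[x_j],X[x_j,x_j],X[x_j,x_j,x_{\ell_1}],\dots$ span the rescaled derivatives $X_0^{(0)}(z),\dots,X_0^{(a)}(z)$ (a Hermite-interpolation form of Lemma~\ref{lem:MVT-divided-diff}, proved by the same Rolle argument), and a confluent analogue of Lemma~\ref{lem:finite-diff-and-deriv} shows each differs from the corresponding rescaled derivative by a Gaussian of variance $O(\delta^2)$. One then applies Lemma~\ref{lem:correlation-LB}, via eigenvalue interlacing, to the principal submatrix of $\Cov(X_0^{(j)}(z_i),Y_0^{(j)}(z_i))_{i\in[r],j\in[0,s]}$ indexed by the derivative orders actually present in each cluster — now unequal between $X$ and $Y$, since only $X$ carries the extra order — to conclude that the limiting matrix is non-degenerate with bounded operator norm, hence has determinant $\Theta_k(1)$. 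Collecting the Jacobian factors over all clusters and the powers of $n$ from the rescalings, via a bookkeeping identity of the form $\sum_i a_i^2+\sum_i b_i^2 = 2k^2 - 2(\#\text{cross-cluster pairs})$ (the same identity that reconciles (I) with the shape of Lemma~\ref{lem:det-LB-general}), gives (II), and with it the conjecture. The main difficulty is thus concentrated entirely in the confluent-node analysis of (II): identifying the correct higher-order cancellation and verifying that the confluent Vandermonde, the Hermite interpolation lemma, and the non-degeneracy input assemble with exactly the exponents above.
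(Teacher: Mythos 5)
The statement you are addressing is a \textbf{conjecture} in the paper, stated after the proof of Lemma~\ref{th:density-bound}; the authors prove only the upper bound (via \eqref{eq:finalpkineq}) and explicitly say it is their ``suspicion'' that the bound is tight. There is therefore no proof in the paper to compare your argument against, and any complete argument would be genuinely new.

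Your strategy appears sound and, notably, sidesteps the obstacle that (judging from a commented-out remark in the source of Lemma~\ref{cor:num-UB}) led the authors themselves to doubt whether the MVT route could be reversed: rather than trying to lower-bound $\E\big[\min_\xi |X^{(k)}(\xi)|\cdots\big]$, which degenerates when the nodes span more than $\Theta(1/n)$, you express each conditional variance $V_j^X$ as a ratio of Gaussian determinants $\det(\tilde\Sigma_j^X)/\det(\Sigma_k)$ and run the divided-difference/Vandermonde machinery of Section~\ref{sec:det-lower-bound} on numerator and denominator alike. Claim (I) is indeed the two-sided refinement of Lemma~\ref{lem:det-LB-general}: $\lambda_{\max}(\Sigma')=O_k(1)$ follows from bounded entries, $\lambda_{\min}(\Sigma')=\Omega_{k,\eps}(1)$ is Lemma~\ref{lem:correlation-LB}, and Lemma~\ref{lem:main-compare} transfers both to $\Sigma_0$; the exponent bookkeeping $T + 2\cdot\#\{\text{cross-cluster pairs}\} = 2k^2$, and for (II) $T_{\rm new} + 2\,\mathrm{cr}_j + 2a + 2k^2 - 2(\mathrm{cr}_j+\mathrm{cr}_0) = 2k^2 + 2k + 1$ with $a+\mathrm{cr}_j = k$, checks out. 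The confluent-Vandermonde idea (double the node $x_j$, so $X'(x_j)$ is the divided difference $X[x_j,x_j]$, and use Cauchy interlacing on a principal submatrix of the Lemma~\ref{lem:correlation-LB} matrix) is exactly the right way to control the near-parallelism between $X'(x_j)/n^{3/2}$ and the first divided difference of its cluster.

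However, what you have written is a roadmap, not a proof, and you say so yourself. The Hermite-interpolation form of Lemma~\ref{lem:MVT-divided-diff}, the confluent analogue of Lemma~\ref{lem:finite-diff-and-deriv}, and the two-sided compatibility of the exponents with the iterated cluster-scale selection of Lemma~\ref{lem:scale} all need to be written out and verified; each is plausible, and together they constitute precisely what you flag as ``the real work.'' So this is a credible, and genuinely different, line of attack on a question the paper leaves open, but it is not yet a proof.
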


% in the sense that 
% for all $\bx,\by \in \R^k$.
A resolution to this conjecture would then determine probabilities of events like those considered in Lemma~\ref{lem:density-bound-restatement},
up to constant factors.

\appendix

\section{Details from Section~\ref{sec:unit-circle-redux}} \label{sec:details}

In this appendix we will tie up a few loose ends from Section~\ref{sec:unit-circle-redux} by proving
Lemmas~\ref{lem:good-complex}, \ref{lem:circle-good}, \ref{lem:endpoints}, \ref{lem:close-roots} and \ref{lem:close-roots-circle}.
Perhaps unsurprisingly, we employ yet another Kac-Rice formula, which is actually much simpler than those which we have been working with.
We won't need to dive too deeply into the behavior of these integrals here; it will just allow us to get upper bounds on the number of roots in a region $U$ when we have an upper bound on $f'$ in $U$.

\begin{lemma}[Complex Kac-Rice] \label{lem:complex-KR}
	Let $V \subset \C$ be an open set and let $U \subset \C$ be a compact set.  Then 
	$$\E\left[| \{z \in U : f(z) = 0, f'(z) \in V \}|  \right] = \int_U \frac{\E[ |f'(z)|^2 \cdot \one\big[ f'(z) \in V \big]\,|\, f(z) = 0  ]  }{2\pi|\det \Cov(f(z))|^{1/2} }\,dz , $$
	where the integral is with respect to $2$-dimensional Lebesgue measure.
\end{lemma}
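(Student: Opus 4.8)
The plan is to obtain this as a special case of the standard (marked) Kac--Rice formula for smooth Gaussian fields on $\R^2$. Identify $\C$ with $\R^2$ and set $F(z) = (\re f(z),\, \im f(z))$; since $f$ is a polynomial, $F$ is a Gaussian field with $C^\infty$ sample paths, and $\{z : f(z)=0\}$ is exactly the zero set of $F$. The one piece of genuine content is the identification of the Jacobian determinant: writing $f = u + iv$ and $z = x+iy$, we have $J_F(z) = \begin{pmatrix} u_x & u_y \\ v_x & v_y \end{pmatrix}$, and the Cauchy--Riemann equations $u_x = v_y$, $u_y = -v_x$ give
\[ \det J_F(z) = u_x v_y - u_y v_x = u_x^2 + v_x^2 = |u_x + i v_x|^2 = |f'(z)|^2 . \]
In particular $\det J_F \ge 0$, so the $|\det J_F|$ appearing in the generic formula is automatically $|f'(z)|^2$; moreover $f'(z) = u_x + iv_x$ is a measurable function of $J_F(z)$, so the constraint $f'(z) \in V$ can be encoded as a bounded measurable mark $g(z, J_F(z)) := \one\{ (J_F(z))_{11} + i\,(J_F(z))_{21} \in V \}$.

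I would then invoke the marked Kac--Rice formula (see e.g.\ \cite{azais2009level} or \cite{adler2009random}): for a Gaussian field $F$ on an open set $U' \subseteq \R^2$ with $C^1$ paths, whose marginals $F(z)$ are non-degenerate, which a.s.\ has no zero with $\det J_F = 0$, and for bounded measurable $g$,
\[ \E\Big[ \sum_{z \in U' :\, F(z) = 0} g(z, J_F(z)) \Big] = \int_{U'} \E\big[\, |\det J_F(z)|\, g(z, J_F(z)) \,\big|\, F(z) = 0 \,\big]\, p_{F(z)}(0)\, dz , \]
where $p_{F(z)}$ is the density of the centered Gaussian vector $F(z)$, so $p_{F(z)}(0) = \big(2\pi\, |\det \Cov(f(z))|^{1/2}\big)^{-1}$ with $\Cov(f(z))$ the $2 \times 2$ covariance of $(\re f(z), \im f(z))$. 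Substituting the mark $g$ together with $\det J_F = |f'|^2$ produces exactly the integrand in the statement; and since a.s.\ every zero of $f$ is simple --- hence isolated, with $f'(z) \neq 0$ there --- the sum on the left equals $|\{z \in U' : f(z) = 0,\ f'(z) \in V\}|$. The formula thereby identifies $\E\, N(\cdot)$, where $N(A) := |\{z \in A : f(z) = 0,\ f'(z) \in V\}|$, with the absolutely continuous measure of density $\rho(z) := \E[\,|f'(z)|^2\, \one\{f'(z) \in V\} \mid f(z) = 0\,] / (2\pi\, |\det \Cov(f(z))|^{1/2})$, so evaluating at the compact $U$ of the statement gives $\E\, N(U) = \int_U \rho$; once $\E\, N(\cdot)$ is produced as such a measure, no hypothesis on $U$ beyond measurability is needed.

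The remaining work is to verify the hypotheses, and this is the only point I expect to require care. Non-degeneracy of $F(z)$ fails exactly on the real axis, where $\im f \equiv 0$ forces $\det\Cov(f(z)) = 0$; off $\R$ the $2\times 2$ covariance is strictly positive definite, by a short computation with the covariance kernel, while $U \cap \R$ is Lebesgue-null and contributes nothing to $\int_U \rho$ (the finitely many real zeros, if one insists on counting them, are handled by a separate elementary estimate). The absence of zeros with $\det J_F(z) = 0$, that is $f(z) = f'(z) = 0$, follows from a.s.\ simplicity of the roots, which holds because the discriminant of $f$ is a polynomial in $(\eps_0,\ldots,\eps_n)$ that is not identically zero and therefore vanishes only on a Lebesgue-null set. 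Finally, for the marking by $\one\{f'(z)\in V\}$ to behave well one needs a.s.\ no zero $z$ with $f'(z) \in \partial V$; this is immediate when $|\partial V| = 0$, which is all that is needed in the applications, and in general follows from a Bulinskaya-type argument using that $(\re f(z), \im f(z), \re f'(z), \im f'(z))$ has a bounded density for $z$ away from the real axis. Combining these verifications with the displayed formula completes the proof.
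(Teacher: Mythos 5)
Your proposal is essentially the same argument as the paper's: apply the $\R^2$-valued Kac--Rice formula (Azaïs--Wschebor, Theorem~6.4 in \cite{azais2009level}) to $F = (\re f, \im f)$ and use Cauchy--Riemann to identify $\det J_F = |f'|^2$. Two minor divergences in execution are worth flagging. First, the paper handles the discontinuous mark $\one_V$ by approximating with continuous $g_\alpha \uparrow \one_V$ and passing to the limit by dominated convergence, whereas you invoke a marked Kac--Rice formula directly for bounded measurable marks and then worry about whether zeros land on $\partial V$; the paper's monotone-approximation route sidesteps that concern since $V$ is open, so $\one_V$ is lower semicontinuous. Second --- and this is a point on which you are actually more careful than the paper --- the general Kac--Rice theorem requires $\Cov(F(z))$ to be positive definite, and this fails on the real axis since $\im f$ vanishes there. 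The paper's proof applies the theorem to $U$ without comment, but the lemma as stated allows $U$ to meet $\R$; your observation that $U \cap \R$ is Lebesgue-null (so the integral is unaffected) and that real zeros can be treated separately correctly patches this. In the paper's actual applications of the lemma the regions are always bounded away from $\R$, so the issue is cosmetic, but your version of the argument is the more rigorous one.
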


\noindent Lemma \ref{lem:complex-KR} is derived from a more general Kac-Rice formula in Appendix \ref{sec:KR-details}.

\subsection{Roots close to the circle} 
For the logical flow of this appendix, it actually makes sense to start by proving Lemma~\ref{lem:endpoints}, which will allow us to ignore
the special behavior that $f$ has near the real axis. For this, we study the covariance of $f$ close to the real axis.
\begin{lemma}\label{lem:det-close-to-axis}
	For $M \geq 0$, $x \in [0,1]$ and $|\rho|\leq M/n^2$ we have $$\det \Sigma := \det \Cov( f((1+\rho)e^{ix})) =\Omega\left(\min\{ n^4 x^2, n^2 \}    \right)\,.$$
\end{lemma}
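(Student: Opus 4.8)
The plan is to write out the $2\times 2$ covariance matrix $\Sigma=\Cov(f((1+\rho)e^{ix}))$ of the bivariate Gaussian $(X(x,\rho),Y(x,\rho))$ explicitly, reduce its determinant to a weighted trigonometric sum via a Gram-determinant identity, and then bound that sum separately in two regimes of $x$.

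Write $r=1+\rho$, so that $X(x,\rho)=\sum_{j=1}^n\eps_j r^j\cos(jx)=\langle\eps,a\rangle$ and $Y(x,\rho)=\sum_{j=1}^n\eps_j r^j\sin(jx)=\langle\eps,b\rangle$, where $a_j=r^j\cos(jx)$ and $b_j=r^j\sin(jx)$. Since the $\eps_j$ are iid standard Gaussians, $\Sigma$ has entries $\Var(X)=\|a\|_2^2$, $\Var(Y)=\|b\|_2^2$, $\Cov(X,Y)=\langle a,b\rangle$, and so by Lagrange's identity
\[ \det\Sigma=\|a\|_2^2\|b\|_2^2-\langle a,b\rangle^2=\sum_{1\le j<k\le n}(a_jb_k-a_kb_j)^2=\sum_{1\le j<k\le n}r^{2(j+k)}\sin^2((k-j)x). \]
Since $|\rho|\le M/n^2$ and $j+k\le 2n$, for $n$ large in terms of $M$ we have $r^{2(j+k)}=(1+\rho)^{2(j+k)}\ge e^{-8M/n}\ge 1/2$, so it suffices to prove that
\[ S:=\sum_{1\le j<k\le n}\sin^2((k-j)x)=\sum_{m=1}^{n-1}(n-m)\sin^2(mx)=\Omega\bigl(\min\{n^4x^2,n^2\}\bigr) \]
for all $x\in[0,1]$; the claimed bound then follows from $\det\Sigma\ge\tfrac12 S$.

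For $0\le x\le 2/n$ I would restrict the sum to $m\in[\lceil n/4\rceil,\lfloor n/2\rfloor]$. For such $m$ one has $n-m\ge n/2$ and $0\le mx\le 1<\pi/2$, so Jordan's inequality gives $\sin(mx)\ge\tfrac{2}{\pi}mx\ge\tfrac{nx}{2\pi}$, hence $\sin^2(mx)\ge n^2x^2/(4\pi^2)$; since there are at least $n/8$ such integers $m$ (for $n$ large), $S\ge\tfrac{n}{8}\cdot\tfrac{n}{2}\cdot\tfrac{n^2x^2}{4\pi^2}=\tfrac{n^4x^2}{64\pi^2}$, which is $\Omega(\min\{n^4x^2,n^2\})$ because $\min\{n^4x^2,n^2\}\le n^4x^2$ whenever $x\le 2/n$. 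For $2/n<x\le 1$ I would instead use the closed form coming from the Fej\'er kernel: expanding $\sin^2(mx)=(1-\cos 2mx)/2$ and using $\sum_{|m|\le n-1}(n-|m|)e^{im\theta}=\bigl(\sin(n\theta/2)/\sin(\theta/2)\bigr)^2$ yields
\[ S=\frac{n^2}{4}-\frac14\left(\frac{\sin(nx)}{\sin x}\right)^2. \]
Since $x\in(2/n,1]\subseteq(0,\pi/2]$ and $\sin$ is increasing on that interval, $\sin x\ge\sin(2/n)\ge\tfrac{9}{5n}$ for $n$ large, so $\bigl(\sin(nx)/\sin x\bigr)^2\le 1/\sin^2 x\le 25n^2/81$, whence $S\ge\tfrac{n^2}{4}-\tfrac{25n^2}{324}=\tfrac{14n^2}{81}=\Omega(n^2)=\Omega(\min\{n^4x^2,n^2\})$, the last equality because $\min\{n^4x^2,n^2\}=n^2$ when $x>2/n$. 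Combining the two cases and recalling $\det\Sigma\ge\tfrac12 S$ completes the proof.

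The main obstacle is the transitional regime $x=\Theta(1/n)$, where the two quantities $n^4x^2$ and $n^2$ are of the same order. There $\det\Sigma$ is a difference between a ``diagonal'' piece of order $n^2$ and the off-diagonal correction $(\sin(nx)/\sin x)^2$, which is also of order $n^2$, so crude triangle-inequality bounds on the correction are not sharp enough to beat $n^2$. The exact Fej\'er identity --- equivalently, the closed forms $\tfrac12\bigl(n\pm|\sin(nx)/\sin x|\bigr)$ for the two eigenvalues of $\Sigma$ when $\rho=0$ --- is precisely what exposes the residual gap and makes the range $x>2/n$ go through.
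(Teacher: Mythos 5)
Your proof is correct, and it takes a genuinely different route from the paper's. The paper works entry-by-entry with the $2\times 2$ matrix $\Sigma$, approximating each entry by an integral (for $x \lesssim n^{-2/3}$ via the trapezoidal rule, and for larger $x$ via a direct Dirichlet-kernel bound on $\Sigma_{1,2}$), then estimating the determinant in three regimes $x\le\gamma/n$, $x\in[\gamma/n,n^{-2/3}]$, $x>n^{-2/3}$. You instead apply Lagrange's identity to write $\det\Sigma$ directly as a nonnegative sum $\sum_{j<k} r^{2(j+k)}\sin^2((k-j)x)$, strip off the $r$-weights at a cost of a factor $\tfrac12$, and then evaluate the remaining sum in closed form via the Fej\'er kernel. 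What your approach buys is precisely what you point out: the closed form $S=\tfrac{n^2}{4}-\tfrac14(\sin nx/\sin x)^2$ handles the delicate transitional regime $x=\Theta(1/n)$ exactly, so two regimes (split at $2/n$) suffice and you avoid the paper's Riemann-sum error bookkeeping and its third regime; the determinant is also manifestly nonnegative from the start, which makes the Jordan-inequality lower bound in the first regime clean. Minor remarks: the paper's indexing (constant term $k=0$ included) would give $S=\sum_{m=1}^n (n+1-m)\sin^2(mx)$ rather than $\sum_{m=1}^{n-1}(n-m)\sin^2(mx)$, but this only shifts constants and does not affect the argument; and in the first regime the parenthetical ``because $\min\{n^4x^2,n^2\}\le n^4x^2$ whenever $x\le 2/n$'' is a harmless non sequitur, since $\min\{n^4x^2,n^2\}\le n^4x^2$ is true unconditionally and is already exactly what is needed.
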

\begin{proof}
	For $\gamma > 0$ to be determined later, we consider three regions for $x$: 
\[ x \leq \gamma/n, \qquad x  \in [\gamma/n, n^{-2/3}], \qquad x > n^{-2/3}. \]
Starting with the region $x \leq \g/n$, fix $\delta > 0$ and note that 
$$\Sigma_{1,1} = \sum_{k = 0}^n (1+\rho)^{2k}\cos^2(kx) \geq (1 - |\rho|)^n n\left(\int_0^1 \cos^2( nx t)\,dt \right) \geq n( 1 - \delta), $$ where this last inequality holds provided $n$ is sufficiently large and $\g$ is small compared to $\delta$.  Similarly, bound $$\Sigma_{2,2} = \sum_{k = 0}^n (1 + \rho)^{2k}\sin^2(kx) \geq (1 - \delta)\frac{n^3 x^2}{6} \text{ and }\Sigma_{1,2} = \sum_{k = 0}^n (1 + \rho)^{2k}\sin(kx)\cos(kx) \leq (1 + \delta)\frac{n^2 x}{4}\,.$$
	
	Putting these calculations together tells us that for $x \leq \gamma/n$ we have $$\det(\Sigma) \geq \frac{n^4 x^2}{12}(1 + o_{\delta \to 0}(1))\,.$$
	Thus for $\gamma$ sufficiently small and $x < \gamma/n$ the Lemma is proven.
	
	Turning to the region $x \leq n^{-2/3}$, we use the trapezoidal rule to see 
	$$\Sigma_{1,1} = \sum_{k = 0}^n(1 + \rho)^{2k} \cos^2(kx) = n\left( \int_0^1 \cos^2(nxt)\,dt\right) + O(1)\,.$$
	Evaluating the integral, setting $y = nx$ and doing the same for the other two entries of $\Sigma$ shows
	 $$\Sigma = n\begin{bmatrix}
	\frac{\cos(y)\sin(y) + y}{2y} & \frac{\sin^2(y)}{2y}\\
	\frac{\sin^2(y)}{2y} &  \frac{y - \cos(y)\sin(y) }{2y}
	\end{bmatrix} + O(1)\,.$$
	Thus for $nx = y \in [\gamma,n^{1/3}]$, $$\det(\Sigma) \sim \frac{n^2}{4}\left( \frac{\cos(y)^2 + y^2 - 1}{y^2}\right) = \Omega(n^2)\,.$$
	
	For the remaining region $x > n^{-2/3}$, bound $\Sigma_{1,1} = \Omega(n), \Sigma_{2,2} = \Omega(n)$ and $$\Sigma_{1,2}= \frac{1}{2}\left(\frac{\cos(x)(1 - \cos(x(n+1))^2) }{\sin(x)} - \sin(x(n+1))\cos(x(n+1))\right) = O(x^{-1}) = O(n^{2/3})\,.$$
	
	This shows $\det(\Sigma) = \Omega(n^2)$ in this regime.
\end{proof}

\begin{proof}[Proof of Lemma \ref{lem:endpoints}]
First note that $f$ has real coefficients and that the law of $f(-z)$ is equal to that of $f(z)$. Therefore it is sufficient to show that the set 
\[ S:= \left\lbrace z \in \cA_n\left( [-M,M]\right) : 0 \leq \arg(z) \leq n^{-\eps} \right\rbrace, \]
is zero free, with high probability.

Set $z_k =\exp(i x_k)$, where $x_k = k/n^2$ for $k \in \{0,1,\ldots, n^{2-\eps} \}$.  We will first show that $|f|$ is at least $n^{-1/2}\log n$ at each of each of the points $z_k$. We will then show that this implies that $f$ is non-zero in the balls $B_{2M/n^2}(z_k)$, which collectively cover $S$.  We attack this former point first.

For this we show
\begin{equation} \label{eq:lem-end-points-goal1} \sum_{k = 0}^{n^{2-\eps}} \P(|f(z_k)| \leq n^{-1/2}\log n) = o(1).\end{equation}
To see this, note 
\[  \P(|f(z_k)| \leq n^{-1/2}\log n) \leq \P(|\re(f(z_k))|, |\im(f(z_k))| \leq n^{-1/2}\log n )  = O\left( n^{-1} (\log n)^2 \det(\Sigma)^{-1/2} \right)\]
and then apply Lemma~\ref{lem:det-close-to-axis} to obtain
\begin{equation}\label{eq:P(f(z_k))small} \P(|f(z_k)| \leq n^{-1/2}\log n) = O\left(  n^{-2}(\log n)^2\max\{(nx_k)^{-1},1\}  \right)= O\left(  n^{-2}(\log n)^2\max\{n/k,1\}  \right).  \end{equation}
We also have, by direct computation,
\[ \P(|f(z_0)| \leq n^{-1/2}\log n ) = O\left( n^{-1}\log n \right)\,. \] So summing this along with \eqref{eq:P(f(z_k))small} over $k \in [0,n^{2-\eps}]$, yields \eqref{eq:lem-end-points-goal1}.
	
We now turn to show that if $|f(z_k)| \geq n^{-1/2}\log n$ then $f$ is zero-free in $B_{2M/n^2}(z_k)$. For this, note that,
by Fact \ref{facts}, we may condition on the event $|f'(z)| \leq (4M)^{-1}n^{3/2}\log n $ for all $|z| \leq 1 + 2M/n^2$ at only an additive loss in $o(1)$ in our probability calculations.

For any $z \in B_{2M/n^2}(z_k)$ the mean-value theorem for complex functions implies that there are points $\xi_1$ and $\xi_2$ on the line segment connecting $z$ and $z_k$ so that
\[ f'(\xi_1) = \re\left(\frac{f(z) - f(z_k)}{z-z_k} \right)  \qquad f'(\xi_2) = \im\left(\frac{f(z) - f(z_k)}{z-z_k} \right)  \,. \]
This implies that 
$$\left|\frac{f(z) - f(z_k)}{z - z_k}\right| \leq \sqrt{2}(4M)^{-1} n^{3/2} \log n$$ and so 
$$|f(z) - f(z_k)| \leq (2n)^{-1/2}\log n  \,.$$
Thus, if $|f(z_k)| \geq n^{-1/2}\log n$, $f(z)$ is non-zero. This, along with \eqref{eq:P(f(z_k))small}, implies that $f$ has no zeros in
 $\bigcup_{k} B_{2M/n^2}(z_k) \supseteq S$ with high probability.
\end{proof}

\subsection{Lemmas~\ref{lem:good-complex},~\ref{lem:circle-good} and \ref{lem:close-roots}}

\begin{proof}[Proof of Lemma~\ref{lem:good-complex} ] Call a zero $\z \in Z(f)$ \emph{bad} if $||\z|-1|\leq n^{-2}(\log n)^{1/4}$ and
\[ |X'(\arg(\z))|,|Y'(\arg(\z))| \leq  n^{3/2}/\log n.\] 
	Note that by Lemma \ref{lem:endpoints}, we need only to show there are no bad zeros $\z$ with $\arg(\z) \in \T_0$, with high probability.  
	We start by counting the number of zeros with a slightly different property and then relate these to bad zeros.  
	
	Say that a zero $\z \in Z(f)$ is \emph{nearly bad} if $||\z|-1|\leq n^{-2}(\log n)^{1/4} $ and
	\[ |\re(\z f'(\z))| \leq  2n^{3/2}/\log n \,  \textit{  or  } \, |\im(\z f'(\z))| \leq 2n^{3/2}/\log n \] and let $b$ be the number of nearly bad zeros. Put
	\[ U := \{x + iy \in \C: |x| < 2n^{3/2}/\log n \text{ or }|y| < 2n^{3/2}/\log n  \}, \] set 
	\[ V = \{z : ||z|-1|\leq n^{-2}(\log n)^{1/4}, \, |\arg(z)| \geq n^{-1/2} \text{  and  } |\arg(-z)| \geq n^{-1/2}\} \] and 
	apply Lemma~\ref{lem:complex-KR} to the function $g(z) = zf(z)$ to get an integral form for the number of $\z \in V$ with $g(\z) =0$ and $g'(\z)  \in U$.
	Since $\z \not = 0 $ and $g'(z) = f(z) + zf'(z)$ this is the same as counting $\z \in V$ with $f(\z) = 0$ and $\z f'(\z)  \in U$. That is,
	\begin{equation}\label{eq:bI} b = \E\left[| \{z \in V : f(z) = 0, zf'(z) \in U \}|  \right] 
	= \int_V \frac{\E[ |g'(z)|^2 \cdot \one\left[ g'(z) \in U\right] \,|\, g(z) = 0 ]  }{2\pi\cdot \det \Cov(g(z))^{1/2} }\,dz\, . \end{equation}
 Since we are working away from the real axis, Lemma \ref{lem:covar-approx} and Fact \ref{fact:approx-int} apply and tell us that the denominator in this integrand satisfies 
	$$ \left|\det \Cov(g(z))\right|^{1/2} = |z|\left|\det \Cov(f(z))\right|^{1/2} = \Theta(n)\,.$$  
	To bound the numerator, we apply Cauchy-Schwarz \begin{align*}
	\E[ |g'(z)|^2 \cdot \one\left[g'(z) \in U\right]\,|\, g(z) = 0  ]
	 &\leq \left(\E[ |g'(z)|^4 \,|\, g(z) = 0  ]\right)^{1/2}\left(\P(g'(z) \in U\,|\,g(z) = 0)\right)^{1/2} \\
	&=O\big(n^3(\log n)^{-1/2}\big)\,.\end{align*}
Where we have used 
\[ \E[\, |g'(z)|^4 \,|\, g(z) = 0  ] \leq C'\big(\E\, |g'(z)|^2 |\, g(z) = 0 \big)^2 \leq \big(\E\, |g'(z)|^2 \big)^2 = O(n^{6}), \]
where the first and second inequalities follow from properties of Gaussian Random variables (Fact~\ref{fact:gaussian}). We have also used
\[ \left(\P(g'(z) \in U\,|\,g(z) = 0)\right)^{1/2} \leq C|U|\left(\Var( |g'(z)| \,|\,g(z) = 0)\right)^{-1/2} <(\log n)^{-1},\]
where the first inequality is again due to a property of Gaussian random variables, the second inequality holds due to \eqref{eq:early-approx2} in Lemma~\ref{lem:early-approx}. 

Thus, using \eqref{eq:bI}, along with the fact that $|V| = O(n^{-2}(\log n)^{1/4})$, we see
\[ b =   O\big( n^3 (\log n)^{-1/2} \cdot n^{-1} \big) \int_V |dz| = o(1)\, \]
and so the probability there is a nearly bad zero $\z$ is $o(1)$.

We now show that the above implies that there are no \emph{bad} zeros $\z$ with high probability. So suppose that $\z = e^{i\t}\rho \in \cA_f(I)$ is bad and in particular: $f(\z) = 0 $ and $|X'(\t)| < n^{3/2}/\log n$ (the case with $X$ replaced with $Y$ is similar). This implies, by mean value theorem, that there is a $\rho_0 \in [0,n^{-2}\max{I}]$ so that
	\[ |X'(\t,\rho)| \leq |X'(\t,0)| + \rho_0 |X''(\t,0)| \leq (1+o(1)) n^{3/2}/\log n ,\] with high probability. 
	Since $X'(\t,\rho) = \re(\z f'(\z) )$ we see that 
	$\z$ is a zero of $f$ that satisfies $|\re(\z f'(\z))| \leq 2n^{3/2}/\log n $ and therefore is a nearly bad zero. Since the probability there exists a nearly bad zero is $o(1)$, the probability there is a bad zero is $o(1)$.
\end{proof}

\begin{proof}[Proof of Lemma \ref{lem:circle-good}] Say that $(x_0,y_0) \in \cC_f(I)$ is \emph{bad} if $\min\{|X'(x)|,|Y'(y)|\} < 2 n^{3/2}(\log n)^{-1}$.
	 We calculate the expected number of bad pairs
\[ b := \E \left[\left| \left\lbrace (x,y) \in \mathcal{C}_f(I)  :\, \min\{|X'(x)|,|Y'(y)|\} < 2 n^{3/2}(\log n)^{-1}   \right\rbrace \right|\right]. \] Now use that fact that $|x-y|\leq n^{-2}(\log n)^4$ and $d(\{x,y\},\pi\Z) \geq n^{-1/2}$ for all $(x,y) \in \cC_f(I)$ and argue as in Lemma \ref{lem:eval-mean} to bound

$$b \leq Cn^2 \int_{x \in \T_0} \int_{|r| \leq \eta} \E\left[ |Z_1|\cdot |Z_2| \one\left\{ \frac{n^2 r Z_1 Z_2}{Z_1^2 + Z_2^2} \in I\right\} \one\{\min\left\{ |Z_1|,|Z_2|\} \leq 2 (\log n)^{-1} \right\} \right]\,dr\,dx$$
where $\eta = (\log n)^4$ and $(Z_1,Z_2)$ is a standard 2-dimensional Gaussian.  Anticipating an application of Fubini's theorem, bound $$\int_{|r| \leq \eta} \one\left\{ \frac{n^2 r Z_1 Z_2}{Z_1^2 + Z_2^2} \in I\right\}\,dr \leq \int_{-\infty}^\infty \one\left\{ \frac{n^2 r Z_1 Z_2}{Z_1^2 + Z_2^2} \in I\right\}\,dr = \frac{|I|}{n^2 |Z_1|\cdot |Z_2|}(Z_1^2 + Z_2^2)\,.$$
Combining with the bound on $b$ then gives \begin{align*}b &\leq C n^2 \int_{x \in \T_0} \E\left[|Z_1| \cdot |Z_2| \frac{|I|}{n^2 |Z_1| \cdot |Z_2|}(Z_1^2 + Z_2^2) \one\{\min\{|Z_1|,|Z_2|\} \leq 2 (\log n)^{-1}  \} \right]\,dx \\
&\leq  C \pi |I| \E[(Z_1^2 + Z_2^2) \one\{\min\{|Z_1|,|Z_2|\} \leq 2 (\log n)^{-1}  \}  ] \\
&= o(1)\,.
\end{align*}

\begin{comment}
the Kac-Rice formula (Lemma \ref{lem:kac-rice}) to see 
\\
\begin{equation}\label{eq:small-deriv-KR}  b \leq \int_{x \in \T_0} \int_{y: |x - y| \leq n^{-2} \log n} \frac{\E\left[ |X'(x)|\cdot |Y'(y)| \one_{A}\,|\,X(x) = Y(y) = 0\right] }{2\pi \det \Cov(X(x),Y(y)) ^{1/2}}\,dx\,dy, \end{equation}
where we define
\[ A := \{ (x,y) :  \min\{|X'(x)|,|Y'(y)|\} < 2 n^{3/2} (\log n)^{-2}\}.\]  
\noindent To deal with \eqref{eq:small-deriv-KR}, we use that $d(\{x,y\}, \pi\Z) > n^{-1/2}$ to apply Lemma \ref{lem:covar-approx} and Fact \ref{fact:approx-int} to bound
\[	 2\pi \left( \det \Cov(X(x),Y(y)) \right)^{1/2}  = \Omega(n). \]
Also note, that the definition of $A$ immediately gives 
\[\E\left[ |X'(x)|\cdot |Y'(y)| \one_{A}\,|\,X(x) = Y(y) = 0\right] = O(n^{3/2} (\log n)^{-2} )\left( \E\, (Y'(y))^2 \right)^{1/2} 
= O(n^{3}(\log n)^{-2}),\] where we have used Cauchy-Schwartz and properties~\eqref{item:gaussCondition} and \eqref{item:gaussMoment} of Gaussian random variables in Fact~\ref{fact:gaussian}.
Thus equation \eqref{eq:small-deriv-KR} reveals that
\[  b = O(n^{-2}\log n \cdot n^3 \cdot (\log n)^{-2} \cdot n^{-1}  ) =O((\log n)^{-1}). \]
\end{comment}
And so, for each pair $(x_0,y_0) \in \mathcal{C}_f(I)$, we have $|X'(x_0)|,|Y'(y_0)| > 2 n^{3/2}/\log n$, with high probability. Arguing as in the end of the proof of Lemma~\ref{lem:good-complex} with the mean value theorem shows $|X'(y_0)|,|Y'(x_0)| > n^{3/2}/\log n$ with high probability.
\end{proof}

\vspace{4mm}

\begin{proof}[Proof of Lemma \ref{lem:close-roots}] 
We first apply Lemma \ref{lem:endpoints} to see that there are no roots $\z$ with $\arg(\z) \in \T \setminus \T_0$, with high probability. Thus we may restrict ourselves to the region $\arg(z) \in \T_0$.
Say that $\z_1,\z_2$  is a \emph{bad pair} if 
$$\z_1,\z_2 \in \{ z \in \C : ||z|-1| \leq n^{-1} \text{ and }  \arg(z) \in \T_0 \} =: V$$ are distinct roots of $f$ with $|\zeta_1 - \zeta_2| \leq n^{-2}(\log n)^{10}$. 

To show that there are no bad pairs, we work with a slightly different notion: say that a $\z$ is \emph{rotten} if $\z \in V$ is a zero of $f$ and $|f'(z)| \leq n^{1/2}(\log n)^{13}$. Let us first observe that there is no rotten root with high probability. Indeed, applying Lemma~\ref{lem:complex-KR} and noting $\det \Cov f(z) = \Omega(n^2)$ for $\arg(z) \in \T_0$, we express the expected number of rotten points as 
\[ \int_{V} \frac{\E[|f'(z)|^2 \cdot \one\left(|f'(z)| \leq n^{1/2}(\log n)^{13} \right) \,|\,f(z) = 0 ]}{2\pi (\det \Cov(f(z)))^{1/2} }\,dz = O\left( n (\log n)^{26} \cdot n^{-1} \cdot |V| \right), \]
which is $o(1)$ and thus there are no rotten points with high probability. 

We now see that a bad pair implies a rotten point;
suppose $\z_1,\z_2$ is a bad pair. By the mean-value theorem for complex functions, there exists points $u$ and $v$ on the line segment connecting $\zeta_1$ and $\zeta_2$ so that \begin{align*}
	\re f'(u) =\re\left( \frac{f(\zeta_1) - f(\zeta_2)}{\zeta_1-\zeta_2} \right) &=  0 \\
	\im f'(v) =\im\left( \frac{f(\zeta_1) - f(\zeta_2)}{\zeta_1-\zeta_2} \right) &= 0 \,.
	\end{align*}	
Recall that $\max_{|z| \leq 1+n^{-1}}|f^{(2)}(z)| = O(n^{5/2}\log n)$ with high probability and so we may apply mean value theorem again to see that 
for all $z$ with $|z- \z_1| < n^{-2}(\log n)^{11}$ we have 
	$$|f'(z) - f'(\zeta_1)|= O(n^{-2}(\log n)^{11}\cdot n^{5/2} \log n )  = O( n^{1/2}(\log n)^{12}),$$
with high probability. This shows that $|f'(\zeta_1)| = O(n^{1/2} \log^{12} n )$ and thus $\z_1$ is rotten. This shows that if $\z_1,\z_2$ is a bad pair 
then one of $\{\z_1,\z_2 \}$ is a rotten point, up to a set of measure $o(1)$. Thus there are no bad pairs with probability $o(1)$.
\end{proof}

\vspace{3mm}

\begin{proof}[Proof of Lemma \ref{lem:close-roots-circle}]
	Suppose there exist distinct $x_1,x_2 \in \T_0$ with $|x_1 -x_2| \leq n^2(\log n)^{10}$ and $g(x_1) = g(x_2) = 0 $. Then there exists $\xi$ with $g'(\xi) = 0$ and $|\xi - x_1| \leq n^{-2} (\log n)^{10}$ by the mean-value theorem.  Let $E$ be the event that 
 $\max_{x \in [0,2\pi]} |g''(x)| \leq Cn^{5/2}(\log n)^{1/2}$ and note that 	
	by the Salem-Zygmund and Bernstein inequalities, $\PP(E) = 1-o(1)$. Now, conditioned on $E$, the mean-value theorem implies that $|g'(x_1)| \leq n^{1/2}(\log n)^{11}$. By the Kac-Rice formula (Lemma \ref{lem:kac-rice}) 
\[ \E\, \left|\left\{ x \in \T_0 : g(x) = 0, |g'(x)| \leq n^{1/2}(\log n)^{11}   \right\} \right| 
= \int_{\T_0} \frac{\E[|g'(x)| \one\left( |g'(x)| \leq n^{1/2}(\log n)^{11}\,|\,g(x) = 0   \right)]}{(2\pi \Var(g(x)) )^{1/2}}\,dx. \]
We then bound the right hand side by
\[ n^{1/2} (\log n)^{11} \int_{\T_0} \frac{\P[|g'(x)| \leq n^{1/2}(\log n)^{11}\,|\,g(x) = 0 ]  }{(2\pi \Var(g(x)) )^{1/2}}\,dx \\
	= O\left( n^{1/2} (\log n)^{11} \cdot (\log n)^{11} n^{-1} \cdot n^{-1/2} \right), \,\]
	which tends to $0$ as $n\rightarrow \infty$, as desired.

\end{proof}

\section{Proof of Lemma~\ref{lem:our-KR-form}, our Kac-Rice-type formula}\label{sec:KR-details}

At first, formulas such as \eqref{eq:our-KR} can appear a bit unwieldy and their utility opaque. 
So we take a moment, before diving into the proof of Lemma~\ref{lem:our-KR-form}, to say a few words about where these integrals come from.
For this, we take a simplified version of the equation in Lemma~\ref{lem:our-KR-form}; \begin{equation}\label{eq:basic-KR}
\E[|\{x \in [0,2\pi] : X(x) = 0  \}  ] = \int_{[0,2\pi]} \frac{\E{[|X'(x)| \,|\,X(x) = 0] }}{(2\pi)^{1/2} \Var(X(x))^{1/2}}\,dx.
\end{equation}
Let $p(x)$ denote the integrand on the right hand side of \eqref{eq:basic-KR} and let us fix $x \in [0,2\pi]$ and take $\eps > 0$
to be extremely small. We will now observe that  the probability that there is a zero in the interval $(x-\eps,x+\eps)$ is roughly $2\eps \cdot p(x)$. Of course, if we can show this to be true, linearity of expectation immediately gives us \eqref{eq:basic-KR}. 

Now, when $\eps$ is sufficiently small, the function $X(\cdot)$ is approximately linear on the interval $(x-\eps,x+\eps)$ with slope $ \approx X'(x)$. It is therefore easy to see when there is a zero in this interval: there is a zero in $(x-\eps,x+\eps)$ roughly when $X(x-\eps) \in (-2\eps X'(x),0)$, (assuming $X'(x) >0$ without loss of generality). Using the properties of Gaussian random variables, one can calculate 
\[ \PP\big( X(x-\eps) \in (-2\eps X'(x),0) \big\vert\, X'(x) \big) \approx \frac{2\eps |X'(x)|}{ \sqrt{2\pi \Var(X(x))}}. \]
We now want to average over all possible values of $X'(x)$ to ``eliminate'' the conditional expectation. Here we note that $\eps$ can be taken small (compared to everything) and therefore we can restrict to considering the case when $X(x)$ is small. Thus we condition  on $|X(x)| \leq \eta$, for some small $\eta$ and take expectations on both sides. Finally, dividing this result by by $2\eps$ gives exactly the Kac-Rice density, once we send $\eta$ and $\eps$ to zero.

Of course, this is only a sketch and turning this into a genuine proof requires a bit of work. Instead of doing this here
we derive our results from a much more general result on Gaussian processes.

\subsection{Deriving Lemmas \ref{lem:our-KR-form} and \ref{lem:complex-KR}}
Rather than prove our Kac-Rice formulas from scratch, we derive them from a general multivariate formula of Aza\"is-Wschebor. 
For a smooth function $F:\R^n \to \R^n$, we write $F'$ for the Jacobian of $F$, given as an $n \times n$ matrix.

\begin{theorem}[Theorem $6.4$ \cite{azais2009level}]\label{th:general-KR}

	Let $U \subset \R^d$ be an open set and let $Z:U \to \R^d$ be Gaussian. Let $Z(t)$ be such that  \begin{enumerate}
		\item the function $t \mapsto Z(t)$ is almost-surely of class $C^1$;
		\item For each $t \in U$, the matrix $\Cov(Z(t))$ is positive definite;
		\item $\P(\exists~t \in U, Z(t) = 0, \det(Z'(t)) = 0) = 0$.
	\end{enumerate}
	Assume further that one has another random function $W:U \to \R^n$ satisfying: \begin{enumerate}
		\item The function $t \mapsto W(t)$ is continuous almost-surely.
		\item For each fixed $t \in U$ the random process $ (Z(s),W(t))_{s \in U}$ is Gaussian.
	\end{enumerate}
	Then for every continuous bounded function $g: U \times \R^n \to \R$ and for every compact $I \subset U$ we have \begin{align*}
	\E\left(\sum_{t \in I, Z(t) = 0} g(t,W(t))\right) = \int_I \frac{\E\left(| \det(Z'(t))|\, g(t,W(t))\, \Big\vert \, Z(t) = 0\right)  }{(2\pi)^{n/2} \det \Cov(Z(t))^{1/2} }\,dt\,.
	\end{align*}
\end{theorem}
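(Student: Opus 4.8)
The plan, for the general Kac--Rice formula of Theorem~\ref{th:general-KR}, is the classical route: localise around the (finitely many, non-degenerate) zeros of $Z$, replace the point-counting measure by a smooth regularisation, compute the expectation of the regularisation by a one-point Gaussian disintegration, and then pass to the limit. Throughout write $d$ for the common dimension of the domain and range of $Z$ (so the normalising constant is $(2\pi)^{d/2}$), and $N_Z(I):=\#\{t\in I:Z(t)=0\}$. First I would reduce: splitting $g=g^+-g^-$, with $g^{\pm}$ bounded continuous and both sides of the identity linear in $g$, we may assume $0\le g\le M$; replacing $I$ by a slightly smaller or larger cube if needed (the two sides are continuous under such a perturbation and $\partial I$ has zero Lebesgue measure), we may also assume $Z$ has a.s.\ no zero on $\partial I$. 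By hypotheses (1)--(3), a.s.\ $Z$ is $C^1$ and every zero of $Z$ in $I$ is non-degenerate, hence isolated, so there are finitely many, $t_1,\dots,t_m$, all interior. Fix $\phi\in C_c(\R^d)$ with $\phi\ge0$, $\int\phi=1$, $\supp\phi\subseteq B_1(0)$, put $\phi_\delta:=\delta^{-d}\phi(\cdot/\delta)$, and define
\[ \nu_\delta:=\int_I\phi_\delta(Z(t))\,|\det Z'(t)|\,g(t,W(t))\,dt,\qquad \nu_\delta^{\circ}:=\int_I\phi_\delta(Z(t))\,|\det Z'(t)|\,dt. \]

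For the deterministic limit: on the a.s.\ event above, once $\delta$ is below a random $\delta_0>0$ the set $\{t\in I:\phi_\delta(Z(t))\ne0\}$ lies in a disjoint union $\bigcup_jV_j$ of neighbourhoods of the $t_j$ on each of which $Z$ is a $C^1$-diffeomorphism onto a ball about $0$ containing $B_{\delta_0}(0)$. Changing variables $u=Z(t)$ on each $V_j$ (equivalently, the area formula with weight $g(\cdot,W(\cdot))$) gives $\nu_\delta=\sum_{j}\int_{B_\delta}\phi_\delta(u)\,g\big((Z|_{V_j})^{-1}(u),W((Z|_{V_j})^{-1}(u))\big)\,du$, which tends to $\sum_jg(t_j,W(t_j))=:G$ as $\delta\to0$ because $\phi_\delta$ is an approximate identity and $t\mapsto g(t,W(t))$ is continuous; the same computation with unit weight gives $\nu_\delta^{\circ}=N_Z(I)$ exactly for $\delta<\delta_0$.

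For the limit of the expectations: the integrand of $\nu_\delta$ is non-negative, so Tonelli together with the Gaussian disintegration of $(Z(t),Z'(t),W(t))$ with respect to $Z(t)$ (legitimate since $Z(t)$ is non-degenerate) gives $\E[\nu_\delta]=\int_I\big(\int_{\R^d}\phi_\delta(z)F_t(z)\,dz\big)\,dt$, where $F_t(z):=p_{Z(t)}(z)\,\E[\,|\det Z'(t)|\,g(t,W(t))\mid Z(t)=z]$ and $p_{Z(t)}$ is the density of $Z(t)$. Conditioning on $Z(t)=z$ leaves $(Z'(t),W(t))$ Gaussian with mean affine in $z$ and $z$-independent covariance, so $z\mapsto\E[\,|\det Z'(t)|g(t,W(t))\mid Z(t)=z]$ is continuous (dominated convergence: $|\det Z'(t)|$ has all moments, $g$ is bounded), hence $F$ is jointly continuous on $I\times\R^d$; therefore $\int\phi_\delta(z)F_t(z)\,dz\to F_t(0)$ for each $t$, dominated by the $I$-bounded quantity $\sup_{|z|\le1}|F_t(z)|$, and dominated convergence in $t$ yields $\E[\nu_\delta]\to\int_IF_t(0)\,dt=\int_I\frac{\E[\,|\det Z'(t)|\,g(t,W(t))\mid Z(t)=0]}{(2\pi)^{d/2}\det\Cov(Z(t))^{1/2}}\,dt=:R(g)$, and in particular $\E[\nu_\delta^{\circ}]\to R(\one)=:R_1<\infty$.

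The hard part --- and the only genuinely non-routine step --- will be the interchange of expectation and limit. Since $\nu_\delta\to G\ge0$ a.s., Fatou gives $\E[G]\le R(g)$ and likewise $\E[N_Z(I)]\le R_1<\infty$; what remains is the reverse inequality, equivalently ruling out escape of mass in $\{\nu_\delta\}$, and this does \emph{not} follow formally (the hypotheses do not even guarantee a finite second factorial moment). I would obtain it by localising at the level of the count: partition $I$ into small cubes $Q$; on each $Q$ one has $N_Z(Q)\ge|\deg(Z;Q,0)|$ (Brouwer degree, $0\notin Z(\partial Q)$ a.s.), and a direct two-point Gaussian estimate on $(Z(t_Q),Z'(t_Q))$ shows $\E|\deg(Z;Q,0)|\ge \E[\,|\det Z'(t_Q)|\mid Z(t_Q)=0]\,p_{Z(t_Q)}(0)\,|Q|\,(1+o(1))=R_1(Q)(1+o(1))$ as $\operatorname{diam}Q\to0$, uniformly over $Q\subseteq I$ by continuity of the integrand $\rho$ of $R_1$; summing and refining the partition gives $\E[N_Z(I)]\ge R_1$, hence equality. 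A slightly more elaborate version, keeping the continuous weight $g(t_Q,W(t_Q))$ inside the estimate, gives $\E[G]\ge R(g)$ directly. (Equality in the unweighted case also makes $\{\nu_\delta^{\circ}\}$ --- non-negative, a.s.\ convergent, with convergent expectations --- uniformly integrable, hence so is $\{\nu_\delta\}$ via $0\le\nu_\delta\le M\nu_\delta^{\circ}$, giving $\E[\nu_\delta]\to\E[G]$ as an alternative route to $\E[G]=R(g)$.) In short, the regularisations converge and their expectations converge; forcing the two limits to coincide is where essentially all the work sits.
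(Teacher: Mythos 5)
The paper offers no proof of this statement to compare against: Theorem~\ref{th:general-KR} is imported verbatim from Aza\"is--Wschebor \cite{azais2009level} and used as a black box, so your proposal is measured against the textbook argument. Your outline follows the classical route (regularise the zero-counting measure by $\phi_\delta$, compute $\E[\nu_\delta]$ by Gaussian disintegration, pass to the limit), and several components are correct and well organised: the a.s.\ convergence $\nu_\delta\to G$ via the inverse function theorem, the identification of $\lim_\delta\E[\nu_\delta]$ with the Rice integral, the Fatou upper bound, and the Scheff\'e/uniform-integrability reduction of the weighted identity to the unweighted one via $0\le g\le M$ and $\nu_\delta\le M\nu_\delta^{\circ}$. (Minor issues --- a general compact $I$ need not be a cube, and the normalisation should indeed read $(2\pi)^{d/2}$ --- are fixable.)

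The genuine gap sits exactly at the step you flag as the hard part, and what you write does not close it. The inequality $\E|\deg(Z;Q,0)|\ge \E[\,|\det Z'(t_Q)|\mid Z(t_Q)=0]\,p_{Z(t_Q)}(0)\,|Q|\,(1+o(1))$ is not a ``direct two-point Gaussian estimate'': the joint law of $(Z(t_Q),Z'(t_Q))$ alone cannot certify that $Z$ has any zero inside $Q$, let alone that the local degree is nonzero. To produce a zero one must control the field on all of $Q$ conditionally on the pinned pair --- i.e.\ perform a Gaussian regression of $(Z(t))_{t\in Q}$ on $(Z(t_Q),Z'(t_Q))$, bound the conditional modulus of continuity of $Z'$ uniformly over $t_Q\in I$, over the conditioning values and as $\operatorname{diam}Q\to0$, and run a quantitative inverse-function/Newton argument, excising the event that $Z'(t_Q)$ is nearly singular by a uniform truncation. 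Moreover, getting $|\deg(Z;Q,0)|\ge1$ rather than mere existence of a zero requires showing that cancelling pairs of zeros occur in $Q$ with probability $o(|Q|)$ uniformly; as you yourself observe, the hypotheses provide no second factorial moment, so this cannot be waved through by a Bonferroni-type bound and needs the same conditional, quantitative analysis. That local lower bound is precisely the technical heart of the Aza\"is--Wschebor proof (equivalently, of the lower half of Adler--Taylor's expectation metatheorem); asserting it in one line leaves the essential content of the theorem unproved, even though your cube decomposition, the summation over the partition, and the uniformity drawn from continuity of the Rice density are the right framing once that estimate is actually established.
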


\vspace{2mm}

\begin{proof}[Proof of Lemma \ref{lem:complex-KR}]
	Identify $\C$ with $\R^2$ and so we may view $f$ as a function from $\R^2$ to $\R^2$.  Seeking to apply Theorem \ref{th:general-KR}, set $W(t) = f'(t)$ where $f'$ is the complex derivative of $f$ viewed as a function on $\R^2$.  Let $g_\alpha$ be a sequence of 
	continuous functions that increases monotonically to the indicator of $V$: $g_{\alpha} \uparrow \one_V$. For all $\alpha$ we have  

$$\E\left(\sum_{t \in U, f(t) = 0} g_\alpha(f'(t))\right) = \int_U \frac{\E\left( | \det(J(t))|\, g_\alpha(f'(t))\,\Big\vert \, f(t) = 0 \right)  }{2\pi \det \Cov(f(t))^{1/2} }\,dt$$
where we write $J(t)$ to be $2 \times 2$ Jacobian of $f$, where we view $f$ as a function in $\R^2$.  Writing $f = u + iv$, the Cauchy-Riemann equations imply  $$\det(J(t)) = u_x(t)v_y(t) - u_y(t)v_x(t) = (u_x(t))^2 + (v_x(t))^2 = |f'(t)|^2\,.$$
	Taking $\alpha \to 0$ sends $g_\alpha \uparrow \one_V$ and so applying dominated convergence theorem completes the proof.
\end{proof}

\vspace{4mm}

\noindent Lemma \ref{lem:our-KR-form} will follow from the Kac-Rice form below.
Recall that for $k \in \N$ and $V \subset \R^{2k}$ we have  \begin{equation}\label{eq:KR}
	p_k(\mathbf{x},\mathbf{y},V) := 
\frac{\E\left[ \left|X'(x_1)\cdots X'(x_k) Y'(y_1)\cdots Y'(y_k) \right| \varphi_V\,|\,X(x_i)=0, Y(y_j) = 0 \text{ for all }i,j \right]}{(2\pi)^{k} |\Sigma|^{1/2}},
	\end{equation}
	where $\Sigma$ is the $2k\times2k$ covariance matrix $\Cov (X(x_i),Y(y_i))$ and $\varphi_V$ is the indicator for the event $(X'(x_1),\ldots,X'(x_k),Y'(y_1),\ldots,Y'(y_k)) \in V$.  For any compact set $T \subset \T^{2k}$, let $N_T$ be the number of pairwise distinct tuples of pairs $((x_1,y_1),\ldots,(x_k,y_k)) \in T$ where $X(x_i) = 0, Y(y_j) = 0$ for all $i,j$ and $\varphi_V = 1$. 

\begin{lemma}\label{lem:kac-rice} We have 
	 $$ \E[N_T] = \int_T p(\bx,\by,V)\,d\bx\,d\by\,.$$
\end{lemma}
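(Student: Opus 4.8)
The plan is to obtain Lemma~\ref{lem:kac-rice} as an essentially immediate consequence of the general Aza\"is--Wschebor formula, Theorem~\ref{th:general-KR}, applied in dimension $d=2k$; the argument follows the same template as the derivation of Lemma~\ref{lem:complex-KR} given above, only with a product of scalar derivatives in place of $|f'|^2$. Writing a generic point as $t=(\bx,\by)=(x_1,\dots,x_k,y_1,\dots,y_k)$, I would apply Theorem~\ref{th:general-KR} with
\[ Z(t):=\big(X(x_1),\dots,X(x_k),\,Y(y_1),\dots,Y(y_k)\big)\in\R^{2k}, \]
and with auxiliary process equal to the vector of derivatives $W(t):=\big(X'(x_1),\dots,X'(x_k),Y'(y_1),\dots,Y'(y_k)\big)$. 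Since $X$ and $Y$ are finite trigonometric sums, $Z$ and $W$ are smooth, and for each fixed $t$ the process $s\mapsto(Z(s),W(t))$ is a linear image of the Gaussian vector $(\eps_0,\dots,\eps_n)$, hence jointly Gaussian; this gives the smoothness and joint-Gaussianity hypotheses in Theorem~\ref{th:general-KR}. The Jacobian $Z'(t)$ is the diagonal matrix $\mathrm{diag}\big(X'(x_1),\dots,X'(x_k),Y'(y_1),\dots,Y'(y_k)\big)$, so $|\det Z'(t)|=\prod_{i=1}^k|X'(x_i)|\,|Y'(y_i)|$, while $\Cov(Z(t))=\Sigma=\Sigma_k(\bx,\by)$ by definition.

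To produce the indicator $\varphi_V$ I would, exactly as in the proof of Lemma~\ref{lem:complex-KR}, take continuous bounded functions $g_\alpha$ with $0\le g_\alpha\uparrow\one_V$ (taking $g(t,w)=\one_V(w)$; more generally Theorem~\ref{th:general-KR} permits $g$ to depend on $t$ as well, which is what the application to Lemma~\ref{lem:our-KR-form} uses), apply Theorem~\ref{th:general-KR} to each $g_\alpha$, and let $\alpha\to\infty$. On the left, $\sum_{t\in I,\,Z(t)=0}g_\alpha(t,W(t))$ increases to $\sum_{t\in I,\,Z(t)=0}\one_V(W(t))$, which counts precisely the tuples $(\bx,\by)\in I$ with $X(x_i)=Y(y_i)=0$ for all $i$ and $(X'(x_1),\dots,Y'(y_k))\in V$, i.e.\ it equals $N_I$; since $\{t\in I:Z(t)=0\}$ is a.s.\ finite (it is contained in a finite product of the finitely many zeros of $X$ and of $Y$), monotone convergence gives $\E[N_I]$ on the left, and on the right the integrand of Theorem~\ref{th:general-KR} is exactly the density $p_k(\bx,\by,V)$ of \eqref{eq:KR}, so monotone convergence gives $\int_I p_k(\bx,\by,V)\,d\bx\,d\by$. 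Exhausting the relevant domain by compact sets $I$ then yields the lemma.

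The points that require genuine (if routine) attention are hypotheses (2) and (3) of Theorem~\ref{th:general-KR}. Hypothesis (2), positive-definiteness of $\Cov(Z(t))$, holds exactly when $x_1,\dots,x_k$ are pairwise distinct and $y_1,\dots,y_k$ are pairwise distinct: this is the standard non-degeneracy of the cosine and sine systems (valid as soon as $n\ge k$), and can alternatively be read off from the integral representation underlying Lemma~\ref{lem:vSv-int-form}. Hypothesis (3), that a.s.\ there is no $t$ with $Z(t)=0$ and $\det Z'(t)=0$, unwinds to the statement that a.s.\ neither $X$ nor $Y$ has a repeated zero, a standard fact for Gaussian trigonometric polynomials. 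I would therefore apply Theorem~\ref{th:general-KR} on the open set $U\subseteq\T^{2k}$ on which all the $x_i$ are distinct and all the $y_i$ are distinct; its complement is Lebesgue-null, hence contributes nothing to the right-hand integral, and it is disregarded on the counting side by the convention that $N_T$ ranges over tuples of distinct pairs (and in the application these degenerate configurations contribute nothing to the factorial moments anyway). Finally, $\T=\R/2\pi\Z$ is not literally $\R^{2k}$, but since $X$ and $Y$ extend $2\pi$-periodically and smoothly to all of $\R$, one may harmlessly regard $T$ as a compact subset of an open box in $\R^{2k}$, so no manifold formalism is needed. I expect the verification of (2) and (3) — non-degeneracy of $\Sigma_k$ at distinct points and simplicity of zeros — to be the only real obstacle, and it is a mild one.
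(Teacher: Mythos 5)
Your proposal is correct and takes essentially the same route as the paper: set $Z(\bx,\by)=(X(x_1),\dots,X(x_k),Y(y_1),\dots,Y(y_k))$ (so $Z'$ is diagonal with determinant $\prod_i X'(x_i)Y'(y_i)$), apply Theorem~\ref{th:general-KR} with $g_\alpha\uparrow\one_V$ and pass to the limit by monotone convergence, then remove the degenerate diagonal by a compact-exhaustion limit — the paper implements this last step via the sets $T_\delta$ with $\delta\to 0$, which is functionally identical to your exhaustion of the open non-degenerate set by compact subsets. You are a bit more explicit than the paper about checking hypotheses (2) and (3) of Theorem~\ref{th:general-KR} (positive definiteness of $\Sigma_k$ off the diagonal, and a.s.\ simplicity of the zeros of $X$ and $Y$), which the paper leaves implicit.
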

\begin{proof}
	We adapt an argument from \cite[Theorem 11.5.1]{adler2009random}. Fix $\delta > 0$ and let 
	\[ Z(t_1,\ldots,t_{2k}) = (X(t_1),\ldots,X(t_k),Y(t_{k+1}),\ldots, Y(t_{2k})).
	\]  Note that $Z'$, the Jacobian of $Z$, is a diagonal $2k \times 2k$ matrix with diagonal 
	\[ (X'(t_1), \ldots, X'(t_k), Y'(t_{k+1}),\ldots, Y'(t_{2k})).\] For a set $T \subset [0,2\pi]^{2k}$, define\footnote{ If $X$ is a set, we let $X^{(k)}$ to be the set of $k$ element subsets of $X$.}
	\[ T_\delta := \{t \in T : |t_i - t_j| \geq \delta \text{ for all } \{i,j\} \in [k]^{(2)} \cup [k+1,2k]^{(2)} \}.\] 
	 As in the proof of Lemma \ref{lem:complex-KR}, let $\{g_\alpha \}_{\alpha}$ be a sequence of continuous functions so that $g_\alpha \uparrow \one_V$ as $\alpha \to 0$. By Theorem \ref{th:general-KR} we have 
	\begin{equation} \label{eq:dervKR} \E\left(\sum_{t \in T_\delta, Z(t) = 0} g_\alpha(Z'(t))\right) = \int_{T_\delta} \frac{\E\left( | \det(Z'(t))|\, g_\alpha(Z'(t))\,\Big\vert \, Z(t) = 0 \right)  }{(2\pi)^{k} \det \Cov(Z(t))^{1/2} }\,dt\,.\end{equation}
	Taking $\alpha \to 0$ implies $g_\alpha \uparrow \one_V$. Monotone convergence theorem allows us to swap $g_{\alpha}$ for $\one_V$ on both sides of \eqref{eq:dervKR}. Now, sending $\delta \to 0$, and again using monotone convergence theorem, allows us to replace $T_{\delta}$ with $T$ on both sides of \eqref{eq:dervKR}. Noting that $\det(Z'(t)) = \prod_{i=1}^k X(t_i)Y(t_{i+k})$, completes the proof. 
\end{proof}

\begin{proof}[Proof of Lemma~\ref{lem:our-KR-form}]
Take $T = \T_0^{2k}$ and $V = \cC_f(U)$ in Lemma~\ref{lem:kac-rice}. Set $R := \mu_f(U) = |\cC_{f}(U)|$ and note that $(R)_k$ counts
the number of pairwise distinct tuples of pairs $((x_1,y_1),\ldots,(x_k,y_k)) \in \cC_{f}(U)$. So
\[ \E\, (R)_k = N_T = \int_{\T_0} p_k(\bx,\by,U)\, d\bx\, d\by, \]
as desired.

\end{proof}

\bibliographystyle{abbrv}
\bibliography{Bib}
\end{document}